\let\realItem\item 
\NewDocumentCommand\myItem{ o }{%
   \IfNoValueTF{#1}%
      {\realItem}
      {\realItem[#1]\def\@currentlabel{#1}}
}
\setlist[enumerate]{
    before=\let\item\myItem,       
    label=\textnormal{(\arabic*)}, 
    widest=(2')                    
}
\newcommand\bigfrown[2][\textstyle]{\ensuremath{%
  \array[b]{c}\text{\resizebox{6ex}{.7ex}{$#1\frown$}}\\[-1.3ex]#1#2\endarray}}
\newcommand\N{{\mathbb N}}
\newcommand\R{{\mathbb R}}
\newcommand\M{{\mathbb M}}
\renewcommand{\d}{\mathrm{d}}
\newcommand{\dv}{\mathrm{d} v}
\newcommand{\dt}{\mathrm{d} t}
\newcommand{\ds}{\mathrm{d} s}
\newcommand{\dr}{\mathrm{d} r}
\newcommand{\dx}{\mathrm{d} x}
\def\AA{{\mathcal A}}
\def\BB{{\mathcal B}}
\def\CC{{\mathcal C}}
\def\DD{{\mathcal D}}
\def\GG{{\mathcal G}}
\def\HH{{\mathcal H}}
\def\JJ{{\mathcal J}}
\def\KK{{\mathcal K}}
\def\MM{{\mathcal M}}
\def\OO{{\mathcal O}}
\def\QQ{{\mathcal Q}}
\def\SS{{\mathcal S}}
\def\TT{{\mathcal T}}
\def\UU{{\mathcal U}}
\def\VV{{\mathcal V}}
\def\VV{{\mathcal V}}
\def\XX{{\mathcal X}}
\def\ZZ{{\mathcal Z}}
\def\AAA{{\mathscr A}}
\def\CCC{{\mathscr C}}
\def\DDD{{\mathscr D}}
\def\III{{\mathscr I}}
\def\LLL{{\mathscr L}}
\def\MMM{{\mathscr M}}
\def\OOO{{\mathscr O}}
\def\PPP{{\mathscr P}}
\def\QQQ{{\mathscr Q}}
\def\RRR{{\mathscr R}}
\def\SSS{{\mathscr S}}
\def\TTT{{\mathscr T}}
\def\UUU{{\mathscr U}}
\def\ZZZ{{\mathscr Z}}
\def\CCCC{{\mathfrak C}}
\def\HHHH{{\mathfrak H}}
\def\IIII{{\mathfrak I}}
\def\MMMM{{\mathfrak M}}
\def\NNNN{{\mathfrak N}}
\def\RRRR{{\mathfrak R}}
\def\SSSS{{\mathfrak S}}
\def\WWWW{{\mathfrak W}}
\def\ffff{{\mathfrak f}}
\def\nnnn{{\mathfrak n}}
\def\MMMM{{\mathfrak M}}
\def\restrict#1{\raise-.5ex\hbox{\ensuremath|}_{#1}}
\def\Lloc{L_{\rm\tiny loc}}
\newcommand{\wto}{\rightharpoonup}
\def\eps{{\varepsilon}}
\newcommand{\la}{\langle}
\newcommand{\ra}{\rangle}
\newcommand{\rra}{\rangle\!\rangle}
\newcommand{\lla}{\langle\!\langle}
\newcommand{\lv}{\lvert}
\newcommand{\rv}{\lvert}
\newcommand{\lvv}{\lVert}
\newcommand{\rvv}{\rVert }
\newcommand{\lvvv}{\lvert\!\lvert\!\lvert}
\newcommand{\rvvv}{\rvert\!\rvert\!\rvert}
\newcommand{\grad}{\nabla}
\DeclareMathOperator{\Div}{div}
\DeclareMathOperator{\supp}{supp}
\newcommand\Ind{{\mathbf 1}}
\newtheorem{theo}{Theorem}[section]
\newtheorem{prop}[theo]{Proposition}
\newtheorem{lem}[theo]{Lemma}
\newtheorem*{thm*}{Theorem}
\theoremstyle{remark}
\newtheorem{rem}[theo]{Remark}
\newtheorem*{ex*}{Example}
\theoremstyle{definition}
\numberwithin{equation}{section}
\newcommand{\be}{\begin{equation}}
\newcommand{\ee}{\end{equation}}
\newcommand{\ba}{\begin{aligned}}
\newcommand{\ea}{\end{aligned}}
\newcommand{\beqn}{\begin{equation*}}
\newcommand{\eeqn}{\end{equation*}}
\newcommand{\bear}{\begin{eqnarray}}
\newcommand{\eear}{\end{eqnarray}}
\newcommand{\bean}{\begin{eqnarray*}}
\newcommand{\eean}{\end{eqnarray*}}
\newcommand{\bal}{\begin{aligned}}
\newcommand{\eal}{\end{aligned}}
\title[The Boltzmann equation on smooth and cylindrical domains.]{The Boltzmann equation on smooth and cylindrical domains with Maxwell boundary conditions.}
\author[R. Medina]{R. MEDINA}
\address[R.~Medina]{Centre de Recherche en Math\'ematiques de
  la D\'ecision (CEREMADE, CNRS UMR 7534),
  Universit\'es PSL \& Paris-Dauphine, Place de Lattre de
  Tassigny, 75775 Paris 16, France}
\email{richard.medina-rodriguez@dauphine.psl.eu}
\date{\today}
\subjclass[2020]{35Q82, 35Q20, 35G31, 35B40, 82C05}
\keywords{Boltzmann equation, Maxwell boundary conditions, long-time asymptotic behavior}
\begin{document}

\begin{abstract}
In this article we study the well-posedness of the Boltzmann equation near its hydrodynamic limit on a bounded domain. We consider two types of domains, namely $C^2$ domains with Maxwell boundary conditions where the accommodation coefficient  is a continuous space dependent function $\iota \in [\iota_0,1]$ for any $\iota_0 \in (0,1]$, or cylindrical domains with diffusive reflection on the bases of the cylinder and specular reflection on the rest of the boundary. Furthermore, we work with polynomial, stretched exponential and inverse gaussian weights to construct the Cauchy theory near the equilibrium. We remark that all methods are quantitative thus all the constants are constructive and tractable.
\end{abstract}

\maketitle

\tableofcontents

\section{Introduction}\label{S1_Intro} 

This paper investigates the well-posedness and long-time behavior of the Boltzmann equation in the regime close to the \emph{hydrodynamic limit}. We consider this problem in bounded spatial domains equipped with Maxwell boundary conditions, and where we distinguish between two types of geometries: general $C^2$ smooth domains and cylindrical domains. 

We remark that we take an interest in cylindrical domains motivated by its physical applications, the mathematical complexity of dealing with irregular domains, and as a first step towards more complex settings in future works. 

The strategy to achieve our results employs and extends the \emph{hypocoercivity} techniques developed in \cite{MR4581432}, as well as the \emph{stretching method} introduced in \cite{GuoZhou18}. In particular, we remark that during this paper we craft a more delicate $L^2-L^\infty$ theory than the one employed in \cite{GuoZhou18} and more recently \cite{GuoZhou2024}, which allows us to achieve quantitative decay estimates towards equilibrium.

\subsection{Framework}\label{sec:Framework}
We consider a small $\eps>0$ and we study the following Boltzmann equation
\be
\varepsilon\partial_\tau F = -v\cdot\grad_y F + \varepsilon^{-1} \QQ(F,F) \quad\text{ in }\UU:= (0,+\infty)\times\Omega\times\mathbb{R}^3,\label{eq:BE}
\ee
where $F = F(\tau, y, v)$ is a density function representing particles which at time $\tau\in (0,\infty)$ are located at position $y\in \Omega\subset \R^3$ and moving with velocity $v\in \R^3$. 

The presence of the small parameter $\eps > 0$ in the equation reflects the fact that the system is close to its \emph{hydrodynamic limit}. For a detailed discussion of the physical interpretation and the main mathematical results concerning this type of limit, we refer to \cite{MR2683475}.

The \emph{Boltzmann collision operator} $\QQ$ represents the collisions between particles inside $\Omega$, and is given by the bilinear form
\beqn
\QQ(G,H):={1\over 2}\int_{\R^3} \int_{\mathbb{S}^{2}} \BB\, \left[ G(v_*')H(v') +H(v_*')G(v') -G(v_*)H(v) - G(v) H(v_*) \right] \d\sigma \d v_* ,
\eeqn
where we have defined 
\begin{equation*}
    v':= v -((v-v_*)\cdot \sigma)\sigma,\qquad v_*':= v_* +((v-v_*)\cdot \sigma)\sigma,
\end{equation*}
with $\sigma \in \mathbb{S}^2$, and the \emph{collision kernel} $\BB=\BB(\lvert v-v_*\rvert,\sigma)$. We remark that $\BB$ describes the \emph{type of interaction} particles exhibit and, during this paper, we will choose the so-called \emph{hard spheres} model by taking
\begin{equation*}
\BB(\lvert v-v_*\rvert,\sigma):= \lvert (v-v_*)\cdot\sigma\rvert.
\end{equation*}

We assume $\Omega$ to be a bounded domain such that $\lv \Omega\rv=1$, and we assume that there exists $\delta \in W^{1,\infty}(\R^3, \R)$ in such a way that $\Omega = \{y\in \R^3,\, \delta (y)>0 \}$, and $\lvert \delta (y) \rvert = \mathrm{dist}(y,\partial \Omega)$ on a neighborhood of the boundary. We then define the normal outward vector 
$$
n_y = n(y):=-{\grad \delta (y)\over \lv \grad \delta (y)\rv } \quad \text{for almost every } y\in \bar \Omega.
$$
We further define the boundary set $\Sigma = \partial \Omega\times \R^d$ and we distinguish between the sets of \emph{outgoing} ($\Sigma_+$), \emph{incoming} ($\Sigma_-$), and \emph{grazing} ($\Sigma_0$) velocities at the boundary by
\beqn
\Sigma_\pm:= \{(y,v)\in \Sigma,\, \pm n_y\cdot v>0 \}, \quad \text{ and } \quad  \Sigma_0:= \{ (y,v)\in \Sigma, \, n_y\cdot v =0\}.
\eeqn
Furthermore, we denote $\Gamma:= (0,\infty)\times\Sigma$ and accordingly $\Gamma_{\pm}:= (0,\infty)\times \Sigma_\pm$. We define $\gamma F$ as the trace function associated with $F$ over $\Gamma$ and $\gamma_\pm F:= \Ind_{\Gamma_{\pm}} \gamma F$. 
 
We then complement the Boltzmann equation \eqref{eq:BE} with the \emph{Maxwell boundary condition}
\be\label{eq:BEBC}
\gamma_-F(\tau, y, v)=\RRR\gamma_+F (\tau, y, v):= (1-\iota(y)) \SSS\gamma_+F(\tau, y, v) + \iota(y) \DDD\gamma_+F(\tau, y, v) \quad  \text{ on }\Gamma_-,
\ee
where we have defined the \emph{accommodation coefficient} $\iota:\partial \Omega\to [0,1]$, the \emph{specular reflection} operator
\begin{equation*}
\SSS \gamma_+F(\tau, y,v):= \gamma_+F(\tau , y,\VV_y v) \quad \text{ with } \quad \VV_y v = v - 2(n_y\cdot v)n_y,
\end{equation*}
and the \emph{diffusive reflection} operator
\begin{equation*}
\DDD \gamma_+F(\tau , y,  v):=  \MMM(v) \widetilde{\gamma_+F}  \quad \text{where}   \quad \widetilde{\gamma_+F}=\int_{\R^d}\gamma_+F(\tau, y,u) (n(y)\cdot u)_+du .
\end{equation*}
We have also defined the \emph{Maxwellian} distributions
$$
\MMM:= \sqrt{2\pi} \MM , \quad \text{ with } \quad \MM = \MM(v):= (2\pi)^{-3/2} e^{-{\lv v\rv^2 \over 2}},
$$ 
and it is worth remarking that $\widetilde \MMM=1$.

\medskip
We present now the two types of \emph{geometric assumptions} for our domain $\Omega$, and the respective choice for the accommodation coefficient in each case. 
\begin{enumerate}[leftmargin=*]
\item[(H1)]\label{item:A1} Assume $\Omega\subset\R^3$ is an open $C^2$ domain, and $\delta\in C^{2}(\R^3, \R)\cap W^{3,\infty}(\R^3, \R)$. Moreover, take $\iota\in C(\partial\Omega)$ and assume that there is $\iota_0\in (0,1]$ such that for every $y\in \partial \Omega$ there holds $\iota(y) \in [\iota_0, 1]$.
\item[(H2)]\label{item:A2} Assume $\Omega =  (-L, L) \times \Omega_0$, for some $L>0$ and where $\Omega_0\subset \R^2$ is the $2$-dimensional ball of radius $\RRRR>0$ centered at the origin. In this case we also define 
\begin{equation*}
    \Lambda_1:= \{-L\}\times\Omega_0,\quad
    \Lambda_2:= \{L\}\times\Omega_0, \quad
    \Lambda_3:= (-L,L)\times \partial \Omega_0 ,\label{eq:cylinderDefinition}
\end{equation*}
and $\Lambda:= \Lambda_1\cup \Lambda_2\cup \Lambda_3$. Furthermore, we impose  \emph{mixed} boundary conditions by taking $\iota= \Ind_{\Lambda_1 \cup \Lambda_2}$, i.e purely diffusive boundary condition on the bases of the cylinder ($\Lambda_1\cup\Lambda_2$), and specularity on the lateral surface ($\Lambda_3$).
\end{enumerate}

Finally, we complement Equation \eqref{eq:BE}-\eqref{eq:BEBC} with the initial condition
\be\label{eq:BEIC}
F (\tau = 0, \cdot ) = F_0 \quad \text{ in } \OO:= \Omega \times \R^3,
\ee
for some function $F_0$ satisfying $\lla F_0\rra_\OO := \displaystyle \int_{\OO} F_0   \, \d y\dv =1$.


\subsection{Main result and discussion}\label{sec:MainResult} 

In order to express our main result we need to introduce the set of the so-called \emph{admissible weight functions} given by:
\begin{itemize}[leftmargin=*]
\item\label{item:W1} \emph{Polynomial weights:} We consider $\omega(v)=(1+ \lvert v\rvert^2 )^{q/2}$ with  $q>q^\star_\iota$, for some explicit $q^\star_\iota >0$ defined in Remark~\ref{def:nu*}.
\item\label{item:W2} \emph{Stretched exponential weights:} We consider $\omega (v)=e^{\zeta\lvert v\rvert^s}$ with $s \in (0,2)$ and $\zeta>0$.
\item\label{item:W3}\emph{Inverse gaussian weights:} We consider $\omega (v)=e^{\zeta\lvert v\rvert^2}$ with $\zeta \in (0,1/2)$.
\end{itemize}

Moreover, for a given measure space $(Z,\ZZZ,\mu)$, a weight function $\rho: Z \to (0,\infty)$, and an exponent $p \in [1,\infty]$, we define the weighted Lebesgue spaces $L^p_\rho(Z)$ 
associated to the norm 
\be\label{def:Lebesgue_weighted_spaces}
\| g \|_{L^p_\rho(Z)} = \| \rho g \|_{L^p(Z)}. 
\ee

In this framework we have the following result for the Boltzmann equation.

\begin{theo}\label{theo:Main}
Consider either Assumption \ref{item:A1} or Assumption \ref{item:A2} to hold, and et $\omega$ be an admissible weight function. There exists  $\eps_0>0$ such that for every $\eps \in (0, \eps_0)$ there is $\eta(\eps)\in (0,1)$, satisfying $\eta(\eps)\to 0$ as $\eps\to 0$, such that for every $F_0\in L^\infty_{\omega}(\OO)$ satisfying
\beqn
 \lVert F_0-\MM\rVert_{L^{\infty}_{\omega}(\OO)}  \leq  (\eta(\eps))^2 ,
\eeqn
there exists a function $F\in  L^\infty_\omega(\UU)$ 
unique global solution to the Boltzmann equation~\eqref{eq:BE}--\eqref{eq:BEBC}--\eqref{eq:BEIC} in the distributional sense.
Furthermore, there is a constructive constant $\theta>0$ such that 
\beqn
  \lVert  F_\tau - \MM \rVert_{L^{\infty}_{\omega}( \OO)}   \leq e^{-\theta \tau} \, \eta(\eps)\label{eq:BEdecayFinal} \qquad  \forall \tau\geq 0.
\eeqn
\end{theo}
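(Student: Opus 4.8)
The plan is to follow the by-now classical scheme that reduces the nonlinear problem to a linear spectral-gap estimate plus a perturbative fixed-point argument, but with the two extra layers this paper announces: a quantitative hypocoercivity estimate in $L^2$ (à la \cite{MR4581432}) transported to $L^\infty_\omega$ via an $L^2$--$L^\infty$ bootstrap, and the stretching trick of \cite{GuoZhou18} to handle the boundary geometry (the $C^2$ domain with space-dependent accommodation $\iota\in[\iota_0,1]$, or the cylinder with mixed specular/diffusive reflection). First I would write $F=\MM+\eps\, g$ (or the analogous fluctuation scaling adapted to the $\eps\partial_\tau$ in \eqref{eq:BE}), so that $g$ solves a linear equation $\eps\partial_\tau g = \LL_\eps g + \Gamma(g,g)$ with $\LL_\eps$ the rescaled linearized Boltzmann operator with Maxwell boundary conditions, $\Gamma$ the symmetrized collision term, and initial data of size $O(\eta(\eps))$. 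The boundary condition \eqref{eq:BEBC} linearizes to a conservative Maxwell reflection acting on $\gamma g$, and one checks that $\MM$ is stationary and that the only conservation law surviving the boundary (mass, since $\iota\not\equiv 0$ kills momentum and energy exchange at the wall on at least part of $\partial\Omega$) is $\lla g\rra_\OO$, which is pinned to zero by the normalization $\lla F_0\rra_\OO=\lla\MM\rra_\OO=1$.

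Next I would establish the linear decay: $\|e^{t\LL_\eps/\eps}g_0\|_{L^\infty_\omega(\OO)}\le C e^{-\theta\tau}\|g_0\|_{L^\infty_\omega(\OO)}$ on the hyperplane $\lla g_0\rra_\OO=0$, uniformly in $\eps$ small, with constructive $C,\theta$. This is the technical heart and I expect it to be the main obstacle. The route: (i) prove an $L^2_{\MM^{-1/2}}$ hypocoercivity estimate for $\LL_\eps$ with the Maxwell boundary condition using a modified entropy/Lyapunov functional — this is where the stretching method enters, because on irregular or product domains the usual Grad-type trace estimates and the construction of the auxiliary vector field in the hypocoercive functional must be done chart-by-chart (or, for the cylinder, separately on the diffusive bases $\Lambda_1\cup\Lambda_2$ and the specular lateral wall $\Lambda_3$, controlling the grazing set $\Sigma_0$ carefully); (ii) run the $L^2$--$L^\infty$ bootstrap: using Duhamel along characteristics that reflect off $\partial\Omega$ according to $\RRR$, bound $\|g_\tau\|_{L^\infty_\omega}$ by the $L^\infty_\omega$ norm of the free transport part (which decays because characteristics either leave in finite time or are absorbed/randomized at the wall) plus a gain term controlled by the $L^2$ norm via the regularizing properties of the collision kernel $K$ (the $\mathbf 1_{|v|\le N}$ truncation of the non-local part of $\LL$). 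The admissible weights $\omega$ are exactly those for which this $L^\infty_\omega$ estimate on the transport/collision splitting closes — polynomial weights require $q>q^\star_\iota$ so that the loss from $\nu(v)\sim|v|$ against the gain is summable, while stretched-exponential and inverse-Gaussian weights are handled by the usual commutator estimates since they are dominated by $\MM^{-1/2}$ behavior or milder.

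Finally, with the linear semigroup decay in hand, I would close the nonlinear problem by a fixed-point (contraction) argument in the space $X=\{g\in L^\infty_\omega(\UU): \sup_\tau e^{\theta\tau}\|g_\tau\|_{L^\infty_\omega(\OO)}\le R\}$ for suitable $R=O(\eta(\eps))$: Duhamel's formula $g_\tau=e^{\tau\LL_\eps/\eps}g_0+\eps^{-2}\int_0^\tau e^{(\tau-s)\LL_\eps/\eps}\Gamma(g_s,g_s)\,\d s$ together with the bilinear estimate $\|\Gamma(f,h)\|_{L^\infty_\omega}\lesssim\|f\|_{L^\infty_\omega}\|h\|_{L^\infty_\omega}$ (valid for all admissible $\omega$ because $\Gamma$ loses only $\nu(v)$, which the weight absorbs) shows the map is a contraction on $X$ provided $\eta(\eps)$ is small enough relative to $\eps$; here is where $\eta(\eps)\to 0$ as $\eps\to 0$ is forced, since the nonlinear term carries a factor $\eps^{-2}$ against the gain $\eps$ from the solution size and a factor $\eps$ from one power of the small data, so the smallness threshold must beat a negative power of $\eps$. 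The exponential decay of $F_\tau-\MM$ with rate $\theta$ is then immediate from membership in $X$, and uniqueness follows from the same contraction. Throughout, every constant ($\eps_0$, $\theta$, $C$, $q^\star_\iota$, the implied constants in the bilinear and $L^\infty$ estimates) is tracked explicitly, which is the quantitative refinement over \cite{GuoZhou18,GuoZhou2024}.
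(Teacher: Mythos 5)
Your high-level skeleton --- linearize, prove $L^2$ hypocoercivity, run an $L^2$--$L^\infty$ bootstrap, close by a fixed point --- is the right shape, but two of the named ingredients are misidentified and a third is missing. The ansatz $F=\MM+\eps g$ on the original domain is not the paper's: the paper sets $f=F-\MM$ (no amplitude rescaling at all) and instead dilates \emph{space and time}, $\tau=\eps^2 t$, $y=\eps x$, so that the perturbation lives on the enlarged domain $\Omega^\eps=\eps^{-1}\Omega$ and the rescaled Equation~\eqref{eq:RBE} carries no singular power of $\eps$ in front of $\QQ$. Your explanation of why $\eta(\eps)\to 0$ (a factor $\eps^{-2}$ on the nonlinearity) is therefore not the mechanism at work; in the paper it is the constant $C(\eps)$ in the $L^2$--$L^\infty$ estimate of Proposition~\ref{prop:LinftyEstimatePerturbedFiniteT} that blows up as $\eps\to 0$ (an $\eps^{-3/2}$ from the volume of $\Omega^\eps$ and a further blowup coming from the degraded decay rate $\theta\eps^2$), which forces $\eta_0^1(\eps)\sim 1/C_0(\eps)\to 0$ in Section~\ref{sec:SolutionsWWWWinfty}. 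More importantly, the spatial dilation \emph{is} the stretching method: on $\Omega^\eps$ the boundary becomes flat at the scale of a backward trajectory of speed $\le M$ and lifetime $\le T$, so it hits $\partial\Omega^\eps$ at most once when $\eps$ is small (Lemma~\ref{lemma3.2:GeneralRefl}), which is what lets the Duhamel iteration terminate after a single bounce and produces the coefficient $(1-\iota_0)<1$ that closes the estimate. Your description of the stretching method as a chart-by-chart construction of the hypocoercive functional is not what happens; the hypocoercivity of Section~\ref{sec:Hypo} is global, and stretching enters only in the trajectory analysis of Sections~\ref{sec:Linfty}--\ref{sec:LinftyCylinder}.

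The second gap concerns the weights. Your scheme --- linear $L^\infty_\omega$ decay plus the bilinear bound $\|\QQ(f,h)\|_{L^\infty_{\omega\nu^{-1}}}\lesssim\|f\|_{L^\infty_\omega}\|h\|_{L^\infty_\omega}$ --- closes directly only for \emph{strongly confining} Gaussian weights $\omega\in\WWWW_1$, i.e.\ $\omega=e^{\zeta|v|^2}$ with $\zeta\in(1/4,1/2)$, which is what Section~\ref{sec:SolutionsWWWWinfty} does. For the rest of the admissible class (all polynomial weights, stretched exponentials, and weak inverse Gaussians, i.e.\ $\WWWW_0$), the decay estimate of Proposition~\ref{prop:LinftyEstimatePerturbedFiniteT_Summary} is not available in $L^\infty_{\omega_0}$, and the paper must change strategy: it splits $K=\AA_\delta+\KK_\delta$ as in \eqref{eq:defSplittingLL} and replaces the single fixed-point equation by the coupled system \eqref{eq:PLRBEDiss}--\eqref{eq:PLRBEReg}, in the spirit of \cite{BriantGuo16}. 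The dissipative part $\AA_\delta$ is handled by a direct $L^\infty_{\omega_0}$ estimate without any hypocoercivity (Proposition~\ref{prop:LinftyEstimatePerturbedFiniteT_Summary_WWWW0}), and the regularizing remainder $\KK_\delta f_1$ is fed as a source into a strongly-weighted auxiliary equation for which the $L^2$--$L^\infty$ theory applies. Your remark that stretched-exponential and inverse-Gaussian weights ``are handled by the usual commutator estimates'' does not anticipate this bifurcation of the argument, and as written your proposal does not produce a contraction in $L^\infty_{\omega_0}$ for $\omega_0\in\WWWW_0$, which includes the physically important polynomial case and is the reason the threshold $q^\star_\iota$ appears at all.
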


The precise sense of the solution given by Theorem~\ref{theo:Main} is constructed in Theorem~\ref{theo:MainRescaled}, after taking $f=F-\MM$, and performing the change of variables from Subsection~\ref{ssec:LinRes}.

\smallskip
We now briefly discuss the history of the progress on the study of the Boltzmann equation, and we pay particular attention to boundary value problems. This has been an active field of mathematical research since the groundbreaking works of L. Boltzmann in \cite{Boltzmann1905} and J.C. Maxwell in \cite{Maxwell1867}. 

\smallskip
We first mention the framework of renormalized solutions introduced by DiPerna and Lions in \cite{Diperna1989} for any arbitrary data,  
and in this setting for bounded domains we cite the works by S. Mischler in \cite{Mischler2000, Mischler2010}, and we also refer to the results presented in the review paper of C. Cercignani \cite{MR1359296}, and the references therein.

\smallskip
For other results concerning the Boltzmann equation (and variants), either on the torus or on the whole space, we refer to \cite{MR363332, MR1828983, MR2214953, MR3779780, MR2784329, MR1908664}. 

Regarding solely the long-time asymptotic behavior of (given) solutions we mention \cite{MR2116276} by L. Desvilletes and C. Villani, where they obtain relaxation to a Maxwellian global equilibrium for positivity and regular enough solutions.

\smallskip
On the existence of weak distributional solutions in bounded domains and their long-time assymptotics, we have the pioneering paper by Y. Guo \cite{MR2679358}, where the boundary conditions are either inflow, bounce-back, specular, or diffusive. It is worth mentioning though, that Guo assumes the spatial domain to be convex and analytic, which are quite demanding hypothesis in applications. 

\smallskip
In the last decade there have been several studies with the objective of relaxing their geometrical assumptions:
we cite \cite{MR3579575} where M. Briant studied a Boltzmann equations with diffusive reflexion along a $C^1$ domain---we note that he also studies the problem for specular reflections but under the extra assumption of having an analytical and convex boundary. We also refer to the work of C. Kim and D. Lee in \cite{MR3762275} for $C^3$ convex domains with specular reflections. 

We also mention \cite{chen2024boltzmannequationmixedboundary} where H. Chen and R. Duan consider a problem with mixed boundary conditions. It is worth remarking that such mixed boundary conditions are similar in objective as those of our Assumption~\ref{item:A2}, however instead of a cylinder they consider a $C^1$ domain. 

For results on the Boltzmann equation in irregular bounded domains we cite \cite{MR1629463} by L. Arkeryd and A. Heintz for a general conservative boundary condition, and, more particularly in the setting of cylindrical domains, we further cite \cite{MR3840911}, where C. Kim and D. Lee construct weak solutions for the Boltzmann equation complemented with specular boundary conditions. 

\smallskip
Regarding the problem under Maxwell boundary conditions we have the result from M. Briant and Y. Guo in \cite{BriantGuo16}, under the conditions of having an accommodation coefficient bounded below by $\sqrt{2/3}$, and for a $C^1$ bounded domain. 

For Boltzmann equations with long-range interaction (no Grad's cutofff assumption) we cite the more recent paper \cite{deng2025noncutoffboltzmannequationbounded} where D. Deng studies the problem within a $C^3$ bounded domain and complemented with a Maxwell boundary condition and a fixed accommodation coefficient $\iota \in (0, 1]$. 

Moreover, we also have the preprint from Y. Guo and F. Zhou \cite{GuoZhou18}, and their recently updated version with J. Jung \cite{GuoZhou2024}, where they study a Boltzmann equation near the hydrodynamic limit, in a $C^2$ domain, complemented with Maxwell boundary conditions with a constant accommodation coefficient within $[0,1]$. They construct a perturbative well-posedness theory for this problem and they also study its hydrodynamic limit.

\smallskip
The framework for our paper is motivated by, and therefore closely related to, the one developed in \cite{GuoZhou18, GuoZhou2024}. 
However, within the study of the Boltzmann equation, our result constitutes a genuine generalization of theirs in several key aspects. 
First, we craft a more delicate $L^2-L^\infty$ theory than the one presented in \cite{GuoZhou18, GuoZhou2024} for the linearized problem, allowing us to derive explicit and constructive decay rates to the equilibrium, going beyond the boundedness results previously obtained.
Second, we also carry out the analysis in cylindrical domains, introducing the presence of geometric irregularities to the obtention of the estimates, thereby generalizing the stretching method from \cite{GuoZhou18, GuoZhou2024} for the cylindrical setting.
Third, even though we do not achieve the full range of $[0,1]$ for the accommodation coefficient in smooth domains, we allow $\iota$ to be a spatially dependent continuous function, and within cylindrical domains we further have it to be discontinuous.

This last fact makes Theorem~\ref{theo:Main} also a generalization of \cite{BriantGuo16}, where we recall that the accommodation coefficient had an imposed lower bound of $\sqrt{2/3}$.
It is also worth mentioning that during the obtention of our main result we have extended, in Section~\ref{sec:Hypo}, the results from \cite{MR4581432} by constructing hypocoercivity estimates in cylindrical domains satisfying Assumption \ref{item:RH2}. Furthermore, we also provide well-posedness results for initial data with a wide range of decaying tail at infinity, including polynomial.

Notably, we emphasize that, to the best of our knowledge, this is the first well-posedness result for the Boltzmann equation in cylindrical domains with Maxwell boundary conditions and a discontinuous accommodation coefficient ranging over the entire interval $[0,1]$.


\subsection{Transformation of the problem} \label{ssec:LinRes}
To study the Boltzmann Equation~\eqref{eq:BE}-\eqref{eq:BEBC}-\eqref{eq:BEIC}, we define the function $\bar F$ such that 
$$
 \bar F(\tau, y, v) = F(\tau , y, v)-\MM(v),
$$ 
and we make the changes of variables $\tau = \eps^2 t $, $y= \eps x$, so we introduce $\Omega^{\varepsilon}:=\{\varepsilon^{-1} y, \, y\in\Omega\}$ and $\OO^\eps= \Omega^\eps\times \R^3$. Then we naturally define 
$$ 
f (t, x, v) := \bar F(\eps^2 t , \eps x ,v),
$$ 
so that $f$ satisfies the \emph{linearized rescaled Boltzmann equation}
\be
\partial_t f = -v\cdot \grad_x  f +\CCC f +  \QQ(f,f)  \quad \text{ in } \UU^\eps:= (0,+\infty)\times \OO^\eps, \label{eq:RBE}
\ee
where we have defined the \emph{linearized Boltzmann operator} $\CCC f:=\QQ(\MM,f)+\QQ(f,\MM)$. We decompose now $\CCC f = Kf-\nu f$, 
where, on the one hand, we have the non-local operator
\beqn
Kf = Kf(\cdot , v) := \int_{\R^3}\int_{\mathbb{S}^{2}} \BB \left[\MM(v'_*)f(v') + \MM(v') f(v'_*) -\MM(v) f(v_*)\right]d\sigma dv_* = \int_{\R^3} k(v,v_*) f (v_*)dv_*,
\eeqn
with the kernel function
\begin{multline*}
k = k(v,v_*):=  \sqrt{2\over \pi} \lvert v-v_* \rvert^{-1} e^{ -{1\over 8} {(\lvert v_*\rvert^2 - \lvert v\rvert^2)^2\over \lvert v-v_*\rvert^2} - {1\over 8} \lvert v-v_*\rvert^2  -{\lvert v\rvert^2 \over 4} + {\lvert v_*\rvert^2 \over 4}   }
 -{1\over 2} \lvert v-v_*\rvert e^{ -{\lvert v\rvert^2 \over 2} }, \label{def:Kkernel} 
\end{multline*}
see for instance \cite[Theorem 7.2.1]{MR1307620} for a derivation of $k$, up to a conjugate change of scale. On the other hand, we have 
$$
\nu = \nu(v):= \int_{\R^3}\int_{\mathbb{S}^{2}} B \,  \MM(v_*)d\sigma dv_* ,
$$ 
and there are constants $\nu_0, \nu_1 >0$ such that 
\be\label{eq:Controlnu}
\nu_0\leq \nu_0\la v\ra \leq\nu(v)\leq \nu_1\la v\ra,
\ee
where we define $\la v\ra:= \sqrt{1+\lvert v\rvert^2}$, and we refer to \cite[Section 4]{MR3779780} for a derivation of this. 
\begin{rem}\label{def:nu*}
Furthermore, also from \cite[Section 4]{MR3779780}, we note that possible choices for these constants are $\nu_0 = 4\pi \sqrt{2/( e\pi)}$ and $\nu_1 = 16\pi $. This motivates the definition of \hbox{$\nu_*:= \nu_1/\nu_0 = 2\sqrt{2e\pi}$} and subsequently
 \begin{equation} \label{def:Poly_degree}
q^\star_\iota =\left\{\begin{array}{ll} 
\displaystyle {5\iota_0+8\nu_* + \sqrt{ 128\pi \CCCC_0\iota_0\nu_* +( 8 \nu_* - 3\iota_0)^2 }\over 2\iota_0} & \text{ if \ref{item:A1} holds} \\
\\
\displaystyle {5+16\nu_* + \sqrt{9 +  160\nu_*+ 256\nu_*\left( 1+\pi \CCCC_0\right)   + 256\nu_*^2 }\over 2}  & \text{ if \ref{item:A2} holds},
\end{array}\right.
\end{equation}
where the constant $\CCCC_0>0$ is such that $\omega \MMM\leq \CCCC_0$. 
\end{rem}

We translate now our framework towards this new setting. We recall that for any $x\in \partial \Omega^\eps$ there holds $y = \eps x\in \partial \Omega$, thus we define $\delta^\eps:\Omega^\eps\to \R$, $\delta^\eps(x):= \delta(y)$, and we observe that there holds almost everywhere
\be\label{eq:Equality_normal_eps}
n(y) = -{\grad_y \delta (y)\over \lv \grad_y \delta (y)\rv } = -{\grad_y \delta (\eps x)\over \lv \grad_y \delta (\eps x)\rv } = -{\grad_x [\delta (\eps x)]\over \lv \grad_x [\delta (\eps x)] \rv } = -{\grad_x \delta^\eps (x)\over \lv \grad \delta^\eps (x)\rv } =: n(x) = n_x,
\ee
which is nothing but saying that the normal vector on a rescaled point of the boundary set $\partial\Omega^\eps$ coincides with the one of the corresponding point on the original boundary set $\partial\Omega$. \\

We define then $\Sigma^\eps:= \partial \Omega^\eps\times \R^3$ and we define accordingly the sets 
$$
\Sigma^\eps_\pm:= \{(x,v)\in \Sigma^\eps, \, \pm n_x \cdot v >0\}, \quad \Sigma_0^\eps:= \{(x,v)\in \Sigma^\eps, \,  n_x \cdot v =0\},
$$
and $\Gamma_\pm^\eps:= (0,\infty) \times \Sigma^\eps_\pm$. We introduce the rescaled accommodation coefficient $\iota^\eps:\partial\Omega^\eps\to [0,1]$ defined as $\iota^\eps(x):= \iota(\eps x)$. We have then that the associated boundary conditions for the rescaled Boltzmann Equation~\eqref{eq:RBE} read
\be \label{eq:RBEBC}
\gamma_-  f =\RRR \gamma_+ f = (1-\iota^\eps) \SSS \gamma_+f + \iota^\eps \DDD \gamma_+f  \quad \text{ on }\Gamma^\eps_- ,
\ee
by abusing notation on the fact that we will maintain unchanged the symbols of the boundary reflection operators. \\

We translate now the geometrical assumptions \ref{item:A1} and \ref{item:A2} into the rescaled setting. 
\begin{enumerate}[leftmargin=*]
\item[(RH1)]\label{item:RH1} $\Omega^\eps\subset\R^3$ is an open $C^2$ domain, and $\delta^\eps \in C^{2}(\R^3, \R)\cup W^{3,\infty}$. Moreover, $\iota^\eps\in C(\partial\Omega)$ and such that for every $x\in \partial \Omega^\eps$, $\iota^\eps(x) \in [\iota_0, 1]$ with $\iota_0 \in (0,1]$.

\item[(RH2)]\label{item:RH2} $\Omega^\eps =  (-L^\eps, L^\eps) \times \Omega_0^\eps$, with $L^\eps:= \eps^{-1} L$ and $\Omega_0^\eps:= \eps^{-1} \Omega_0$, i.e is the 2-dimensional ball of radius $\eps^{-1}\RRRR$ centered at the origin. We also define 
\begin{equation*}
    \Lambda_1^\eps:= \{-L^\eps\}\times\Omega_0^\eps,\quad
    \Lambda_2^\eps:= \{L^\eps\}\times\Omega_0^\eps, \quad
    \Lambda_3^\eps:= (-L^\eps,L^\eps)\times \partial \Omega_0^\eps ,\label{eq:cylinderDefinition}
\end{equation*}
and $\Lambda^\eps:= \Lambda_1^\eps\cup \Lambda_2^\eps\cup \Lambda_3^\eps$. Moreover, we take $\iota^\eps= \Ind_{\Lambda_1^\eps \cup \Lambda_2^\eps}$.
\end{enumerate}

Finally we complement Equation~\eqref{eq:RBE}-\eqref{eq:RBEBC} with the initial condition
\be\label{eq:RBEIC}
f (t=0,x, v) =  f_0 (x, v) := F_0 (\eps x,v) - \MM(v)  \quad \text{ in } \OO^\eps,
\ee
and we remark that due to the hypothesis on $F_0$ we have that $\lla f_0 \rra_{\OO^\eps} =0$.\\

We state now an equivalent version of Theorem~\ref{theo:Main} in the rescaled setting, and this will be the result we will prove during this paper. 

\begin{theo}\label{theo:MainRescaled}
Consider either Assumption \ref{item:RH1} or Assumption \ref{item:RH2} to hold, and let $\omega$ be an admissible weight function. There exists  $\eps_0>0$ such that for every $\eps \in (0, \eps_0)$ there is $\eta(\eps)\in (0,1)$, satisfying $\eta(\eps) \to 0$ as $\eps\to 0$, such that for every $f_0\in L^\infty_\omega(\OO^\eps)$ satisfying
\beqn
\lVert f_0\rVert_{L^{\infty}_{\omega}(\OO^\eps)} \leq  (\eta (\eps))^2,
\eeqn
there exists  $f\in  L^\infty_\omega(\UU^\eps)$, unique global solution to the linearized rescaled Boltzmann equation \eqref{eq:RBE}--\eqref{eq:RBEBC}--\eqref{eq:RBEIC}. Furthermore, there is a constructive constant $\theta>0$ such that 
\beqn
  \lVert  f_t \rVert_{L^{\infty}_{\omega}( \OO)}   \leq e^{-\theta \eps^2 t}  \, \eta(\eps) \label{eq:BEdecayFinal} \qquad  \forall t\geq0.
\eeqn
\end{theo}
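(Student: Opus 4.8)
\section*{Proof proposal for Theorem~\ref{theo:MainRescaled}}

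\textbf{Overall strategy.} The plan is to run the standard perturbative $L^2$--$L^\infty$ scheme adapted to the hydrodynamic scaling, in three layers. First, establish a quantitative \emph{linear} decay estimate for the semigroup generated by $\Lambda^\eps := -v\cdot\grad_x + \CCC$ with the Maxwell boundary condition \eqref{eq:RBEBC}, measured in $L^\infty_\omega(\OO^\eps)$ with rate $\theta\eps^2$; then close a \emph{nonlinear} fixed-point argument for the full equation \eqref{eq:RBE} using the bilinear estimates for $\QQ$; finally translate the smallness threshold and decay rate back through the scaling $\tau=\eps^2 t$, $y=\eps x$ to recover the statement. The two geometric cases \ref{item:RH1} and \ref{item:RH2} are treated in parallel, the difference being localized in the hypocoercivity/stretching input (Section~\ref{sec:Hypo}) and in the value of $q^\star_\iota$ from Remark~\ref{def:nu*}.

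\textbf{Linear theory: the heart of the argument.} I would first prove an $L^2_{\MM^{-1/2}}(\OO^\eps)$ hypocoercive decay estimate $\|e^{t\Lambda^\eps}f_0\|_{L^2} \lesssim e^{-\theta\eps^2 t}\|f_0\|_{L^2}$ for data with $\lla f_0\rra_{\OO^\eps}=0$, using the hypocoercivity construction extended to cylindrical domains (Section~\ref{sec:Hypo}, Assumption~\ref{item:RH2}): the $\eps^{-2}$ factor in front of $\CCC$ after rescaling upgrades the $O(1)$ spectral gap of $\CCC$ on $(\ker\CCC)^\perp$ to a decay rate $O(\eps^2)$ once the macroscopic (hydrodynamic) modes are controlled via the microscopic coercivity plus a suitable modified functional $\mathcal H(f)=\|f\|_{L^2}^2 + \eta\,\text{(cross terms)}$ whose boundary contributions are sign-definite thanks to $\iota\ge\iota_0$ (smooth case) or the diffusive-on-bases/specular-on-lateral structure (cylindrical case). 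Then I would bootstrap from $L^2$ to $L^\infty_\omega$ by the Vidav-type iterated Duhamel/stochastic-cycles argument: write the mild formulation along characteristics, split $K=K_{\text{small}}+K_{\text{compact}}$, estimate the back-in-time trajectories (here the stretching method of \cite{GuoZhou18} handles the specular/grazing trajectories on $\Lambda_3^\eps$ in the cylinder and on general $C^2$ boundary in case \ref{item:RH1}), and absorb the compact part after two iterations using the already-established $L^2$ decay. The admissible-weight conditions (polynomial degree $>q^\star_\iota$, stretched exponential, inverse Gaussian) enter precisely here to control $\sup_v \omega(v)\int k(v,v_*)\omega(v_*)^{-1}dv_*$ and the boundary weight factor $\CCCC_0$ with $\omega\MMM\le\CCCC_0$.

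\textbf{Nonlinear closure and unscaling.} With the linear estimate $\|e^{t\Lambda^\eps}\|_{L^\infty_\omega\to L^\infty_\omega}\le C e^{-\theta\eps^2 t}$ in hand, I would set up the map $\Phi(f)(t) = e^{t\Lambda^\eps}f_0 + \int_0^t e^{(t-s)\Lambda^\eps}\QQ(f(s),f(s))\,ds$ on the ball $\{\sup_{t\ge0} e^{\theta\eps^2 t}\|f_t\|_{L^\infty_\omega(\OO^\eps)} \le \eta(\eps)\}$, using the standard trilinear/bilinear bound $\|\nu^{-1}\QQ(f,g)\|_{L^\infty_\omega}\lesssim \|f\|_{L^\infty_\omega}\|g\|_{L^\infty_\omega}$ together with the gain $\int_0^t e^{-\theta\eps^2(t-s)}\nu_0^{-1}\,ds \le (\theta\eps^2\nu_0)^{-1}$ — this is why the smallness threshold must be $(\eta(\eps))^2$ with $\eta(\eps)=c\,\eps^2$ (or similar), so that $C(\eta(\eps))^2\cdot(\theta\eps^2)^{-1}\le\eta(\eps)$; contraction follows identically, giving existence and uniqueness of $f\in L^\infty_\omega(\UU^\eps)$ with the claimed decay $\|f_t\|_{L^\infty_\omega}\le e^{-\theta\eps^2 t}\eta(\eps)$. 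Theorem~\ref{theo:Main} then follows by reversing $\tau=\eps^2 t$, $y=\eps x$, $F=\MM+\bar F$, which turns $e^{-\theta\eps^2 t}$ into $e^{-\theta\tau}$ and $\OO^\eps$ into $\OO$ (recall $|\Omega|=1$).

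\textbf{Main obstacle.} The delicate point is the $L^2$-to-$L^\infty$ step with an $\eps$-dependent decay rate: one must show the implicit constant $C$ in $\|e^{t\Lambda^\eps}f_0\|_{L^\infty_\omega}\le C e^{-\theta\eps^2 t}\|f_0\|_{L^\infty_\omega}$ is \emph{uniform in} $\eps$, which requires carefully tracking how many characteristic/reflection cycles fit in a time window of length $O(\eps^{-2})$ and controlling the accumulation of boundary reflections — particularly the specular reflections on $\Lambda_3^\eps$ (resp.\ the $C^2$ boundary), whose trajectories can be long and nearly grazing. This is exactly where the ``more delicate $L^2$--$L^\infty$ theory'' advertised in the introduction is needed, and where the stretching method must be re-engineered so that the stretched coordinate is compatible with the cylinder's product structure and the mixed (discontinuous) accommodation coefficient $\iota^\eps=\Ind_{\Lambda_1^\eps\cup\Lambda_2^\eps}$.
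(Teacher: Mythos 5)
Your sketch captures the strongly-confining half of the argument — hypocoercivity in $\HH$, the $L^2$-to-$L^\infty$ lift via iterated Duhamel and the stretching lemmas, then a Banach fixed-point in the weighted-in-time ball — and this is essentially how the paper proves Theorem~\ref{theo:existenceWWWWinfty}. But the theorem statement covers \emph{all} admissible weights, and your proposal does not survive the passage to weakly confining ones (polynomial $\langle v\rangle^q$, stretched exponential $e^{\zeta|v|^s}$, or weak inverse Gaussian with $\zeta\le 1/4$). The obstruction is concrete: in the trajectory estimate (Proposition~\ref{prop:GainLinftyL2LB1}, Step~1), the diffusive boundary contribution is estimated by writing $\omega(v)\,\iota^\eps(x_1)\,\MMM(v)\,|\widetilde f(t_1,x_1)|$ and then absorbing $\omega\MMM$ into a constant. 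For strongly confining weights one has $\omega_1\MMM\le 1$, which is precisely what keeps the pre-factor in \eqref{eq:GainLinftyL2LB1} at $(1-\iota_0)+\lambda(\cdots)<1$ so the iteration closes. For weakly confining weights one only has $\omega_0\MMM\le\CCCC_0$ with $\CCCC_0$ potentially large (for polynomial weights, exponentially large in $q$), which destroys the contraction in your single-pass stretching argument. This is exactly why the paper introduces the dichotomy $\WWWW_1$ vs.\ $\WWWW_0$ in \eqref{def:WWWW_1}--\eqref{def:WeightSpace0} and states explicitly that Theorem~\ref{theo:MainRescaled} is the union of Theorems~\ref{theo:existenceWWWWinfty} and~\ref{theo:existenceWWWW0}.

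The missing idea for the weakly confining case is the Briant--Guo decomposition (Subsection~\ref{ssec:Strategy}, \ding{175}): split $K=\AA_\delta+\KK_\delta$ as in \eqref{eq:defSplittingLL} and couple the two equations \eqref{eq:PLRBEDiss}--\eqref{eq:PLRBEReg}. The component $f_1$, governed by $\TTT+\AA_\delta$, decays in $L^\infty_{\omega_0}$ \emph{without any hypocoercivity input}, purely from the dissipativity of $\AA_\delta$ (the norm $\varpi_{\delta,\omega_0}$ of \eqref{eq:DissipativityVarpi_1} is made small by choosing $\delta$); this is where the threshold $q>q^\star_\iota$ of Remark~\ref{def:nu*} enters, as a balance between $\CCCC_0$, $\nu_*$, and $\iota_0$ ensuring the boundary loop still contracts (see the end of the proof of Proposition~\ref{prop:DissipationLinftyL1}). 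The component $f_2$, forced by $\KK_\delta f_1$, is measured in a \emph{strongly} confining weight $\omega_1$ — which is possible because $\KK_\delta$ regularizes (Lemma~\ref{lem:KKcontrol}) — and only there does your $L^2$--$L^\infty$ hypocoercivity machinery apply. Your Vidav-style split $K=K_{\rm small}+K_{\rm compact}$ is a different object and does not perform the same role: it controls the operator norm inside the iterated Duhamel loop, but it does not change the weight in which $f$ is measured, so it cannot rescue the boundary estimate. A secondary point: your heuristic $\eta(\eps)\sim\eps^2$ via $\int_0^t e^{-\theta\eps^2(t-s)}\nu_0^{-1}ds\lesssim\eps^{-2}$ is not the scaling the paper actually obtains; the constant $C(\eps)$ in Proposition~\ref{prop:LinftyEstimatePerturbedFiniteT} already incorporates the $\nu^{-1}$ gain and diverges at a different rate, and $\eta_0^1(\eps)$ is chosen as $\lambda(\eps)\sim C_0(\eps)^{-1}$ in the fixed-point step of Theorem~\ref{theo:existenceWWWWinfty} — so the exponent matters only insofar as $\eta(\eps)\to0$, not for the specific power of $\eps$.
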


In order to explain the precise sense of the solutions given by Theorem~\ref{theo:MainRescaled} we introduce the subset of admissible weight functions, that we will call \emph{strongly confining}, as follows
\be\label{def:WWWW_1}
\WWWW_1 = \{\omega:\R^3\to \R,\, \omega(v) = e^{\zeta\lv v\rv^2}, \, \text{ with } \zeta \in (1/4, 1/2) \}.
\ee
In consequence, we also define the space of \emph{weakly confining} admissible weights by  
\be\label{def:WeightSpace0}
\WWWW_0 =\{\omega:\R^3\to \R, \text{ $\omega$ is an admissible weight function, with } \omega \notin \WWWW_1 \}.
\ee
We then remark that the proof of Theorem~\ref{theo:MainRescaled} is given by Theorem~\ref{theo:existenceWWWWinfty} for strongly confining admissible weight functions and Theorem~\ref{theo:existenceWWWW0} for weakly confining ones.

\subsection{Strategy for the proof of the main result}\label{ssec:Strategy}
We explain in this subsection the main ideas used for the proof of Theorem \ref{theo:MainRescaled}.

%
%
%
%

\medskip\noindent
\ding{172} \emph{Hypocoercivity decay:} We consider the problem
\begin{equation}
	\left\{\begin{array}{llll}
		 \partial_t f &=& \LLL f:= -v\cdot \grad_x f + \CCC f  &\text{ in }\UU^\eps\\
		\gamma_-f&=&\RRR\gamma_+f &\text{ on }\Gamma^\eps_-\\
		f_{t=0}&=&f_0 &\text{ in }\OO^\eps.
	\end{array}\right.\label{eq:LRBE}
\end{equation}

On the one hand we observe that, for any nice enough functions $G,H, \varphi: \R^3 \to \R$, the Boltzmann collision operator classically satisfies 
\be\label{eq:pre_ConservationLaws}
\int_{\R^3} \QQ(G,H) \,  \varphi 
= \frac 18 \int_{\R^3}\int_{\R^3} \int_{\mathbb{S}^2} \BB  \left( G_*' H' +H_*'G' -G_*H - G H_* \right) \left( \varphi+\varphi_* - \varphi'-\varphi'_*\right) , 
\ee
where we have used the shorthands $\phi=\phi(v)$, $\phi_* = \phi(v_*)$, $\phi' = \phi(v')$, $\phi'_* = \phi(v'_*)$,
and we recall that $v'$ and $v'_*$ are given in Subsection \ref{sec:Framework}. The interested reader can consult the derivation of \eqref{eq:pre_ConservationLaws} in \cite[Section 3.1]{MR1307620}.

In particular, if we set $\R^3 \ni v = (v_1, v_2, v_3)$, then \eqref{eq:pre_ConservationLaws} implies that choosing $\varphi = \varphi(v)$ to be either $1, v_1, v_2, v_3$ or $\lv v\rv^2$ there holds
\be\label{eq:ConservationLaws}
\int_{\R^3} \QQ(G,H) (v)\,  \varphi (v) \, \dv = 0.
\ee

Therefore, \eqref{eq:ConservationLaws} implies that for the previous choices of $\varphi$ there holds
\be\label{eq:ConservationLawsCCC}
\int_{\R^3} (\CCC f)(v)\,  \varphi(v) \, \dv = 0. 
\ee

We now take as a momentary framework the Hilbert space $L^2_{\MM^{-1/2}}(\R^3)$ endowed with the scalar product 
$$
\la g, h \ra_{L^2_{\MM^{-1/2}}(\R^3)}:= \int_{\R^3} g(v)\, h(v)\, \MM^{-1}(v) \, \dv,
$$
and its associated norm as defined in \eqref{def:Lebesgue_weighted_spaces}. We observe that \cite[Theorem 7.2.4]{MR1307620} implies that we can set ${\rm Dom}(\CCC):= L^2_{\MM^{-1/2}}(\R^3)$, that $\CCC$ is a closed operator on its domain, and \eqref{eq:ConservationLawsCCC} further gives that
$$
{\rm ker} \, (\CCC) = {\rm span}\, \{ \MM, \, v_1 \MM, \, v_2 \MM, \, v_3 \MM, \, \lv v\rv^2 \MM\}.
$$
This motivates the definition of $\Pi f$ as the projection of $f\in {\rm Dom}(\CCC) $ onto $\mathrm{ker} (\CCC)$ given by 
\beqn\label{def:pi}
\Pi f = \left( \int_{\R^d} f (w) \, \d w \right) \MM
+ \left( \int_{\R^d} w f (w) \, \d w   \right) \cdot v \MM 
+ \left( \int_{\R^d} \frac{|w|^2-3}{\sqrt{6}} \, f (w) \, \d w   \right) \frac{|v|^2-3}{\sqrt{6}} \, \MM.
\eeqn
We also note that $\CCC$ is self-adjoint on its domain and negative, so that its spectrum is included in $\R_{-}$, and \eqref{eq:ConservationLawsCCC} holds true for any { $f \in \mathrm{Dom} (\CCC)$}. 
Furthermore, $\CCC$ satisfies a \emph{microscopic coercivity} estimate, more precisely  \cite[Theorem 1.1]{MR2231011} gives that there is $\kappa_0 >0$ such that for any $f \in \mathrm{Dom} (\CCC)$ one has
\be\label{eq:MicroCoercivity}
\la-\CCC f , f \ra_{L^2_{\MM^{-1/2}}(\R^3)} \ge \kappa_0 \|  f^\perp \|_{L^2_{\MM^{-1/2}} (\R^3)}^2,
\ee
where $f^\perp:= f - \Pi f$. Finally, we observe that for any polynomial function $\phi=\phi(v): \R^3 \to \R$ of degree less or equal to $4$, there holds  $\MM \phi \in \hbox{Dom}(\CCC)$, and using again \cite[Theorem 7.2.4]{MR1307620} we deduce that there is a constant $C_\phi >0$ such that
$$
\bigl\| \CCC (\phi \MM) \bigr\|_{L^2_{\MM^{-1/2}}(\R^3)}  \le \bigl\| \phi \MM \bigr\|_{L^2_{\MM^{-1/2}}(\R^3)}  \leq C_\phi.
$$
Finally, we remark that the conservations laws \eqref{eq:ConservationLawsCCC} together with the fact that 
$$
\lvv \RRR \rvv_{L^1 (\Gamma_+^\eps)\to L^1(\Gamma_-^\eps)} = 1,
$$
imply that, at least formally, Equation~\eqref{eq:LRBE} conserves mass, i.e for any solution $f$ of Equation~\eqref{eq:LRBE} there holds $\lla f_t\rra_{\OO^\eps} = \lla f_0\rra_{\OO^\eps} = 0$ for every $t\geq 0$. 

\medskip
Altogether, the previous analysis implies that $\CCC$ satisfies the structural assumptions made on \cite{MR4581432}. We thus define the weighted Hilbert space  $\HH:=  L^2_{\MM^{-1/2}}(\OO^\eps)$ equipped with the scalar product 
$$
\la g, h\ra_\HH := \int_\OO g(x,v)\, h (x,v) \MM^{-1} (v) \dv\dx,
$$
and we observe that \cite[Theorem 1.1]{MR4581432} and  \cite[Theorem 5.1]{MR4581432} imply that under Assumption \ref{item:RH1} we have that there is $\kappa >0$ such that any solution $f$ of Equation~\eqref{eq:LRBE} satisfies
\be\label{eq:Hypo_Intro}
\lvv f_t \rvv_\HH \leq C e^{\kappa \eps^{-2} t} \lvv f_0 \rvv_\HH \qquad \forall t\geq 0,
\ee
for a constant $C>0$, independent of $\eps$. We remark that the constant $\eps^{-2}$ on the rate of the exponential decay comes from the change of variables made in Subsection~\ref{ssec:LinRes}, see for instance \cite[Proposition A.1]{carrapatoso2025navierstokeslimitkineticequations} for a similar result.

We then dedicate Section \ref{sec:Hypo} to prove \eqref{eq:Hypo_Intro} under the geometrical Assumption \ref{item:RH2}. The main goal is to obtain $H^2$ regularity estimates for the solutions of certain elliptic equations introduced in \cite[Section 2]{MR4581432}, for cylindrically shaped domains. It is worth remarking that this is not straightforward because such gain of regularity, even though classical in smooth domains, doesn't necessarily hold for irregular domains, see for instance \cite{Grisvard11}. 

Once obtained these regularity estimates, the rest of the computations leading to \eqref{eq:Hypo_Intro} follow as an exact repetition of the main ideas and computations from \cite{MR4581432}.

\medskip\noindent
\ding{173} \emph{$L^2-L^\infty$ method for strongly confining weights:} 
We remark that $\LLL = \TTT + K$, where we have defined the \emph{free transport operator}
\be\label{eq:DefTransportOp}
\TTT h(t, x, v):= -v\cdot \grad_x h(t,x,v) -\nu(v) h(t, x,v) ,
\ee
for any nice enough function $h: \UU^\eps\to \R$. We consider then a function $G: \UU^\eps \to \R$, and we study the following perturbed evolution equation
\be
	\left\{\begin{array}{llll}
		\partial_{t} f &=& \TTT f + K f + G&\text{ in }\UU^\eps\\
		\gamma_- f&=&\RRR \gamma_+f &\text{ on }\Gamma_{-}^\eps\\
		 f_{t=0}&=& f _0  &\text{ in }\OO^\eps.
	\end{array}\right.\label{eq:PLRBE}
\ee

Our main interest is to provide weighted $L^2-L^\infty$ estimates for the solutions of Equation~\eqref{eq:PLRBE} by employing the \emph{stretching method} developed in \cite{GuoZhou18}, see also \cite{GuoZhou2024}.
To do this we define the \emph{free transport semigroup}, classically generated by $\TTT$ (see for instance \cite[Section 8]{sanchez2024kreinrutmantheorem}), as  
\be\label{def:Transport_semigroup}
S_\TTT(t) h(s, x,v):= e^{-\nu(v) t}h(s, x-vt, v) \qquad \text{ for any } s,t\in \R_+, \text{ and any } (x,v)\in \OO^\eps.
\ee
Furthermore, for any sufficiently nice operator $\AA$, and any $\sigma \in [0, t]$, we define the convolution operation
\be\label{def:convolution_sigma}
 (S_\TTT *_{\sigma}  \AA) \, h (t,x,v):= \int_{\sigma}^t S_\TTT(t-s) \, \AA\,  h(s, x, v) ds = \int_\sigma^t e^{-\nu(v)(t-s)} \AA f(s, x-(t-s)v, v) \ds ,
\ee
and in particular we will classically denote $S_\TTT*\AA:= S_\TTT *_{0}\,  \AA$.
We now remark that $S_\TTT$  will satisfy two important properties.
\begin{itemize}[leftmargin=*]
\item On the one hand, $S_\TTT * K$ generates an integrability gain of the form $L^2\to L^\infty$ in the sense provided by Proposition~\ref{prop:Kregularization} and Proposition~\ref{prop:KregularizationCylinder}, for the cases of smooth and cylindrical domains respectively.

\item On the other hand, and as described in \ref{item:K3}, weighted $L^\infty$ estimates on $S_\TTT*G$ generate a gain of weight of $\nu^{-1}$, which is crucial to control the Boltzmann collision operator $\QQ$ (see Lemma~\ref{lem:NonlinearGaussianWeightEstimate}).
\end{itemize}

Exploiting this, the strategy for the $L^2-L^\infty$ decay transfer will focus on the repeated use of the Duhamel formula on the semigroup $S_\TTT$ for the solutions of Equation~\eqref{eq:PLRBE}, and using the stretching method we will use the effect of the parameter $\eps$ on the domain to make small the \emph{measure} of the \textquotedblleft set of singular trajectories". 

To explain how this works we first observe that the characteristics of Equation~\eqref{eq:PLRBE} are given by 
\be
 X (s; t,x,v):= x - v(t - s)  \quad \text{ and } \quad V(s; t,x,v):= v. \label{eq:carachteristics}
 \ee
Therefore, for any fixed particle with coordinates in $ \UU^\eps$, we can characterize the coordinates of the last collision against the boundary of $\Omega^\eps$. Indeed, let $(t_0,x_0,v_0) \in \UU^\eps$ be the coordinates of a particle, we define the time of collision along this trajectory ($t_b$), the time of life of the particle prior to such collision ($t_1$), and the position $(x_1)$ and velocity ($v_1$) at the boundary during this collision, as follows
\begin{equation}\label{eq:BackwardsTrajectory}
\begin{array}{rcl}
    t_b(x_0,v_0) &=& \inf\{s >0;\ X(-s, 0, x, v)\notin\Omega^\eps\},\\
    t_1(t_0,x_0,v_0)&=&t_0-t_b(x_0,v_0),\\
    x_1(t_0, x_0,v_0)&=& X(t_1; t, x, v) = x -v(t - t_1),\\
    v_1(t_0, x_0,v_0)&=&\left\{\begin{array}{cl}
         \VV_{x_1}(v_0)& \text{ during specular reflection,} \\
         v_0^*& \text{ during diffuse reflection,}
    \end{array}\right.
\end{array}
\end{equation}
where $v_0^*$ stands for an independent variable.

We then note that the smaller $\eps$ is chosen, the \emph{flatter} the boundary $\partial \Omega^\eps$ becomes. In particular, we observe that if the boundary is flat then any particle following the \emph{backwards trajectories} given by \eqref{eq:BackwardsTrajectory}, doesn't collide more than once against the boundary.

Therefore, the objective of the stretching method is to choose $\eps$ small enough such that the set of paths that collide more than once against the boundary is \emph{small} in an appropriate sense.  

This will be done for smooth domains in Section \ref{sec:Linfty}, mainly by following the arguments presented in \cite{GuoZhou18}. Afterwards during Section~\ref{sec:LinftyCylinder}, and following a similar methodology, we will generalize this technique for cylindrical domains. 

It is worth remarking that such a generalization is not straightforward and requires a more delicate control of the backwards trajectories due to the presence of irregularities at the boundary. In particular, we will need to account for trajectories where the particle changes between the boundary subsets $\Lambda_1^\eps\cup\Lambda_2^\eps$ and $\Lambda_3^\eps$.

Finally, we emphasize that by combining in a very delicate way the $L^2$ hypocoercivity decay and the $L^\infty$ control given by the stretching method, we craft a $L^2-L^\infty$ control that provides a control on the long-time behavior for the solutions of Equation \eqref{eq:PLRBE} in a weighted $L^\infty$ space.

\medskip\noindent
\ding{174} \emph{Proof of Theorem~\ref{theo:MainRescaled} for strongly confining weight functions:} 
The a priori estimates obtained by using the above $L^2-L^\infty$ method are enough then to construct a well-posedness theory for Equation~\eqref{eq:PLRBE}. Using all these elements we will prove the well-posedness of Equation~\eqref{eq:RBE}-\eqref{eq:RBEBC}-\eqref{eq:RBEIC} by using a Banach fixed point argument.

\medskip\noindent
\ding{175} 
 \emph{Proof of Theorem~\ref{theo:MainRescaled} for weakly confining weights:} 
 In order to obtain the well-posedness for the linearized Boltzmann equation for weakly confining weights, we need to split the problem into a system of equations. 

Before doing this we need to define some operators. 
Consider a parameter $\delta>0$, and define the sets
\bean
E_1^\delta &=&\{ (v,v_*,\theta) \in \R^3\times \R^3 \times \mathbb S^2; \, \lvert v\rvert \leq \delta^{-1},\, 2\delta \leq \lvert v-v_*\rvert \leq \delta^{-1}, \, \lvert \cos(\theta)\rvert \leq 1-2\delta \},\\
E_2^\delta &=&\{ (v,v_*,\theta) \in \R^3\times \R^3 \times \mathbb S^2; \,  \lvert v\rvert \leq 2\delta^{-1},\, \delta \leq \lvert v-v_*\rvert \leq 2\delta^{-1}, \, \lvert \cos(\theta)\rvert \leq 1-\delta \},
\eean
and the cut-off function $\chi_\delta\in C^\infty_c(\R^3\times \R^3 \times\mathbb{S}^{2}, \R)$, such that $\Ind_{E^\delta_1}\leq \chi_\delta\leq \Ind_{E^\delta_2}$. We define then the operators
\be\label{eq:defSplittingLL}
 \AA_\delta f= K(  (1-\chi_\delta)f)\quad \text{ and }\quad \KK_\delta f =  K ( \chi_\delta f),
\ee
and we remark that $K = \AA_\delta + \KK_\delta$. 

This choice for the splitting of the operator $K$ is taken from \cite{MR3779780} and motivated by the ideas developed in \cite[Section 6]{BriantGuo16}. Its main interest is to exploit \cite[Lemma 4.12]{MR3779780}, providing dissipative estimates on $\AA_\delta$.

Moreover, we remark that the nature of the operator $\KK_\delta$ will make it enjoy nice boundedness properties (see for instance Lemma~\ref{lem:KKcontrol}). 

\medskip
We then observe that if $f_1, f_2$ are solutions of the system of equations
 \be
	\left\{\begin{array}{llll}
		\partial_{t} f_1 &=& \TTT f_1 + \AA_\delta f_1 + \QQ(f_1 + f_2 , f_1 +f_2) &\text{ in }\UU^\eps \\
		\gamma_- f_1&=&\RRR \gamma_+f_1 &\text{ on }\Gamma_{-}^\eps\\
		 f_{1,t=0}&=& f _0 &\text{ in }\OO^\eps,
	\end{array}\right.\label{eq:PLRBEDiss}
\ee
and
 \be
	\left\{\begin{array}{llll}
		\partial_{t} f_2 &=& \LLL f_2 + \KK_\delta f_1 &\text{ in }\UU^\eps \\
		\gamma_- f_2&=&\RRR \gamma_+f_2 &\text{ on }\Gamma_{-}^\eps \\
		 f_{2,t=0}&=& 0 &\text{ in }\OO^\eps,
	\end{array}\right.\label{eq:PLRBEReg}
\ee
respectively then, at least formally, $f = f_1+f_2$ is a solution of Equation~\eqref{eq:RBE}-\eqref{eq:RBEBC}-\eqref{eq:RBEIC}.
We proceed to use the stretching method as during Sections \ref{sec:Linfty} and \ref{sec:LinftyCylinder} to provide analogue a priori estimates for these problems and we construct their solutions by following the ideas from \cite[Theorems 6.1 and 6.9]{BriantGuo16}.

\subsection{Organisation of the paper}

This paper is structured as follows. 

In Section~\ref{sec:Hypo} we generalize the hypocoercivity theory developed in \cite{MR4581432} to cylindrical domains, and as a consequence we deduce a decay estimate for the solutions of Equation~\eqref{eq:LRBE} in the Hilbert space $\HH$.

During Section~\ref{sec:Linfty} we consider Assumption~\ref{item:RH1} to hold, and we use the stretching method developed in \cite{GuoZhou18} to obtain a $L^2-L^\infty$ type of decay estimate with strongly confining weights, for the solutions of Equation~\eqref{eq:PLRBE}. 

In Section~\ref{sec:LinftyCylinder} we consider Assumption~\ref{item:RH2} to hold, and still working with strongly confining weights, we generalize the ideas and methods from Section~\ref{sec:Linfty} to cylindrical domains.

We devote Section~\ref{sec:AprioriWeakConf} to study the linearized version of Equation~\eqref{eq:PLRBEDiss} with weakly confining weight functions. We use the stretching method as during Sections \ref{sec:Linfty} and \ref{sec:LinftyCylinder}, and the dissipative properties of the operator $\AA_\delta$ to obtain weighted $L^\infty$ decay estimates.

Section~\ref{sec:Toolbox} then addresses the well-posedness of transport equations with non-local terms. The focus is particularly in obtaining the well-posedness of Equation~\eqref{eq:PLRBE}, the linearized version of Equation~\eqref{eq:PLRBEDiss}, and Equation~\eqref{eq:PLRBEReg}.

At last, we dedicate Sections \ref{sec:SolutionsWWWWinfty} and \ref{sec:SolutionsWWWW0} to prove Theorem~\ref{theo:MainRescaled} for strongly confining and weakly confining weight functions respectively.

\subsection{Notation} \label{ssec:Notations}
During this paper we consider $\mu_L$ to be the Lebesgue measure, and when it is clear we will denote it as $\lv \cdot \rv = \mu_L$.  
We define $d\sigma_x$ as the Lebesgue measure on the boundary set $\partial \Omega^\eps$. A  $3-$dimensional ball of radius $r>0$ will be denoted as $B_r$ and $D:=\lv B_1\rv$. Moreover, for a point $z\in \R^3$ we denote its component as $z_i$ or $z^i$, with $i\in \{1,2,3\}$. $H^k(Z)$ is the Sobolev space for $L^2(Z)$ functions with $k>0$ weak derivatives in $L^2(Z)$. Furthermore, we define the tensor product between two vector $a,b\in \R^3$ as $a\otimes b := (a_ib_j)_{1\leq i,j\leq 3} \in \M_3$, the space of square matrices of size 3. At last, or $a,b>0$ we say that $a\lesssim b$ when there is a constant $c>0$ such that $a\leq cb$, and throughout this paper we will denote as $C$, a constant that---unless said otherwise---might change from one line to the other.

\section{Hypocoercivity}\label{sec:Hypo}
We dedicate this section to prove the following hypocoercivity result.  

\begin{theo}\label{theo:Hypo}
Consider either Assumption \ref{item:RH1} or \ref{item:RH2} to hold. There are constructive constants $\kappa>0$ and $C\geq 1$ such that for any solution $f$ of Equation~\eqref{eq:LRBE} there holds
 \begin{equation}
    \lVert f_t \rVert_{ \HH}\leq Ce^{-\kappa\eps^2 t}\lVert f_0\rVert_{ \HH} \qquad \forall t\geq0. \label{eq:Hypo}
\end{equation}
Furthermore, there is a norm $\lvvv \cdot \rvvv$ equivalent to the usual norm of $\HH$ uniformly in $\eps$, i.e there is a constant $c>0$ independent of $\eps$ such that
\be\label{eq:ClassicHypoEquivalence}
c^{-1} \lvv f\rvv_{\HH} \leq \lvvv f\rvvv \leq c \lvv f\rvv_{\HH},
\ee
and a constant $\kappa^\star>0$ for which there holds
 \begin{equation}
 \lvvv f_t \rvvv\leq e^{-\kappa^\star \eps^2 t}\lvvv f_0\rvvv \qquad \forall t\geq0. \label{eq:HypoEquiv}
\end{equation}
\end{theo}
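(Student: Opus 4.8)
The plan is to reduce the statement to the abstract hypocoercivity machinery of \cite{MR4581432}, whose structural hypotheses on the collision operator $\CCC$ have already been verified in the excerpt (self-adjointness and negativity on $\mathrm{Dom}(\CCC)=L^2_{\MM^{-1/2}}(\R^3)$, the explicit kernel, microscopic coercivity \eqref{eq:MicroCoercivity}, the conservation laws \eqref{eq:ConservationLawsCCC}, and the boundedness of $\CCC$ on polynomial-times-Maxwellian functions). Under Assumption \ref{item:RH1} the domain $\Omega^\eps$ is $C^2$ with the required regularity of $\delta^\eps$, so \cite[Theorem 1.1 and Theorem 5.1]{MR4581432} apply verbatim — the only point to track is the $\eps$-dependence, which enters solely through the time rescaling $\tau=\eps^2 t$, $y=\eps x$ made in Subsection~\ref{ssec:LinRes}, and yields the factor $\eps^2$ in the exponent exactly as in \cite[Proposition A.1]{carrapatoso2025navierstokeslimitkineticequations}. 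Thus for \ref{item:RH1} both \eqref{eq:Hypo} and the refined statements \eqref{eq:ClassicHypoEquivalence}--\eqref{eq:HypoEquiv} are inherited directly, the equivalent norm $\lvvv\cdot\rvvv$ being the twisted norm $\lvv\cdot\rvv_\HH + \eta \la \cdot, \text{(something involving a potential/elliptic corrector)}\ra$ constructed in \cite[Section 2]{MR4581432}, with $\eta$ small and $\eps$-independent.

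The substance of the section is therefore Assumption \ref{item:RH2}, the cylinder $\Omega^\eps=(-L^\eps,L^\eps)\times\Omega_0^\eps$, where the abstract scheme of \cite{MR4581432} requires, as an input, $H^2$ a priori estimates for the auxiliary elliptic problems introduced there (roughly, Poisson-type equations with Neumann-type boundary data arising from the macroscopic components of $f$). On a domain with edges — here the circles $\{\pm L^\eps\}\times\partial\Omega_0^\eps$ where $\Lambda_1^\eps,\Lambda_2^\eps$ meet $\Lambda_3^\eps$ — elliptic regularity is genuinely delicate (cf. \cite{Grisvard11}), so the plan is: first exploit the product structure of the cylinder to separate variables, writing the elliptic solution as a sum over a Fourier/eigenfunction basis adapted to $(-L^\eps,L^\eps)$ in the axial variable and to the disk $\Omega_0^\eps$ in the transverse variables; second, observe that the mixed boundary condition is \emph{compatible} with this splitting precisely because it is Neumann-type on the bases and the transverse Laplacian on the disk with its own boundary condition has a clean spectral decomposition, so no singular ``corner'' exponents with regularity below $H^2$ actually appear for the data at hand; third, sum the mode-by-mode $H^2$ bounds, being careful that the constants are uniform in $\eps$ (the dilation $\Omega^\eps=\eps^{-1}\Omega$ rescales lengths, so one checks the estimates are scale-covariant in the right way, or equivalently works on the fixed domain $\Omega$ and tracks powers of $\eps$). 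This replaces the single black-box regularity lemma of \cite{MR4581432} by an explicit cylindrical computation.

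Once the cylindrical $H^2$ estimate is in hand, the remaining steps are an ``exact repetition'' of \cite{MR4581432}: one builds the same modified functional/twisted norm $\lvvv\cdot\rvvv$ — microscopic coercivity \eqref{eq:MicroCoercivity} controls $f^\perp$, the elliptic corrector converts the control of $f^\perp$ together with the transport term into coercivity for the macroscopic part $\Pi f$, and the boundary term is handled using $\lvv\RRR\rvv_{L^2_{\MM^{-1/2}}(\Sigma_+^\eps)\to L^2_{\MM^{-1/2}}(\Sigma_-^\eps)}\le 1$ together with the structure of the Maxwell operator $\RRR=(1-\iota^\eps)\SSS+\iota^\eps\DDD$ (here the specular part on $\Lambda_3^\eps$ contributes nothing to the boundary dissipation and the diffusive part on $\Lambda_1^\eps\cup\Lambda_2^\eps$ provides the needed sign). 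This yields a differential inequality $\tfrac{d}{dt}\lvvv f_t\rvvv^2 \le -2\kappa^\star\eps^2\lvvv f_t\rvvv^2$, hence \eqref{eq:HypoEquiv}, and \eqref{eq:Hypo} follows from \eqref{eq:ClassicHypoEquivalence} with $\kappa=\kappa^\star$ and $C=c^2$. All constants produced this way are constructive since every ingredient — $\kappa_0$ from \cite{MR2231011}, the elliptic constant from the explicit cylindrical computation, the smallness threshold for $\eta$ — is explicit.

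\textbf{Main obstacle.} The one genuinely non-routine point is the uniform-in-$\eps$ $H^2$ regularity for the auxiliary elliptic equations on the edged cylinder; everything else is bookkeeping or a transcription of \cite{MR4581432}. The risk is that the mixed (Neumann on bases / effectively-Neumann-compatible on the lateral surface via the specular symmetry) boundary data still excites corner singularities; the separation-of-variables argument is meant to show this does not happen for the specific right-hand sides dictated by the hydrocoercivity construction, but verifying the compatibility conditions at the edges carefully — and confirming the estimate is scale-covariant so the constant does not degenerate as $\eps\to 0$ — is where the real work lies.
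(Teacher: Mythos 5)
Your proposal is correct in its overall structure and correctly identifies the crux — the $H^2$ regularity estimate for the auxiliary elliptic problems on the edged cylinder — but the route you take to that estimate is genuinely different from the paper's. The paper does not separate variables. Instead, in Theorems~\ref{theo:PoissonRegularity} and~\ref{thm:LameRegularity} it extends the domain by even reflection across the bases, $\widehat{\Omega}^\eps=(-2L^\eps,2L^\eps)\times\Omega_0^\eps$ with $\widehat\phi$ defined by mirroring in $x_1$, and observes that the extended function solves the same elliptic problem on the longer cylinder. The original edges $\{\pm L^\eps\}\times\partial\Omega_0^\eps$ are thereby turned into interior points of the smooth lateral boundary $\widehat{\Lambda_3^\eps}$, and the new edges $\{\pm 2L^\eps\}\times\partial\Omega_0^\eps$ are far from the region of interest. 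A partition of unity is then chosen so that the balls covering $\Lambda_3^\eps$ avoid the new edges, and classical local elliptic regularity (the standard partition-of-unity $C_1,C_2$ steps from Brezis) applies ball by ball; summing and restricting back to $\Omega^\eps$ gives the uniform-in-$\eps$ bound $\|w\|_{H^2(\Omega^\eps)}\lesssim\eps^{-2}\|\xi\|_{L^2(\Omega^\eps)}$, the $\eps^{-2}$ coming from the Poincar\'e--Wirtinger constant on a domain of diameter $\sim\eps^{-1}$, consistent with the scaling you anticipate. Both methods exploit the same geometric fact — Neumann-type data on the bases is compatible with even reflection, so the right-angle edges generate no singular exponent — but reflection is markedly cleaner for the Lam\'e system~\eqref{eq:LameSystem}, which you would find awkward to handle by separation of variables: the vector Laplacian and the symmetric-gradient operator do not split mode-by-mode in the cylindrical basis, the boundary conditions couple the tangential and normal components of $W$, and one would also need Korn's inequality (Lemma~\ref{lem:KornInequlities}) uniformly across modes. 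The reflection argument treats Poisson and Lam\'e with the same two lines. Once the $H^2$ estimate is in hand, your remaining reduction — citing \cite{MR4581432} for (RH1), repeating its modified-norm construction for (RH2), using the $\LLL^2$-contractivity of $\RRR$ for the boundary terms, and tracing the factor $\eps^2$ through the parabolic rescaling — matches the paper's proof of Subsection~\ref{ssec:ProofHypo}.
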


\begin{rem}\label{rem:HypoRH1}
Theorem~\ref{theo:Hypo} under the assumption \ref{item:RH1} is nothing but \cite[Theorem 1.1]{MR4581432} and  \cite[Theorem 5.1]{MR4581432} up to the change of variables performed in Subsection \ref{ssec:Strategy}. Therefore our proof will focus on proving the hypocoercivity of Equation~\eqref{eq:LRBE} under Assumption \ref{item:RH2}. 
\end{rem}
\subsection{Poisson equation in the cylinder}\label{ssec:Poisson}
Consider $\Omega^\eps$ satisfying the geometrical Assumption \ref{item:RH2} and let $\xi\in L^2(\Omega^\eps)$. During this subsection we study the following Poisson equation
\begin{equation}\label{eq:PoissonEquation}
\left\{\begin{array}{rcll}
-\Delta u  &= &\xi & \text{ in }\Omega^\eps  \\
(2-\alpha(x)) \partial_n u + \alpha(x) u &=& 0& \text{ on } \partial\Omega^\eps,
\end{array}\right.
\end{equation}
where $\alpha$ is chosen satisfying one of the following conditions:
\begin{enumerate}
    \item[(P1)]\label{item:P1} either $\alpha = \Ind_{\Lambda^\eps_1\cup\Lambda^\eps_2}$,
\item[(P2)]\label{item:P2} or we assume $ \lla \xi \rra_{\Omega^\eps} =0$, and we take $\alpha\equiv 0$. 
\end{enumerate}
We consider then the functional spaces
\begin{equation*}
    V_1:= H^1(\Omega^\eps) \qquad\text{ and }\qquad V_2:=\left\{ u\in H^1(\Omega^\eps), \, \lla u \rra_{\Omega^\eps} =0\right\} ,
\end{equation*}
from where we define
\begin{equation*}
        V_k:=\left\{\begin{array}{cc}
        V_1 & \text{if \ref{item:P1} holds,} \\
        V_2 & \text{if \ref{item:P2} holds.}
    \end{array}\right.
\end{equation*}
and we will dedicate the rest of this subsection to prove the following well-posedness and regularity result.

\begin{theo}\label{theo:PoissonRegularity}
Let $\Omega^\eps$ be a cylindrical domain as defined in \ref{item:RH2} and let either \ref{item:P1} or \ref{item:P2} boundary conditions hold. For any $\xi\in L^2(\Omega^\eps)$ there exists a unique variational solution $w \in H^2(\Omega^\eps)\cap V_k$ to the Poisson Equation~\eqref{eq:PoissonEquation}, i.e there holds
\be\label{eq:PoissonVariationalSln}
\int_{\Omega^\eps} \grad w(x) \cdot \grad v(x) \, \dx + \int_{\partial\Omega^\eps} {\alpha(x) \over 2-\alpha(x) } w(x)v(x) \, \d\sigma_x  = \int_{\Omega^\eps} \xi(x) \, v(x) \dx \qquad \forall v\in V_k.
\ee
Furthermore, there holds
\beqn
	\lVert w\rVert_{H^2(\Omega^\eps)}\leq C \eps^{-2} \lVert \xi\rVert_{L^2(\Omega^\eps)},\label{H2Schr}
\eeqn
for some constructive constant $C>0$, independent of $\eps$.
\end{theo}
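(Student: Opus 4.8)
The plan is to prove Theorem~\ref{theo:PoissonRegularity} in two stages: first establish existence, uniqueness, and the variational characterization via Lax--Milgram on the space $V_k$, and then bootstrap to $H^2$-regularity with the stated $\eps$-dependent constant by exploiting the product structure of the cylinder $\Omega^\eps = (-L^\eps, L^\eps)\times\Omega_0^\eps$. For the first stage, I would define the bilinear form
\[
a(u,v):=\int_{\Omega^\eps}\grad u\cdot\grad v\,\dx+\int_{\partial\Omega^\eps}\frac{\alpha(x)}{2-\alpha(x)}\,u\,v\,\d\sigma_x
\]
and check continuity (trivial, using the trace theorem) and coercivity on $V_k$. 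In case \ref{item:P2} coercivity on $V_2$ is the Poincaré--Wirtinger inequality; in case \ref{item:P1} the boundary term is a genuine nonnegative zeroth-order contribution supported on $\Lambda_1^\eps\cup\Lambda_2^\eps$, so coercivity on $V_1=H^1(\Omega^\eps)$ follows from a Poincaré-type inequality with boundary control (the gradient plus the trace on a set of positive surface measure controls the full $H^1$ norm). Lax--Milgram then yields the unique variational solution $w\in V_k$ satisfying \eqref{eq:PoissonVariationalSln}, and an integration by parts against test functions shows $w$ solves \eqref{eq:PoissonEquation} in the distributional sense with the stated Robin/Neumann conditions.

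For the $H^2$-regularity, the key observation is that the cylinder is a product of a one-dimensional interval with a smooth two-dimensional ball, so I would separate the tangential (along $\Omega_0^\eps$) and axial (along $(-L^\eps,L^\eps)$) directions. \textbf{Tangential regularity:} Since $\Omega_0^\eps$ is a ball of radius $\eps^{-1}\RRRR$, it is $C^2$ (in fact $C^\infty$), and one can translate/difference-quotient in the axial variable freely (the domain is a full interval there, and the boundary conditions on $\Lambda_3^\eps$ are translation-invariant in $x_1$), giving control of $\partial_{x_1}w$ and then, applying interior-plus-boundary elliptic estimates slice-by-slice on each disk $\{x_1\}\times\Omega_0^\eps$ for the resulting two-dimensional Poisson problem $-\Delta_{x_2,x_3}w = \xi + \partial_{x_1}^2 w$, control of the $x_2,x_3$-Hessian. \textbf{Axial regularity:} Once $\partial_{x_1}w\in H^1$ is known and all mixed/transverse second derivatives are in $L^2$, the equation itself reads $\partial_{x_1}^2 w = -\xi - \Delta_{x_2,x_3}w \in L^2(\Omega^\eps)$, closing the estimate. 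Near the ``edges'' $\partial\Lambda_1^\eps$ and $\partial\Lambda_2^\eps$ (where the diffusive base meets the specular lateral surface) the geometry is a right-angle wedge; here I would rely on the fact that a right dihedral angle of opening $\pi/2$ with mixed Neumann/Robin--Dirichlet data is in the regular range for $H^2$ elliptic regularity (this is the one place where Grisvard-type corner analysis is genuinely needed, cf.~\cite{Grisvard11}), so no loss occurs.

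The $\eps^{-2}$ factor in $\lVert w\rVert_{H^2}\le C\eps^{-2}\lVert\xi\rVert_{L^2}$ comes from tracking scaling: the domain $\Omega^\eps$ has diameter of order $\eps^{-1}$, and a rescaling $x\mapsto \eps x$ maps $\Omega^\eps$ to the fixed domain $\Omega$ on which the standard $H^2$ estimate $\lVert \tilde w\rVert_{H^2(\Omega)}\le C\lVert\tilde\xi\rVert_{L^2(\Omega)}$ holds with an $\eps$-independent constant; undoing the rescaling, the change of variables multiplies the $L^2$ norm of second derivatives by $\eps^2\cdot\eps^{-3/2}$ relative to $\eps^{-3/2}$ for $\xi$, i.e.\ picks up a factor $\eps^{-2}$ after accounting for the Jacobian on both sides. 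Concretely, I would do the whole argument on the fixed cylinder $\Omega=(-L,L)\times\Omega_0$ for the rescaled unknown and then transfer. \textbf{The main obstacle} is the corner/edge regularity at $\partial\Lambda_i^\eps$: one must verify that the mixed boundary condition there (diffusive-type Robin on the base, specular-type homogeneous Neumann on the lateral part, meeting at a $\pi/2$ angle) falls within the range where $H^2$ regularity survives, rather than producing singular solutions of the form $r^{\pi/(2\omega)}$ with $\omega>\pi/2$; since the opening angle is exactly $\pi/2$ and the pairing is Neumann--Neumann (up to lower-order Robin terms), the critical exponent is $\ge 1$ and $H^2$ is retained, but this needs to be checked carefully and is the technical heart of the proof.
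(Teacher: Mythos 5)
Your first stage (Lax--Milgram on $V_k$, coercivity from Poincar\'e--Wirtinger plus, in case \ref{item:P1}, the boundary control from the Robin term on $\Lambda_1^\eps\cup\Lambda_2^\eps$) matches the paper's Step~1 essentially verbatim.

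For $H^2$-regularity you take a genuinely different route. Rather than analyzing the edge $\SSSS^\eps$ directly via Grisvard-type wedge estimates, the paper extends the solution by even reflection across the two bases $\Lambda_1^\eps,\Lambda_2^\eps$ to the doubled cylinder $\widehat{\Omega^\eps}=(-2L^\eps,2L^\eps)\times\Omega_0^\eps$. The geometric gain is that the edges of $\Omega^\eps$ become interior points of the smooth lateral surface $\widehat{\Lambda_3^\eps}$ of the extended cylinder; the new edges sit at $x_1=\pm 2L^\eps$, far from $\Omega^\eps$. A partition of unity localized away from those new edges then lets one run classical smooth-boundary local elliptic regularity (\`a la Brezis) on a cover of $\Omega^\eps$, with no corner analysis at all. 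Your plan --- axial difference quotients, slicewise two-dimensional estimates on $\{x_1\}\times\Omega_0^\eps$, and a verification that the $\pi/2$ Neumann--Neumann (or Robin--Neumann) edge lies in the regular range --- is in principle workable, since the leading singular exponent for a Neumann--Neumann dihedral of opening $\pi/2$ is $2$ and the Robin term is a zeroth-order perturbation that does not shift it below $1$. But this is exactly the delicate step the reflection bypasses. Also note that your axial difference-quotient argument is only legitimate away from the bases, where the boundary data on $\Lambda_3^\eps$ is translation-invariant in $x_1$; near $\Lambda_1^\eps,\Lambda_2^\eps$ you must replace it by the local wedge analysis, and that local analysis must itself carry the full $H^2$ estimate there, not just a tangential-derivative gain.

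A smaller point: the paper does not obtain the $\eps^{-2}$ by rescaling to the fixed cylinder $\Omega$. Under $x\mapsto\eps x$ the Robin weight $\alpha/(2-\alpha)$ is not scale-invariant (the Robin coefficient picks up a factor $\eps$), so the rescaled problem on $\Omega$ is not uniformly coercive in $\eps$; a naive transfer of the fixed-domain $H^2$ estimate would not close. Instead, the factor $\eps^{-2}$ enters already at the $H^1$ level from the coercivity constant $a(u,u)\gtrsim\eps^2\|u\|_{H^1(\Omega^\eps)}^2$ on the domain of diameter $\sim\eps^{-1}$, and it is then carried to the $H^2$ bound by local elliptic estimates whose constants are $\eps$-uniform (the local boundary geometry only flattens as $\eps\to 0$).
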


\begin{proof} We split the proof into three steps.

\medskip\noindent
\emph{Step 1. (Existence and uniqueness of variational solutions)} The analysis for this step is classical and similar to the arguments from the Step 1 of the proof of \cite[Theorem 2.2]{MR4581432}, so we will only sketch it. We introduce the bilinear form
\begin{equation*}
    a(u,v):=\int_{\Omega^\eps}\nabla u(x)\nabla v(x) \dx + \int_{\partial\Omega^\eps} {\alpha(x) \over 2-\alpha(x) } u(x)v(x) \d\sigma_x, 
\end{equation*}
and we observe that $a$ is continuous in $V_k\times V_k$. On the other hand, 
by using the Poincaré-Wirtinger's inequality (see for instance \cite[Proposition 2.1]{MR4581432}) together with classical scaling arguments, we deduce that
\be\label{eq:coercivityPoisson}
a(u,u)\gtrsim \eps^2 \lVert u\rVert^2_{H^1(\Omega^\eps)},
\ee
The bilinear operator $a$ is thus coercive and the Lax-Milgram theorem implies the existence and uniqueness of a variational solution $w\in V_k$ for the Poisson Equation~\eqref{eq:PoissonEquation} in the sense of \eqref{eq:PoissonVariationalSln}. 
Furthermore, from the variational formulation \eqref{eq:PoissonVariationalSln} and the previous coercivity estimate \eqref{eq:coercivityPoisson}, we further deduce that
\beqn\label{eq:H1Poisson}
	\lVert w\rVert_{H^1(\Omega^\eps)}\lesssim \eps^{-2} \lVert \xi\rVert_{L^2(\Omega^\eps)}.
\eeqn
In particular, we classically deduce that there is a trace function $w \restrict  {\partial \Omega^\eps} \in L^2(\partial\Omega^\eps)$ and there holds
 $$
 \lvv w \restrict  {\partial \Omega^\eps} \rvv_{L^2(\partial \Omega^\eps)}\lesssim  \eps^{-2} \lvv \xi\rvv_{L^2(\Omega^\eps)}.
 $$
This concludes the well-posedness of Equation~\eqref{eq:PoissonEquation}.

\medskip\noindent
\emph{Step 2. (Domain reflection)} We consider now the extended domain $\widehat{\Omega}^\eps := (-2L^\eps, 2L^\eps)\times \Omega^\eps_0$ and we define 
$$
    \widehat{\Lambda^\eps_1} := \{-2L^\eps\}\times {\Omega_{0}^\eps}, \qquad
    \widehat{\Lambda^\eps_2} := \{2L^\eps\}\times {\Omega_{0}^\eps}, \qquad
    \text{and} \qquad
    \widehat{\Lambda^\eps_3} := (-2L^\eps, 2L^\eps)\times \partial{\Omega_{0}^\eps}.
$$
Now for any $x\in \R^3$, we write it as $x = (x_1, \bar x)\in \R\times \R^{2}$ and for any function $\phi$ with domain in $\Omega^\eps$ we define a new function $\widehat \phi$ with domain in $\widehat{\Omega^\eps}$ as follows
\begin{equation}
	\widehat{\phi}(x)= \left\{\begin{array}{ll}
		\phi(-x_1 + 2L^\eps, \bar x)& \text{ if } x_1\in (-2L^\eps, -L^\eps)\\
		\phi(x_1, \bar x)& \text{ if } x_1\in (-L^\eps, L^\eps)\\
		\phi(-x_1- 2L^\eps, \bar x)& \text{ if } x_1\in (L^\eps, 2L^\eps).\\
	\end{array}\right.\label{eq7.3:ReflectionDef}
\end{equation}
It is easy to check that, defined this way, $\widehat{w}$ is a variational solution of the extended Poisson problem
\begin{equation}\left\{
\begin{array}{rcll}\label{eq:PoissonExtended}
-\Delta \widehat u  &= &\widehat \xi & \text{ in }\widehat{\Omega^\eps}  \\
(2-\widehat \alpha(x)) \partial_n \widehat u + \widehat \alpha(x)  \widehat u &=& 0& \text{ on } \partial\widehat{\Omega^\eps}.
\end{array}\right.
\end{equation}
As a consequence, repeating the arguments of Step 1, we deduce that $\widehat w$ satisfies the following estimates
 \begin{equation}\label{eq:H1PoissonExtended}
	\lVert \widehat w\rVert_{H^1(\widehat{\Omega^\eps})}\lesssim \eps^{-2} \lVert\widehat \xi\rVert_{L^2(\widehat{\Omega^\eps})} \quad \text{ and } \quad  \lvv \widehat w \restrict  {\partial \Omega^\eps} \rvv_{L^2(\partial \widehat\Omega^\eps)}\lesssim  \eps^{-2} \lvv \xi\rvv_{L^2(\widehat \Omega^\eps)}.
\end{equation}

\medskip\noindent
\emph{Step 3. (Regularity)} Using the partition of unity (see for instance \cite[Lemma 9.3]{Brezis}) we obtain the existence of a collection of open balls of radius $r>0$, $(B^j_r)_{j=1}^N\subset \R^3$ covering $ \partial \widehat{\Omega^\eps}$, and a family of functions $\varrho_j\in C^\infty(\R^3,\R)$ with $j=0,\ldots, N$, such that 
\begin{enumerate}
\item[(i)] $0\leq \varrho_j\leq 1$ for every $j =0,\ldots, N$, and $\sum_{j=0}^N\varrho_j = 1$ on $\R^3$, 
\item[(ii)] moreover, $ \supp \varrho_0\subset \R^3\setminus \partial \widehat{\Omega^\eps}$ and for every $j=1,\ldots, N$, there holds that $\varrho_j\in C^\infty_c(B^j_r)$.
\end{enumerate}
In particular, we choose $r>0$ small enough such that the balls $B_r^j$ covering $\Lambda_3^\eps$ do not intersect $\widehat{\Lambda^\eps_1}$ or $\widehat{\Lambda^\eps_2}$, and we call $\JJ$ the sets of such indexes plus $0$, i.e
$$
\JJ=\left\{j \in \{1,\ldots,N\} \left\rvert \,   \bigcup B^j_r \supset  \Lambda^\eps_3 \right.  \, \left\lvert \,   B^j_r \cap \widehat{\Lambda^\eps_1}  =\emptyset \right.\, \left\lvert\,    B^j_r \cap \widehat{\Lambda^\eps_2}  =\emptyset \right. \right\}\cup\{0\}.
$$
Then we classically have, see for instance \cite[Theorem 9.26 Steps $C_1$ and $C_2$]{Brezis}, that for every $j\in \JJ$ there holds $\widehat w\varrho_j\in H^2(\widehat \Omega^\eps)$ and
\be\label{eq:H2localPoissonExtended}
\lVert \Delta(\widehat w\varrho_j) \lVert_{H^2(\widehat\Omega^\eps)} \leq \widehat C \eps^{-2} \lVert \widehat \xi\lVert_{L^2(\widehat\Omega^\eps)},
\ee
where $\widehat C>0$ is a constant independent of $\eps$. We deduce then that for every $\varphi \in C^\infty_c(\R^3)$ we have
\begin{multline*}
\int_{\cup_{j\in \JJ} B^j_r\cap\widehat{\Omega^\eps}} \grad \widehat w \cdot  \grad \varphi = \int_{\cup_{j\in \JJ} B^j_r\cap \widehat{\Omega^\eps}} \grad \widehat w \cdot  \grad \varphi \, \varrho_j =  \sum_{j\in \JJ} \left( \int_{B^j_r\cap \widehat{\Omega^\eps}} \grad (\widehat w \varrho_j) \cdot  \grad \varphi   -  \int_{B^j_r\cap \widehat{\Omega^\eps}} \widehat w \grad  \varrho_j\cdot  \grad \varphi \right) \\
= \sum_{j\in \JJ} \left( \int_{ B_r^j\cap \partial\widehat{\Omega^\eps}} \varphi \grad(\widehat w \varrho_j) \cdot n_x  -\int_{B^j_r\cap \widehat{\Omega^\eps}}  \Delta (\widehat w \varrho_j) \varphi  -\int_{ B_r^j\cap \partial\widehat{\Omega^\eps}} \widehat w \varphi \grad \varrho_j  \cdot n_x + \int_{B^j_r\cap \widehat{\Omega^\eps}} \Div(\widehat w \grad  \varrho_j)  \varphi \right) 
\end{multline*}
where we have used that $\widehat w \varrho_j\in H^2(\widehat \Omega^\eps)$, $\widehat w\grad \varrho_j\in H^1(\widehat\Omega^\eps)$  and we have performed an integration by parts on the third line. We then compute
$$
\left\lvert \int_{\partial\widehat{\Omega^\eps}} \varphi \grad(\widehat w \varrho_j) \cdot n_x \right\rvert= \left\lvert \int_{\partial\widehat{\Omega^\eps}} \varphi \grad(\widehat w) \cdot n_x \varrho_j \right\rvert+  \left\lvert \int_{\partial\widehat{\Omega^\eps}} \varphi \grad(\varrho_j) \cdot n_x \widehat w \right\rvert \leq \int_{\partial\widehat{\Omega^\eps}}  \lvert \varphi \widehat w \rvert \left\lvert {\alpha \over 2-\alpha}\varrho_j + \grad \varrho_j\cdot n_x \right\rvert
$$
where we have used the boundary conditions on the second line. Then by using the Cauchy-Schwartz inequality we have
\bean
\int_{\cup_{j\in \JJ} B^j_r\cap\widehat{\Omega^\eps}} \grad \widehat w \cdot  \grad \varphi&\lesssim & \lVert \varphi\rVert_{L^2(\R^3)}  \sum_{j\in \JJ} \left(  \lVert \Delta (\widehat w \varrho_j)\rVert_{L^2(B^j_r\cap \widehat{\Omega^\eps})}    + \lVert \grad \widehat w  \rVert_{L^2(B^j_r\cap \widehat{\Omega^\eps})}      +\lVert \widehat w \rVert_{L^2(B^j_r\cap \partial \widehat{\Omega^\eps})}         \right)    \\
&\lesssim & \lVert  \varphi \rVert_{L^2(\R^3)}   \eps^{-2} \lVert \widehat \xi\rVert_{L^2( \widehat{\Omega^\eps})}
\eean
where we have used \eqref{eq:H1PoissonExtended} and \eqref{eq:H2localPoissonExtended} to obtain the second inequality. From the previous result we deduce that $\widehat w\in H^2(\cup_{j\in \JJ}  B^j_r\cap \widehat{\Omega^\eps})$ and 
$$
\lVert \Delta \widehat w\rVert_{L^2(\cup_{j\in \JJ} B^j_r\cap\widehat{\Omega^\eps})}\lesssim \eps^{-2} \lVert \widehat \xi\rVert_{L^2(\widehat{\Omega^\eps})},
$$
which implies in particular that $w\in H^2(\Omega^\eps)$. Moreover, we remark that the above estimate together with \eqref{eq:H1PoissonExtended} implies that
$$
\lVert  w\rVert_{H^2(\Omega^\eps)}  \leq  \lVert \widehat w\rVert_{H^2(\cup_{j\in \JJ} B^j_r\cap\widehat{\Omega^\eps})} \lesssim \eps^{-2} \lVert \widehat \xi\rVert_{L^2(\widehat{\Omega^\eps})} \lesssim \eps^{-2} \lVert  \xi\rVert_{L^2(\Omega^\eps)},
$$
and this concludes the proof.
\end{proof}

\subsection{Lamé system in the cylinder}\label{ssec:LameSystem}
Consider $\Omega^\eps$ satisfying Assumption \ref{item:RH2}, and consider $\Xi\in L^2(\Omega^\eps)$. We dedicate this subsection to study the following Lamé system of elliptic equations
\begin{equation}
	\left\{\begin{array}{rlll}
		-\Div (\nabla^s U) &=& \Xi &\text{ in }\Omega^\eps\\
		U\cdot n(x)&=&0 &\text{ on }\partial\Omega^\eps\\
		(2-\iota^\eps(x))[\nabla^sU \cdot n(x) - (\nabla^sU: n(x)\otimes n(x))n(x)]+\iota^\eps(x) U&=&0 &\text{ on }\partial\Omega^\eps.
	\end{array}\right. \label{eq:LameSystem}
\end{equation}
We consider the functional space
\be
\UUU(\Omega^\eps):=\{ U\in H^1(\Omega^\eps), \, U(x) \cdot n(x) = 0 \,\text{ on } \partial\Omega^\eps \}, 
\ee 
and we will prove the following well-posedness and regularity result.
\begin{theo}\label{thm:LameRegularity}
Let $\Omega^\eps$ be a cylindrical domain as described in \ref{item:RH2}. For any $\Xi\in L^2(\Omega^\eps)$ there is a unique variational solution $W \in \UUU(\Omega^\eps) \cap  H^2(\Omega^\eps)$ to the Lamé system \eqref{eq:LameSystem}, i.e for any $V\in \UUU(\Omega^\eps)$ there holds
\be\label{eq:WeakFormulationLame}
\int_{\Omega^\eps}\nabla^sW (x):\nabla^sV(x) \dx + \int_{\partial\Omega^\eps} {\iota^\eps(x) \over 2-\iota^\eps(x) }\, W(x)\cdot V(x)   \d\sigma_x = \int_{\Omega^\eps} \Xi (x) \cdot V(x) \dx.
\ee
Furthermore, there holds
\begin{equation}
    \lVert W\rVert_{H^2(\Omega^\eps)}\leq C\eps^{-2}\lVert \Xi\rVert_{L^2(\Omega^\eps)},
\end{equation}
for some constructive constant $C>0$, independent of $\eps$.
\end{theo}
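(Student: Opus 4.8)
The proof should closely mirror that of Theorem~\ref{theo:PoissonRegularity}, replacing the scalar Poisson equation by the Lamé (symmetrized gradient) system, and I would split it into the same three steps: variational well-posedness, reflection across the bases, and interior/boundary elliptic regularity patched by a partition of unity.

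\emph{Step 1 (Existence and uniqueness).} On the space $\UUU(\Omega^\eps)$ I would consider the bilinear form
\[
a(U,V):=\int_{\Omega^\eps}\nabla^s U:\nabla^s V\,\dx+\int_{\partial\Omega^\eps}\frac{\iota^\eps}{2-\iota^\eps}\,U\cdot V\,\d\sigma_x,
\]
which is continuous on $\UUU(\Omega^\eps)\times\UUU(\Omega^\eps)$ since $\iota^\eps\in[0,1]$ keeps the boundary coefficient bounded. Coercivity is the only genuinely nontrivial point of this step: it requires a Korn-type inequality on $\UUU(\Omega^\eps)$, namely $\|\nabla^s U\|_{L^2}^2+\|U\|_{L^2(\partial\Omega^\eps)}^2\gtrsim\|U\|_{H^1}^2$. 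One has to be careful that on a cylinder there are rigid motions (in particular rotations about the cylinder axis and the axial translation) that have vanishing symmetrized gradient; the constraint $U\cdot n=0$ kills the axial translation on the bases, but the axial rotation $U(x)=(0,-\bar x_2,\bar x_1)$ still satisfies $\nabla^s U=0$ and $U\cdot n=0$ everywhere. Since $\iota^\eps=\Ind_{\Lambda_1^\eps\cup\Lambda_2^\eps}$ is nonzero precisely on the bases, and the axial rotation does not vanish there, the boundary term $\int_{\Lambda_1^\eps\cup\Lambda_2^\eps}|U|^2$ is strictly positive on it, so $a$ is indeed coercive on the whole space; this should be packaged as a quotient/compactness argument (as in \cite{MR4581432}). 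Then one tracks the $\eps$ dependence through the scaling $x\mapsto\eps x$ exactly as for \eqref{eq:coercivityPoisson}, obtaining $a(U,U)\gtrsim\eps^2\|U\|_{H^1(\Omega^\eps)}^2$, and Lax--Milgram gives the unique variational $W\in\UUU(\Omega^\eps)$ with $\|W\|_{H^1(\Omega^\eps)}\lesssim\eps^{-2}\|\Xi\|_{L^2(\Omega^\eps)}$ and the trace bound $\|W|_{\partial\Omega^\eps}\|_{L^2(\partial\Omega^\eps)}\lesssim\eps^{-2}\|\Xi\|_{L^2(\Omega^\eps)}$.

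\emph{Step 2 (Reflection).} As in \eqref{eq7.3:ReflectionDef} I would extend $\Omega^\eps$ to $\widehat\Omega^\eps=(-2L^\eps,2L^\eps)\times\Omega_0^\eps$ by even reflection in $x_1$ across both bases. The subtlety is vectorial: for the reflected field to solve the reflected Lamé system one must reflect the first component oddly and the transverse components evenly, i.e.\ $\widehat U=(-U_1,U_2,U_3)$ composed with the $x_1$-reflection on the two outer slabs. With this convention $\nabla^s\widehat U$ is the correctly reflected symmetrized gradient, the constraint $U\cdot n=0$ on the bases is exactly the compatibility condition making $\widehat U\in H^1$, and the specular/diffusive boundary operator on $\Lambda_3^\eps$ transforms consistently because $n$ on the lateral surface is purely transverse. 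One then gets that $\widehat W$ is the variational solution of the Lamé system on $\widehat\Omega^\eps$ with data $\widehat\Xi$ reflected by the same rule, and the $H^1$ and trace bounds \eqref{eq:H1PoissonExtended} carry over.

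\emph{Step 3 (Regularity).} Using a partition of unity $\{\varrho_j\}$ subordinate to a cover of $\partial\widehat\Omega^\eps$, chosen so the balls meeting $\Lambda_3^\eps$ avoid $\widehat{\Lambda_1^\eps},\widehat{\Lambda_2^\eps}$, one has near every point of $\partial\widehat\Omega^\eps$ a flat (or $C^\infty$, after the reflection doubling the cylinder there is no remaining corner since the bases have been interiorized) piece of boundary with a Robin-type condition $(2-\iota^\eps)[\nabla^s\widehat W\cdot n-(\nabla^s\widehat W:n\otimes n)n]+\iota^\eps\widehat W=0$, $\widehat W\cdot n=0$. Interior $H^2$ regularity for the Lamé operator plus the local boundary regularity for this Neumann-type elliptic system (Agmon--Douglis--Nirenberg, or the $H^2$ boundary estimates cited in \cite{Brezis,MR4581432}) give $\|\widehat W\varrho_j\|_{H^2(\widehat\Omega^\eps)}\le\widehat C\eps^{-2}\|\widehat\Xi\|_{L^2(\widehat\Omega^\eps)}$ for each $j$, with $\eps$-independent constant after scaling. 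Summing over $j$, exactly as in the integration-by-parts computation after \eqref{eq:H2localPoissonExtended}, controls $\int\nabla\widehat W\cdot\nabla\varphi$ by $\|\varphi\|_{L^2}\eps^{-2}\|\widehat\Xi\|_{L^2}$, hence $\widehat W\in H^2$ near the boundary, and combining with Step~1 yields $\|W\|_{H^2(\Omega^\eps)}\lesssim\eps^{-2}\|\Xi\|_{L^2(\Omega^\eps)}$.

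\emph{Main obstacle.} The delicate point is Step~1's coercivity: one needs the correct Korn inequality on the constrained space $\UUU(\Omega^\eps)$ and must verify that the boundary penalization $\iota^\eps=\Ind_{\Lambda_1^\eps\cup\Lambda_2^\eps}$ actually eliminates the residual rigid motion (axial rotation) that both has vanishing $\nabla^s$ and respects $U\cdot n=0$. A secondary technical care point is getting the reflection parities right in Step~2 so that the vector Lamé structure and the mixed boundary conditions on $\Lambda_3^\eps$ are preserved; once those are settled, Step~3 is the standard elliptic-regularity bootstrap already carried out for the Poisson case.
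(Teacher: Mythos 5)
Your proposal follows the paper's architecture exactly (Lax--Milgram with Korn/Poincaré coercivity; reflection to the doubled cylinder $\widehat{\Omega^\eps}$; local elliptic regularity on the extended domain), but it corrects the one real imprecision in the paper's Step~2. The paper states that it extends ``component-wise as described in \eqref{eq7.3:ReflectionDef}'', i.e.\ evenly in every component; but a direct computation shows that this does \emph{not} preserve the symmetrized gradient, since under even reflection of all components one gets
\[
(\nabla^s_x\widehat W)_{1j}(x)=\tfrac12\bigl(-\partial_{y_1} W_j+\partial_{y_j} W_1\bigr)(y)\qquad (j=2,3),
\]
which is the skew-symmetric part with the wrong sign rather than $\pm(\nabla^s_y W)_{1j}$, so $\widehat W$ would not solve the reflected Lamé system. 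Your parity $\sigma=(-1,1,1)$ --- odd on the normal component and even on the tangential ones --- is precisely the choice for which $(\nabla^s_x\widehat W)_{ij}=\sigma_i\sigma_j(\nabla^s_y W)_{ij}$, hence $\nabla^s\widehat W:\nabla^s\widehat V$ is invariant under the change of variables and the extended variational identity is the one you claim; this is a genuine correction. Your explicit identification of the axial rotation as the residual rigid motion compatible with $W\cdot n=0$, and the observation that only the boundary penalty on $\Lambda_1^\eps\cup\Lambda_2^\eps$ eliminates it, also makes visible the content of Lemma~\ref{lem:KornInequlities}(L2), which the paper merely cites.

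One caveat applies equally to your write-up and to the paper's: on the bases the diffusive condition ($\iota^\eps=1$) is Robin-type, namely $\tfrac12\partial_1 W_j+W_j=0$ for $j=2,3$ once $W_1=0$ and its tangential derivatives are used. Even reflection of the tangential components therefore gives $\partial_1\widehat W_j$ a jump of $4W_j$ across the interfaces $\Lambda_1^\eps,\Lambda_2^\eps$ (for either parity choice, since the parity only helps $W_1$), so $\widehat W$ fails to be $H^2$ across the original bases and $-\mathrm{div}(\nabla^s\widehat W)-\widehat\Xi$ contains a surface distribution there. This is harmless in the Neumann case $\iota^\eps=0$, where even reflection is seamless, but not here. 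So the reflection alone does not deliver interior $H^2$ regularity near $\Lambda_1^\eps,\Lambda_2^\eps$, and the partition-of-unity argument in Step~3 cannot just cite interior elliptic regularity for the extended operator on sets touching those interfaces. When you flesh out Step~3, make explicit how this is resolved --- for instance by using direct Robin boundary regularity on the flat discs $\Lambda_1^\eps,\Lambda_2^\eps$ (which are $C^\infty$ pieces of $\partial\Omega^\eps$) away from the corner $\SSSS^\eps$, reserving the reflection for a collar around $\SSSS^\eps$, or by tracking the surface term explicitly. The paper absorbs this into ``repeating the arguments from [MR4581432]'' without comment, and it is exactly the kind of detail a complete proof should surface.
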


\subsubsection{Preliminary inequalities}\label{sssec:Korn} Before proving Theorem~\ref{thm:LameRegularity} we will prove some preliminary results in order to provide the coercivity for Equation~\eqref{eq:LameSystem}.

One of the main inequalities to study this problem will be Korn's inequality, and we refer to \cite[ Theorem 3.2, Chapter 3]{MR0521262}, \cite{CiarletCiarlet} and \cite{DesvillettesVillani02} for more on the history and applications of this type of inequality.

\begin{lem}\label{lem:KornInequlities}
Consider a vector-field $U\in H^1(\Omega^\eps)$, the following statements hold.
\begin{enumerate}
\item[(L1)]\label{item:L1} \emph{(Poincaré's inequality)} If $U(x)\cdot n(x)=0$ for \emph{almost} every $x\in\partial\Omega^\eps$ we have	that
\begin{equation*}
	\eps^2\lVert U\rVert^2_{L^2(\Omega^\eps)} \lesssim\lVert \grad U\rVert^2_{L^2(\Omega^\eps)}.
\end{equation*} 

\item[(L2)]\label{item:L2}  \emph{(Korn's inequality)} There is a constant $C>0$, independent of $\eps$, for which there holds
$$
\lVert\grad U\rVert_{L^2(\Omega^\eps)}^2\lesssim_C \lVert\grad^s U\rVert^2_{L^2(\Omega^\eps)} + \left\lVert \sqrt{\iota^\eps\over 2-\iota^\eps} U\right\rVert^2_{L^2(\partial\Omega^\eps)}.
$$	
\end{enumerate}
\end{lem}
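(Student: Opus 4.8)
The two statements (L1) and (L2) should be proven as essentially independent claims, both by reduction to known results on a fixed reference domain followed by a scaling argument.

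For (L1), the plan is to first establish the corresponding Poincaré-type inequality on the fixed cylinder $\Omega^1 = (-L,L)\times\Omega_0$ for vector fields $U\in H^1(\Omega^1)$ satisfying $U\cdot n = 0$ on $\partial\Omega^1$, and then transfer it to $\Omega^\eps$ by the change of variables $x\mapsto \eps^{-1}x$. On the reference domain, the inequality is a standard consequence of the compact embedding $H^1(\Omega^1)\hookrightarrow L^2(\Omega^1)$: if it failed there would be a sequence $U_k$ with $\|U_k\|_{L^2}=1$, $\|\grad U_k\|_{L^2}\to 0$, $U_k\cdot n=0$ on $\partial\Omega^1$; passing to a subsequence converging weakly in $H^1$ and strongly in $L^2$, the limit $U$ would be constant, satisfy $\|U\|_{L^2}=1$, and have $U\cdot n=0$ on $\partial\Omega^1$. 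But a constant vector field that is tangent to the full boundary of a nondegenerate cylinder must vanish (the lateral boundary $\Lambda_3 = (-L,L)\times\partial\Omega_0$ forces the two components of $U$ in the disk directions to vanish, and the bases $\Lambda_1,\Lambda_2$ force the axial component to vanish), a contradiction. Here the trace operator being continuous and weakly continuous from $H^1$ to $L^2(\partial\Omega^1)$ ensures $U_k\cdot n\to U\cdot n$ in $L^2(\partial\Omega^1)$. Then one checks the scaling: writing $U^\eps(x) = U(\eps x)$ one has $\|U^\eps\|_{L^2(\Omega^\eps)}^2 = \eps^{-3}\|U\|_{L^2(\Omega^1)}^2$ and $\|\grad U^\eps\|_{L^2(\Omega^\eps)}^2 = \eps^{-1}\|\grad U\|_{L^2(\Omega^1)}^2$, which upon applying the reference inequality yields exactly the claimed factor $\eps^2$; and the tangency condition $U\cdot n = 0$ is preserved under the scaling since by \eqref{eq:Equality_normal_eps} the normals coincide.

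For (L2), the plan is analogous but relies on a boundary Korn inequality rather than Poincaré. On the fixed cylinder one wants $\|\grad U\|_{L^2(\Omega^1)}^2 \lesssim \|\grad^s U\|_{L^2(\Omega^1)}^2 + \|U\|_{L^2(\partial\Omega^1)}^2$ for all $U\in H^1(\Omega^1)$; since $\iota^\eps/(2-\iota^\eps)$ is bounded above and—crucially on the bases $\Lambda_1\cup\Lambda_2$ where $\iota^\eps\equiv 1$—bounded below by a positive constant on a portion of the boundary of positive measure, the boundary term on the right of (L2) is comparable to $\|U\|_{L^2(\Lambda_1\cup\Lambda_2)}^2$ up to constants independent of $\eps$. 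The reference Korn inequality of the form $\|\grad U\|_{L^2}^2\lesssim \|\grad^s U\|_{L^2}^2 + \|U\|_{L^2(\Gamma)}^2$ for $\Gamma\subset\partial\Omega^1$ of positive surface measure is classical (it follows from the standard second Korn inequality $\|\grad U\|_{L^2}^2\lesssim \|\grad^s U\|_{L^2}^2 + \|U\|_{L^2(\Omega^1)}^2$ together with a compactness argument eliminating the $L^2(\Omega^1)$ term in favor of the $L^2(\Gamma)$ term, using that the space of rigid displacements is finite-dimensional and that a rigid displacement vanishing on $\Gamma$ vanishes identically); one can cite \cite[Theorem 3.2, Chapter 3]{MR0521262}. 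Then one transfers to $\Omega^\eps$ by the same scaling $x\mapsto\eps^{-1}x$, noting that $\grad^s$ scales the same way as $\grad$, so the gradient and symmetric-gradient terms acquire the same $\eps$-power and cancel, while the surface term scales with one extra power that is consistent; checking that the implicit constant remains independent of $\eps$ is the main point, and it is automatic precisely because the reference inequality holds on the $\eps$-independent domain $\Omega^1$ and the scaling is exact.

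\textbf{Main obstacle.} I expect the delicate point to be (L2): ensuring that after rescaling the constant in Korn's inequality is genuinely independent of $\eps$. The naive concern is that as $\eps\to 0$ the cylinder $\Omega^\eps$ becomes very long, and Korn constants can blow up for thin or elongated domains. The resolution, and the reason the reference-domain strategy works, is that the rescaling is an exact dilation, so all norms scale by clean powers of $\eps$ and the Korn inequality on $\Omega^\eps$ is literally equivalent to the one on the fixed $\Omega^1$ with the \emph{same} constant; no geometric degeneration actually occurs because the shape of $\Omega^\eps$ is independent of $\eps$ up to dilation. One must be careful to (i) verify the clean scaling of each term including $\grad^s$, (ii) handle the weight $\iota^\eps/(2-\iota^\eps)$—using its uniform positive lower bound on $\Lambda_1^\eps\cup\Lambda_2^\eps$ where $\iota^\eps=1$—to replace the weighted boundary norm in (L2) by a plain $L^2$ boundary norm on a fixed-geometry piece, and (iii) in (L1), confirm that the tangency boundary condition together with the cylinder geometry really does force a constant tangent field to be zero, which is where the specific structure of the cylinder (two transverse base planes plus a cylindrical side) enters essentially.
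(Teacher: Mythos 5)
Your scaling argument is the right idea and is in fact exactly how the paper obtains the $\eps$-dependence: the cylinder $\Omega^\eps=\eps^{-1}\Omega^1$ is an exact dilation of a fixed reference cylinder, so any $\eps$-free inequality on $\Omega^1$ transfers with a clean power of $\eps$ (your bookkeeping for (L1) is correct, and for (L2) the boundary term even picks up a favorable extra factor $\eps\le 1$). For (L1), your argument coincides with the paper's: the paper says explicitly that it repeats the compactness argument of \cite[Lemma 2.7]{MR4581432} and deduces the $\eps$-dependence by scaling, which is precisely what you propose — including the key observation that the base disks $\Lambda_1\cup\Lambda_2$ and the lateral wall $\Lambda_3$ together span normal directions filling $\R^3$, so a constant field tangent to all of $\partial\Omega^1$ must vanish.

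For (L2) your route is correct but genuinely different from the paper's. You reduce to the classical second Korn inequality $\lVert\grad U\rVert^2\lesssim\lVert\grad^sU\rVert^2+\lVert U\rVert^2_{L^2(\Omega^1)}$ and then use a compactness/contradiction argument to replace the interior $L^2$ term by the boundary term $\lVert U\rVert^2_{L^2(\Lambda_1\cup\Lambda_2)}$ (which you correctly identify as what the weighted boundary norm collapses to under $\iota^\eps=\Ind_{\Lambda_1^\eps\cup\Lambda_2^\eps}$, since $\iota/(2-\iota)$ is $1$ on the bases and $0$ on the side). The key rigidity fact you invoke — a rigid displacement $a+b\times x$ vanishing on a set of positive $2$D measure is zero — is correct (such a set contains three non-collinear points, forcing $b=0$ and then $a=0$). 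By contrast, the paper does not use compactness here but instead transports the quantitative Korn machinery of \cite[Lemmas 2.3–2.5]{MR4581432} to the cylindrical geometry, using the explicit Korn-type estimates of \cite[Section 5]{DesvillettesVillani02} and taking care to avoid the singular edge $\SSSS^\eps$ of the cylinder in the pointwise arguments. What each approach buys: your compactness argument is shorter and avoids all of the BCMT/Desvillettes–Villani structure, but its constant is non-constructive; the paper's route is heavier but yields a constructive, tractable constant, which the paper advertises throughout as one of its selling points. If constructiveness of the Korn constant is a genuine requirement (and not just of the final theorem's constants), your proof would need to be supplemented — for example by replacing the soft argument with an explicit quantitative version of boundary Korn on $\Omega^1$, which is precisely what the Desvillettes–Villani machinery supplies.
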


\begin{proof}
The proof of Poincaré's inequality given by \ref{item:L1} is nothing but a repetition of the compactness arguments used during the proof of \cite[Lemma 2.7]{MR4581432}, and we deduce the dependence on $\eps$ by a standard scaling argument.

Moreover, the proof of the Korn inequality given in \ref{item:L2} follows by using the exact same arguments as during its proof in \cite[Lemma 2.5]{MR4581432}, with a few extra considerations. First we deduce the validity of \cite[Lemma 2.3]{MR4581432} for cylindrical domains by arguing as during \cite[Section 5]{DesvillettesVillani02}. Secondly, we repeat the proof of \cite[Lemma 2.4]{MR4581432} by taking into account not to choose a point in the singular set of the boundary of the cylinder and we follow the same ideas.  
\end{proof}

\subsubsection{Proof of Theorem~\ref{thm:LameRegularity}}\label{sssec:ProofLame} 
We divide the proof into two steps.

\medskip\noindent
\emph{Step 1. (Existence and uniqueness of variational solutions)} We consider the bilinear operator $A:\UUU(\Omega^\eps)\times \UUU(\Omega^\eps) \to \R$ defined as
\begin{equation*}
    A(U,V)=\int_{\Omega^\eps}\nabla^sU (x):\nabla^sV(x) \dx + \int_{\partial\Omega^\eps} {\iota^\eps(x) \over 2-\iota^\eps(x)}\, U(x) \cdot V(x) \d\sigma_x.
\end{equation*}	
We observe that $A$ is continuous on $\UUU(\Omega^\eps)\times \UUU(\Omega^\eps)$ and \ref{item:L1} and \ref{item:L2} from Lemma~\ref{lem:KornInequlities} together imply that $A$ is also coercive.
We then may apply the Lax-Milgram theorem and we get that for every $\Xi\in L^2(\Omega^\eps)$ there is a unique variational solution $W\in \UUU(\Omega^\eps)$ of the Lamé system \eqref{eq:LameSystem}. Furthermore, using the coercivity of the operator $A$ and the weak formulation \eqref{eq:WeakFormulationLame} we deduce that
\begin{equation}
	\lVert W\rVert_{H^1(\Omega^\eps)}\leq C_L \, \eps^{-2}\lVert \Xi\rVert_{L^2(\Omega^\eps)}.\label{eq8.5:H1Lame}
\end{equation}
for some constant $C_L>0$ independent of $\eps$.

\medskip\noindent
\emph{Step 2. (Domain reflection and regularity)} Following the same reflection method as for the Poisson equation, we extend the  functions component-wise as described in \eqref{eq7.3:ReflectionDef}. We observe that $\widehat\Xi \in L^2(\widehat{\Omega^\eps})$ and that $\widehat W$ is a variational solution of the problem 
\begin{equation}
	\left\{\begin{array}{rccl}
		-\Div(\nabla^s\widehat{W}) &=& \widehat{\Xi} &\text{ in }\widehat{\Omega^\eps}\\
		\widehat{W}\cdot n(x)&=&0 &\text{ on }\partial\widehat{\Omega^\eps}\\
		\left(2-\widehat {\iota^\eps}\right)[\nabla^s\widehat{W} \cdot n - (\nabla^s\widehat{W}:n\otimes n)n]+\widehat {\iota^\eps} \, \widehat{W}&=&0 &\text{ on }\partial\widehat{\Omega^\eps}.
	\end{array}\right.\label{eq:LameSystemExtended}
\end{equation}
Moreover, by arguing as in the Step 1 we have that
\begin{equation}\
	\lVert \widehat {W }\rVert_{H^1(\widehat{\Omega^\eps})}\lesssim \eps^{-2}\rVert \Xi \rVert_{L^2(\widehat{\Omega^\eps})}.\label{eq:H1LameExtended}
\end{equation}
Repeating the arguments from \cite[Theorem 2.11, Steps 3 and 4]{MR4581432} together with \eqref{eq:H1LameExtended}, we deduce that for every small enough open set $\GG \subset (-3L^\eps/2, 3L^\eps/2) \times \Omega_0^\eps$ there holds 
$$
\lVert  \widehat W \lVert_{H^2(\widehat\Omega^\eps\cap \GG)} \lesssim \eps^{-2} \lVert \widehat \xi\lVert_{L^2(\widehat\Omega^\eps)}.
$$
We then choose a finite family of open sets $(\GG_j)_{j=1}^N$ with $\GG_j  \subset (-3L^\eps/2, 3L^\eps/2) \times \Omega_0^\eps$ and such that 
$\bigcup_{j=1}^N \GG_j \supset (-L^\eps, L^\eps) \times \Omega_0$. Using then the above estimate we deduce that $W\in H^2(\Omega^\eps)$ and there holds
$$
\lVert  W\rVert_{H^2(\Omega^\eps)}  \leq  \lVert \widehat W\rVert_{H^2(\bigcup_{j=1}^N \GG_j)} \lesssim \eps^{-2} \lVert \widehat \Xi\rVert_{L^2(\widehat{\Omega^\eps})} \leq 3\eps^{-2} \lVert  \Xi\rVert_{L^2(\Omega^\eps)}.
$$
This completes the proof. \qed

\subsection{Proof of Theorem~\ref{theo:Hypo}}\label{ssec:ProofHypo}
As discussed during Remark \ref{rem:HypoRH1}, Theorem~\ref{theo:Hypo} under hypothesis \ref{item:RH1} is nothing but \cite[Theorem 1.1]{MR4581432} and \cite[Theorem 5.1]{MR4581432}. Moreover, the existence of the norm $\lvvv \cdot \rvvv$, and the fact that it is equivalent to $\lvv \cdot \rvv_\HH$ uniformly in $\eps$, is obtained by arguing exactly as during the proof of \cite[Theorem 1.1]{MR4581432}, but we also refer to \cite[Theorem A.1]{carrapatoso2025navierstokeslimitkineticequations} together with the comments and references on  \cite[Appendix A]{carrapatoso2025navierstokeslimitkineticequations}.

Finally, the proof of Theorem~\ref{theo:Hypo} under the assumptions \ref{item:RH2} is mainly a repetition of the proof of \cite[Theorem 1.1]{MR4581432} using instead the regularity results from Theorems \ref{theo:PoissonRegularity} and \ref{thm:LameRegularity}. 
\qed

\section{Stretching method for strongly confining weights in smooth domains}\label{sec:Linfty}

During this section we assume there to hold assumptions \ref{item:RH1}, and we study Equation~\eqref{eq:PLRBE}. We dedicate this section to prove the following result.
\begin{prop}\label{prop:LinftyEstimatePerturbedFiniteT} 
Consider Assumption \ref{item:RH1} to hold, $\omega_1\in\WWWW_1$ a strongly confining admissible weight function, $G:\UU^\eps\to \R$ satisfying $\lla G_t\rra_{\OO^\eps} = 0$ for every $t\geq 0$, and let $f$ be a solution of Equation~\eqref{eq:PLRBE}. There are constructive constants $\eps_1, \theta >0$ such that for every $\eps\in (0,\eps_1)$ there holds 
\be
\lVert  f_t \rVert_{L^{\infty}_{\omega_1}(\bar\OO^\eps)} \leq  C e^{-\theta \eps^2 t} \left(   \lVert f_0\rVert_{L^{\infty}_{\omega_1}(\OO^\eps)} + \underset{s\in[0,t]}{\sup}\left[ e^{\theta \eps^2 s} \lVert G_s \rVert_{L^{\infty}_{\omega_1\nu^{-1}}(\OO^\eps)}\right]\right)  \quad \forall t\geq 0, \label{eq:LinftyPerturbedFiniteTimeDecay}
\ee
for some constant $C = C(\eps)>0$ such that $C(\eps) \to \infty$ as $\eps \to 0$. 
\end{prop}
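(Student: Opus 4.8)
The plan is to combine the $L^2$ hypocoercive decay of Theorem~\ref{theo:Hypo} with the $L^2\to L^\infty$ regularizing effect of $S_\TTT*K$ (Proposition~\ref{prop:Kregularization}) and the gain-of-weight property of $S_\TTT*G$, using the Duhamel formula for $S_\TTT$ and the stretching method. First I would write the mild formulation of Equation~\eqref{eq:PLRBE}: $f_t = S_\TTT(t)f_0 + (S_\TTT*Kf)(t) + (S_\TTT*G)(t)$, where along the backward characteristics the boundary reflection operator $\RRR$ intervenes whenever a trajectory meets $\partial\Omega^\eps$. The key structural facts to invoke are: (i) in the weighted $L^\infty_{\omega_1}$ norm the transport semigroup $S_\TTT$ is a contraction up to the boundary operator, with $\|\RRR\|$ controlled in $L^\infty$ because $\omega_1 \in \WWWW_1$ is strongly confining (so $\omega_1\MMM$ is integrable and the diffusive part is bounded); (ii) $\|(S_\TTT*G)(t)\|_{L^\infty_{\omega_1}} \lesssim \sup_{s\le t}\|G_s\|_{L^\infty_{\omega_1\nu^{-1}}}$ by the elementary bound $\int_0^t e^{-\nu(v)(t-s)}\,\nu(v)\,ds \le 1$ (item \ref{item:K3}); and (iii) the $L^2$ hypocoercive bound $\|f_t\|_\HH \le Ce^{-\kappa\eps^2 t}\|f_0\|_\HH$, valid since $\lla f_t\rra_{\OO^\eps}=0$ is preserved (using $\lla G_t\rra_{\OO^\eps}=0$).

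The heart of the argument is the iterated Duhamel expansion controlling $Kf$. I would expand $f$ once more inside the term $S_\TTT*Kf$, producing $S_\TTT*K(S_\TTT*Kf)$ plus lower-order terms, and then split the double time/velocity integral according to whether the backward trajectory $X(s;t,x,v)$ is a \emph{singular trajectory} — i.e. one that hits the boundary more than once, or spends too much time near the grazing set, or has very large velocity. On the \emph{non-singular} part, after at most one boundary reflection the trajectory behaves as in the flat half-space, and one uses the $L^2\to L^\infty$ estimate of Proposition~\ref{prop:Kregularization} (together with $L^2$-to-$L^2$ boundedness of $K$ and the kernel bound on $k(v,v_*)$) to bound that contribution by the $\HH$-norm of $f$ on a compressed time interval; then the hypocoercive decay upgrades this to an $e^{-\theta\eps^2 t}$ factor. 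On the \emph{singular} part, the stretching method enters: because $\partial\Omega^\eps$ becomes flatter as $\eps\to0$ (the curvature scales like $\eps$), the measure of the set of singular $(x,v)$ is $O(\eps)$ (or a suitable small power), so this contribution is absorbed back into $\tfrac12\sup_{s\le t}e^{\theta\eps^2 s}\|f_s\|_{L^\infty_{\omega_1}}$ provided $\eps<\eps_1$ for some constructive $\eps_1$. Choosing $\theta \in (0,\kappa)$ and collecting terms, a Grönwall-type / absorption argument on the quantity $\sup_{s\in[0,t]}e^{\theta\eps^2 s}\|f_s\|_{L^\infty_{\omega_1}}$ yields \eqref{eq:LinftyPerturbedFiniteTimeDecay}, with the constant $C(\eps)$ blowing up as $\eps\to0$ because the $L^2\to L^\infty$ transfer costs a negative power of $\eps$ (the time-compression and the $|\OO^\eps|\sim \eps^{-3}$ volume factor in passing from pointwise to $L^2$ bounds).

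The \textbf{main obstacle} is the bookkeeping of the boundary reflections under the \emph{mixed} and \emph{space-dependent} reflection operator $\RRR = (1-\iota^\eps)\SSS + \iota^\eps\DDD$: one must track the composition of specular bounces (which do not regularize and can accumulate near grazing) against diffusive bounces (which decorrelate the post-collisional velocity via $\MMM$), and show that the "bad" set of multiply-reflecting or grazing trajectories genuinely has small measure uniformly in $t$ — this is exactly where the flatness-in-$\eps$ quantification of the stretching method must be made rigorous, and where the strongly confining weight $\omega_1\in\WWWW_1$ is needed so that the diffusive kernel $\omega_1(v)\MMM(v)$ stays bounded and the reflected mass is controlled in $L^\infty$ rather than merely in $L^2$. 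A secondary technical point is ensuring all constants remain constructive: the hypocoercivity constants $\kappa, C$ from Theorem~\ref{theo:Hypo}, the regularization constant from Proposition~\ref{prop:Kregularization}, and the measure-of-singular-set estimate are each explicit, so the final $\eps_1, \theta, C(\eps)$ can be written down.
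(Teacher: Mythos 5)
Your outline captures the right building blocks (Duhamel on $S_\TTT$, the stretching lemma, the $L^2\!\to\!L^\infty$ transfer via Proposition~\ref{prop:Kregularization}, and the hypocoercivity decay), and these are indeed the ingredients the paper assembles into the intermediate estimate of Proposition~\ref{prop:LinftyEstimatePerturbedFiniteT_0}. But the passage from that finite-time estimate to the global exponential decay \eqref{eq:LinftyPerturbedFiniteTimeDecay} is precisely where your sketch has a genuine gap, and it is not a routine Gr\"onwall absorption on $\sup_s e^{\theta\eps^2 s}\|f_s\|_{L^\infty_{\omega_1}}$.

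The problem is a rate mismatch. The finite-time $L^\infty$ estimate gains smallness at rate $\nu_0 = O(1)$, but carries a term $C\,T^2 e^{-\nu_0 t}\sup_{s\le t}e^{\nu_0 s}\|f_s\|_\HH$, and the only available decay for $\|f_s\|_\HH$ is the hypocoercive rate $\kappa^\star\eps^2$, which is slow. If you absorb only the $L^\infty$ sup, the $\HH$-contribution does not iterate on its own: at each time-$T$ step you accumulate a term $C_T e^{-2\theta\eps^2 T}\|f_{kT}\|_\HH$ whose decay in $k$ must itself be tracked. The paper handles this by first passing to the $\eps$-uniformly equivalent decreasing norm $\lvvv\cdot\rvvv$ of Theorem~\ref{theo:Hypo}, and then forming the coupled quantity $Z_t := Y_t + \beta X_t$ with $X_t = e^{\theta\eps^2 t}\|f_t\|_{L^\infty_{\omega_1}(\bar\OO^\eps)}$, $Y_t = e^{\theta\eps^2 t}\lvvv f_t\rvvv$, and $\beta := \tfrac{1}{2C_T}\bigl(e^{\theta\eps^2 T}-1\bigr) = O(\eps^2)$. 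Only with this precise choice of $\beta$ does one obtain $Z_T \le \vartheta Z_0 + \eps^{-3/2}(\tfrac12 + C_2 T)\Phi_{0,T}$ with $\vartheta = \tfrac12(1+e^{-\theta\eps^2 T})<1$, which then telescopes over $[nT,(n+1)T]$. A Gr\"onwall argument on the $L^\infty$ sup alone, as written, does not close.

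This also corrects your account of why $C(\eps)\to\infty$. You attribute the blow-up to the $|\OO^\eps|\sim\eps^{-3}$ volume factor in going from $L^\infty$ to $L^2$. That gives $\eps^{-3/2}$, but the dominant contribution in the paper is the geometric series $\sum_k\vartheta^k = (1-\vartheta)^{-1}\sim (\theta\eps^2 T)^{-1}$, multiplied again by $\beta^{-1}\sim(\theta\eps^2T)^{-1}$ from undoing the $\beta$-weighting, giving $C(\eps) = \frac{\eps^{-3/2}C'}{(e^{\theta\eps^2 T}-1)(1-e^{-\theta\eps^2 T})}\sim\eps^{-11/2}$. The slow hypocoercive rate $\eps^2$, not just the domain volume, drives the blow-up. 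Finally, one smaller imprecision: the stretching lemma (Lemma~\ref{lemma3.2:GeneralRefl}) does not make the \emph{measure} of singular $(x,v)$ small in $\eps$; it guarantees at most one boundary bounce for trajectories with $|v|\le M$, $|n_x\cdot v|>\eta$ and $\eps<\eps_R(\eta,M,T)$. The small sets are the grazing sets $\{|n_{x_j}\cdot v_*|<\alpha_j\}$, whose measure is controlled by the freely chosen cutoffs $\alpha_j$, not by $\eps$; $\eps$-smallness governs the number of reflections, not the measure.
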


\begin{rem}
Throughout this paper (at least when working at the level of a priori estimates) we will use the notation
\be\label{def:NormBarOOeps}
\lVert  f_t \rVert_{L^{\infty}_{\omega}(\bar \OO^\eps)} := \lVert  f_t \rVert_{L^{\infty}_{\omega}(\OO^\eps)} + \lVert \gamma f_t \rVert_{L^{\infty}_{\omega}(\Sigma^\eps)}, 
\ee
for any admissible weight function $\omega$, and we recall that $L^{\infty}_{\omega}(\Sigma^\eps) :=L^{\infty}(\Sigma^\eps, \omega(v) \, \dv\d\sigma_x )$.
\end{rem}

\begin{rem}
The computations leading to the proof of Proposition~\ref{prop:LinftyEstimatePerturbedFiniteT} use the stretching method developed in \cite{GuoZhou2024} (see also the previous version \cite{GuoZhou18}). 
Nonetheless, we point out that the exponential decay obtained in Proposition~\ref{prop:LinftyEstimatePerturbedFiniteT} is actually an improvement of \cite[Proposition 4.1]{GuoZhou2024}. This is done by using a more delicate way of combining the hypocoercivity exponential decay from Theorem~\ref{theo:Hypo} and the weighted $L^\infty$ estimate obtained by using the stretching method. Such type of way of combining norms is reminiscent of \cite[Proposition 3.6]{CM_Landau},  \cite[Proposition 3.2]{Carrapatoso_2016}, \cite[Proposition 4.1]{MR3591133}, and \cite[Proposition 7.2]{evans2025existencestabilitynonequilibriumsteady}.
\end{rem}

\subsection{Auxiliary problem in finite time and backwards trajectories}\label{ssec:Trajectories}
Here and below, during this section, we consider an arbitrary $T>0$, we take $G$ as defined in Subsection \ref{ssec:Strategy}, and we study the following evolution equation
\be
	\left\{\begin{array}{llll}
		\partial_{t} f &=& \LLL f + G&\text{ in }\UU^\eps_T:= (0,T)\times \OO^\eps\\
		\gamma_- f&=&\RRR \gamma_+f  &\text{ on }\Gamma_{-, T}^\eps:= (0,T) \times \Sigma_-^\eps\\
		 f_{t=0}&=& f _0  &\text{ in }\OO^\eps.
	\end{array}\right.\label{eq:PLRBE_T}
\ee
For any fixed $(t, x, v) \in  \UU_T^\eps$, we recall the backwards trajectories defined in \eqref{eq:BackwardsTrajectory}, and we define the singular set along the backwards trajectory
\be\label{def:Sx}
 S_{x}:=\{v\in\R^3;\ \exists \, t\in [0,T], \, n(X(t_b(x,v),t,x,v))\cdot v=0\}.
\ee

\subsection{Stretching lemma} We have now the following lemma, which is mainly an application of  \cite[Lemma 4.3]{GuoZhou2024}. 

\begin{lem}
\label{lemma3.2:GeneralRefl}
Assume \ref{item:RH1} to hold. Let $(t,x,v)\in[0,T]\times \partial\Omega^{\varepsilon}\times\{ \lvert v\rvert\leq M, \lvert n_x\cdot v\rvert>\eta\} $ for some $M,\eta>0$ and such that $(x,v) \in \Sigma^\eps_+$. There exists $\eps_R = \varepsilon_{R} (\eta, M, T) \lesssim \eta M^{-2} T^{-1}$ such that for every $\varepsilon \in (0, \varepsilon_R(\eta,M, T))$ there is at most one bounce against the boundary $\partial\Omega_{\varepsilon}$ along the backwards trajectory.
\end{lem}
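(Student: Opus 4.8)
The plan is to reduce the statement to a quantitative estimate on how far a straight-line trajectory can travel inside $\Omega^\eps$ before it must re-enter the domain, and to compare this with the flattening of the boundary as $\eps\to 0$. First I would set up the geometry: by the scaling $\Omega^\eps=\eps^{-1}\Omega$ with $\Omega$ a $C^2$ domain, the boundary $\partial\Omega^\eps$ has second fundamental form of order $\eps$, i.e.\ the principal curvatures are $O(\eps)$. Concretely, after translating so that $x_1\in\partial\Omega^\eps$ and rotating so that $n_{x_1}=e_3$, the boundary is locally a graph $x_3=\phi(x_1',x_2')$ with $\phi(0)=0$, $\grad\phi(0)=0$, and $\|D^2\phi\|_\infty\lesssim \eps$ (uniformly, since $\delta\in W^{3,\infty}$ gives uniform $C^2$ bounds on $\partial\Omega$ after rescaling).

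Next I would estimate the re-entry time. Suppose the backwards trajectory from $(t,x,v)$ hits the boundary at $(t_1,x_1,v_1)$ with $|n_{x_1}\cdot v_1|>\eta$ and $|v_1|\le M$ (note that specular reflection preserves $|v|$ and $|n\cdot v|$, and during diffuse reflection we restart with a fresh velocity of controlled size, so these bounds are what Lemma~\ref{lemma3.2:GeneralRefl} assumes on the relevant segment). In the local graph coordinates centered at $x_1$, the trajectory is $s\mapsto x_1 + (s-t_1)v_1$, and using $n_{x_1}\cdot v_1>\eta$ (the outgoing/forward case) the normal component of the displacement grows like $\eta|s-t_1|$ while the deviation of the boundary from its tangent plane over a horizontal distance $\lesssim M|s-t_1|$ is at most $\tfrac12\|D^2\phi\|_\infty (M|s-t_1|)^2 \lesssim \eps M^2 |s-t_1|^2$. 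Therefore the trajectory stays strictly on the interior side of $\partial\Omega^\eps$ as long as $\eta |s-t_1| \gtrsim \eps M^2 |s-t_1|^2$, i.e.\ for all $|s-t_1|\lesssim \eta (\eps M^2)^{-1}$. Choosing $\eps_R=\eps_R(\eta,M,T)\lesssim \eta M^{-2}T^{-1}$ ensures that this no-return window exceeds $T$, so within the time horizon $[0,T]$ the particle cannot collide a second time; one then invokes \cite[Lemma 4.3]{GuoZhou2024}, which packages exactly this kind of single-bounce conclusion, to finish.

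A few technical points I would be careful about. One must handle the case where the relevant segment arises after a diffuse reflection, where $v_1=v_0^*$ is an independent variable: the hypothesis of the lemma restricts to $|v|\le M$ and $|n_x\cdot v|>\eta$, so the same graph estimate applies verbatim on each segment between consecutive reflections, and the point is simply that \emph{no} segment of length $\le T$ with these bounds can return. For specular reflection one additionally checks that the reflected velocity $\VV_{x_1}v_1 = v_1 - 2(n_{x_1}\cdot v_1)n_{x_1}$ still satisfies $|\VV_{x_1}v_1|=|v_1|\le M$ and $|n_{x_1}\cdot\VV_{x_1}v_1|=|n_{x_1}\cdot v_1|>\eta$, so the inductive hypothesis along the trajectory is preserved; but since we are proving there is \emph{at most one} bounce, we really only need the estimate on the first forward segment from $x_1$. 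Finally, the uniformity of the constant in $\eps_R\lesssim\eta M^{-2}T^{-1}$ comes from the uniform bound $\|D^2\phi\|_\infty\lesssim\eps$, which in turn uses the $C^2\cap W^{3,\infty}$ regularity of $\delta$ assumed in \ref{item:RH1} together with a compactness/covering argument on $\partial\Omega$ to get a single local graph radius valid everywhere.

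\textbf{Main obstacle.} The delicate part is making the ``trajectory stays inside'' estimate fully rigorous and uniform: one needs a single coordinate patch size and a single constant in $\|D^2\phi\|_\infty\lesssim\eps$ that works at every boundary point of $\Omega^\eps$, and one must verify that the trajectory does not leave the patch before the no-return window elapses (which is where the bound $|v|\le M$ enters, controlling the horizontal speed). This is exactly the content that \cite[Lemma 4.3]{GuoZhou2024} is designed to supply, so the real work here is to check that the hypotheses of that lemma are met under Assumption~\ref{item:RH1} after rescaling — in particular that the $\eps$-dependence of the curvature is precisely linear, which yields the claimed scaling $\eps_R\lesssim\eta M^{-2}T^{-1}$.
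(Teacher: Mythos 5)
Your central geometric estimate is correct and gives the stated scaling: after rescaling by $\eps^{-1}$ the boundary $\partial\Omega^\eps$ has curvature $O(\eps)$, so a backward trajectory departing a boundary point with normal speed $>\eta$ and total speed $\le M$ stays strictly inside $\Omega^\eps$ for a time of order $\eta/(\eps M^2)$, whence $\eps_R\lesssim\eta M^{-2}T^{-1}$. (The paper itself supplies no proof here, only a pointer to \cite[Lemmas~4.3--4.5]{GuoZhou2024}, so the real comparison is with how Lemma~\ref{lemma3.2:GeneralRefl} is used downstream.)

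The gap is the anchor point. You suppose the trajectory has already hit $\partial\Omega^\eps$ at $(t_1,x_1,v_1)$ and run the estimate there, assuming $|n_{x_1}\cdot v_1|>\eta$ and $|v_1|\le M$. These are not hypotheses of the lemma, and your parenthetical does not supply them: specular reflection at $x_1$ gives $|n_{x_1}\cdot\VV_{x_1}v|=|n_{x_1}\cdot v|$, which relates two velocities at the \emph{same} point $x_1$, but says nothing about how $|n_{x_1}\cdot v|$ compares with $|n_x\cdot v|$ --- the normal rotates as the trajectory travels from $x$ to $x_1$, so $(x_1,v)$ could in principle be nearly grazing even though $(x,v)$ is not. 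The hypotheses are in fact stated at $(t,x,v)$ itself, where $x$ already lies on $\partial\Omega^\eps$ and $(x,v)\in\Sigma_+^\eps$: the backward trajectory is just leaving this boundary point, and the curvature-versus-escape-angle estimate should be centered there. Concretely, since $\delta^\eps(x)=0$, $\grad\delta^\eps(x)=-\eps\,n_x$ and $\|D^2\delta^\eps\|_\infty\lesssim\eps^2$,
\begin{equation*}
\delta^\eps(x - v\tau)\ =\ \tau\,\eps\,(n_x\cdot v)\ +\ \tfrac{1}{2}\,\tau^2\, v^\top D^2\delta^\eps(\xi)\, v\ \ge\ \tau\,\eps\bigl(\eta - C\,\eps\,M^2\,\tau\bigr)\ >\ 0
\end{equation*}
for $0<\tau<\eta/(C\eps M^2)$, and once $\eps<\eps_R$ this window exceeds $T$, so the backward trajectory from $(t,x,v)$ reaches $t=0$ without touching $\partial\Omega^\eps$ again. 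That is precisely what Propositions~\ref{prop:Kregularization} and~\ref{prop:GainLinftyL2LB1} need: Duhamel from $(s_1,x_1,\VV_{x_1}v_*)$, resp.\ $(s_1,x_1,u_*)$, runs straight back to $t=0$ with no further boundary term. In particular, for $\eps<\eps_R$ the first collision $(t_1,x_1)$ does not even occur within $[0,T]$, so the branch of your argument operating ``from $x_1$'' is vacuous. The fix is short: run the same local-graph or distance-function estimate you wrote, but centered at $x$ rather than at $x_1$, and delete the parenthetical about reflection preserving $|n\cdot v|$.
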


\begin{rem}
The proof of Lemma~\ref{lemma3.2:GeneralRefl} follows mainly using the computations from \cite[Lemma 4.3]{GuoZhou2024} in a very close way to that of  \cite[Lemmas 4.4 and 4.5]{GuoZhou2024}, and we remark that the arguments are, in a certain sense, local along trajectories.
\end{rem}

\subsection{Preliminary lemmas}
We present now several results which will be useful during the rest of this paper. 

\begin{lem}\label{lem:Kproperties}
Let $h:\UU^\eps\to \R$, we consider $\omega$ to be an admissible weight function, and we define $\varsigma(v) = e^{\zeta \lv v\rv^2}$ for $\zeta \in (0,1/2)$. The following statements hold. 
\begin{enumerate}[leftmargin=*]
\item[(K1)]\label{item:K1} For every $N>0$ there is $m= m(N)\geq 1$ such that 
$$
\varsigma(v) \lvert K h (v)\rvert \leq {1\over N}\,  \lVert  h\rVert_{L^{\infty}_{\varsigma}(\R^3)} +  K_m \left(\lvert  \varsigma(v) h (v) \rvert \right)
$$
where $\displaystyle K_m h:= \int_{\R^3} k_m(v,v_*) h(v_*) dv_*$ and
\be\label{eq:defkm}
k_m(v,v_*):=  c_k \mathbf{1}_{\lvert v_*\rvert \leq m}\mathbf{1}_{\lvert v\rvert \leq m}\mathbf{1}_{\lvert v_*-v\rvert \geq {1/ m}} \, \left(\lvert v-v_*\rvert + \lvert v-v_*\rvert^{-1}\right),
\ee
for some constructive constant $c_k>0$. It is also worth remarking that $k_m\leq C_k \,m$, where $C_k = 3c_k$.


\item[(K2)]\label{item:K3} For every $\nu_2\in (0, \nu_0)$, every $(t,x,v)\in \UU^\eps$, and every $\sigma \in [0,t]$ there holds
$$
\omega(v) \left\lvert  S_{\TTT} *_\sigma h (t, x, v) \right\rvert  \leq {\nu_1\over \nu_0 - \nu_2} e^{-\nu_2 t} \underset{s\in [0,t]}{\sup}\left[e^{\nu_2 s}\lVert  h_s\rVert_{L^{\infty}_{\omega\nu^{-1}}(\OO^\eps)}\right] .
$$
where we remark that $S_\TTT *_\sigma h$ is in the sense of \eqref{def:convolution_sigma} taking $\AA = Id$,

\item[(K3)] \label{item:K4} Let $0\leq r\leq s\leq t$, $x\in\Omega^\eps$, $u, v_*\in\R^3$ and consider a function $\XX = \XX(t,s, r ,x,u, v_*)$ such that 
$$
\lvert\mathrm{det} \,\grad_{v_*}\XX (v_*)\rvert\geq C_0\lvert s-r\rvert^3 ,
$$ 
for some constant $C_0>0$. Then for any $m_1, m_2>0$, and every $\alpha>0$ there holds 
\begin{multline*}
\int_0^{t} \int_{\lvert v_*\rvert \leq m_1} \int_0^s e^{-\nu_0(t-r)} \int_{\lvert u\rvert\leq  m_2} \omega (u) \lv  h  (r,\XX,u) \rv \, \d u \d r \d v_*\d s 
    \leq D^2\, m_1^3\, m_2^3\, \alpha\,  t\,  e^{-\nu_0 t}\underset{s\in[0,t]}{\sup}\left[ e^{\nu_0s}\lVert  h_s\rVert_{L^{\infty}_{\omega}(\OO^{\eps})}\right]\\
    + Dm_1^{3/2}m_2^{3/2} \, \omega(m_2) \, t^2 \alpha^{-3/2}C_0^{-1/2} t^2e^{-\nu_0 t}\underset{s\in[0,t]}{\sup}\left[ e^{\nu_0 s}\lVert  h_s\rVert_{ L^2(\OO^\eps)}\right].
\end{multline*}
where we recall that $D>0$ is the volume of the $3-$dimensional ball of radius 1. 

\item[(K4)]\label{item:K5} For every $\alpha>0$ there are $M, \eta >0$ such that 
\bean
\int_{\R^3}  \lvert  h(t, x, u) \rvert (n_x\cdot u)_+ \d u &\leq& \alpha \lVert  h_t\rVert_{L^{\infty}_{\omega}(\OO^\eps)}+ \int_{\{\lvert u\rvert \leq M,\, \lvert n_x\cdot u\rvert >\eta\}}  \lvert  h(t, x, u)\rvert  (n_x\cdot u) \d u
\eean
\end{enumerate}
\end{lem}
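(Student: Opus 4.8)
My plan is to prove the four statements independently, since each concerns a different structural feature of the kernel $k$ and the transport semigroup $S_\TTT$. For \emph{(K1)}, the plan is to exploit the explicit form of $k(v,v_*)$ written above: multiply $Kh$ by $\varsigma(v)=e^{\zeta|v|^2}$ and estimate $\varsigma(v)k(v,v_*)\varsigma(v_*)^{-1}$. A Carleman-type computation (the same one underlying \cite[Theorem 7.2.1]{MR1307620} or \cite[Section 4]{MR3779780}) shows that this conjugated kernel is again of the form $C(|v-v_*|+|v-v_*|^{-1})e^{-c|v-v_*|^2-c'(|v|^2-|v_*|^2)^2/|v-v_*|^2}$ with a strictly negative exponent whenever $\zeta<1/2$, hence integrable in $v_*$ with a bound that tends to $0$ as one restricts to $|v|$ large, $|v_*|$ large, or $|v-v_*|$ small. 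So, given $N$, one chooses $m=m(N)$ so that the contribution of the complement of $\{|v|\le m,\ |v_*|\le m,\ |v-v_*|\ge 1/m\}$ has $L^\infty_\varsigma\to L^\infty$ norm at most $1/N$; on the retained set one crudely bounds the conjugated kernel by $c_k(|v-v_*|+|v-v_*|^{-1})$, which is exactly $k_m$. The elementary bound $k_m\le C_k m$ follows since on that set $|v-v_*|\le 2m$ and $|v-v_*|^{-1}\le m$.

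For \emph{(K2)}, this is a direct computation from the definition \eqref{def:convolution_sigma} with $\AA=\mathrm{Id}$. Writing $\omega(v)|S_\TTT*_\sigma h(t,x,v)| = \omega(v)\int_\sigma^t e^{-\nu(v)(t-s)}|h(s,x-(t-s)v,v)|\,ds$, I insert $1 = e^{\nu_2 s}e^{-\nu_2 s}$, bound $\omega(v)\nu(v)|h(s,x-(t-s)v,v)| \le \nu(v)\|h_s\|_{L^\infty_{\omega\nu^{-1}}(\OO^\eps)}$, and pull out $\sup_{s\in[0,t]}[e^{\nu_2 s}\|h_s\|_{L^\infty_{\omega\nu^{-1}}}]$. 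The remaining integral is $\int_\sigma^t \nu(v)e^{-\nu(v)(t-s)}e^{-\nu_2 s}\,ds \le e^{-\nu_2 t}\int_0^t \nu(v)e^{-(\nu(v)-\nu_2)(t-s)}\,ds \le e^{-\nu_2 t}\,\nu(v)/(\nu(v)-\nu_2)$, and \eqref{eq:Controlnu} gives $\nu(v)/(\nu(v)-\nu_2)\le \nu_1/(\nu_0-\nu_2)$ (using $\nu_2<\nu_0\le\nu(v)$ and monotonicity of $t\mapsto t/(t-\nu_2)$). For \emph{(K4)}, the statement is purely a matter of cutting the $u$-integral: $\int_{\R^3}|h(t,x,u)|(n_x\cdot u)_+\,du$ splits as the integral over $\{|u|\le M,\ |n_x\cdot u|>\eta\}$ plus the rest; on the rest one bounds $|h|\le \omega(u)^{-1}\|h_t\|_{L^\infty_\omega}$ and notes that $\int_{\{|u|>M\}\cup\{|n_x\cdot u|\le\eta\}}\omega(u)^{-1}(n_x\cdot u)_+\,du\to 0$ as $M\to\infty$, $\eta\to 0$ (since $\omega(u)^{-1}$ has at least polynomial decay of high degree and $(n_x\cdot u)_+\le|u|$), so this is $\le\alpha$ for suitable $M,\eta$.

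The main obstacle is \emph{(K3)}, the change-of-variables estimate. The idea is: split each inner integrand $\omega(u)|h(r,\XX,u)|$ according to whether $|h(r,\XX,u)|$ is pointwise controlled by its $L^\infty_\omega$ norm or instead one uses the Jacobian lower bound to change variables $v_*\mapsto \XX$ and invoke an $L^2$ bound. Concretely, write $\omega(u)|h(r,\XX,u)| \le \mathbf 1_{\{|h|\le \lambda\}}\,\omega(u)|h| + \mathbf 1_{\{|h|>\lambda\}}\,\omega(u)|h|$ for a threshold $\lambda$ to be optimized; the first piece is $\le \lambda\,\omega(m_2)$ times the measure of the domain, contributing (after the $v_*,u,r,s$ integrations against $e^{-\nu_0(t-r)}$) a term of order $m_1^3 m_2^3 \lambda\,\omega(m_2)\,t\,e^{-\nu_0 t}$-ish; the second piece, by Cauchy--Schwarz in $v_*$ and then the substitution $v_*\mapsto \XX$ with $|\det\grad_{v_*}\XX|\ge C_0|s-r|^3$, becomes $\le \omega(m_2)\,m_1^{3/2}\big(\int |h|^2\big)^{1/2}(C_0|s-r|^3)^{-1/2}\cdot(\text{something}/\lambda^{1/2})$ using $\mathbf 1_{\{|h|>\lambda\}}\le |h|/\lambda$ to gain the extra factor. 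Balancing the two pieces by the choice $\lambda \sim \alpha^{?}$ and carefully tracking the powers of $t$ coming from the $\int_0^t\int_0^s$ nesting and the $e^{-\nu_0(t-r)}$ weight (which one estimates by $e^{-\nu_0 t}e^{\nu_0 r}$ and then absorbs $e^{\nu_0 r}$ into the sup) yields the stated two-term bound with the factors $D^2 m_1^3 m_2^3\alpha\,t\,e^{-\nu_0 t}$ and $D m_1^{3/2}m_2^{3/2}\omega(m_2)t^2\alpha^{-3/2}C_0^{-1/2}t^2 e^{-\nu_0 t}$. The delicate points are getting the Jacobian hypothesis to apply on the relevant set (the map $\XX(\cdot)$ must be injective there, which one arranges by the smallness of $\eps$ in the surrounding argument, or handles by a covering), and bookkeeping the exact powers of $m_1,m_2,t,\alpha$ so they match the claimed inequality; I expect this is exactly where most of the work lies, essentially reproducing a lemma of this type from \cite{GuoZhou2024} adapted to the weighted setting.
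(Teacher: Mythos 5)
Your treatments of \emph{(K1)}, \emph{(K2)}, and \emph{(K4)} are essentially the paper's: conjugate the kernel by $\varsigma$, check the exponent remains strictly negative for $\zeta<1/2$ as in Guo's Lemma~3, cut by the set $A_m=\{|v|\le m,\ |v_*|\le m,\ |v-v_*|\ge 1/m\}$ for \emph{(K1)}; insert $e^{\nu_2 s}e^{-\nu_2 s}$, pull out $\nu(v)$ to use the $\omega\nu^{-1}$-norm, and integrate the exponential for \emph{(K2)}; and split $\{n_x\cdot u>0\}$ into a large-$|u|$ part, a small-$|n_x\cdot u|$ part, and the retained set for \emph{(K4)}. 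Those are fine.

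Your proposal for \emph{(K3)} is a genuinely different decomposition, and it has a gap that I do not think you can repair. You split the integrand by the level set $\{|h|\le\lambda\}$ versus $\{|h|>\lambda\}$ for a fixed threshold $\lambda$, and on the second piece you change variables $v_*\mapsto\XX$, pulling out $|\det\grad_{v_*}\XX|^{-1/2}\le(C_0|s-r|^3)^{-1/2}$. That factor is inside the remaining $\int_0^t\int_0^s\cdots\,\d r\,\d s$, and $\int_0^s(s-r)^{-3/2}\,\d r$ diverges. Your split does nothing to keep $s-r$ away from $0$, so the Jacobian lower bound is useless precisely where you need it. This is exactly why the paper splits by the \emph{time range} rather than by the size of $|h|$: it sets $I_1$ to be the integral over $r\in(s-\alpha,s)$ and $I_2$ over $r\in(0,s-\alpha)$. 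On $I_1$ the $r$-range has length $\le\alpha$ and one uses only the $L^\infty_\omega$ norm, which directly gives the first term $D^2m_1^3m_2^3\,\alpha\,t\,e^{-\nu_0 t}\sup[\cdots]$. On $I_2$ one has $s-r\ge\alpha$, hence the \emph{uniform} lower bound $|\det\grad_{v_*}\XX|\ge C_0\alpha^3$, and Cauchy--Schwarz in $(v_*,u)$ followed by the change of variables produces $(C_0\alpha^3)^{-1/2}\|h_r\|_{L^2(\OO^\eps)}$ with no divergent $r$-integral. Your question mark at ``$\lambda\sim\alpha^{?}$'' is in effect where the argument breaks: there is no single constant $\lambda$ that both makes the first term scale like $\alpha$ and tames the $(s-r)^{-3/2}$ in the second; the threshold in \emph{(K3)} must be in the variable $s-r$, not in $|h|$.

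Two small additional remarks. First, your bound for the $\{|h|>\lambda\}$ piece via ``$\mathbf 1_{\{|h|>\lambda\}}\le|h|/\lambda$'' followed by Cauchy--Schwarz actually yields $\lambda^{-1}\int|h|^2$, not a $\lambda^{-1/2}$ gain, so even the bookkeeping in that step is off. Second, the ``something by a covering'' hedge about injectivity of $\XX$ is not needed here: the change of variables is only invoked through the area formula as an inequality (a lower Jacobian bound upper-bounds the pushforward density), which does not require global injectivity on the ball $\{|v_*|\le m_1\}$.
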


\begin{rem}
It is worth emphasizing that the results from Lemma~\ref{lem:Kproperties} are independent of the geometry of the domain, so they hold under either Assumption \ref{item:RH1} or Assumption \ref{item:RH2}. 
\end{rem}

\begin{proof}[Proof of Lemma \ref{lem:Kproperties}] 
We proof each of the statements separately. 

\smallskip\noindent
\emph{Proof of \ref{item:K1}.} 
We observe that, using \cite[Lemma 3]{Guo09}, \cite{Grad1962} or \cite[Equation (2.17)]{MR1307620}, we have that 
\be\label{eq:Kcontrol3}
k(v,v_*) = \widetilde k(v,v_*) \exp\left(-{\lvert v\rvert^2 \over 4} + {\lvert v_*\rvert^2\over 4}\right) \quad \text{ with } \quad \widetilde k(v,v_*) \leq c_k \bar k(v,v_*),
\ee
for some  constant $c_k>0$, and where we have defined
$$
\bar k(v,v_*) = (\lvert v-v_*\rvert + \lvert v-v_*\rvert^{-1})   \exp\left( -{1\over 8} {(\lvert v_*\rvert^2 - \lvert v\rvert^2)^2\over \lvert v-v_*\rvert^2} - {1\over 8} \lvert v-v_*\rvert^2   \right).
$$
Furthermore, repeating the exact arguments used during the proof of \cite[Lemma 3]{Guo09} we have that for any $\zeta\in (0, 1/2)$, the quadratic form
$$
\QQQ: = -{1\over 8} {(\lvert v_*\rvert^2 - \lvert v\rvert^2)^2\over \lvert v-v_*\rvert^2} - {1\over 8} \lvert v-v_*\rvert^2  -{\lvert v\rvert^2 \over 4} + {\lvert v_*\rvert^2\over 4} + \zeta\lvert v\rvert^2 - \zeta \lvert v_*\rvert^2 ,
$$
is definite negative.
Thus there is $b>0$ such that
\be\label{eq:KdecayPart}
\int_{\R^3} \bar k(v,v_*) e^{-{\lvert v\rvert^2 \over 4} + {\lvert v_*\rvert^2\over 4}} \exp\left( {b \over 8} {(\lvert v_*\rvert^2 - \lvert v\rvert^2)^2\over \lvert v-v_*\rvert^2} + {b \over 8} \lvert v-v_*\rvert^2   \right) {\varsigma(v)\over \varsigma(v_*)} dv_* \leq {C_1\over 1+ \lvert v\rvert} ,
\ee
for some positive constant $C_1= C_1(b)$. 
We remark now that the above estimate implies that
\be
\bar k(v,v_*)e^{-{\lvert v\rvert^2 \over 4} + {\lvert v_*\rvert^2\over 4}}{\varsigma(v)\over \varsigma(v_*)} =  (\lvert v-v_*\rvert + \lvert v-v_*\rvert^{-1})   e^\QQQ \leq    (\lvert v-v_*\rvert + \lvert v-v_*\rvert^{-1}) \label{eq:Kcontrol5},
\ee
thus setting $A_m = \{\lvert v_*\rvert \leq m, \, \lvert v\rvert \leq m, \, \lvert v_*-v\rvert \geq {1/ m}\}$ for some $m>0$ to be chosen later, we compute
\bean
\varsigma(v) \lvert Kh(v) \rvert 
&\leq & c_k\lVert h\rVert_{L^\infty_{\varsigma}(\OO^\eps)} \int_{A^c_m} \bar k(v,v_*)e^{-{\lvert v\rvert^2 \over 4} + {\lvert v_*\rvert^2\over 4}}  {\varsigma(v)\over \varsigma(v_*)} dv_* + c_k\int_{A_m} (\lvert v-u\rvert + \lvert v-u\rvert^{-1}) \varsigma(v_*) \lvert h(v_*) \rvert dv_*\\
&\leq & {c_kC_1\over 1+m}\,  \lVert h\rVert_{L^\infty_{\varsigma}(\OO^\eps)} + \int_{\R^3} k_m(v,v_*) \varsigma(v_*) \lvert h(v_*) \rvert dv_* \leq  {1\over N} \lVert h\rVert_{L^\infty_{\varsigma}(\OO^\eps)} + K_m (\lvert  \varsigma(v) h(v)\rvert ) 
\eean
where we have used \eqref{eq:Kcontrol3} on the first inequality, \eqref{eq:Kcontrol5} to obtain the second, we have used \eqref{eq:KdecayPart} and the definition of $k_m$ on the third inequality, and we have chosen $m= \max( Nc_kC_1, 1)$ on the last line.

\medskip\noindent
\emph{Proof of \ref{item:K3}:}  We compute 
\bean
\left\lv \int_0^t S_{\TTT}(t-s)    h(s, x,v)   \omega (v)  \d s\right\rv 
 &\leq& \underset{s\in [0,t]}{\sup}\left[e^{\nu_2 s}\lVert  h_s \rVert_{L^{\infty}_{\omega\nu^{-1}}(\OO^\eps)}\right]   \int_0^t \nu(v)  e^{-\nu(v) (t-s) - \nu_2 \, s } \d s\\
   &\leq&{\nu(v) \over \nu(v) - \nu_2} e^{-\nu_2 \, t} \underset{s\in [0,t]}{\sup}\left[e^{\nu_2 s}\lVert  h_s\rVert_{L^{\infty}_{\omega\nu^{-1}}(\OO^\eps)}\right].
 \eean
At last we remark that \eqref{eq:Controlnu} implies that ${\nu(v) / (\nu(v) - \nu_2)} \leq {\nu_1 / (\nu_0-\nu_2)}$.

\medskip\noindent
\emph{Proof of \ref{item:K4}:} We split the integral on the LHS into $I_1 + I_2$ where 
\bean
I_1 &:=&\int_0^{t} \int_{\lvert v_*\rvert \leq m_1} \int_{s-\alpha}^s e^{-\nu_0(t-r)} \int_{\lvert u\rvert\leq  m_2} \omega (u) \lv h (r,\XX,u) \rv  
\leq D^2 m_1^3\, m_2^3\, \alpha\,  t\,  e^{-\nu_0t}\underset{s\in[0,t]}{\sup} \left[ e^{\nu_0 s}\lVert g_s\rVert_{L^{\infty}_{\omega}(\OO^\eps)}\right],
\eean
and
$$
I_2 :=\int_0^{t} \int_{\lvert v_*\rvert \leq m_1} \int_0^{s-\alpha} e^{-\nu_0(t-r)} \int_{\lvert u\rvert\leq  m_2} \omega (u) \lv h (r,\XX,u) \rv \, \d u\d r\d v_* \d s.
$$
To control $I_2$ we perform the change of variables $\XX \to x$, and we remark that
$$
\lvert\mathrm{det} \grad_{v_*}\XX(v_*)\rvert \geq C_0\lvert s-r\rvert^3> C_0\alpha^3>0,
$$ 
thus we compute
\begin{eqnarray*}
I_2 
&\leq& D m_1^{3/2}m_2^{3/2} \omega (m_2) \, t^2 e^{-\nu_0 t} \underset{0\leq r\leq s-\alpha<s\leq t}{\sup}\left[ e^{\nu_0 r} \left(\int_{\lvert v_*\rvert \leq m_1}\int_{\lvert u\rvert \leq m_2}\lvert  h(r,\XX,u)\rvert^2  \, \d u\d v_*\right)^{1/2}\right] \\
\\
&\leq& D m_1^{3/2}m_2^{3/2} \omega (m_2) \, t^2 e^{-\nu_0 t}   \underset{0\leq r\leq s-\alpha<s\leq t}{\sup}\left[ e^{\nu_0 r} \left(\int_{ x  \in\Omega^\eps}\int_{\lvert u\rvert \leq m_2}\lvert  h(r,x,u)\rvert^2   {1\over \lvert\mathrm{det} \grad_{v_*}X(v_*)\rvert }   \d u\d x\right)^{1/2}\right] \\
\\
&\leq&  Dm_1^{3/2}m_2^{3/2}\omega (m_2) \, t^2 C_0^{-1/2}\alpha^{-3/2} e^{-\nu_0 t}\underset{s\in[0,t]}{\sup} \left[ e^{\nu_0 s}\lVert  h_s\rVert_{ L^2(\OO^\eps)}\right].
\eean
where we have used the Cauchy-Schwartz inequality in the first line.

\medskip\noindent
\emph{Proof of \ref{item:K5}:} We fix $M, \eta >0$ to be defined later and we split 
 \begin{equation*}
    \{n_x \cdot u>0\} = \{\lvert u\rvert > M,\ n_x\cdot u>0 \}\cup \{\lvert u\rvert \leq M,\ \lvert n_x\cdot u\rvert<\eta\}\cup \{\lvert u\rvert \leq M,\ \lvert n_x\cdot u\rvert\geq\eta\}.
\end{equation*}
On the one hand, we compute 
\bean
\int_{\{\lvert u\rvert > M,\ n_x\cdot u>0\}} \lvert  h(t, x,u)\rvert \lvert n_{x}\cdot u\rvert du &\leq& \lVert  g_t\rVert_{L^{\infty}_{\omega}(\OO^\eps)} \int_{\{\lvert u\rvert > M,\ n_x\cdot u>0\}} \omega^{-1}(u) \lvert n_{x}\cdot u\rvert du\\
&\leq& o(M) \, \lVert h_t \rVert_{L^{\infty}_{\omega}(\OO^{\eps})} , 
\eean
where $o(M) \to 0$ as $M\to \infty$. Similarly, on the set $\{\lvert u\rvert \leq M,\ \lvert n(x_1)\cdot u\rvert<\eta\}$ we compute
\bean
 \int_{\{\lvert u\rvert \leq M,\ \lvert n_x\cdot u\rvert<\eta\}} \lvert  h(t,x,u)\rvert \lvert n_x \cdot u\rvert du &\leq& \eta \, \lVert  h_t\rVert_{L^{\infty}_{\omega}(\OO^\eps)}  \int_{\lvert u\rvert \leq M} \omega^{-1}(u)du \\
 &\leq& \eta \, M^3 D   \lVert  h_t\rVert_{L^{\infty}_{\omega}(\OO^\eps)}.
\eean
We take then $M>0$ large enough such that $o(M) \leq \alpha/2$
and we choose $\displaystyle \eta = {\alpha / (2M^3D) }$. This concludes the proof of \ref{item:K5} and the proof of Lemma~\ref{lem:Kproperties}. 
\end{proof}

Moreover, we also have the following lemma in the spirit of \ref{item:K4}. 

\begin{lem}\label{lem:Specular_Regularization}
Let $0\leq r\leq s\leq t$, $x\in\Omega^\eps$, $u,v_*\in\R^3$ and consider a function $\XX = \XX(t,s, r ,x,u, v_*)$ such that 
$$
\mathrm{det} \,\grad_{v_*}\XX (v_*) = C_0 \lvert s-r\rvert^3 + C_1 \eps,
$$ 
for some constants $C_0, C_1 >0$. 
Then for any $m_1, m_2>0$, and every $\alpha>0$ there is a constructive $\eps_U = \eps_U (\alpha, C_0, C_1):= \lv C_0\rv\alpha^3 /(2\lv C_1\rv)>0$ such that for every $\eps\in (0,\eps_U)$ there holds
\begin{multline*}
\int_0^{t} \int_{\lvert v_*\rvert \leq m_1} \int_0^s e^{-\nu_0(t-r)} \int_{\lvert u\rvert\leq  m_2} \omega (u) \lv  h  (r,\XX,u) \rv \, \d u \d r \d v_*\d s \leq D^2\, m_1^3\, m_2^3\, \alpha\,  t\,  e^{-\nu_0 t}\underset{s\in[0,t]}{\sup}\left[ e^{\nu_0s}\lVert  h_s\rVert_{L^{\infty}_{\omega}(\OO^{\eps})}\right]\\
    + 2^{1/2} Dm_1^{3/2}m_2^{3/2} \, \omega(m_2) \, t^2 \alpha^{-3/2} \lv C_0\rv ^{-1/2} t^2e^{-\nu_0 t}\underset{s\in[0,t]}{\sup}\left[ e^{\nu_0 s}\lVert  h_s\rVert_{ L^2(\OO^\eps)}\right].
\end{multline*}
where we recall that $D>0$ is the volume of the $3-$dimensional ball of radius 1. 
\end{lem}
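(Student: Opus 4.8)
The plan is to deduce Lemma~\ref{lem:Specular_Regularization} from \ref{item:K4} of Lemma~\ref{lem:Kproperties} by a smallness-of-$\eps$ argument: the only difference between the two statements is that the Jacobian lower bound $\lvert \mathrm{det}\,\grad_{v_*}\XX\rvert \geq C_0 \lvert s-r\rvert^3$ has been replaced by the \emph{exact} expression $\mathrm{det}\,\grad_{v_*}\XX = C_0\lvert s-r\rvert^3 + C_1\eps$, which looks weaker near $s=r$ but is in fact controllable once $\eps$ is small relative to $\alpha$. The strategy is therefore to retrace the proof of \ref{item:K4} verbatim: split the left-hand side as $I_1 + I_2$, where $I_1$ integrates over $r \in (s-\alpha, s)$ and $I_2$ over $r \in (0, s-\alpha)$. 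The bound on $I_1$ is literally the same as in \ref{item:K4} (it uses only the volume of the balls $\{\lvert v_*\rvert\leq m_1\}$, $\{\lvert u\rvert\leq m_2\}$, the weight bound $\omega\leq$ const on $\{\lvert u\rvert \le m_2\}$, and the factor $e^{-\nu_0 t}$ together with $\sup_s e^{\nu_0 s}\lVert h_s\rVert_{L^\infty_\omega}$), so it yields the first term on the right-hand side unchanged.

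For $I_2$ one again performs the change of variables $\XX \mapsto x$ at fixed $s,r,u$; the Jacobian factor produced is $1/\lvert \mathrm{det}\,\grad_{v_*}\XX(v_*)\rvert$. On the region $r \le s-\alpha$ we have $\lvert s-r\rvert \ge \alpha$, hence $C_0 \lvert s-r\rvert^3 \ge C_0\alpha^3$, so $\mathrm{det}\,\grad_{v_*}\XX = C_0\lvert s-r\rvert^3 + C_1\eps \ge C_0\alpha^3 + C_1\eps > 0$; in particular the map is an injective change of variables there and the determinant is positive. The point of choosing $\eps < \eps_U := \lvert C_0\rvert \alpha^3/(2\lvert C_1\rvert)$ is that then $\lvert C_1\rvert\eps < \tfrac12\lvert C_0\rvert\alpha^3 \le \tfrac12 C_0\lvert s-r\rvert^3$, whence
\[
\mathrm{det}\,\grad_{v_*}\XX \;=\; C_0\lvert s-r\rvert^3 + C_1\eps \;\ge\; C_0\lvert s-r\rvert^3 - \lvert C_1\rvert \eps \;\ge\; \tfrac12 C_0 \lvert s-r\rvert^3,
\]
so that $1/\lvert \mathrm{det}\,\grad_{v_*}\XX\rvert \le 2 C_0^{-1}\lvert s-r\rvert^{-3} \le 2 C_0^{-1}\alpha^{-3}$ on $\{r \le s-\alpha\}$. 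This is exactly the estimate $\lvert \mathrm{det}\,\grad_{v_*}\XX\rvert \ge \tfrac12 C_0 \lvert s-r\rvert^3$, i.e.\ the hypothesis of \ref{item:K4} with $C_0$ replaced by $C_0/2$. Plugging this into the same Cauchy--Schwarz estimate used for $I_2$ in the proof of \ref{item:K4} (first bound $\omega(u)\le \omega(m_2)$ on the ball, then Cauchy--Schwarz in $(v_*,u)$, then the change of variables, then bound the supremum) produces the $L^2$ term on the right-hand side, with the constant $C_0^{-1/2}$ of \ref{item:K4} replaced by $(C_0/2)^{-1/2} = 2^{1/2}C_0^{-1/2}$, which matches the claimed constant $2^{1/2}\lvert C_0\rvert^{-1/2}$.

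Assembling $I_1 + I_2$ gives the stated two-term bound. The only genuinely new ingredient compared to \ref{item:K4} is the elementary observation that the perturbation $C_1\eps$ is absorbed into half of the main term once $\eps < \eps_U$; everything else is a direct transcription. The main (mild) obstacle is simply bookkeeping: one must be careful that on the range $r \in (0,s-\alpha)$ the lower bound $\lvert s-r\rvert \ge \alpha$ really does hold and hence the threshold $\eps_U$ depending only on $\alpha, C_0, C_1$ (and not on $s,r,t$) suffices uniformly; once that is checked, the reduction to \ref{item:K4} is immediate. I would therefore write the proof as: ``We argue exactly as in the proof of \ref{item:K4}. Splitting the integral as $I_1+I_2$ as there, the bound on $I_1$ is unchanged. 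For $I_2$, on $\{r\le s-\alpha\}$ we have $\lvert s-r\rvert\ge\alpha$, so for $\eps<\eps_U=\lvert C_0\rvert\alpha^3/(2\lvert C_1\rvert)$ one has $\mathrm{det}\,\grad_{v_*}\XX\ge \tfrac12 C_0\lvert s-r\rvert^3$; applying \ref{item:K4} with $C_0$ replaced by $C_0/2$ yields the second term. Summing concludes the proof.''
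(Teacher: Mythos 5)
Your proof is correct and matches the paper's own (sketched) argument: the paper also splits the integral into $I_1+I_2$ exactly as in \ref{item:K4} and uses $\eps < \eps_U$ to absorb the perturbation, deducing $\lv \det\grad_{v_*}\XX\rv \geq \tfrac12\lv C_0\rv\alpha^3$ on $\{r\leq s-\alpha\}$ and then repeating the Cauchy--Schwarz computation of \ref{item:K4}. Your pointwise version $\det\grad_{v_*}\XX\geq \tfrac12 C_0\lv s-r\rv^3$ is marginally stronger but amounts to the same thing, since the proof of \ref{item:K4} only ever uses the uniform lower bound $C_0\alpha^3$ on that region (your closing ``applying \ref{item:K4} with $C_0$ replaced by $C_0/2$'' should be read as ``repeating the proof of \ref{item:K4}'' rather than a black-box application, since the modified Jacobian hypothesis is only verified for $r\leq s-\alpha$, but your preceding discussion makes this intent clear).
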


\begin{rem}
We also notice that Lemma~\ref{lem:Specular_Regularization} is independent of either geometrical Assumption \ref{item:RH1} or \ref{item:RH2}.  
\end{rem}

\begin{proof}[Proof of Lemma~\ref{lem:Specular_Regularization}]
The proof follows the main ideas of that of \ref{item:K4}, thus we only sketch it. We split the integral on the LHS into $I_1 + I_2$ as defined during the proof of \ref{item:K4}. To control $I_2$ we remark that if $s-r \geq \alpha$ and $\eps\in (0, \eps_U)$, with $\eps_U$ as defined above, there holds
$$
\lv \mathrm{det} \,\grad_{v_*}\XX (v_*) \rv \geq \lv C_0\rv  \alpha ^3 - \lv C_1\rv  \eps\geq { \lv C_0\rv \over 2} \alpha ^3 + \left( { \lv C_0\rv \over 2} \alpha ^3 -\lv C_1\rv  \eps\right) > { \lv C_0\rv \over 2} \alpha ^3.
$$
The conclusion follows by repeating the arguments used on the proof of \ref{item:K4}. 
\end{proof}

\subsection{Regularizing effect of $K$}\label{ssec:RegularK} We will now use the previous lemmas to prove the regularizing character of the interplay between the free transport semigroup $S_\TTT$ and the operator $K$. 

\begin{prop} \label{prop:Kregularization}
Consider Assumption \ref{item:RH1} to hold, $\omega_1\in \WWWW_1$ a strongly confining admissible weight function, and let $ f$ be a solution of Equation~\eqref{eq:PLRBE_T}. For every $\lambda>0$ there is $\eps_2 = \eps_2(\lambda, T) $ such that for every $\eps\in (0, \eps_3)$ and every $\nu_2\in (0, \nu_0)$ we have that for every $(t, x, v)\in  \UU_T^\eps$ with $v\notin S_x$ there holds 
\begin{multline*}
\omega_1 (v) \lvert S_{\TTT} *_\sigma K  f (t, x,v) \rvert  \leq \lambda (t + t^2)  e^{-\nu_0t}\underset{s\in[0,t]}{\sup}\left[ e^{\nu_0s}\lVert  f_s\rVert_{L^{\infty}_{\omega_1}( \OO^\eps)}\right] 
+  Ct e^{-\nu_0 t} \lVert  f_0\rVert_{L^{\infty}_{\omega_1}(\OO^\eps)}  \\
+  Ct^2 e^{-\nu_0 t}\underset{s\in[0,t]}{\sup}\left[ e^{\nu_0 s}\lVert  f_s\rVert_{ \HH}\right] 
 + C (1+t) e^{-\nu_2 t}\underset{s\in [0,t]}{\sup}\left[e^{\nu_2 s}\lVert  G_s\rVert_{L^{\infty}_{\omega_1\nu^{-1}}(\OO^\eps)}\right]  ,
\end{multline*}
for a constant $C= C(\lambda)>0$, and any $\sigma\in [0,t]$ such that $x-v(t-\sigma )\in \bar\Omega^\eps$. Furthermore, there holds  $C(\lambda) \lesssim \lambda^{-p}$ for some constant $p>0$.
\end{prop}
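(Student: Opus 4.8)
The plan is to estimate $\omega_1(v)\,S_\TTT *_\sigma K f(t,x,v)$ by applying the Duhamel formula to $f$ itself inside the convolution, thereby trading the single factor of $K$ for the four source terms appearing in Equation~\eqref{eq:PLRBE_T}. First I would write, using \eqref{def:convolution_sigma} and \eqref{def:Transport_semigroup},
\[
S_\TTT *_\sigma K f(t,x,v) = \int_\sigma^t e^{-\nu(v)(t-s)} (Kf)(s, x-(t-s)v, v)\,\ds,
\]
and then expand $f(s,\cdot,\cdot)$ along its own characteristics back to either the initial time or the first boundary hit, via the mild formulation of \eqref{eq:PLRBE_T}: for $v\notin S_x$ the backwards trajectory hits the boundary transversally, so the Duhamel expansion is
\[
f(s,y,u) = \Ind_{t_1<0}\,S_\TTT(s)f_0 + \Ind_{t_1\ge0}\,\big(\text{boundary term at }(t_1,x_1,v_1)\big) + S_\TTT*_{\max(0,t_1)}(Kf + G)(s,y,u),
\]
evaluated at $(y,u)=(x-(t-s)v,v)$. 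Substituting this into the $K$-integral produces: (a) a term with $f_0$, handled by the crude bound $\|Kf_0\|\lesssim\|f_0\|$ and giving the $Cte^{-\nu_0 t}\|f_0\|_{L^\infty_{\omega_1}}$ contribution; (b) a boundary term, which after applying the Maxwell reflection operator $\RRR$ and \ref{item:K5} from Lemma~\ref{lem:Kproperties} reduces either to a controlled incoming flux integral (for the localized-in-velocity part) or to an $\alpha$-small multiple of the sup-norm; (c) a $G$-term, to which I apply \ref{item:K3} to get the gain of $\nu^{-1}$ and the factor $\tfrac{\nu_1}{\nu_0-\nu_2}e^{-\nu_2 t}\sup[e^{\nu_2 s}\|G_s\|_{L^\infty_{\omega_1\nu^{-1}}}]$, contributing the last line; and (d) a double-$K$ term $S_\TTT *_\sigma K (S_\TTT * K f)$, which is the crucial one.

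For the double-$K$ term the idea is to localize both velocity variables using \ref{item:K1}: replace each $K$ by $\tfrac1N\|\cdot\|_{L^\infty_{\varsigma}} + K_m$, where $\varsigma$ is chosen between the weight $\omega_1\in\WWWW_1$ and the Gaussian barrier (this is why strong confinement $\zeta\in(1/4,1/2)$ is needed — it leaves room for the $e^{\pm|v|^2/4}$ factors in $k$ and still dominates $\omega_1$). The $\tfrac1N$ pieces contribute the $\lambda(t+t^2)e^{-\nu_0 t}\sup[e^{\nu_0 s}\|f_s\|_{L^\infty_{\omega_1}}]$ term after choosing $N$ large depending on $\lambda$. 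The genuine $K_m K_{m'}$ piece has compactly supported, bounded kernels $k_m\le C_k m$, $k_{m'}\le C_k m'$, so we are left with an integral of the form
\[
\int_\sigma^t e^{-\nu_0(t-s)}\!\!\int_{|v_*|\le m}\int_{\max(0,t_1)}^s e^{-\nu_0(s-r)}\!\!\int_{|u|\le m'}\omega_1(u)\,|f(r,\XX,u)|\,\d u\,\dr\,\d v_*\,\ds,
\]
where $\XX$ is the position reached by following the characteristic from $(s, x-(t-s)v, v_*)$ back to time $r$; the key geometric fact, coming from the stretching Lemma~\ref{lemma3.2:GeneralRefl} (which forces at most one bounce once $\eps<\eps_R(\eta,m,T)$, and a corresponding Jacobian lower bound from the specular-reflection change of variables), is that $|\det\grad_{v_*}\XX|\gtrsim|s-r|^3$ on the relevant set, possibly after absorbing an $O(\eps)$ correction as in Lemma~\ref{lem:Specular_Regularization}. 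Then I apply \ref{item:K3} (or Lemma~\ref{lem:Specular_Regularization}) with the parameter $\alpha$: the first output gives an $\alpha m^3 m'^3$-small multiple of $t\,e^{-\nu_0 t}\sup[e^{\nu_0 s}\|f_s\|_{L^\infty_{\omega_1}}]$, absorbed into $\lambda$ by choosing $\alpha$ small (this is what fixes the dependence $C(\lambda)\lesssim\lambda^{-p}$, since $m=m(N)=m(\lambda^{-1})$ grows and $\alpha$ shrinks polynomially in $\lambda$), while the second output is the $L^2$ term giving $Ct^2 e^{-\nu_0 t}\sup[e^{\nu_0 s}\|f_s\|_{\HH}]$, recalling $\|f_s\|_{L^2(\OO^\eps)}\le C\|f_s\|_{\HH}$ since $\MM^{-1/2}\gtrsim 1$.

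The main obstacle, and where the bulk of the work lies, is the geometric/measure-theoretic analysis of the set of trajectories and the Jacobian estimate for $\grad_{v_*}\XX$: one must track the backwards characteristic through possibly one specular or diffuse bounce, verify that away from the singular set $S_x$ and for $\eps$ small enough (via Lemma~\ref{lemma3.2:GeneralRefl}) the map $v_*\mapsto\XX$ is a nondegenerate change of variables with the stated determinant lower bound, and handle the diffuse part of $\RRR$ separately (there the post-collision velocity is an independent variable $v_0^*$ distributed according to $\MMM(v_0^*)(n\cdot v_0^*)_+$, so the corresponding term is controlled directly by \ref{item:K5} together with the $L^\infty$ bound on the outgoing trace, not by a change of variables). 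A secondary technical point is to carefully choose the ordering of the parameters — first $\lambda$, then $N=N(\lambda)$ and hence $m$, then $\alpha=\alpha(\lambda)$, then the thresholds $M,\eta$ from \ref{item:K5}, and finally $\eps_2=\eps_2(\lambda,T)$ small enough for the stretching lemma to apply with these $M,\eta,T$ — so that all the "bad" contributions are genuinely absorbed into the $\lambda$-coefficient while keeping $C(\lambda)$ polynomial in $\lambda^{-1}$. Everything else (the $\nu^{-1}$-gain for $G$, the $f_0$ term, bounding $L^2$ by $\HH$) is routine given the earlier lemmas.
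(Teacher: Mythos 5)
Your proposal is correct and follows essentially the same route as the paper: Duhamel-expanding $f$ inside the $S_\TTT *_\sigma K$ convolution, localizing each $K$ via \ref{item:K1}, treating the compact double-$K$ piece by the Jacobian/$L^2$ change-of-variables estimate (you cite \ref{item:K3} where you mean \ref{item:K4}), splitting the boundary term through the Maxwell condition into a diffuse part handled by \ref{item:K5} and a specular part handled by Lemmas~\ref{lemma3.2:GeneralRefl}, \ref{lem:Specular_Jacobian} and \ref{lem:Specular_Regularization}, and finally ordering the parameters $\lambda \to N,m \to \alpha \to M,\eta \to \eps_2$ so the small pieces absorb into $\lambda$ and $C(\lambda)\lesssim\lambda^{-p}$. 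The one step you pass over is the paper's explicit angular localization $|n_{x_s}\cdot v_*|\geq\alpha_0$ (and again at $x_1$) inserted before each Duhamel step, which is what guarantees the stretching lemma can be invoked on the boundary term with a fixed transversality threshold, but this is fully in the spirit of what you describe.
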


\begin{proof}[Proof of Proposition~\ref{prop:Kregularization}] We fix $(t, x, v)\in  \UU_T^\eps$ with $v\notin S_x$, we set 
$$
\IIII:=  \omega_1 (v) \lvert S_{\TTT} *_\sigma K  f (t, x,v) \rvert  = \omega_1 (v) \left\lvert \int_{\sigma}^t S_{\TTT}(t-s) K  f (s, x,v) \d s \right\rvert,
$$
and we split the proof into four steps.

\medskip\noindent
\emph{Step 1. (A first control on $\IIII$)} By using \ref{item:K1} we deduce that for every $N_1>0$ there is $m_1(N_1)>0$ such that
\be \label{eq:STKN1}
\IIII \leq {1\over N_1} t e^{-\nu_0t}\underset{s\in [0, t]}{\sup}\left[e^{\nu_0 s}\lVert  f_s\rVert_{L^{\infty}_{\omega_1}(\OO^\eps)} \right]+ S_{\TTT}*_\sigma K_{m_1}\left(  \omega_1(v)  \lvert f(t, x,v)\rvert \right) ,
\ee
where we recall that $K_{m_1}$ has been defined in \ref{item:K1}.
Moreover, for each $s\in [\sigma, t]$ we denote $x_s:= y-v(t -s)$, and for any constant $\alpha_0>0$ we define $k_{0,m_1} = \Ind_{\{ \lvert n(x_s)\cdot v_*\rvert  \geq \alpha_0\}}k_{m_1}$ and the modified non-local operator 
\be\label{eq:DefKm1}
K_{0,m_1} h (v)= \int_{\R^3} k_{0,m_1} (v, v_*) h(v_*) \d v_*.
\ee
We then have that $S_{\TTT}*_\sigma K_{m_1} \left( \omega_1(v) \lvert f(t,x,v) \rvert\right)
= \IIII_0 + \widetilde \IIII $ where
 $$
\IIII_0:= 
 \int_\sigma^t e^{-\nu(v)(t-s)}\int_{\R^3} k_{0, m_1}(v, v_*)\omega_1(v_*) \lvert  f(s, x_s, v_*) \rvert  \d v_*\ds,
$$
and
\bean
\widetilde \IIII &:=& \int_0^t e^{-\nu (v) (t-s)} \int_{\lvert v_*\rvert \leq m_1, \lvert n_{x_s} \cdot v_*\rvert \leq \alpha_0} k_{m_1} (v,v_*)\omega_1 (v) \lvert  f (s, x_s ,v_*) \rvert \d v_* \ds \\
    &\leq &  C_km_1 \int_0^{t} e^{-\nu_0(t-s)}\lVert  f_s\rVert_{L^{\infty}_{\omega_1}(\OO^\eps)}\d s \int_{-\alpha_0}^{\alpha_0}\d v_{*}^{par}\int_{\lv v_*^{\perp}\rv \leq m_1}\d v_{*}^\perp\\
    &\leq&  2C_kDm_1^{4}\alpha_0 t e^{-\nu_0t} \underset{s\in[0,t]}{\sup}\left[ e^{\nu_0 s}\lVert  f_s\rVert_{L^{\infty}_{\omega_1}(\OO^\eps)}\right]
\end{eqnarray*}
where we have used the change of variables $v_{*}^{par} = [n(x_s)\cdot v_*]n(x_s)$ and $v_{*}^\perp = v_*-v_*^{par}$ to obtain the second line together with \eqref{eq:defkm}. The above computations together with \eqref{eq:STKN1} give that
$$
\IIII \leq \left( {1\over N_1}   + 2C_kDm_1^{4}\alpha_0  \right)t e^{-\nu_0t}\underset{s\in [0, t]}{\sup}\left[e^{\nu_0 s}\lVert  f_s\rVert_{L^{\infty}_{\omega_1}(\OO^\eps)} \right] + \IIII_0.
$$

We remark now that \cite[Lemma 17]{Guo09} gives that $\Sigma_0^\eps$ and $S_{x_s}$ have Lebesgue measure zero. Due to this fact, we may rewrite $\IIII_0$ being integrated over the set $v_* \in \R^3 \setminus S_{x_s}$. This implies that, since $v\notin S_x$ and $v_*\notin S_{x_s}$, we may define $(s_1, x_1, v_1)$, with $(x_1, v)\notin \Sigma_0^\eps$, as the point of the first bounce against the boundary through the backwards trajectory starting at $(s,x_s, v_*)$, and which is given by \eqref{eq:BackwardsTrajectory}. In particular, we remark that  $x_1 = x_s-v_*(s- s_1)$. 

Moreover, it is worth remarking that when $s_1\leq 0$ we reach the initial time $t=0$ along the backwards trajectory, on the contrary if $s_1 >0$ then along the backwards trajectory we reach the boundary at the time $s-s_1$. 
Using the Duhamel formula we deduce then that $\IIII_0 \leq  \IIII_1 + \IIII_2 + \IIII_3 +\IIII_4$,
where we have defined 
\bean
\IIII_1 &:=& \int_\sigma^t e^{-\nu(t-s)}\int_{\R^3} k_{0, m_1}  (v,v_*) \,  \omega_1 (v_*)\left\lvert   S_{\TTT}(s)    f_0 (x_s, v_*)    \right\rvert  \d v_*\d s,\\
\IIII_2 &:=& \int_\sigma^t e^{-\nu(t-s)}\int_{\R^3} k_{0, m_1} (v,v_*) \, \omega_1(v_*) \left\lvert    S_{\TTT}*_{\max (0, s_1)} K   f(s, x_s, v_*) \right\rvert  \d v_*\d s,\\
\IIII_3 &:=& \int_\sigma^t e^{-\nu(t-s)}\int_{\R^3} k_{0, m_1}(v,v_*) \, \omega_1(v_*)  \left\lvert    S_{\TTT}*_{\max(0,s_1)}  G(s, x_s, v_*) \right\rvert  \d v_*\d s,\\
\IIII_4 &:=&  \int_\sigma^t e^{-\nu(t-s)}\int_{\R^3} k_{0, m_1}(v,v_*)  \,  e^{-\nu(v_*)(s-s_1)} \, \omega_1(v_*) \left\lvert     f(s_1, x_1, v_*) \right\rvert  \d v_*\d s,
\eean
and we will control now each term separately. Using the very definition of $k_{m_1}$ given by \eqref{eq:defkm} together with \ref{item:K3} we deduce that for every $\nu_2\in (0, \nu_0)$ we have
\beqn
\IIII_1 + \IIII_3 \leq  C_k D m_1^{4} t e^{-\nu_0 t} \lVert   f_0\rVert_{L^{\infty}_{\omega_1}(\OO^\eps)} + C_k m_1^{4}D {\nu_1 t \over \nu_0 - \nu_2} e^{-\nu_2t}  \underset{s\in [0,t]}{\sup}\left[e^{\nu_2 s}\lVert   G_s\rVert_{L^{\infty}_{\omega_1\nu^{-1}}(\OO^\eps)}\right] .
\eeqn
Using now \ref{item:K1} we deduce that for any $N_2>0$ there is $m_2(N_2)>0$ such that
\beqn
\IIII_2  \leq   C_k m_1^{4} D{ t\over N_2} e^{-\nu_0 t}\underset{s\in [0, t]}{\sup}\left[e^{\nu_0 s}\lVert   f_s\rVert_{L^{\infty}_{\omega_1}(\OO^\eps)} \right]  + \IIII^K,
\eeqn
where we have defined
\beqn
\IIII^K :=  (C_k)^2m_1m_2 \int_0^{t} \underset{\lvert v_*\rvert \leq m_1} \int \int_0^s e^{-\nu_0(t-r)}  \underset{\lvert u\rvert\leq  m_2}\int     \omega_1(u) \, \lvert f (r,x_s-(s-r)u ,u) \rvert \, \d u\d r\d v_*\d s,
\eeqn
and since 
$$
\lvert\mathrm{det} \grad_{v_*} (x_s-(s-r)u)\rvert  = \lvert\mathrm{det} \grad_{v_*} (x-(t-s)v_*-(s-r)u)\rvert = \lvert  t - s\rvert^3
$$ 
we can apply \ref{item:K4} and it yields that for every $\alpha_1 >0 $ there holds
\begin{multline*}
\IIII^K\leq  C_k^2 D^2 m_1^{4}m_2^{4}\alpha_1 t e^{-\nu_0t}\underset{s\in[0,t]}{\sup}\left[ e^{\nu_0s}\lVert   f_s\rVert_{L^{\infty}_{\omega_1}(\OO^\eps)}\right]\\
 + C_k^2Dm_1^{1+3/2}m_2^{1+3/2}\, \omega_1(m_2) \alpha_1^{-3/2} t^2e^{-\nu_0 t}\underset{s\in[0,t]}{\sup}\left[ e^{\nu_0 s}\lVert   f_s\rVert_{{\HH}}\right].
\end{multline*}
Altogether we have obtained that
\bean
\IIII&\leq& \left( {1\over N_1}  + 2C_kDm_1^{4}\alpha_0+  {C_kD m_1^{4}\over N_2}+ C_k^2D^2m_1^{4}m_2^{4} \alpha_1 \right)   t e^{-\nu_0t}\underset{s\in [0, t]}{\sup}\left[e^{\nu_0 s}\lVert   f_s\rVert_{L^{\infty}_{\omega_1}(\OO^\eps)} \right] \\
&&+ C_k D m_1^{4} t e^{-\nu t} \lVert   f_0\rVert_{L^{\infty}_{\omega_1}(\OO^\eps)} 
 + C_kDm_1^{4}{\nu_1t \over \nu_0 - \nu_2}  e^{-\nu_2 t}\underset{s\in [0,t]}{\sup}\left[e^{\nu_2 s}\lVert   G_s\rVert_{L^{\infty}_{\omega_1\nu^{-1}}(\OO^\eps)}\right] \\
&&+C_k^2Dm_1^{1+3/2}m_2^{1+3/2} \omega_1(m_2) \alpha_1^{-3/2} t^2 e^{-\nu_0 t}\underset{s\in[0,t]}{\sup}\left[ e^{\nu_0 s}\lVert   f_s\rVert_{{\HH}}\right]   +\IIII_4,
\eean
where there is only left to control the boundary term $\IIII_4$. We remark that
\beqn
\IIII_4  \leq \int_\sigma^t  \underset{\vert v_*\rvert \leq m_1, \, \lvert n(x_s)\cdot v_* \rvert\geq \alpha_0}\int  e^{-\nu_0 (t-s_1)}  k_{0,m_1} (v,v_*) \omega_1(v_*) \lvert   f(s_1, x_1, v_*)\rvert \, \d v_*\d s .
\eeqn
We recall that $ x_s - (s-s_1)v_* = x_1$, and since by definition $(x_1, v_*)\in \Sigma_-^\eps $, the Maxwell boundary condition applies thus $f(s_1, x_1, v_*) = (1-\iota^\eps(x_1)) f(s_1, x_1, \VV_{x_1}v_*) + \iota^\eps(x_1) \MMM(v_*) \widetilde f(s_1, x_1)$.
We then have that  $\IIII_4 \leq  \IIII^S + \IIII^D$,
where 
\bear
\IIII^S &:=& \int_\sigma^t \int_{\R^3} k_{0, m_1} (v,v_*)  e^{-\nu_0(t-s_1)} \omega_1(v_*) (1-\iota^\eps(x_1)) \left\lvert   f(s_1, x_1, \VV_{x_1}v_*) \right\rvert  \d v_*\d s , \label{eq:IIII_4_S}\\
\IIII^D &:=& \int_\sigma^t \int_{\R^3} k_{0, m_1} (v, v_*)  e^{-\nu_0(t-s_1)}  \omega_1(v_*)   \iota^\eps(x_1) \MMM(v_*) \left\lvert \widetilde f(s_1, x_1) \right\rvert  \d v_*\d s .\label{eq:IIII_4_D}
\eear

\medskip\noindent
\emph{Step 2. (Control of $\IIII^S$)} For any small parameter $\alpha_{0,1}>0$ we define the non-local operator
$$
K_{1,m_1} h (v)= \int_{\R^3} k_{1,m_1} (v, v_*) h(v_*) \d v_*,
$$
where $k_{1,m_1} (v, v_*)= \Ind_{\{ \lvert n(x_1)\cdot v_*\rvert  \geq \alpha_{0,1}\}}k_{0, m_1} (v,v_*)$, and we also define
$$
\IIII^S_0 =  \int_\sigma^t \int_{\R^3} k_{1, m_1} (v, v_*)  e^{-\nu_0(t-s_1)} \omega_1(v_*) \left\lvert   f(s_1, x_1, \VV_{x_1}v_*) \right\rvert  \d v_*\d s.
$$
Proceeding then as during the Step 1 of this proof we have that
\beqn
\IIII^S \leq \IIII^S_0 + 2C_kDm_1^{4}\alpha_{0,1} t e^{-\nu_0t} \underset{s\in[0,t]}{\sup}\left[ e^{\nu_0 s}\lVert  f_s\rVert_{L^{\infty}_{\omega_1}(\OO^\eps)}\right], 
\eeqn
We remark now that  $\lvert \VV_{x_1} v_*\rvert  = \lvert v_*\rvert$, $\lvert n(x_1) \cdot \VV_{x_1} v_*\rvert = \lvert n(x_1)\cdot v_*\rvert$, 
and we recall that \cite[Lemma 17]{Guo09} gives that the singular set $S_{x_1}$ has Lebesgue measure zero. Therefore, we deduce that we can integrate the expression of $\IIII^S_0$ on the set $\{\lvert  \VV_{x_1} v_*\rvert \leq m_1, \lvert n(x_1)\cdot  \VV_{x_1} v_*\rvert \geq \alpha_{0,1} \}  \setminus S_{x_1}$.
We can then apply Lemma~\ref{lemma3.2:GeneralRefl} and that there is $\eps_2^1 = \eps_R (\alpha_0, M, T)$ such that for every $\eps \in (0, \eps_2^1)$ there is only one bounce against the backwards trajectory, hence applying the Duhamel formula as before we have that $\IIII^S_0 \leq \IIII^S_1 + \IIII^S_2+ \IIII^S_3$,
where
\bean
\IIII_1^S &:=& \int_\sigma^t \int_{\R^3} k_{1,m_1} (v, v_*) e^{-\nu_0(t-s_1)}  \omega_1(v_*)\lvert  S_{\TTT}(s_1)  f_0(x_1, \VV_{x_1}v_*)\rvert \d v_*\ds,\\
\IIII_2^S &:=& \int_\sigma^t  \int_{\R^3} k_{1,m_1} (v, v_*) e^{-\nu_0(t-s_1)} \omega_1 (v_*)\lvert  S_{\TTT}* K  f (s_1, x_1, \VV_{x_1}v_*) \rvert \d v_*\ds,\\
\IIII_3^S &:=& \int_\sigma^t  \int_{\R^3} k_{1,m_1} (v,v_*) e^{-\nu_0(t-s_1)} \omega_1(v_*)\lvert  S_{\TTT}*   G(s_1, x_1, \VV_{x_1}v_*) \rvert \d v_*\ds,
\eean
and we proceed to control each of these terms separately. First we remark that  $\omega_1(\VV_{x_1} v) = \omega_1(v)$, and using \ref{item:K3}, we deduce that for every $\nu_2\in (0, \nu_0)$ there holds
$$
\IIII_1^S + \IIII_3^S \leq C_k D m_1^{4} t e^{-\nu_0 t} \lVert  f_0\rVert_{L^{\infty}_{\omega_1}(\OO^\eps)} +  C_k D m_1^{4}{\nu_1t \over \nu_0 - \nu_2}  e^{-\nu_2t} \underset{s\in [0,t]}{\sup}\left[e^{\nu_2 s}\lVert   G_s\rVert_{L^{\infty}_{\omega_1\nu^{-1}}(\OO^\eps)}\right] .
$$
Furthermore, to control $\IIII_2^S$ we use \ref{item:K1} so that for every $N_3>0$ there is $m_3 (N_3)>0$ satisfying
$$
\IIII_2^S \leq C_k D m_1^{4}{ t \over N_3} e^{-\nu_0 t}\underset{s\in [0, t]}{\sup}\left[e^{\nu_0 s}\lVert   f_s\rVert_{L^{\infty}_{\omega_1}(\OO^\eps)} \right] +\IIII_{2,0}^S,
$$
where we have defined
$$
\IIII_{2,0}^S =  \int_\sigma^t \int_{\R^3} k_{0,m_1}(v, v_*)  e^{-\nu_0(t-s_1)} \, S_{\TTT}* K_{m_3} \left( \omega_1(v_*) \lvert  f (s_1, x_1, \VV_{x_1}v_*) \rvert\right) \d v_*\ds.
$$
To control then $\IIII_{2,0}^S$ we first use the expression of $k_{m_1}$ given by \eqref{eq:defkm} and we have that 
$$
\IIII_{2,0}^S \leq C_k^2 m_1m_3 \int_0^t \underset{\lv v_*\rv\leq m_1}\int  \int_0^{s_1} e^{-\nu_0(t-r)}  \underset {\lv u_*\rv \leq m_3}\int  \omega_1(u_*)\lvert  f (x_1 - (\VV_{x_1}v_*)(s_1 - r)), u_*) \rvert \d u_*\d r\d v_*\ds. 
$$
We then have the following lemma, by arguing as \cite[Equations (2.141) and (2.142)]{GuoZhou18}.
\begin{lem}\label{lem:Specular_Jacobian}
Assume $\lv v_*\rv \leq m_1$ for some $m_1>0$, and we define $\XX(v_*):= x_1 - \VV_{x_1}v_*(s_1- r)$. There holds
\beqn
 \det \grad_{v_*}(\XX(v_*)) =  (s-r)^3 + \OOO(\eps).
\eeqn
\end{lem}

We then deduce that Lemma~\ref{lem:Specular_Jacobian} together with Lemma~\ref{lem:Specular_Regularization}, imply that for every $\alpha_2>0$ there is $\eps_2^2 = \eps_U(\alpha_2)>0$ such that for every $\eps\in(0, \min(\eps_2^1, \eps_2^2))$ there holds
\begin{multline*}
\IIII_{2,0}^S \leq C_k^2\, D^2\, m_1^{4}\, m_3^{4}\, \alpha_2 \, t \, e^{-\nu_0 t}\underset{s\in[0,t]}{\sup}\left[ e^{\nu_0s}\lVert  f_s\rVert_{L^{\infty}_{\omega_1}(\OO^\eps)}\right] \\
+  2^{1/2}\omega_1(m_3) \, C_k^2\, Dm_1^{1+3/2}\, m_3^{1+3/2}\, \alpha_2^{-3/2}\,  t^2\, e^{-\nu_0 t}\underset{s\in[0,t]}{\sup}\left[ e^{\nu_0 s}\lVert  f_s\rVert_{{\HH}}\right].
\end{multline*}
We conclude this step by putting together the estimates to control the specular reflection and we obtain that 
\bean
\IIII^S &\leq& \left( 2C_kDm_1^{4}\alpha_{0,1} + C_k Dm_1^{4} { t \over N_3} + C_k^2D^2m_1^{4}m_3^{4}\alpha_2 t \right) e^{-\nu_0 t}\underset{s\in[0, t]}{\sup}\left[ e^{\nu_0s}\lVert  f_s\rVert_{L^{\infty}_{\omega_1}(\OO^\eps)}\right] \\
&&+C_k D m_1^{4} t e^{-\nu_0 t} \lVert  f_0\rVert_{L^{\infty}_{\omega_1}(\OO^\eps)} + C_k D m_1^{4}{\nu_1t \over \nu_0 - \nu_2}   e^{-\nu_2 t} \underset{s\in [0,t]}{\sup}\left[e^{\nu_2 s}\lVert  G_s\rVert_{L^{\infty}_{\omega_1\nu^{-1}}(\OO^\eps)}\right]  \\
&& + 2^{1/2}\omega_1(m_3) C_k^2 m_1^{1+3/2}m_3^{1+3/2} \alpha_2^{-3/2} t^2e^{-\nu_0 t}\underset{s\in[0,t]}{\sup}\left[ e^{\nu_0 s}\lVert  f_s\rVert_{{\HH}}\right] ,
\eean
for every $\nu_2\in (0, \nu_0)$.

\medskip\noindent
\emph{Step 3. (Control of $\IIII^D$)}
 From the very definition of $\IIII^D$ we first observe the following elementary inequality
$$
\IIII^D \leq \int_\sigma^t  \int_{\R^3} k_{0,m_1}(v, v_*) e^{-\nu_0(t-s_1)} \int_{\R^3} \left\lvert  f(s_1, x_1, u_*)\right\rvert (n(x_1)\cdot u_*)_+\,  \d u_* \d v_*\ds,
$$
where we have used the fact that $ \MMM\omega_1\leq 1$. We then apply \ref{item:K5} so that for every $\lambda_1>0$ there are $M_1, \eta>0$ such that
\beqn
\IIII^D \leq \lambda_1 C_k D m_1^{4} t e^{-\nu_0 t}\underset{s\in[0,t]}{\sup}\left[ e^{\nu_0s}\lVert  f_s\rVert_{L^{\infty}_{\omega_1}(\OO^\eps)}\right]  + \IIII^D_0 
\eeqn
where we have defined
$$
\IIII^D_0:=  \int_\sigma^t  \int_{\R^3}k_{m_1}(v, v_*) e^{-\nu_0(t-s_1)} \int_{\lvert u_*\rvert \leq M_1,\, \lvert n_{x_1}\cdot u\rvert >\eta} \left\lvert  f(s_1, x_1, u_*)\right\rvert (n_{x_1}\cdot u_*)_+ \, \d u_* \d v_*\ds.
$$
Arguing as during the first Duhamel decomposition of this proof we deduce that we may rewrite the previous integral as
being integrated over the set $ \{\lvert u_*\rvert \leq M_1, \lvert n_{x_1}\cdot u_*\rvert \geq \eta \}\setminus S_{x_1}$,
thus we may apply Lemma~\ref{lemma3.2:GeneralRefl} and we deduce that there is $\eps_2^3= \eps_R(\eta, M_1, T)$ such that for every $\eps \in (0, \min (\eps_2^1, \eps_2^2, \eps_2^3))$ there is only one bounce against the backwards trajectory. Hence applying Duhamel's principle as in the Step 2 we get that $\IIII^D_0 \leq \IIII^D_1 + \IIII^D_2 + \IIII^D_3$, 
with
\bean
\IIII_1^D &:=& \int_\sigma^t  \int_{\R^3} k_{m_1}(v, v_*) e^{-\nu_0(t-s_1)} \underset{\lvert u_*\rvert \leq M_1,\, \lvert n_{x_1}\cdot u\rvert >\eta}\int \left\lvert S_{\TTT}(s_1)  f_0(x_1, u_*)\right\rvert (n_{x_1}\cdot u_*)_+ \d u_* \d v_*\ds, \\
\IIII_2^D &:=& \int_\sigma^t \int_{\R^3} k_{m_1}(v, v_*)  e^{-\nu_0(t-s_1)} \underset{\lvert u_*\rvert \leq M_1,\, \lvert n_{x_1}\cdot u\rvert >\eta}\int \left\lvert S_{\TTT}* K  f (s_1, x_1, u_*) \right\rvert (n_{x_1}\cdot u_*)_+ \d u_* \d v_*\ds, \\
\IIII_3^D &:=& \int_\sigma^t \int_{\R^3} k_{m_1}(v, v_*)  e^{-\nu_0(t-s_1)} \underset{\lvert u_*\rvert \leq M_1,\, \lvert n_{x_1}\cdot u\rvert >\eta}\int \left\lvert S_{\TTT}*   G(s_1, x_1, u_*) \right\rvert (n_{x_1}\cdot u_*)_+ \d u_* \d v_*\ds,
\eean
 and we will control each of these terms separately. On the one hand, using again the form of $k_{m_1}$ given by \eqref{eq:defkm}, and  \ref{item:K3}, there holds
$$
\IIII_1^D + \IIII_3^D \leq C_k D^2 m_1^{4} M_1^{4} t e^{-\nu_0 t} \lVert  f_0\rVert_{L^{\infty}_{\omega_1}(\OO^\eps)} + {\nu_1 C_k D^2m_1^{4} M_1^{4} t \over \nu_0 - \nu_2} e^{-\nu_2 t}  \underset{s\in [0,t]}{\sup}\left[e^{\nu_2 s}\lVert   G_s\rVert_{L^{\infty}_{\omega_1\nu^{-1}}(\OO^\eps)}\right] .
$$
To analyze $\IIII_2^D$ we use \ref{item:K1} so that for every $N_4>0$ there is $m_4(N_4)>0$ for which there holds
  \bean
\IIII_2^D
&\leq &  {t^2 C_k^2 D^2 m_1^{4} M_1^{4} \over N_4} e^{-\nu_0 t}\underset{s\in[0, t]}{\sup}\left[ e^{\nu_0s}\lVert   f_s\rVert_{L^{\infty}_{\omega_1}(\OO^\eps)}\right]  + C_k^2m_1m_4M_1\,   \IIII^D_{2,0},
\eean
where 
we have defined
$$
\IIII_{2,0}^D = \int_\sigma^t  \int_{\lvert v_*\rvert \leq m_1}  \int_{\lvert u_*\rvert \leq M_1} \int_0^{s_1} e^{-\nu_0(t-r)} \int_{\lvert u'\rvert \leq m_4} \omega (u')\lvert  f (r, x_1-u_*(s_1 - r), u') \rvert \d u'\dr \d u_* \d v_*\ds.
$$
Furthermore, we remark that
\beqn
\lvert \det \grad_{u_*}(x_1 - u_*(s_1- r))\rvert  = \vert s_1-r\rvert^3,
\eeqn
thus using \ref{item:K4} we have that for every $\alpha_4>0$ there holds
\begin{multline*}
\IIII_{2,0}^D \leq 
 D^3\, m_1^3\, M_1^3\, m_4^3\, \alpha_4 \, t\,  e^{-\nu_0 t}\underset{s\in[0, t]}{\sup}\left[ e^{\nu_0s}\lVert  f_s\rVert_{L^{\infty}_{\omega_1}(\OO^\eps)}\right] \\
+ D^2m_1^3M_1^{3/2}m_4^{3/2} \, \omega_1(m_4) \, \alpha_4^{-3/2} \, t^2\, e^{-\nu_0 t}\underset{s\in[0,t]}{\sup}\left[ e^{\nu_0 s}\lVert  f_s\rVert_{{\HH}}\right].
\end{multline*}
Putting together the above estimates controlling the diffusion term $\IIII^D$ we have that 
\bean
\IIII^D  &\leq & \left(  \lambda_1 C_k D m_1^{4} t +   {t^2 C_k^2 D^2 m_1^{4} M_1^{4} \over N_4}  + C_k^2D^3m_1^{4} M_1^{4}m_4^{4}\alpha_4 t  \right)   e^{-\nu_0 t}\underset{s\in[0,t]}{\sup}\left[ e^{\nu_0s}\lVert   f_s\rVert_{L^{\infty}_{\omega_1}(\OO^\eps)}\right] \\
&&+ C_k D^2 m_1^{4} M_1^{4} t e^{-\nu_0 t} \lVert   f_0\rVert_{L^{\infty}_{\omega_1}(\OO^\eps)} +   {\nu_1 C_k D^2m_1^{4} M_1^{4} t \over \nu_0 - \nu_2}  e^{-\nu_2 t}  \underset{s\in [0,t]}{\sup}\left[e^{\nu_2 s}\lVert   G_s\rVert_{L^{\infty}_{\omega_1\nu^{-1}}(\OO^\eps)}\right] \\
 && + C_k^2D^2m_1^{4}m_4^{1+3/2} M_1^{1+3/2}\omega(m_4) \alpha_4^{-3/2} t^2e^{-\nu_0 t}\underset{s\in[0,t]}{\sup}\left[ e^{\nu_0 s}\lVert  f_s\rVert_{{\HH}}\right]  ,
  \eean
  for every $\nu_2\in (0, \nu_0)$.

\medskip\noindent
\emph{Step 4. (Choice of the parameters)} Putting together the estimates from Steps 1, 2 and 3 we get that for every $\nu_2\in (0, \nu_0)$ there holds
\begin{multline*}
\IIII\leq  C_1\left(t+t^2\right) e^{-\nu_0 t}\underset{s\in[0,t]}{\sup}\left[ e^{\nu_0s}\lVert  f_s\rVert_{L^{\infty}_{\omega_1}(\OO^\eps)}\right] + C_2 t e^{-\nu_0t} \lVert  f_0\rVert_{L^{\infty}_{\omega_1}(\OO^\eps)} \\
 + C_3t^2e^{-\nu_0 t}\underset{s\in[0,t]}{\sup}\left[ e^{\nu_0 s}\lVert  f_s\rVert_{{\HH}}\right] + C_4 t e^{-\nu_2 t} \underset{s\in [0,t]}{\sup}\left[e^{\nu_2 s}\lVert   G_s\rVert_{L^{\infty}_{\omega_1\nu^{-1}}(\OO^\eps)}\right],
\end{multline*}
with
\bean
 C_1 &=& {1\over N_1}  + 2C_kDm_1^{4} (\alpha_0 + \alpha_{0,1}) +  {C_kD m_1^{4}\over N_2}+ C_k^2D^2m_1^{4}m_2^{4} \alpha_1  +  C_k Dm_1^{4} { 1 \over N_3} \\
&&+ C_k^2D^2m_1^{4}m_3^{4}\alpha_2  +  \lambda_1 C_k D m_1^{4}  +   {C_k^2 D^2 m_1^{4} M_1^{4} \over N_4}  + C_k^2D^3m_1^{4} M_1^{4}m_4^{4}\alpha_4 ,  \\
C_2 &=&C_k D m_1^{4}  + C_k D m_1^{4} + C_k D^2 m_1^{4} M_1^{4} ,\\
C_3 &=&  9C_k^2Dm_1^{1+3/2}m_2^{1+3/2} \omega_1(m_2) \alpha_1^{-3/2} +   2^{3/2}\omega_1(m_3) C_k^2 m_1^{1+3/2}m_3^{1+3/2} \alpha_2^{-3/2} \\
&&+   C_k^2D^2m_1^{4}m_4^{1+3/2} M_1^{1+3/2}\omega_1(m_4) \alpha_4^{-3/2}  , \\
C_4 &=&  {\nu_1 \over \nu_0 - \nu_2}\left( C_kDm_1^{4} +  C_k D m_1^{4} + C_k D^2m_1^{4} M_1^{4}  \right).
\eean

We then set 
\begin{multicols}{2}
\begin{itemize}
\item $N_1=  9\lambda^{-1}$ and this fixes $m_1$,
\item $\alpha_0 + \alpha_{0,1} = \lambda ( 18C_k D m_1^{4})^{-1}$,
\item $\displaystyle N_2 = 9 C_k Dm_1^{4} \lambda^{-1}$ fixing $m_2$, 
\item $\alpha_1= \lambda (9C_k^2 D^2 m_1^{4}m_2^{4})^{-1}$,
\item $N_3 = 9C_kDm_1^{4} \lambda^{-1}$ and this fixes $m_3$
\item $\alpha_2= \lambda (9 C_k^2 D^2m_1^{4}m_3^{4})^{-1}$,
\item $\lambda _1 = \lambda (9C_kDm_1^{4})^{-1}$ fixing $M_1$,
\item $N_4 = 9C_k^2 D^2 m_1^{4} M_1^{4} \lambda^{-1}$ and this fixes $m_4$,
\item  $\alpha_4= \lambda (9C_k^2D^3m_1^{4}m_4^{4}M_1^{4})^{-1}$.
\end{itemize}
\end{multicols}
This implies that $C_1\leq \lambda$, we then define $C = \max(C_2, C_3, C_4)$ and we observe that, since the constants that define $C$ come from \ref{item:K1}, \ref{item:K4} and \ref{item:K5}, there is a constant $p>0$ such that $ C\lesssim \lambda^{-p}$, and we conclude by setting $\eps_2= \min(\eps_2^1, \eps_2^2, \eps_2^3)$. 
\end{proof}

\subsection{Pointwise estimate on the trajectories}\label{ssec:TrajectoryLinfty} We now use the regularization estimate given by Proposition~\ref{prop:Kregularization} to obtain a pointwise control on the solutions of Equation~\eqref{eq:PLRBE_T}. 
\begin{prop} \label{prop:GainLinftyL2LB1}
Consider Assumption \ref{item:RH1} to hold, $\omega_1\in \WWWW_1$ a strongly confining weight function, and let $f$ be a solution of Equation~\eqref{eq:PLRBE_T}. For every $\lambda>0$ there is a constructive $ \eps_3 = \eps_3(\lambda, T)\geq 0 $ such that for every $\eps\in (0,\eps_3)$, every $t\in [0, T]$, and every $\nu_2\in (0, \nu_0)$, we have that for every $(x, v)\in  \OO^\eps$, $v\notin S_{x}(v)$ there holds
\begin{multline} \label{eq:GainLinftyL2LB1}
\omega_1(v)\lvert f  (t, x,v) \rvert \leq \left[1-\iota_0 + \lambda (1+ t + t^2)\right] e^{-\nu_0 t}\underset{s\in[0,t]}{\sup}\left[ e^{\nu_0s}\lVert  f_s\rVert_{L^{\infty}_{\omega_1}( \bar\OO^\eps)}\right] 
+ C(1+t) e^{-\nu_0 t} \lVert f_0\rVert_{L^{\infty}_{\omega_1}(\OO^\eps)}   \\
+ C t^2 e^{-\nu_0 t}\underset{s\in[0,t]}{\sup}\left[ e^{\nu_0 s}\lVert  f_s\rVert_{\HH}\right]  
+ C(1+t) e^{-\nu_2t}\underset{s\in [0,t]}{\sup}\left[e^{\nu_2 s}\lVert G_s\rVert_{L^{\infty}_{\omega_1\nu^{-1}}(\OO^\eps)}\right] ,
\end{multline}
for some constant $C = C(\lambda)>0$. Moreover, there is $p>0$ such that $C \lesssim \lambda^{-p}$.
\end{prop}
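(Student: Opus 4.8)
The strategy is to run the Duhamel representation for the solution of Equation~\eqref{eq:PLRBE_T} along the backwards characteristics~\eqref{eq:BackwardsTrajectory}, peeling off one collision against $\partial\Omega^\eps$ and then invoking the regularization estimate from Proposition~\ref{prop:Kregularization} to absorb the non-local term $S_\TTT * K f$. Concretely, fix $(t,x,v)\in\UU_T^\eps$ with $v\notin S_x$, and let $t_1, x_1, v_1$ be the time, position and velocity of the last bounce before $t$. Writing $\LLL = \TTT + K$ and using the mild (Duhamel) formulation of~\eqref{eq:PLRBE_T}, we obtain
\be
\omega_1(v)\lvert f(t,x,v)\rvert \leq \omega_1(v)\lvert S_\TTT(t-\max(0,t_1))\,[\text{initial or boundary data}]\rvert + \omega_1(v)\lvert S_\TTT *_{\max(0,t_1)} K f(t,x,v)\rvert + \omega_1(v)\lvert S_\TTT *_{\max(0,t_1)} G(t,x,v)\rvert.
\ee
When $t_1\leq 0$ the trajectory reaches the initial time and the first term is $S_\TTT(t)f_0$, controlled by $e^{-\nu_0 t}\lVert f_0\rVert_{L^\infty_{\omega_1}(\OO^\eps)}$ since $\omega_1(v)$ is transported unchanged and $\nu\geq\nu_0$. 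When $t_1>0$ the first term involves the boundary value $\gamma_- f(t_1,x_1,v) = \RRR\gamma_+ f(t_1,x_1,v)$, which by~\eqref{eq:RBEBC} splits as $(1-\iota^\eps(x_1))\SSS\gamma_+ f + \iota^\eps(x_1)\DDD\gamma_+ f$.

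\medskip

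\textbf{Handling the two pieces of the reflection operator.} For the specular part, $\lvert\SSS\gamma_+ f(t_1,x_1,v)\rvert = \lvert\gamma_+ f(t_1,x_1,\VV_{x_1}v)\rvert$ and $\omega_1(\VV_{x_1}v)=\omega_1(v)$ since $\lvert\VV_{x_1}v\rvert = \lvert v\rvert$, so this term contributes at most $(1-\iota^\eps(x_1))\,e^{-\nu_0 t}\sup_{s}[e^{\nu_0 s}\lVert f_s\rVert_{L^\infty_{\omega_1}(\bar\OO^\eps)}] \leq (1-\iota_0)\,e^{-\nu_0 t}\sup_{s}[\cdots]$, using $\iota^\eps(x_1)\geq\iota_0$ from Assumption~\ref{item:RH1}. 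This is the source of the $1-\iota_0$ coefficient in~\eqref{eq:GainLinftyL2LB1}. For the diffusive part, $\lvert\DDD\gamma_+ f(t_1,x_1,v)\rvert = \MMM(v)\lvert\widetilde{\gamma_+ f}(t_1,x_1)\rvert \leq \MMM(v)\int_{\R^3}\lvert f(t_1,x_1,u)\rvert(n_{x_1}\cdot u)_+\,du$, and using $\omega_1\MMM\leq\CCCC_0$ (in fact $\leq 1$ for strongly confining weights, as used in Proposition~\ref{prop:Kregularization}) together with~\ref{item:K5} of Lemma~\ref{lem:Kproperties}, for any $\lambda_1>0$ this is bounded by $\lambda_1\lVert f_t\rVert_{L^\infty_{\omega_1}(\OO^\eps)}$ plus an integral over the good set $\{\lvert u\rvert\leq M, \lvert n_{x_1}\cdot u\rvert>\eta\}$. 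On this good set one applies Lemma~\ref{lemma3.2:GeneralRefl} to guarantee, for $\eps$ below an explicit threshold $\eps_R(\eta,M,T)$, that there is only one further bounce, so a further Duhamel expansion produces only initial-data, $S_\TTT * K f$, and $S_\TTT * G$ contributions, which are estimated exactly as in Steps~2--3 of the proof of Proposition~\ref{prop:Kregularization} (via~\ref{item:K3}, \ref{item:K1}, \ref{item:K4} and Lemma~\ref{lem:Specular_Regularization} together with Lemma~\ref{lem:Specular_Jacobian}).

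\medskip

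\textbf{Absorbing the non-local term and closing.} The term $\omega_1(v)\lvert S_\TTT *_{\max(0,t_1)} K f(t,x,v)\rvert$ is handled directly by Proposition~\ref{prop:Kregularization}, which—after choosing the free parameter $\lambda$ there appropriately—bounds it by $\lambda(t+t^2)e^{-\nu_0 t}\sup_s[e^{\nu_0 s}\lVert f_s\rVert_{L^\infty_{\omega_1}(\OO^\eps)}]$ plus the three harmless terms involving $\lVert f_0\rVert_{L^\infty_{\omega_1}}$, $\lVert f_s\rVert_\HH$ and $\lVert G_s\rVert_{L^\infty_{\omega_1\nu^{-1}}}$, with the $\lVert f_0\rVert$ and $\lVert G\rVert$ constants of the form $C\lesssim\lambda^{-p}$. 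The term $\omega_1(v)\lvert S_\TTT *_{\max(0,t_1)} G(t,x,v)\rvert$ is controlled by~\ref{item:K3} with an arbitrary $\nu_2\in(0,\nu_0)$, giving the $e^{-\nu_2 t}$ factor in~\eqref{eq:GainLinftyL2LB1}. Collecting all contributions, the coefficient of $e^{-\nu_0 t}\sup_s[e^{\nu_0 s}\lVert f_s\rVert_{L^\infty_{\omega_1}(\bar\OO^\eps)}]$ is $1-\iota_0$ from the specular reflection plus terms proportional to $\lambda(1+t+t^2)$ coming from all the small-parameter choices (the $N_i$, $\alpha_i$, $\lambda_1$ in Lemma~\ref{lem:Kproperties} and Proposition~\ref{prop:Kregularization}); relabelling the accumulated small constant as $\lambda$ gives the stated form. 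Finally we set $\eps_3 = \eps_3(\lambda,T)$ to be the minimum of the threshold from Proposition~\ref{prop:Kregularization} and the thresholds $\eps_R(\eta,M,T)$ from the applications of Lemma~\ref{lemma3.2:GeneralRefl}, noting all are of order $\gtrsim\eta M^{-2}T^{-1}$ and hence strictly positive.

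\medskip

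\textbf{Main obstacle.} The delicate point is the diffusive boundary contribution: the integral $\int f(t_1,x_1,u)(n_{x_1}\cdot u)_+\,du$ involves \emph{all} incoming velocities at $x_1$, including grazing and large ones, so one cannot simply bound it by the $L^\infty_{\omega_1}$ norm without losing a factor that is not small. The resolution—splitting off the grazing/large part at the cost of $\lambda_1\lVert f_t\rVert_{L^\infty_{\omega_1}}$ via~\ref{item:K5}, then on the truncated good set invoking the one-bounce Lemma~\ref{lemma3.2:GeneralRefl} and a second Duhamel expansion whose $K f$ term is again regularized—is exactly where the stretching method enters, and where the careful bookkeeping of all the free parameters must be done so that the total coefficient multiplying the $L^\infty_{\omega_1}$ norm is $1-\iota_0+o(1)$ rather than something $\geq 1$. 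Everything else is a routine, if lengthy, repetition of the estimates already established in Lemma~\ref{lem:Kproperties} and Proposition~\ref{prop:Kregularization}.
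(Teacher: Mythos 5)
Your proposal follows the same approach as the paper's proof: a Duhamel decomposition along the backwards trajectory up to the first bounce, with $\III_1 + \III_2 + \III_3$ (initial data, $S_\TTT * Kf$, $S_\TTT * G$) handled by \ref{item:K3} and Proposition~\ref{prop:Kregularization}, the boundary term split via the Maxwell condition into a specular part yielding the $1-\iota_0$ coefficient and a diffusive part treated via \ref{item:K5}, Lemma~\ref{lemma3.2:GeneralRefl}, a second Duhamel expansion, and Proposition~\ref{prop:Kregularization} again, followed by the parameter bookkeeping to make the accumulated small constant equal to $\lambda$. The argument is correct and matches the paper's Steps~1--3 in substance and structure.
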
 

\begin{proof}
We split the proof into four steps.

\medskip\noindent
\emph{Step 1. (Duhamel decomposition)} We denote the point $(t_1, x_1, v_1)$ as the first collision through the backwards trajectory as defined in \eqref{eq:BackwardsTrajectory}, and we remark that we have $x_1 = x-v(t- t_1)$. We observe that if $t_1\leq 0$, then along the backwards trajectory we reach the initial time $t=0$, on the contrary if $t_1 >0$ then along the backward trajectory we reach the boundary at the time $t-t_1$. Using Duhamel's formula we get that
\bean
    \lvert \omega_1(v) f  (t, x,v) \rvert &\leq& e^{-\nu(v) t}  \omega_1(v) \lv f_0 (x,v)\rv   + \omega_1(v)\lvert S_{\TTT}*_{\sigma}  K f (t,x,v)\rvert  + \omega_1(v) \lvert S_{\TTT}*_{\sigma} G (t,x,v)\rvert \\
    && + e^{-\nu(v)(t-t_1)} \omega_1(v)\lvert  f(t_1, x_1, v)\rvert\\
    &=:& \III_1 + \III_2 + \III_3 + \III_4 ,
\eean
where we have defined $\sigma = \max(0,t_1)$ and we will control each of these terms separately. 
Using then \ref{item:K3} and Proposition~\ref{prop:Kregularization} we have that for every $\lambda_1>0$ there is $\eps_3^1:= \eps_2 (\lambda_1, T) >0$ such that for every $\eps \in (0, \eps_3^1)$ and every $\nu_2\in (0, \nu_0)$ there is a constant $C_1=C_1( \lambda_1)>0$ such that 
\bean
\III_1 + \III_2 + \III_3 &\leq& \lambda_1 (t+t^2)  e^{-\nu_0t}\underset{s\in[0,t]}{\sup}\left[ e^{\nu_0s}\lVert f_s\rVert_{L^{\infty}_{\omega_1}(\OO^\eps)}\right] +  (1+ C_1t) e^{-\nu_0t} \lVert f_0\rVert_{L^{\infty}_{\omega_1}(\OO^\eps)} \\
&& +  C_1 t^2 e^{-\nu_0 t}\underset{s\in[0,t]}{\sup}\left[ e^{\nu_0 s}\lVert f_s\rVert_{\HH}\right] + (1+C_1)(1+t) e^{-\nu_2 t} \underset{s\in [0,t]}{\sup}\left[e^{\nu_2 s}\lVert G_s\rVert_{L^{\infty}_{\omega_1\nu^{-1}}(\OO^\eps)}\right],
\eean
furthermore there are $c_1, p_1>0$ such that $C_1\leq  c_1 \lambda_1^{-p_1}$.

To control now the boundary term $\III_4$ we have that $(x_1, v)\in \Sigma^\eps_-$, thus the Maxwell boundary condition gives that $f(t_1, x_1, v) = (1-\iota^\eps(x_1)) f(t_1, x_1, \VV_{x_1}v) + \iota^\eps(x_1) \MMM(v)\widetilde f(t_1, x_1)$,
thus
\beqn
\III_4 \leq (1-\iota_0) \underset{s\in[0,t]}{\sup}\left[ e^{\nu_0s}\lVert f_s\rVert_{L^{\infty}_{\omega_1}(\bar \OO^\eps)}\right] + \III^D ,
\eeqn
where we have used that $\omega_1\, \MMM\leq 1$ and we have defined $\III^D:= e^{-\nu(v) (t-t_1)} \left\lvert \widetilde f(t_1, x_1)\right\rvert$.

\medskip\noindent
\emph{Step 2. (Control of the diffusive reflection)}  To control the diffusive boundary condition we first use \ref{item:K5} so that for every $\lambda_2>0$ there are $M, \eta>0$ such that 
\beqn
\III^D \leq \lambda_2  e^{-\nu_0 t}\underset{s\in[0,t]}{\sup}\left[ e^{\nu_0 s}\lVert f_s\rVert_{L^\infty_{\omega_1}(\OO^\eps)}\right] + \III_0^D \label{eq:SB1_diffusive},
\eeqn
where we have used that $\omega_1\geq 1$ and we have defined
$$
\III_0^D:= e^{-\nu(v)(t-t_1)} \int_{\lvert u\rvert \leq M,\, \lvert n(x_1)\cdot u\rvert >\eta}  \omega_1 (u)\lvert  f(t_1, x_1, u)\rvert  (n(x_1)\cdot u) \d u.
$$
Arguing as during the Step 3 of the proof of Proposition~\ref{prop:Kregularization} there is $\eps_3^2:= \eps_R (\eta, M, T)$, given by Lemma~\ref{lemma3.2:GeneralRefl},  such that for every $\eps \in (0, \min(\eps_3^1, \eps_3^2))$ there holds that for every $\lambda_3>0$ there is $\eps_3^3:= \eps_3 (\lambda_3, T)$, given by Propsotion~\ref{prop:Kregularization}, such that for every $\eps\in (0, \min(\eps_3^1, \eps_3^2, \eps_3^3))$ there holds that for every $\nu_2\in (0, \nu_0)$ there is a constant $C_3(\lambda_3)>0$ such that
\begin{multline*}
\III^D \leq \left( \lambda_2 + D  M^{4}\lambda_3 (t +t^2) \right)  e^{-\nu_0 t}\underset{s\in(0,t]}{\sup}\left[ e^{\nu_0 s}\lVert f_s\rVert_{L^\infty_{\omega_1}(\OO^\eps)}\right] + \left( D M^{4} + D  M^{4}C_3t^2 \right) e^{-\nu_0t} \lVert f_0\rVert_{L^{\infty}_{\omega_1}(\OO^\eps)} \\
+ \left( D  M^{4} +  D  M^{4}C_3 (1+t)\right){\nu_1\over \nu_1 - \nu_0} e^{-\nu_2t} \underset{s\in [0,t]}{\sup}\left[e^{\nu_2 s}\lVert G_s\rVert_{L^{\infty}_{\omega_1\nu^{-1}}(\OO^\eps)}\right] +  D  M^{4}C_3t^2 e^{-\nu_0 t}\underset{s\in[0,t]}{\sup}\left[ e^{\nu_0 s}\lVert f_s\rVert_{ \HH}\right],
\end{multline*}
furthermore there are $c_3, p>0$ such that $C\leq c_3 \lambda_3^{-p}$.

\medskip\noindent
\emph{Step 3. (Choice of the parameters)} 
Altogether we have obtained that
\bean
  \omega_1(v) \lvert f (t, x ,v) \rvert &\leq& (1-\iota_0) e^{-\nu_0t}\underset{s\in[0,t]}{\sup}\left[ e^{\nu_0s}\lVert f_s\rVert_{L^{\infty}_{\omega_1}(\bar \OO^\eps)}\right]  + \IIII_1 (1+t)e^{\nu_0t} \lVert f_0\rVert_{L^{\infty}_{\omega_1}(\OO^\eps)}   \\
 &&+ \left( \lambda_1 + \lambda_2 + D  M^{4} \lambda_3   \right) (1+ t+t^2)  e^{-\nu_0t}\underset{s\in[0,t]}{\sup}\left[ e^{\nu_0s}\lVert f_s\rVert_{L^{\infty}_{\omega_1}(\bar \OO^\eps)}\right]\\
 && + \IIII_2 t^2 e^{-\nu_0 t}\underset{s\in[0,t]}{\sup}\left[ e^{\nu_0 s}\lVert f_s\rVert_{ \HH}\right] + \IIII_3 (1+t)e^{-\nu_2 t} \underset{s\in [0,t]}{\sup}\left[e^{\nu_2 s}\lVert G_s\rVert_{L^{\infty}_{\omega_1\nu^{-1}}(\OO^\eps)}\right] ,
\eean
with the constants
$$
\IIII_1 = 1+ C_1  +  D M^{4} + D  M^{4}C_3 , \quad \IIII_2 =  C_1 + D  M^{4}C_3, \quad \text{ and } \quad  \IIII_3 =  \left( 1+    D  M^{4}\right) {\nu_1\over \nu_0 - \nu_2} + C_1 + D  M^{4}C_3.
$$
We choose then $\lambda_1=\lambda_2 = \lambda/3$, $\lambda_3 = \lambda/(3DM^{4})$, we take $\eps_3 = \min(\eps_3^1, \eps_3^2, \eps_3^3) $, we define $C(\lambda) = \max\left(\IIII_1, \IIII_2, \IIII_3 \right) $ thus $C(\lambda) \lesssim \lambda^{-p}$, for some $p>0$, and this concludes the proof.
\end{proof}

\subsection{Weighted $L^\infty$ control for solutions of Equation~\eqref{eq:PLRBE_T}}
In this subsection we use the estimate given by Proposition~\ref{prop:GainLinftyL2LB1} to deduce a weighted $L^\infty$ control on the solutions of Equation~\eqref{eq:PLRBE_T}.
\begin{prop}\label{prop:LinftyEstimatePerturbedFiniteT_0} 
Consider Assumption \ref{item:RH1} to hold, $\omega_1\in\WWWW_1$ a strongly confining admissible weight function, and let $f$ be a solution of Equation~\eqref{eq:PLRBE_T}. There is $\eps_4 = \eps_4 (T)>0$ such that for every $\eps\in (0,\eps_4)$ there holds 
\begin{multline*}
\lVert  f_t  \rVert_{L^{\infty}_{\omega_1}(\bar\OO^\eps)} \leq   C(1+T) (1+T+T^2)^pe^{-\nu_0 t} \lVert f_0\rVert_{L^{\infty}_{\omega_1}(\OO^\eps)} 
+  C (1+T+T^2)^p  T ^2 e^{-\nu_0  t }\underset{s\in[0, t]}{\sup}\left[ e^{\nu_0 s}\lVert  f_s\rVert_{\HH} \right]  \\
+  C (1+T+T^2)^p (1+T) e^{-\nu_2t}\underset{s\in [0,t]}{\sup}\left[e^{\nu_2 s}\lVert G_s\rVert_{L^{\infty}_{\omega_1\nu^{-1}}(\OO^\eps)}\right] .
\end{multline*}
for every $t\in [0, T]$, and some universal constants $C, p>0$.
\end{prop}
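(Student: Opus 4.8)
The plan is to close the pointwise estimate \eqref{eq:GainLinftyL2LB1} onto itself. The decisive feature of that estimate is that its \emph{self-referential} term carries the prefactor $1-\iota_0+\lambda(1+t+t^2)$; choosing the free parameter $\lambda$ small \emph{in terms of $T$ and $\iota_0$} makes this prefactor strictly less than $1$, after which one simply absorbs. The price is that $\lambda$, and hence the constant $C(\lambda)\lesssim\lambda^{-p}$ from Proposition~\ref{prop:GainLinftyL2LB1}, must be allowed to depend on $T$, which is exactly what produces the polynomial factor $(1+T+T^2)^p$ in the conclusion.

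Concretely, I would first fix $\nu_2\in(0,\nu_0)$ and, since we argue at the level of a priori estimates, assume $A(t):=\sup_{s\in[0,t]}e^{\nu_0 s}\lVert f_s\rVert_{L^\infty_{\omega_1}(\bar\OO^\eps)}$ is finite; it is nondecreasing on $[0,T]$. For fixed $t\in[0,T]$, Proposition~\ref{prop:GainLinftyL2LB1} supplies $\eps_3(\lambda,T)$ such that, for $\eps\in(0,\eps_3(\lambda,T))$ and every $t'\in[0,t]$, the bound \eqref{eq:GainLinftyL2LB1} holds a.e.\ on $\OO^\eps$ (the grazing set $S_x$ being Lebesgue-null by \cite[Lemma 17]{Guo09}); since its proof is local along backward characteristics it holds equally at non-grazing points of $\Sigma^\eps$ — on $\Sigma^\eps_+$ the backward trajectory enters $\Omega^\eps$ at once, and on $\Sigma^\eps_-$ one invokes the Maxwell condition exactly as for the term $\III_4$ in that proof — so that, taking essential suprema, it in fact bounds $\lVert f_{t'}\rVert_{L^\infty_{\omega_1}(\bar\OO^\eps)}$, i.e.\ the full norm \eqref{def:NormBarOOeps}. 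Multiplying by $e^{\nu_0 t'}$, estimating $1-\iota_0+\lambda(1+t'+(t')^2)\le 1-\iota_0+\lambda(1+T+T^2)$, $A(t')\le A(t)$, $1+t'\le 1+T$, $(t')^2\le T^2$, rewriting the $G$-term using that $s\mapsto e^{(\nu_0-\nu_2)s}$ is increasing (since $\nu_2<\nu_0$), and then taking the supremum over $t'\in[0,t]$, I arrive at
\begin{multline*}
A(t)\le\bigl[1-\iota_0+\lambda(1+T+T^2)\bigr]A(t)+C(\lambda)(1+T)\lVert f_0\rVert_{L^\infty_{\omega_1}(\OO^\eps)}\\
+C(\lambda)\,T^2\sup_{s\in[0,t]}\bigl[e^{\nu_0 s}\lVert f_s\rVert_{\HH}\bigr]+C(\lambda)(1+T)\,e^{(\nu_0-\nu_2)t}\sup_{s\in[0,t]}\bigl[e^{\nu_2 s}\lVert G_s\rVert_{L^\infty_{\omega_1\nu^{-1}}(\OO^\eps)}\bigr].
\end{multline*}

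Then I would choose $\lambda=\lambda(T):=\iota_0\bigl(2(1+T+T^2)\bigr)^{-1}$, so that the prefactor of $A(t)$ equals $1-\iota_0/2<1$, and set $\eps_4:=\eps_3(\lambda(T),T)$. Absorbing that term (legitimate because $A(t)<\infty$) gives $A(t)\le(2/\iota_0)$ times the last three terms; since $\lambda^{-1}\lesssim 1+T+T^2$ and $C(\lambda)\lesssim\lambda^{-p}$, each $C(\lambda)$ is $\lesssim(1+T+T^2)^p$ with $C,p$ universal (enlarging $p$ to swallow $2/\iota_0$). Finally $\lVert f_t\rVert_{L^\infty_{\omega_1}(\bar\OO^\eps)}\le e^{-\nu_0 t}A(t)$, and the identity $e^{-\nu_0 t}e^{(\nu_0-\nu_2)t}=e^{-\nu_2 t}$ in the last term yields precisely the claimed inequality. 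I do not expect a genuine obstacle here: the argument is a Grönwall-type closing of \eqref{eq:GainLinftyL2LB1}, and the only point demanding care is the extension of that pointwise estimate to boundary points indicated above, which is needed so that the prefactor being absorbed stays $1-\iota_0+\lambda(\cdots)$ rather than acquiring an extra trace constant that could destroy the smallness.
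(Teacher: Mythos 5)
Your proposal is correct and follows essentially the same route as the paper: apply the pointwise estimate \eqref{eq:GainLinftyL2LB1}, pass to essential suprema over $\bar\OO^\eps$ using the null measure of $S_x$ and $\Sigma_0^\eps$, multiply by $e^{\nu_0 t}$ and supremize over $t\in[0,T]$, choose $\lambda=\iota_0\bigl(2(1+T+T^2)\bigr)^{-1}$ so the self-referential prefactor becomes $1-\iota_0/2<1$, and absorb, with $C(\lambda)\lesssim\lambda^{-p}$ producing the $(1+T+T^2)^p$ factor. Your extra care in justifying the bound at boundary points is a more detailed account of the step the paper handles implicitly by taking the $L^\infty(\bar\Omega^\eps)$ norm.
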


\begin{proof}
We consider $( t , x, v)\in  \UU_T^\eps$ such that $v\notin S_{x}$, then Proposition~\ref{prop:GainLinftyL2LB1} implies that for every $\lambda>0$ there is $\eps_4:=\eps_3( \lambda, T)$ such that for every $\eps \in (0, \eps_4)$  and every $\nu_2\in (0, \nu_0)$ there holds \eqref{eq:GainLinftyL2LB1}.

We then remark that \cite[Lemma 2, Item 4]{MR2679358} and \cite[Lemma 17]{MR2679358} imply that $\Sigma_0^\eps$ and $S_x$ respectively have Lebesgue measure zero. Therefore we may take the $L^{\infty}(\R^d)$ norm first and then the $L^{\infty}({\bar\Omega^\eps})$ norm on \eqref{eq:GainLinftyL2LB1} and we obtain
\begin{multline*}
\lVert  f_{t} \rVert_{L^{\infty}_{\omega_1}(\bar \OO^\eps)} \leq \left[ 1-\iota_0 + \lambda (1+  t  +  t ^2)\right] e^{-\nu_0  t }\underset{s\in[0, t]}{\sup}\left[ e^{\nu_0s}\lVert  f_s\rVert_{L^{\infty}_{\omega_1}( \bar \OO^\eps)}\right] 
+ c\,  (1+ t)  e^{-\nu_0  t } \lVert f_0\rVert_{L^{\infty}_{\omega_1}(\OO^\eps)} \\
+ c\,   t ^2 e^{-\nu_0  t }\underset{s\in[0, t]}{\sup}\left[ e^{\nu_0 s}\lVert  f_s\rVert_{\HH}\right] 
 + c\,  (1+t) e^{-\nu_2t}\underset{s\in [0,t]}{\sup}\left[e^{\nu_2 s}\lVert G_s\rVert_{L^{\infty}_{\omega_1\nu^{-1}}(\OO^\eps)}\right] ,
\end{multline*}
for every $t\in [0,T]$ and some constant $c>0$ such that $c \leq C_0 \lambda^{-p}$ for some constants $C_0, p>0$. We then choose $\lambda =\displaystyle \iota_0(2(1+T+T^2))^{-1}$, we absorb the small contributions, and we conclude by setting $C = 2C_0 (2-\iota_0)^{-1}$, and using the fact that $c\leq C_0\lambda^{-p}$.
\end{proof}

\subsection{Proof of Proposition~\ref{prop:LinftyEstimatePerturbedFiniteT}}
We split the proof into two steps.

\medskip\noindent
\emph{Step 1. (Choice of parameters and $L^2$ estimate)} We choose $T>0$ large enough such that 
$$
C_1 (1+T) (1+T+T^2)^p e^{-T\nu_0/2 } \leq 1,
$$ 
where $C_1>0$ is given by Proposition~\ref{prop:LinftyEstimatePerturbedFiniteT_0},
and we set $\eps_1^1 = \min(\eps_4(T), 1/2, \sqrt{\nu_0/\kappa^\star})$, where $\eps_4(T)>0$ is given by Proposition~\ref{prop:LinftyEstimatePerturbedFiniteT_0} and we recall that $\kappa^\star>0$ is given by Theorem~\ref{theo:Hypo}. 


We recall the hypocoercivity norm $\lvvv \cdot \rvvv$ given by Theorem~\ref{theo:Hypo} and the equivalency relation \eqref{eq:ClassicHypoEquivalence}. 
We now denote by $S_\LLL$ the semigroup generated by the solutions of Equation~\eqref{eq:LRBE}, which is given by Theorem~\ref{theo:ExistenceL2} and Remark~\ref{rem:ExistenceL2}, and using the Duhamel formula we have that
$$
f_t = S_\LLL(t) f_0 + \int_0^t S_\LLL (t-s) G_s \, \ds \qquad \forall t\geq 0.
$$
Using the fact that $\lla G_t\rra_{\OO^\eps}=0$ for every $t\geq 0$, the decay estimate \eqref{eq:HypoEquiv}, and the equivalency relation \eqref{eq:ClassicHypoEquivalence}, we then have that
$$
\lvvv f_t \rvvv \leq  e^{-\kappa^\star \eps^2 t} \lvvv f_0  \rvvv +  t \int_0^t  e^{-\kappa^\star \eps^2 (t-s)}  \lvvv G_s \rvvv \, \ds \leq e^{-\kappa^\star \eps^2 t} \lvvv f_0  \rvvv + ct \int_0^t  e^{-\kappa^\star \eps^2 (t-s)}  \lvv G_s \rvv_\HH \, \ds ,
$$
where the constant $c>0$ is given by Theorem~\ref{theo:Hypo}. 
Moreover, we observe that
\be\label{eq:ControlHHinfty} 
\lvv G\rvv_\HH \lesssim \left( \int_{\OO^\eps} \dx \right)^{1/2} \lvv G\rvv_{L^\infty_{\omega_1\nu^{-1}} (\OO^\eps)} \lesssim \eps^{-3/2} \lvv G\rvv_{L^\infty_{\omega_1\nu^{-1}} (\OO^\eps)} ,
\ee
thus we deduce that there is a constant $C_2>0$ such that
\be
 \lvvv f_t \rvvv \leq  e^{-\kappa^\star \eps^2 t} \lvvv f_0  \rvvv + C_2  \, t^2 \, \eps^{-3/2} \, e^{- \kappa^\star \eps^2t}\underset{s\in [0,t]}{\sup}\left[e^{ \kappa^\star \eps^2 s}\lVert G_s\rVert_{L^{\infty}_{\omega_1\nu^{-1}}(\OO^\eps)}\right],\label{eq:HypoPerturbed}
\ee
where $\kappa^\star>0$ is given by Theorem~\ref{theo:Hypo}. Putting together the estimate given by Proposition~\ref{prop:LinftyEstimatePerturbedFiniteT_0} and \eqref{eq:HypoPerturbed} we have that for every $\eps \in (0,\eps_1^1)$, every $\nu_2\in (0,\nu_0)$, and $\kappa_0 \in (0,\kappa^\star)$ there holds 
 \bean
\lVert  f_{t} \rVert_{L^{\infty}_{\omega_1}(\bar \OO^\eps)} &\leq&  C_1 (1+T) (1+T+T^2)^p e^{-\nu_0 t} \lVert f_0\rVert_{L^{\infty}_{\omega_1}( \OO^\eps)} + C_1 T^2 (1+T+T^2)^p e^{-\kappa^\star \eps^2 t} \lvvv f_0 \rvvv\nonumber \\
&& + C_1 (1+T+T^2)^p (1+T) e^{-\nu_2t} \underset{s\in [0,t]}{\sup}\left[e^{\nu_2 s}\lVert G_s\rVert_{L^{\infty}_{\omega_1\nu^{-1}}(\OO^\eps)}\right] \nonumber \\
&&+ C_1C_4 \, T^4 (1+T+T^2)^p \, \eps^{-3/2} \, e^{- \kappa_0 \eps^2t}\underset{s\in [0,t]}{\sup}\left[e^{ \kappa_0 \eps^2 s}\lVert G_s\rVert_{L^{\infty}_{\omega_1\nu^{-1}}(\OO^\eps)}\right],
\eean
where we have used the fact that $\nu_0-\kappa^\star(\eps)^2 \geq 0$ due to our choice of $\eps_1^1$.

We then set $\theta = \min(\nu_0, \kappa^{\star})/8$, $\nu_2= \eps^2\theta$ and $\kappa_0 = \theta$. 
We remark that these choices are possible due to the fact that $\eps\leq 1/2$ and $\theta\leq \nu_0/2$, thus $\eps^2\theta\in (0,\nu_0)$ and to the fact that $\theta \in (0, \kappa^{\star})$. 

Putting everything together we have that
 \begin{multline}\label{eq:LargeTimeTEstimateZeta1/4}
\lVert  f_T \rVert_{L^{\infty}_{\omega_1}(\bar \OO^\eps)} \leq  e^{-2 \theta \eps^2 T} \lVert f_0\rVert_{L^{\infty}_{\omega_1}(\OO^\eps)} + C_T e^{-2\theta \eps^2 T} \lvvv f_0\rvvv  \\
+\eps^{-3/2}  C_T e^{-\theta \eps^2 T} \underset{s\in [0,T]}{\sup}\left[e^{\theta\eps^2 s}\lVert G_s\rVert_{L^{\infty}_{\omega_1\nu^{-1}}(\OO^\eps)}\right] ,
\end{multline}
and 
\begin{multline}\label{eq:SmallTime_t_EstimateZeta1/4}
\lVert  f_t  \rVert_{L^{\infty}_{\omega_1}(\bar\OO^\eps)} \leq  C_T  e^{-2 \theta \eps^2 t}  \lVert f_0\rVert_{L^{\infty}_{\omega_1}(\OO^\eps)} + C_T e^{-2 \theta \eps^2 t}  \lvvv f_0\rvvv \\
+ \eps^{-3/2}  C_T e^{- \theta \eps^2 t} \underset{s\in [0,t]}{\sup}\left[e^{  \theta \eps^2 s}\lVert G_s\rVert_{L^{\infty}_{\omega_1\nu^{-1}}(\OO^\eps)}\right],
\end{multline}
for some constant $C_T>0$ and where \eqref{eq:SmallTime_t_EstimateZeta1/4} holds for all $t\in [0,T]$.

\medskip\noindent
\emph{Step 2. (Decay estimate)} We now set 
$$
X_t:= e^{\theta \eps^2 t} \lVert  f_t \rVert_{L^{\infty}_{\omega_1}(\bar \OO^\eps)} , \quad  Y_t:=e^{\theta \eps^2 t}   \lvvv  f_t \rvvv,  \quad  \text{ and } \quad \Phi_{t_0, t_1}:= \underset{s\in [t_0,t_1]}{\sup}\left[e^{ \theta \eps^2 s}\lVert G_s\rVert_{L^{\infty}_{\omega_1\nu^{-1}}(\OO^\eps)}\right].
$$
Translating \eqref{eq:HypoPerturbed} with $t=T$, and \eqref{eq:LargeTimeTEstimateZeta1/4} into this new notations we observe that
$$
Y_T \leq e^{- \theta \eps^2 T}  Y_0+ C_2 \, \eps^{-3/2}  T\,  \Phi_{0,T}  , \quad \text{ and } \quad X_T \leq  e^{- \theta \eps^2 T} X_0 + C_T e^{- \theta \eps^2 T} Y_0 +  \eps^{-3/2}  C_T  \Phi_{0,T}.
$$
We define $\eps_1 = \min(\eps_1^1, (\theta T)^{-1} \log 2)$ so that $e^{\theta\eps^2 T} -1\leq 1$, and we introduce a constant $\beta>0$ defined by 
$$
\beta =  {1\over 2C_T} \left( e^{\theta\eps^2 T} -1\right),
$$
so that, due to our choice of $\eps_1$, there holds $\beta C_T\leq 1/2$ and simultaneously
$$
\vartheta :=e^{- \theta \eps^2 T}(1+\beta C_T) =  \frac 12 \left( 1+ e^{-\theta\eps^2T} \right)  < 1, \quad \forall \eps >0.
$$
We then have that
\bear
Z_T:= Y_T + \beta X_T &\leq& (1+ \beta C_T) e^{- \theta \eps^2 T} ( Y_0 + \beta X_0) + \eps^{-3/2} (\beta C_T + C_2 T) \Phi_{0,T} \nonumber \\
&\leq& \vartheta Z_0 + \eps^{-3/2} \left( \frac 12 + C_2 T\right) \Phi_{0,T} \label{eq:LargeTimeTEstimateXY},
\eear
where we have used the fact that $\beta C_T   \leq 1/2$ due to the choice of $\beta$. On the other hand, using \eqref{eq:SmallTime_t_EstimateZeta1/4} and again our choice of $\beta$ we have that
\be\label{eq:SmallTime_t_EstimateXY}
Z_t  \leq\frac 32  e^{- \theta \eps^2 t} Z_0 + \eps^{-3/2}  \left( \frac 12 + C_2 T\right)  \Phi_{0, t} \qquad \forall t\in [0,T].
\ee
Then for any $\bar t \in \R$ there is $n\in \N$ such that $\bar t\in [nT, (n+1)T)$ and iterating first \eqref{eq:LargeTimeTEstimateXY} we have
\bean
Z_{\bar t} &\leq& \vartheta^n  Z_{\bar t-nT} +  \eps^{-3/2}  \left( \frac 12 + C_2 T\right) \Phi_{\bar t-nT, \bar t}  \left( \sum_{k=0}^{n-1} \vartheta^k\right) \\
&\leq & \frac 32 \vartheta^n Z_{0} +  \eps^{-3/2}  \left( \frac 12 + C_2 T\right) \left( \sum_{k=0}^{n-1} \vartheta^k\right) \Phi_{0, \bar t}   
\eean
where we have used \eqref{eq:SmallTime_t_EstimateXY} on the second line. Using the previous estimate we deduce that 
$$
\beta \, e^{\theta \eps^2 t} \lvv f_t \rvv_{L^{\infty}_{\omega_1}(\bar \OO^\eps)} \leq Z_t \leq \frac 32 Z_0 + \tilde C \underset{s\in [0,t]}{\sup}\left[e^{  \theta \eps^2 s}\lVert G_s\rVert_{L^{\infty}_{\omega_1\nu^{-1}}(\OO^\eps)}\right] \quad \forall t>0,
$$
where 
$$
\tilde C = \eps^{-3/2}  \left( \frac 12 + C_2 T\right) \left( \sum_{k=0}^{\infty} \vartheta^k\right) = \eps^{-3/2}  {1\over 1-\vartheta} \left( \frac 12 + C_2 T\right) = {\eps^{-3/2} \over 1- e^{-\theta \eps^2 T}}  \left( \frac 12 + C_2 T\right) .
$$
Finally, we observe that \eqref{eq:ClassicHypoEquivalence} and \eqref{eq:ControlHHinfty} imply together that
$$
Z_0 \lesssim \lvv f_0 \rvv_{L^{\infty}_{\omega_1}(\OO^\eps)} +  \lvv f_0\rvv_{\HH} \lesssim  \eps^{-3/2} \lvv f_0 \rvv_{L^{\infty}_{\omega_1}(\OO^\eps)},
$$
Altogether, using the definition of $\beta$ and $\vartheta$ we have that 
$$
 e^{\theta \eps^2 t} \lvv f_t \rvv_{L^{\infty}_{\omega_1}(\bar \OO^\eps)} \leq { \eps^{-3/2} C'\over e^{\theta\eps^2 T}-1} \lvv f_0 \rvv_{L^{\infty}_{\omega_1}(\OO^\eps)} +  {\eps^{-3/2} C' \over \left( e^{\theta\eps^2T} -1 \right) \left( 1-e^{-\theta \eps^2 T} \right) } \underset{s\in [0,t]}{\sup}\left[e^{  \theta \eps^2 s}\lVert G_s\rVert_{L^{\infty}_{\omega_1\nu^{-1}}(\OO^\eps)}\right] ,
$$
for some constant $C'>0$ independent of $\eps$. So we conclude by taking 
$$
C(\eps) =  {\eps^{-3/2} C' \over \left( e^{\theta\eps^2T} -1 \right) \left( 1-e^{-\theta \eps^2 T} \right) },
$$
and remarking that $C(\eps) \to \infty$ as $\eps\to 0$. 
\qed

\section{Stretching method for strongly confining weights in cylindrical domains}\label{sec:LinftyCylinder}
Throughout this section we assume Assumption \ref{item:RH2} to hold and we will prove the same result as in Proposition~\ref{prop:LinftyEstimatePerturbedFiniteT} within this framework. 

\begin{prop}\label{prop:LinftyEstimatePerturbedFiniteTCylinder} 
Consider Assumption \ref{item:RH2} to hold, $\omega_1\in\WWWW_1$ a strongly confining weight function, and let $f$ be a solution of Equation~\eqref{eq:PLRBE}. There are constructive constants $\eps_5, \theta  >0$ such that for every $\eps\in (0,\eps_5)$ there holds
\be
\lVert  f_t \rVert_{L^{\infty}_{\omega_1}(\bar \OO^\eps)} \leq C \, e^{-\theta \eps^2 t} \left(   \lVert f_0\rVert_{L^{\infty}_{\omega_1}(\OO^\eps)} + \underset{s\in[0,t]}{\sup}\left[ e^{\theta \eps^2 s} \lVert G_s \rVert_{L^{\infty}_{\omega_1\nu^{-1}}(\OO^\eps)}\right]\right) \qquad \forall t \geq 0 \label{eq:LinftyPerturbedFiniteTimeDecayCylinder} ,
\ee
for some constant $C = C(\eps)>0$ such that $C(\eps) \to \infty$ as $\eps \to 0$. 
\end{prop}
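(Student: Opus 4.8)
The proof follows the same five–layer architecture as in Section~\ref{sec:Linfty}, with every geometry–dependent input replaced by its cylindrical counterpart. Concretely, I would work with the finite–horizon problem~\eqref{eq:PLRBE_T} under Assumption~\ref{item:RH2} and establish in order: (i) a \emph{cylindrical stretching lemma}, the analogue of Lemma~\ref{lemma3.2:GeneralRefl}, bounding the number of boundary bounces along the backward trajectories~\eqref{eq:BackwardsTrajectory}; (ii) the regularization estimate of Proposition~\ref{prop:KregularizationCylinder}, showing that $S_\TTT\ast_\sigma K f$ turns an $\HH$–bound into a weighted $L^\infty$–bound up to an arbitrarily small loss; (iii) the pointwise trajectory estimate analogous to Proposition~\ref{prop:GainLinftyL2LB1}; (iv) the finite–$T$ weighted $L^\infty$ bound analogous to Proposition~\ref{prop:LinftyEstimatePerturbedFiniteT_0}; and (v) the closure argument of the proof of Proposition~\ref{prop:LinftyEstimatePerturbedFiniteT}, combining (iv) with the $L^2$ hypocoercive decay of Theorem~\ref{theo:Hypo}, which is available under Assumption~\ref{item:RH2}, through the two–norm iteration of its Steps~1--2. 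The auxiliary estimates on $K$, on $S_\TTT$ and on the boundary flux — Lemma~\ref{lem:Kproperties} (items \ref{item:K1}--\ref{item:K5}) and Lemma~\ref{lem:Specular_Regularization} — are geometry–independent, and the hypocoercivity norm $\lvvv\cdot\rvvv$ and rate $\kappa^\star\eps^2$ are furnished by Theorem~\ref{theo:Hypo}; hence once (i)--(ii) are in place, steps (iii)--(v) are an almost verbatim transcription, and the volume factor $|\Omega^\eps|^{1/2}=\eps^{-3/2}$ produces a constant $C(\eps)\to\infty$ as $\eps\to 0$ exactly as before.

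The genuinely new work is step~(i). The cylinder $\Omega^\eps=(-L^\eps,L^\eps)\times\Omega_0^\eps$ has flat bases $\Lambda_1^\eps,\Lambda_2^\eps$ with constant normals $\mp e_1$ and mutual distance $2L^\eps=2\eps^{-1}L$, and a lateral surface $\Lambda_3^\eps$ whose cross–section is the circle of radius $\eps^{-1}\RRRR$. For a particle with $|v|\le M$ and $|n_x\cdot v|>\eta$, hitting both bases costs time $\gtrsim\eps^{-1}L/M$, while the cross–sectional dynamics under specular reflection on $\Lambda_3^\eps$ is billiard in a disk of radius $\eps^{-1}\RRRR$ whose chords, under the non–grazing cutoff, have length $\gtrsim\eps^{-1}\RRRR\eta/M$, so consecutive lateral bounces are separated by time $\gtrsim\eps^{-1}\RRRR\eta/M^2$. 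Thus on the fixed horizon $[0,T]$, for $\eps$ below a threshold $\eps_R=\eps_R(\eta,M,T)\lesssim \eta M^{-2}T^{-1}$, the only bounces that can cluster are those in a neighbourhood of the singular edge $\{\pm L^\eps\}\times\partial\Omega_0^\eps$; there the base and the lateral wall meet at a right \emph{convex} angle, so locally the flow is a right–angle wedge times the tangential direction, and a billiard in a right–angle convex wedge reflects at most twice before escaping. Collecting these facts I obtain: for $\eps<\eps_R$, along the backward trajectory from a bounded non–grazing point there is a uniformly bounded number $N_0$ of boundary reflections, and in particular at most one diffusive reflection (the bases being the only diffusive piece and being far apart), with the specular reflections occurring in at most one near–edge cluster plus at most one isolated lateral bounce.

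This structural statement is what feeds the Duhamel recursion. In Proposition~\ref{prop:KregularizationCylinder} the decomposition of $S_\TTT\ast_\sigma K f$ must be iterated through up to $N_0$ reflections instead of one, but the recursion terminates because after $N_0$ layers every surviving branch reaches $t=0$. At each specular reflection off $\Lambda_3^\eps$ one needs the Jacobian of the composed hitting map in $v_*$ to remain $\gtrsim(\text{elapsed time})^3+\OOO(\eps)$, which holds since reflection off the (nearly flat) lateral wall is a near–isometry — the only $\eps$–dependence entering through the curvature $\OOO(\eps)$ of $\Lambda_3^\eps$ and through the hitting point, exactly as in Lemma~\ref{lem:Specular_Jacobian} — so Lemma~\ref{lem:Specular_Regularization} applies with threshold $\eps_U\sim\alpha^3$; reflection off a base is an exact isometry, so there the straight–line Jacobian $(\text{time})^3$ of \ref{item:K4} is unchanged. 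In the diffusive branch one uses $\MMM\omega_1\le1$ and \ref{item:K5} to reduce $\widetilde f$ to bounded non–grazing velocities, and then the stretching lemma again controls the one remaining (lateral) bounce after the velocity reset; this is precisely the bookkeeping of the change between $\Lambda_1^\eps\cup\Lambda_2^\eps$ and $\Lambda_3^\eps$ alluded to in the introduction. With Proposition~\ref{prop:KregularizationCylinder} at hand, the pointwise bound, the finite–$T$ bound and the final iteration are obtained word for word as in the proofs of Propositions~\ref{prop:GainLinftyL2LB1}, \ref{prop:LinftyEstimatePerturbedFiniteT_0} and~\ref{prop:LinftyEstimatePerturbedFiniteT}, yielding the decay $e^{-\theta\eps^2 t}$ with $\theta=\min(\nu_0,\kappa^\star)/8$.

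The main obstacle is the cylindrical stretching lemma near the singular edge: one must make the wedge analysis \emph{quantitative and uniform} in $T,\eta,M$, i.e. exhibit the explicit threshold $\eps_R$ below which no accumulation of bounces occurs, check that the ``set of singular trajectories'' — velocities grazing one of the two pieces at some hitting point, or landing exactly on the edge — still has Lebesgue measure zero so that it may be discarded before taking $L^\infty$ norms (the cylindrical analogue of \cite[Lemma~17]{Guo09} used in Section~\ref{sec:Linfty}), and verify that the composed–reflection Jacobians do not degenerate in the near–edge cluster. Everything else is a mechanical, if lengthy, repetition of Section~\ref{sec:Linfty}.
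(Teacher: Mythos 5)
Your five--layer architecture is the right one and several of the ingredients you identify are exactly the paper's Lemmas~\ref{lemma3.2:GeneralReflCylinder}, \ref{lemma3.2:GeneralReflCylinderDiff}, \ref{lem:SpecularAngleDiffusiveReflection}, \ref{lem:ZeroMeasureSetCylinderSx} and~\ref{lem:ZeroMeasureSetCylinder}: separation of the two bases in time $\gtrsim\eps^{-1}L/M$, chord separation on $\Lambda_3^\eps$ of order $\eps^{-1}\RRRR\eta/M^2$, the need for a second (``$\nnnn$''--) angle cutoff to track the reflected velocity after a diffusive reset, and the measure--zero discard of grazing and edge--hitting trajectories. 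But there is a genuine gap in step (i) that propagates through the whole argument.

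\smallskip

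Your central structural claim --- that along the backward trajectory from a point $(t,x,v)$ there is a \emph{uniform} bound $N_0$ on the number of boundary reflections, obtained from the two stretching estimates plus a right--angle--wedge analysis near the edge --- is only true once one has cut $v$ away from grazing, i.e.\ after restricting to the $K_{m_1}$--domain where $|n\cdot v_*|\ge\alpha_0$ and $|\nnnn\cdot v_*|\ge\eta_1$. At the level of the pointwise estimate (the analogue of Proposition~\ref{prop:GainLinftyL2LB1}), however, the outer velocity $v$ is \emph{not} cut off: it is only bounded, $|v|\le\MMMM$, and excluded from the null set $S_x\cup W_{t,x}$. A near--tangential $v$ produces short chords on $\Lambda_3^\eps$, hence an arbitrarily large (though finite) number of consecutive specular reflections on $[0,T]$, and these bounces cluster anywhere on $\Lambda_3^\eps$, not only near the edge; so no $N_0$ independent of $(x,v)$ exists and the wedge analysis does not rescue this. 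The paper resolves this differently: Lemma~\ref{lem:CircunferenceSpecular} shows consecutive lateral bounces are equi--angular and equi--spaced in time, whence Lemma~\ref{lem:CircunferenceSpecularNNNN} gives a \emph{finite but $(x,v)$--dependent} bound $\NNNN$. Proposition~\ref{prop:GainLinftyL2LB2} then iterates Duhamel through all $\NNNN$ specular segments (Case~1) and observes that the resulting estimate is \emph{uniform in} $\NNNN$ because $\int_0^t\sum_{j=1}^{\NNNN+1}\Ind_{[t_j,t_{j-1}]}(s)\ds=t$ and because \ref{item:K3} and Proposition~\ref{prop:KregularizationCylinder} are pointwise uniform; the $\MMMM$--dependence is then likewise removed before taking the $L^\infty$ norm. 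This telescoping observation is the missing idea: you should replace ``iterate up to $N_0$ times with $N_0$ uniform'' by ``iterate through the finite but arbitrary chain and check the constants do not see the chain length.''

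\smallskip

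A secondary, route--level remark: the paper never analyses the geometry of the wedge at $\SSSS^\eps=\{\pm L^\eps\}\times\partial\Omega_0^\eps$ directly. It instead shows in Lemma~\ref{lem:ZeroMeasureSetCylinder}, via measure--preservation of the specular billiard flow (Liouville plus invariance of $\dv\dx$ under $v\mapsto\VV_x v$), that the set $W_{t,x}$ of velocities whose backward trajectory eventually lands on the edge has Lebesgue measure zero, and then removes it before taking suprema. Your wedge picture is a plausible alternative for the \emph{local} behaviour near the edge, and the mixed diffusive/specular nature of the two faces at the corner would require extra care there; but in the paper's scheme this analysis is simply not needed, which is a cleaner way to dispose of the edge. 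Everything else in your proposal --- the Jacobian control $\det\nabla_{v_*}\XX=(\text{time})^3+\OOO(\eps)$ via the analogue of Lemma~\ref{lem:Specular_Jacobian} and Lemma~\ref{lem:Specular_Regularization}, the use of \ref{item:K5} with $\MMM\omega_1\le1$ on the diffusive branch, and the closure argument combining the finite--$T$ bound with the hypocoercive $L^2$ decay of Theorem~\ref{theo:Hypo} to produce the $e^{-\theta\eps^2 t}$ rate and the $\eps^{-3/2}$ volume factor in $C(\eps)$ --- matches the paper.
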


We remark that the ideas developed during Section \ref{sec:Linfty} do not immediately apply to cylindrical domains due to the presence of irregularities at the boundary. 
However, we can still provide a weighted $L^\infty$ control for the solutions of Equation~\eqref{eq:PLRBE} by using a more delicate control on the trajectories of particles within the cylinder. 

To do this we will exploit the geometrical properties of the domain described in Assumption \ref{item:RH2}, which will be expressed through a series of preliminary lemmas in the next subsection. Furthermore, we remark that the rest of this section is structured as Section \ref{sec:Linfty}.

\subsection{Preliminary lemmas}\label{ssec:TrajectoriesCylinder}
We present now the coordinates of multiple consecutive collisions along the boundary set presenting specular reflections, $\Lambda_3^\eps$. To be more precise, we consider $(t_0, x_0,v_0) \in \UU^\eps$ and we define the sequence $(t_N, x_N,v_N)$, as long as it makes sense, of specular collisions following the backwards trajectories as follows
\begin{equation}\label{eq:BackwardsSpecTrajectory}
\begin{array}{rcl}
    t_b(x_{N-1},V_{N-1}) &=& \inf\{s >0;\ X(-s, 0, x_{N-1}, v_{N-1})\notin\Omega^\eps\},\\
    t_N(t_{N-1}, x_{N-1},v_{N-1})&=&t_{N-1} -t_b(x_{N-1},v_{N-1}),\\
    x_N(t_{N-1}, x_{N-1},v_{N-1})&=& x_{N-1} - v_{N-1} (t_{N-1} - t_N),\\
    v_N(_{N-1}, x_{N-1},v_{N-1})&=& \VV_{x_N}(v_{N-1}).
\end{array}
\end{equation}
It is worth remarking that, when we say that we define $(t_N, x_N,v_N)$ as long as it makes sense, we mean that for every $j\in \llbracket 1, N \rrbracket$, there holds $x_j \in \Lambda^\eps_3$. 

During the rest of this section, we will be denoting the points $z\in \R^3$ as $z=(z^1, z^2, z^3)$ and we define $\widehat z = (z^2, z^3)$. We then remark that, in particular, if $x\in \Omega^\eps$ then $\widehat x\in \Omega_0^\eps$, the $2-$dimensional ball of radius $\eps^{-1}\RRRR$, and $x^1\in (-\eps^{-1} L, \eps^{-1} L)$. Moreover, we remark that if $x\in \Lambda^\eps_3$ then 
$$
n_x = \frac 1 {\lv \widehat x\rv } (0, x^2, x^3)=  \eps \RRRR^{-1} (0, x^2, x^3).
$$ 
Similarly, we explicitly have that the normal on the surface $\partial \Omega_0^\eps \subset \R^2$, at the point $\widehat x \in \partial \Omega_0^\eps$ is given by $\widehat n_{\widehat x}:= \eps  \RRRR^{-1} \widehat x $

We also define the set of non-smooth points of the boundary 
\be\label{def:SSSS}
\begin{aligned}
\SSSS^\eps&:=(\overline{\Lambda_1^\eps} \cap \overline{\Lambda^\eps_3}) \cup (\overline{\Lambda_2^\eps} \cap \overline{\Lambda^\eps_3}) \\
& \ = \left\{ x=(x^1, x^2, x^3)\in \R^3,\, x^1 =\pm L^\eps, \text{ and } (x^2)^2+(x^3)^2 = \eps^{-2} \RRRR^2 \right\},
\end{aligned}
\ee
and the set of singular velocities across multiple specular reflections through the backwards trajectories
$$
\begin{aligned}
 W_{t, x}&:=\{v\in\R^3;  \text{ such that starting at $(t,x,v)$ there is $N\in \N$ with }
 ((t_j, x_j , v_j))_{j=1}^{N}\\
 &\qquad  
 \text{ as defined in \eqref{eq:BackwardsSpecTrajectory}, with $t_j >0$ and $x_j\in \Lambda_3^\eps$ for every $j\in \llbracket 1, N\rrbracket$}, \\
 &\qquad 
 \text{ and such that if } (t_{N+1}, x_{N+1}, v_{N+1}) \text{ is the first collision against}\\
 &\qquad   
 \text{ the backwards trajectory starting at } (t_{N}, x_{N}, v_{N}) ,   \\
 &\qquad 
   \text{ given by \eqref{eq:BackwardsTrajectory} is such that } t_{N+1}>0 \text{ and }  x_{N+1} \in \SSSS^\eps\}.
\end{aligned}
$$
In words, for any fixed $t,x$ we have that $v\in W_{t,x}$, when starting from the point $(t,x,v)$ there are a certain number of consecutive purely specular collisions against $\Lambda_3^\eps$ followed by a collision against the singular set $\SSSS$.

We now prove a first result on the number of possible consecutive collisions through the specular reflection on $\Lambda_3^\eps$.

\begin{lem}\label{lem:CircunferenceSpecularNNNN}
For any fixed $M, T>0$ and any point $(t,x, v)\in  [0,T]\times \left( \Omega^\eps \cup \Lambda_3^\eps\right) \times \{ \lvert v\rvert\leq M\}$ there is a constant $\NNNN = \NNNN(T,x,M)$ such that there are not more than $\NNNN$ consecutive specular collisions against the boundary through the backwards trajectory starting from $(t,x,v)$, given by \eqref{eq:BackwardsSpecTrajectory}.
\end{lem}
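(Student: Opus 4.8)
The statement is a purely geometric fact about billiard trajectories in the cylinder $\Omega^\eps = (-L^\eps, L^\eps) \times \Omega_0^\eps$ with specular reflection only on the lateral surface $\Lambda_3^\eps$. The key observation is that specular reflection on $\Lambda_3^\eps$ decouples: the velocity component $v^1$ along the axis is conserved at every reflection (since $n_x = \eps\RRRR^{-1}(0,x^2,x^3)$ has no first component, so $\VV_{x}v = v - 2(n_x\cdot v)n_x$ leaves $v^1$ unchanged and acts on $\widehat v$ as the planar specular reflection at $\widehat x \in \partial\Omega_0^\eps$). Hence the projected trajectory $(\widehat x_N, \widehat v_N)$ in $\R^2$ is exactly the billiard in the disk $\Omega_0^\eps$ of radius $\eps^{-1}\RRRR$ with the planar normal $\widehat n_{\widehat x} = \eps\RRRR^{-1}\widehat x$, run at constant speed $|\widehat v|$.

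First I would reduce to the planar disk billiard. Between two consecutive specular collisions the particle travels in a straight line, so $\widehat x_{N-1}, \widehat x_N \in \partial\Omega_0^\eps$ and the chord $[\widehat x_{N-1}, \widehat x_N]$ is traversed in time $t_{N-1}-t_N$ at speed $|\widehat v|$ (if $\widehat v = 0$ the trajectory never meets $\Lambda_3^\eps$ again after leaving it, so there is nothing to prove; note $|\widehat v| \le M$). The classical fact about the circular billiard is that the angle of incidence $\varphi \in (0, \pi/2]$ between the velocity and the inward normal is preserved at every reflection, and consecutive boundary points $\widehat x_{N-1}, \widehat x_N$ subtend a fixed central angle $2\varphi$ (more precisely the angular step is a constant depending only on the initial incidence). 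Consequently each chord has the same Euclidean length $\ell = 2\eps^{-1}\RRRR\sin\varphi$, and hence each inter-collision time satisfies $t_{N-1} - t_N = \ell/|\widehat v| = 2\eps^{-1}\RRRR\sin\varphi/|\widehat v|$.

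Next I would extract the bound on $\NNNN$. Since the specular collision sequence \eqref{eq:BackwardsSpecTrajectory} is only defined while $t_j > 0$, and since the first collision time $t_1 < t \le T$, after $N$ consecutive specular collisions we have consumed time $t - t_N = \sum_{j=1}^{N}(t_{j-1}-t_j) = N\,\ell/|\widehat v|$ (with $t_0 = t$). Therefore $N\,\ell/|\widehat v| \le T$, which forces
\[
N \le \frac{T|\widehat v|}{\ell} = \frac{T|\widehat v|}{2\eps^{-1}\RRRR\sin\varphi} \le \frac{TM\eps}{2\RRRR\sin\varphi}.
\]
The only subtlety is that $\sin\varphi$ can be arbitrarily small when the trajectory is nearly grazing $\partial\Omega_0^\eps$; this is why the bound $\NNNN$ is allowed to depend on $x$ (and on $v$): for a \emph{fixed} starting point $(t,x,v)$ with $v \notin W_{t,x} \cup (\text{grazing set})$ the incidence angle $\varphi = \varphi(x,\widehat v)$ is a fixed positive number, so $\NNNN(T,x,M) := \lceil TM\eps/(2\RRRR\sin\varphi)\rceil$ works. (If $x \in \Omega^\eps$ rather than on $\Lambda_3^\eps$, the first segment from $x$ to $x_1 \in \Lambda_3^\eps$ only decreases the available time, so the same estimate holds a fortiori.) One should also note that if $\varphi = \pi/2$, i.e. the chord is a diameter, the trajectory bounces back and forth along a single diameter and the bound is still finite.

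**Main obstacle.** The genuine content is not the counting — that is the elementary computation above — but rather handling the grazing and singular cases cleanly, i.e. making precise which velocities $v$ are excluded so that $\varphi$ is bounded away from $0$ and the reflection sequence is well-defined. In practice one excludes $v$ in the grazing set $S_x$ (resp.\ its iterates) and in $W_{t,x}$, both of which have Lebesgue measure zero by the arguments of \cite[Lemma 17]{Guo09} adapted to the cylinder; for the remaining $v$ the planar incidence angle is a fixed positive number and the argument closes. I expect the write-up to devote most of its length to justifying the disk-billiard angle-preservation identity (elementary trigonometry) and to the bookkeeping of when the sequence \eqref{eq:BackwardsSpecTrajectory} terminates by reaching $\SSSS^\eps$, reaching $\Lambda_1^\eps \cup \Lambda_2^\eps$, or exhausting the time budget $T$.
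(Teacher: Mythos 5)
Your proof is essentially the paper's: the paper reduces the claim to the geometric Lemma~\ref{lem:CircunferenceSpecular}, which establishes exactly the disk-billiard facts you invoke (preservation of the incidence $n_x\cdot v$, of $|v|$, and of the inter-collision time, bounded below by $\TT(M,\eta)$), and then observes that at most $T/\TT$ specular bounces fit in time $T$. Two minor remarks. First, your trigonometry has a labeling slip: you define $\varphi$ as the angle to the inward normal, but all the formulas you then use (central angle $2\varphi$, chord length $2R\sin\varphi$, diameter at $\varphi=\pi/2$, grazing at $\varphi\to 0$) are those for the angle measured from the tangent; with your stated definition the central angle is $\pi-2\varphi$ and the chord length is $2R\cos\varphi$, with the diameter at $\varphi=0$ and grazing at $\varphi=\pi/2$. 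This is purely cosmetic and does not affect the counting. Second, you correctly observe that $\NNNN$ as constructed depends on $v$ through the incidence angle, not only on $(T,x,M)$ as the lemma's statement suggests; the paper's own Lemma~\ref{lem:CircunferenceSpecular} exhibits the same dependence through $\eta=n(x_0)\cdot v$, and the downstream use in Proposition~\ref{prop:GainLinftyL2LB2} is arranged so that the final constants are independent of $\NNNN$, so the discrepancy is a harmless imprecision in the lemma's statement rather than a defect of your argument.
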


This result is an inmediate consequence of the following geometrical lemma. 
\begin{lem}\label{lem:CircunferenceSpecular}
Consider a point $(t_0,x_0,v_0)\in (0,T) \times \Lambda^\eps_3\times \{ \lvert v\rvert\leq M\} $ such that $ n(x_0)\cdot v  = \eta$  for some constants $\eta, M>0$. Assume the two points of collision against the boundary through the backwards trajectories are $(t_1, x_1, v_1)\in (0,T) \times \Lambda^\eps_3\times \R^3$ and $(t_2, x_2, v_2)\in (0,T) \times \Lambda^\eps_3\times \R^3 $. Then $t-t_1 = t_1-t_2 \geq \TT$ for some $\TT = \TT ( M, \eta)>0$, $\lv v_0\rv =\lv v_1\rv = \lv v_2\rv$, and 
\be\label{eq:CircleSpecular}
n_{x_1}\cdot v_1 = n_{x_2}\cdot v_2 = n_{x_0} \cdot v_0 = \eta .
\ee
\end{lem}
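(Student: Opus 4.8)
The plan is to exploit the cylindrical symmetry of $\Lambda_3^\eps$, which makes the problem essentially two-dimensional. Write $v = (v^1, \widehat v)$ with $\widehat v \in \R^2$, and recall that for $x \in \Lambda_3^\eps$ the outward normal $n_x = \eps\RRRR^{-1}(0, x^2, x^3)$ has no first component, so $n_x \cdot v = \widehat n_{\widehat x} \cdot \widehat v$ depends only on the planar data $(\widehat x, \widehat v)$. Under specular reflection $\VV_x v = v - 2(n_x \cdot v) n_x$ the first component $v^1$ is preserved, so the backwards trajectory between consecutive collisions on $\Lambda_3^\eps$ projects onto a billiard trajectory inside the disk $\Omega_0^\eps = B_{\eps^{-1}\RRRR} \subset \R^2$ with incoming data $\widehat v$. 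First I would reduce everything to this planar billiard: the preservation of $|v_0| = |v_1| = |v_2|$ is immediate because specular reflection is an isometry, and the first-component invariance together with $|\VV_x v|=|v|$ gives $|\widehat v_0| = |\widehat v_1| = |\widehat v_2|$ as well.

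Next I would establish the equal-angle identity \eqref{eq:CircleSpecular}. For a billiard in a disk, the angle that a chord makes with the boundary normal is conserved along the whole trajectory — this is the classical fact that the billiard map in a disk preserves the angle of incidence. Concretely, if $\widehat x_0, \widehat x_1$ are two consecutive collision points on $\partial\Omega_0^\eps$, the triangle with vertices at the center $0$, $\widehat x_0$, $\widehat x_1$ is isosceles (two sides of length $\eps^{-1}\RRRR$), so the chord $\widehat x_1 - \widehat x_0$ makes equal angles with the two radii, hence equal angles with the two inward normals. Since the reflected velocity after a collision is the incoming velocity mirrored in the tangent line, the outgoing ray at $\widehat x_1$ makes the same angle with $\widehat n_{\widehat x_1}$ as the incoming ray did with $\widehat n_{\widehat x_0}$. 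Translating back: $\widehat n_{\widehat x_1}\cdot \widehat v_1 = \widehat n_{\widehat x_0}\cdot \widehat v_0$, i.e. $n_{x_1}\cdot v_1 = n_{x_0}\cdot v_0 = \eta$, and iterating once more gives the value at $x_2$. (One must keep track of signs: along the \emph{backwards} trajectory we are looking at incoming-from-the-past velocities, but the magnitude of the normal component is what matters and it is genuinely conserved.)

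Finally I would lower-bound the inter-collision times $t_0 - t_1 = t_1 - t_2$. Along a segment between two collisions on $\Lambda_3^\eps$, the particle moves in a straight line at speed $|v_0| \le M$, covering a chord of the cylinder; the time elapsed is the chord length divided by $|v_0|$, so it suffices to bound the chord length from below. The planar chord of the disk $\Omega_0^\eps$ subtending the (conserved) angle of incidence $\arcsin(\eta/|\widehat v_0|)$ has length $2\eps^{-1}\RRRR\cos(\text{angle}) $; since $\eta = n_{x_0}\cdot v_0 \le |\widehat v_0| \le M$, the cosine is bounded below by a quantity depending only on $M$ and $\eta$, and the factor $\eps^{-1}$ makes the chord long (in particular $\ge \RRRR$ for $\eps$ small). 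Thus $t_0 - t_1 \ge 2\eps^{-1}\RRRR\cos(\cdot)/M =: \TT(M,\eta) > 0$, uniformly in $\eps$ once $\eps$ is small, and equality $t_0 - t_1 = t_1 - t_2$ follows since both chords subtend the same angle in the same disk. I expect the main obstacle to be bookkeeping the degenerate and sign-related cases correctly — namely ruling out $\eta = 0$ (a grazing trajectory, excluded by hypothesis $\eta > 0$) and verifying that the planar projection $\widehat v_0$ does not vanish, which again follows from $|\widehat v_0| \ge n_{x_0}\cdot v_0 = \eta > 0$ — and making sure the "conserved angle" argument is stated for the backwards flow with the correct orientation of normals; the geometry itself is elementary.
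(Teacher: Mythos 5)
Your overall approach coincides with the paper's: both reduce to the planar billiard in the disk $\Omega_0^\eps$ (using that $n_x$ has no first component on $\Lambda_3^\eps$ and that $v^1$ is preserved by $\VV_x$), both establish the angle conservation \eqref{eq:CircleSpecular} from elementary circle geometry (the paper via the tangent theorem and arc-angle bookkeeping, you via the isoceles triangle --- the same fact), and both bound the inter-collision time by a chord-length computation. In fact the paper's own proof of this lemma only establishes the equalities and defers the chord-length computation to Lemma~\ref{lemma3.2:GeneralReflCylinder}, so your proposal is arguably more complete as regards the stated lower bound $\TT$.

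There is, however, a concrete cofunction error in your chord-length step that, taken at face value, breaks the lower bound. The angle you introduce, $\arcsin(\eta/|\widehat v_0|)$, is measured from the tangent; the chord of the disk subtending that angle has length $2\eps^{-1}\RRRR\,\sin(\cdot)=2\eps^{-1}\RRRR\,\eta/|\widehat v_0|$, not $2\eps^{-1}\RRRR\,\cos(\cdot)=2\eps^{-1}\RRRR\sqrt{1-\eta^2/|\widehat v_0|^2}$. (Compare the law-of-cosines computation in the proof of Lemma~\ref{lemma3.2:GeneralReflCylinder}, which gives the chord as $2\eps^{-1}\RRRR\,|\cos\angle(\widehat n_{\widehat x},\widehat v)|$ with the angle measured from the \emph{normal}.) This matters for the bound: you then invoke $\eta\le|\widehat v_0|\le M$ to claim that ``the cosine is bounded below by a quantity depending only on $M$ and $\eta$,'' but $\sqrt{1-\eta^2/|\widehat v_0|^2}$ is \emph{not} bounded below by any positive function of $(M,\eta)$ --- it tends to $0$ as $|\widehat v_0|\to\eta$, i.e.\ when the planar velocity becomes radial, which the hypotheses allow. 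The correct quantity $\eta/|\widehat v_0|$ \emph{is} bounded below by $\eta/M$ (here one uses $|\widehat v_0|\le M$, the opposite inequality from what a lower bound on $\sqrt{1-\eta^2/|\widehat v_0|^2}$ would require). Replacing $\cos$ by $\sin$ (equivalently, measuring the angle from the normal) fixes the step and yields $t_0-t_1\ge 2\eps^{-1}\RRRR\,\eta/M^2$, and then $\TT(M,\eta)=2\RRRR\eta/M^2$ for $\eps\le 1$, which is $\eps$-independent as the lemma requires (your final expression for $\TT$ still carries a spurious $\eps^{-1}$).
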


\begin{proof}
We first observe that we elementary have that 
$$
\lv v_3\rv = \lv \VV_{x_3} v_2\rv = \lv v_2\rv = \lv \VV_{x_2} v_1 \rv = \lv \VV_{x_1} v_0 \rv = \lv v_0\rv.
$$
Moreover, there also holds
\be\label{eq:SpecularHatEquality}
\lv \widehat v_3\rv  = \lv \widehat \VV_{\widehat x_3} \widehat v_2 \rv = \lv \widehat v_2\rv = \lv \widehat \VV_{\widehat x_2} \widehat v_1 \rv = \lv \widehat\VV_{\widehat x_1} \widehat v_0 \rv = \lv \widehat v_0\rv,
\ee
where we have defined $\widehat \VV_{\widehat z} \widehat w = \widehat w -  (\widehat n_{\widehat z} \cdot \widehat w) \widehat w$, for any $z\in \partial \Omega_0^\eps$ and $w\in \R^3$.

We now denote now as $A_j$ the line perpendicular to $\widehat n_{\widehat x_j}$ passing through the point $\widehat x_j$. We also denote $\alpha := \angle(\widehat x_0, v_0) \in (0, \pi/2)$ and we remark that $\alpha \neq \pi/2$ due to the fact that there are collisions through the backwards trajectories, and $\alpha = \pi/2$ would imply that $\widehat v_0$ is a tangent velocity to the circumference in $\widehat x_0$. 

We then observe that, from the tangent theorem for the circle, the arc $\bigfrown{\widehat x_0 \widehat x_1} =  \pi_2 - \alpha =: \beta$. Moreover, using the same argument, we have that $\angle(A_1, v_0) = \beta$, thus $\angle (v_0, x_1) = \alpha + \pi/2$. We now bserve that from the very definition of the specular boundary conditions there holds $x_1\cdot v_0 = - x_1\cdot v_1$. This together with \eqref{eq:SpecularHatEquality} implies that $\pi - \angle(v_1, x_1) = \angle (v_0, x_1)$. We deduce then that $\angle (v_1, x_1) = \alpha$, and in particular that $\bigfrown{\widehat x_1 \widehat x_2} = \beta$. 
The fact that $\angle (v_0, x_0) = \angle (v_1, x_1)$ and \eqref{eq:SpecularHatEquality} imply \eqref{eq:CircleSpecular}. Moreover, the fact that $\bigfrown{\widehat x_0 \widehat x_1} =\bigfrown{\widehat x_1 \widehat x_2}$ imply that the $\overline {\widehat x_0 \widehat x_1}= \overline {\widehat x_1 \widehat x_2}$. This information together with \eqref{eq:SpecularHatEquality} and the very definition of the trajectories imply that $t-t_1 = t_1-t_2$. This concludes the proof. 
\end{proof}

We present now some results regarding the singular sets in the framework of cylindrical domains. 

\begin{lem}\label{lem:ZeroMeasureSetCylinderSx}
Let $x\in \Omega^\eps \cup \Lambda_3^\eps$, the set $S_x$, that we recall was defined in \eqref{def:Sx}, has Lebesgue measure zero. 
\end{lem}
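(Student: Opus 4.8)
The plan is to reduce the claim to a dimension count on $\partial\Omega^\eps$. Fix $x\in\Omega^\eps\cup\Lambda_3^\eps$ and recall from \eqref{eq:carachteristics}--\eqref{eq:BackwardsTrajectory} that for $v\in\R^3$ the backward characteristic leaves $\Omega^\eps$ for the first time after a time $t_b=t_b(x,v)\in[0,\infty]$, exiting (when $t_b<\infty$) at the point $x_b:=x-t_b\,v\in\partial\Omega^\eps$, and that $v\in S_x$ as in \eqref{def:Sx} forces both $t_b\le T$ and the grazing identity $n(x_b)\cdot v=0$, where $n(x_b)$ is the outward normal on whichever smooth face $\Lambda_1^\eps,\Lambda_2^\eps,\Lambda_3^\eps$ contains $x_b$ (no condition being imposed if $x_b\in\SSSS^\eps$). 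I would split $S_x$ according to whether $t_b=0$ or $t_b>0$.

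The case $t_b=0$ can occur only if $x\in\partial\Omega^\eps$, i.e.\ $x\in\Lambda_3^\eps$, and then $x_b=x$, so the grazing condition reads $n(x)\cdot v=0$; the set of such $v$ lies in a plane of $\R^3$ and is therefore Lebesgue-negligible. In the case $t_b>0$ one necessarily has $v\neq0$ and $x_b\neq x$ (since $t_b\in(0,T]$ is finite), so the relation $x_b=x-t_b v$ inverts to $v=t_b^{-1}(x-x_b)$, and the grazing identity becomes a constraint on the exit point alone: $x_b$ must belong to the grazing locus $\Sigma_x:=\{p\in\Lambda_1^\eps\cup\Lambda_2^\eps\cup\Lambda_3^\eps:\ n(p)\cdot(x-p)=0\}\cup\SSSS^\eps$. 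Hence $\{v\in S_x:\ t_b(x,v)>0\}$ is contained in the image of $\Sigma_x\times(0,T]$ under the map $\Phi(p,\tau):=\tau^{-1}(x-p)$, which is $C^1$ on $\partial\Omega^\eps\times(0,\infty)$.

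The core of the argument is the elementary fact that $\Sigma_x$ has Hausdorff dimension at most one. On the flat faces $\Lambda_1^\eps,\Lambda_2^\eps$ the normal is the constant $\mp(1,0,0)$ and $n(p)\cdot(x-p)=\mp(x^1\mp L^\eps)\neq0$ because $x^1\in(-L^\eps,L^\eps)$, so this part of $\Sigma_x$ is empty. On $\Lambda_3^\eps$ one has $n(p)=\eps\RRRR^{-1}(0,\widehat p)$, hence $n(p)\cdot(x-p)=\eps\RRRR^{-1}(\widehat p\cdot\widehat x-\eps^{-2}\RRRR^{2})$, and by Cauchy--Schwarz $\widehat p\cdot\widehat x\le\lvert\widehat p\rvert\,\lvert\widehat x\rvert\le\eps^{-2}\RRRR^{2}$ with equality only when $\lvert\widehat x\rvert=\eps^{-1}\RRRR$, i.e.\ $x\in\Lambda_3^\eps$, in which case the zero set is the segment $\{(p^1,\widehat x):p^1\in(-L^\eps,L^\eps)\}$; and $\SSSS^\eps$ is a union of two circles. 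Thus $\Sigma_x\times(0,T]$ has Hausdorff dimension at most two; writing $\Phi$ as the countable union of its Lipschitz restrictions to $\partial\Omega^\eps\times[1/n,T]$ shows $\Phi(\Sigma_x\times(0,T])$ is a countable union of Lipschitz images of $\mathcal H^2$-finite sets, hence Lebesgue-negligible in $\R^3$. Together with the $t_b=0$ contribution this yields $\lvert S_x\rvert=0$.

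I expect the only genuinely delicate point to be the geometry on the lateral surface $\Lambda_3^\eps$ — identifying the grazing locus there and handling the degenerate configuration $x\in\Lambda_3^\eps$, where the planar projection of the trajectory starts on $\partial\Omega_0^\eps$ — together with isolating the corner set $\SSSS^\eps$ as a separate lower-dimensional stratum; everything else is a routine dimension count, in the spirit of the analogous statement for smooth domains.
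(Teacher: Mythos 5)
Your argument is correct and reaches the stated conclusion, but it follows a different path from the paper's. The paper argues directly: for $x\in\Omega^\eps$ it observes that if the backward ray from $x$ exits grazing at $x_1\in\Lambda_3^\eps$ then $\widehat v$ is tangent to $\partial\Omega_0^\eps$ at $\widehat x_1$, so the half-line $\widehat x_1+t\widehat v$ never re-enters the open disk — contradicting $\widehat x\in\Omega_0^\eps$ — and therefore $S_x$ is actually \emph{empty} for interior points; for $x\in\Lambda_3^\eps$ it simply asserts that $S_x$ is an explicitly describable set of codimension one. You instead stratify by $t_b=0$ versus $t_b>0$ and, in the latter regime, invert $x_b=x-t_b v$ to pull the grazing condition back onto a one-dimensional locus $\Sigma_x\subset\partial\Omega^\eps$ and conclude by a Lipschitz-image dimension count. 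The two routes pivot on the same algebraic identity on $\Lambda_3^\eps$, namely $n(p)\cdot(x-p)=\eps\RRRR^{-1}\bigl(\widehat p\cdot\widehat x-\eps^{-2}\RRRR^2\bigr)\le 0$, with equality only in the degenerate configuration $\widehat p=\widehat x$, $x\in\Lambda_3^\eps$. What your version buys is a single uniform treatment of the interior and boundary base points and an explicit handling of exits through the corner set $\SSSS^\eps$ and through $\Lambda_1^\eps\cup\Lambda_2^\eps$, which the paper leaves implicit; what it gives up is the sharper conclusion $S_x=\emptyset$ for $x\in\Omega^\eps$ that the tangent-line argument delivers (though that strength is not needed here). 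One small remark on your boundary case: when $x\in\Lambda_3^\eps$ and the exit point is $p=(p^1,\widehat x)$ with $p^1\neq x^1$, the resulting velocity $v=t_b^{-1}(x-p)$ has $\widehat v=0$, so the backward trajectory stays on $\Lambda_3^\eps$ and in fact $t_b=0$; those velocities therefore already live in your $t_b=0$ stratum (the plane $n(x)\cdot v=0$), so the segment in $\Sigma_x$ is redundant for the $t_b>0$ count — harmless over-inclusion, but worth noticing.
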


\begin{proof}
We consider first $x\in \Omega^\eps$ and assume there is $v\in \R^d$ and $t>0$ such that $x_1 = x-tv\in \Lambda^\eps_3$ with $n_{x_1} \cdot v = 0$. However, we recall that $n_{x_1} = (0, x_1^2, x_1^3)/\lv \widehat x_1\rv$, therefore $n_{x_1} \cdot v = x_1^2 v^2+ x_1^3v^3$ has to be equal to zero. This then implies that $\widehat n_{\widehat x} \perp \widehat v$, and if we now go through the backwards trajectories we observe that $\widehat x_1 + t\widehat v\notin \Omega_0^\eps$ which is in contradiction with the fact that $x\in \Omega^\eps$. We thus deduce that $S_x=\emptyset$ for any $x\in \Omega^\eps$. 

Taking $x\in \Lambda^\eps_3$, we remark that $S_x$ has an explicit form and  we immediately observe that it has codimension 1 in $\R^3$, thus Lebesgue measure zero.
\end{proof}

\begin{lem}\label{lem:ZeroMeasureSetCylinder}
Consider some arbitrary $M, T>0$ and a point $(t,x)\in [0,T]\times (\Omega \cup \Lambda_3^\eps)$, the set $W_{t,x} \cap \{v\in \R^3, \, \lv v \rv \leq M\} \setminus S_x$ has Lebesgue measure zero in $\R^3$. 
\end{lem}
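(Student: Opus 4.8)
The plan is to reduce the claim about $W_{t,x}$ to a countable union of measure-zero sets coming from the (finitely many) specular bounce scenarios, exploiting \Cref{lem:CircunferenceSpecularNNNN} and the structure of $\SSSS^\eps$. First I would fix $M,T>0$ and the point $(t,x)$, and work on the bounded velocity set $B_M:=\{v\in\R^3,\ \lv v\rv\leq M\}$. By \Cref{lem:CircunferenceSpecularNNNN}, for every $v\in B_M$ there is a bound $\NNNN=\NNNN(T,x,M)$ on the number of consecutive specular collisions against $\Lambda_3^\eps$ along the backwards trajectory starting from $(t,x,v)$; in particular the integer $N$ appearing in the definition of $W_{t,x}$ satisfies $N\leq\NNNN$. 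Hence
\[
W_{t,x}\cap B_M\setminus S_x \ \subset\ \bigcup_{N=0}^{\NNNN} W_{t,x}^{(N)},
\]
where $W_{t,x}^{(N)}$ is the set of $v\in B_M\setminus S_x$ for which exactly $N$ specular bounces on $\Lambda_3^\eps$ occur and then the $(N{+}1)$-th collision point $x_{N+1}$ lies in the singular edge $\SSSS^\eps$ with $t_{N+1}>0$. Since a finite union of null sets is null, it suffices to show each $W_{t,x}^{(N)}$ has Lebesgue measure zero.

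Next I would fix $N$ and analyze $W_{t,x}^{(N)}$ directly. For $v$ in this set, the map
\[
v\ \longmapsto\ x_{N+1}(t,x,v)\in\R^3
\]
is, on the open region where all intermediate collisions are transversal (i.e.\ away from $S_x$ and away from $W_{s,x_j}$-type degeneracies), a composition of the explicit specular reflection maps $\VV_{x_j}$ and the affine free-flight maps from \eqref{eq:BackwardsTrajectory}--\eqref{eq:BackwardsSpecTrajectory}; in particular it is a $C^1$ (indeed piecewise-analytic) map on that region. The constraint defining $W_{t,x}^{(N)}$ is precisely that $x_{N+1}\in\SSSS^\eps$, and by \eqref{def:SSSS} the edge set $\SSSS^\eps$ is a $1$-dimensional submanifold of $\R^3$ (a circle), hence has codimension $2$. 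So $W_{t,x}^{(N)}$ is contained in the preimage under a $C^1$ map of a codimension-$2$ set. To conclude it is Lebesgue-null it is enough to check that the relevant map has, generically, maximal rank $2$ in the directions needed — i.e.\ that the collision position depends nondegenerately on $v$ — which is exactly the Jacobian-type lower bound already used repeatedly in this paper (cf.\ \Cref{lem:Specular_Jacobian}, where $\det\grad_{v_*}\XX(v_*)=(s-r)^3+\OOO(\eps)\neq0$ for $t-t_1$ bounded below). Here the lower bound on the inter-collision times $t-t_1=t_1-t_2\geq\TT(M,\eta)$ supplied by \Cref{lem:CircunferenceSpecular}, together with the fact that the normal is never parallel to the velocity along such trajectories, guarantees that these Jacobians are nonzero away from a further null set.

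Concretely I would stratify $W_{t,x}^{(N)}$ by a dyadic parameter $\eta$ controlling $\lv n_{x_j}\cdot v_j\rv$: on $\{v:\ \lv n_{x_1}\cdot v\rv>\eta\}$ \Cref{lem:CircunferenceSpecular} gives a uniform lower bound on all inter-collision times, so $x_{N+1}$ is a $C^1$ function of $v$ with derivative of full rank in the two tangential directions, and $\{x_{N+1}\in\SSSS^\eps\}$ is then locally the zero set of two functionally independent smooth functions of $v$, hence null (by the implicit function theorem / Sard-type coarea argument). Taking the union over $\eta=2^{-k}$, $k\in\N$, plus the residual set $\{v:\ n_{x_1}\cdot v=0\ \text{for some bounce}\}=S_x$-type set which is already excluded or null by \Cref{lem:ZeroMeasureSetCylinderSx}, shows $W_{t,x}^{(N)}$ is Lebesgue-null. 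Summing over $N=0,\dots,\NNNN$ and then exhausting $\R^3=\bigcup_M B_M$ finishes the proof.

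The main obstacle I anticipate is the bookkeeping of the non-transversal (grazing / edge-hitting) intermediate cases: one has to be sure that the set of $v$ for which \emph{some} intermediate collision $x_j$ ($1\le j\le N$) is itself grazing, or lands on $\SSSS^\eps$ prematurely, or makes the reflection map non-differentiable, is itself null — so that the $C^1$-composition argument applies on a full-measure open piece. This is handled by an induction on $N$ using \Cref{lem:ZeroMeasureSetCylinderSx} at each step (each intermediate grazing condition is a codimension-$\ge1$ constraint on $v$, and the surviving set is open where the composed map is smooth), but it requires care to set up the induction hypothesis correctly; the rest is a standard Jacobian-nondegeneracy plus coarea/implicit-function-theorem argument of the type already used in \Cref{lem:Specular_Jacobian} and \Cref{lem:Specular_Regularization}.
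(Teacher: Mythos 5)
Your approach is genuinely different from the paper's. The paper's argument is a dynamical one: it shows that the billiard flow $\Phi^t$ (Hamiltonian free flight plus specular reflection on $\Lambda_3^\eps$) preserves the phase-space Lebesgue measure $\dx\dv$ — Liouville's theorem for the free flight, a change of variables $v\mapsto\VV_x v$ for the reflection — and then concludes from the fact that $\SSSS^\eps$ has zero Lebesgue measure in position space. Your argument instead fixes $x$ from the start, truncates the number of bounces via Lemma~\ref{lem:CircunferenceSpecularNNNN}, and runs a transversality/Jacobian argument bounce by bounce. The two buy different things: the measure-preservation route is conceptually clean and dimension-independent, but it a priori produces a phase-space null set, and extracting a null set in $v$ for a \emph{fixed} $x$ (rather than a.e.\ $x$) takes one more step that the paper leaves implicit in its sketch; your approach gives the fixed-$x$ conclusion directly, at the price of the inductive bookkeeping of grazing and edge-hitting intermediate bounces that you flag at the end, and of having to propagate a Jacobian nondegeneracy estimate through $N$ reflections rather than citing Lemma~\ref{lem:Specular_Jacobian}, which as stated only covers a single specular bounce.

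One concrete imprecision in your write-up: you treat $\SSSS^\eps$ as a codimension-$2$ set in $\R^3$ and then claim $\{v: x_{N+1}(v)\in\SSSS^\eps\}$ is locally the common zero set of two functionally independent smooth functions of $v$, appealing to the full rank of $D_v x_{N+1}$. But the map $v\mapsto x_{N+1}(v)$ always lands on the $2$-manifold $\partial\Omega^\eps$, so its rank is at most $2$, and the two functions cutting out $\SSSS^\eps$ in $\R^3$ are not functionally independent after composition — one of them vanishes identically on the image. The correct count is that $\SSSS^\eps$ is codimension $1$ \emph{inside} $\partial\Omega^\eps$, so what you need (and what Lemmas~\ref{lem:CircunferenceSpecular} and~\ref{lem:Specular_Jacobian}-type estimates plausibly give, once the lower bound on inter-collision times is in hand) is rank $\geq 1$ of $v\mapsto x_{N+1}(v)$ in a direction transverse to $\SSSS^\eps$ in $\partial\Omega^\eps$. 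The preimage is then generically a $C^1$ hypersurface in $\R^3$ (dimension $2$, not $1$), which is still Lebesgue-null, so your conclusion stands, but the rank and codimension claims need to be restated at that level before the implicit-function-theorem step is justified.
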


\begin{proof}
This result is classical in the study of the ergodic properties of \emph{dynamical billiards}, i.e particles moving by the dynamics \eqref{eq:carachteristics} and colliding against the boundary following specular (also called elastic in the framework of billiards) reflections. We refer the interest reader towards \cite{MR2229799, MR1874973, MR274721, MR2168892, MR357736} and the references therein for more information in this kind of systems and the current known results. However, even if it is a well accepted result within this field, we weren't able to find explicit references regarding the measure preservation of this flow, except in some particular two dimensional cases (see for instance in the previous references). Therefore, we provide now a sketch of a proof. 

We define the Hamiltonian energy function $H(v):= \lv v\rv^2/2$ and the flow $\Phi^t$ generated by the system
\be\label{eq:Hamiltonian}
\dot{x} = - {\partial H \over \partial v} (v) = -v ,\qquad \dot{v} = {\partial H \over \partial x} (v) = 0,
\ee
complemented with the specular boundary reflection, that we recall is given by 
\be\label{eq:Specular_reflection}
(x,v ) =  (x, \VV_x v) \quad \text{ for every } x\in  \Lambda_3^\eps, \text{ and } v\in \R^3 \text{ such that } n(x)\cdot v>0.
\ee
We will then prove that the flow $\Phi^t$ defined this way is measure preserving, that is for every Borel set $A\subset (\Omega^\eps \cup \bar \Lambda_3^\eps) \times \R^3$ there holds
\be\label{eq:Def_Measure_preservation}
\mu_L (\Phi^t (A)) = \mu_L (A) = \mu_L ((\Phi^t)^{-1} (A)),
\ee
for every $t\geq 0$, and where $\mu_L$ is the Lebesgue measure in phase space $\dx\dv$.

To do this, we remark first that during free flight, i.e between collisions with the boundary, the flow $\Phi^t$ corresponds to a Hamiltonian flow, therefore Liouville's theorem \cite[Part II, Chapter 3, Theorem 16.1]{MR1345386} implies that this map is measure preserving with respect to $\mu_L$. 
We prove then that the specular reflection map given by \eqref{eq:Specular_reflection} is a measure preserving map.

To prove this we observe that 
\be\label{eq:Measure_Preserv_Identity}
\text{a map }\psi \text{ is measure preserving iff } \int_{  \Lambda_3^\eps \times \R^3} F(x,v) \dx\dv = \int_{  \Lambda_3^\eps \times \R^3} F \circ \psi(x, v) \dx\dv,
\ee
for every integrable function $F\in L^1( \Lambda_3^\eps \times \R^3)$. 

In particular, we remark that if $\psi(x,v) = \VV_x v$, then \eqref{eq:Measure_Preserv_Identity} holds by using the change of variables $v\mapsto \VV_x v$.

We have then proved that the flow $\Phi^t$ is measure preserving and the conclusion follows by using this and the fact that $\mu_L(\SSSS) = 0$.
\end{proof}

We provide now a control on the angle of reflections against the normal on $\Lambda_3^\eps$ after a diffusive collision. To do this, we introduce the vector field $\nnnn:\R^3\to \R^3$ defined as $\nnnn(x) = (0,x^2,x^3) /\lv \widehat x\rv$, and we remark that $\nnnn(x) = n_x$ for every $x\in \Lambda_3^\eps$.

\begin{lem}\label{lem:SpecularAngleDiffusiveReflection}
Assume $t>0$, $x\in \Lambda_1^\eps\cup \Lambda_2^\eps$, $v\in \R^3$, such that $(x,v)\in \Sigma_+^\eps$, and $\lv n(x)\cdot v \rv >\eta_0$ for some $\eta_0>0$. Consider $(t_1, x_1, v_1)$ the point of collision against the boundary through the backwards trajectory starting at $(t,x,v)$ given by \eqref{eq:BackwardsTrajectory}, and assume $x_1\in \Lambda^\eps_3$. 

For every $\eta>0$ there is a constructive constant $A>0$ such that  if $\lv \nnnn(x) \cdot v\rv >\eta$ then $\lv n(x_1) \cdot v_1\rv \geq A$, uniformly in $x$.
\end{lem}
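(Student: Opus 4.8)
The plan is to argue geometrically in the cylinder. Write $v=(v^1,\widehat v)\in\R\times\R^2$, and recall that on $\Lambda_1^\eps\cup\Lambda_2^\eps$ the normal is $n(x)=(\mp 1,0,0)$, so the hypothesis $\lv n(x)\cdot v\rv=\lv v^1\rv>\eta_0$ controls the axial component. The key observation is that the quantity $\widehat v$ is \emph{unchanged} during free flight, and the axial component $v^1$ is also unchanged along the trajectory until the particle hits $\Lambda_3^\eps$; only at a collision on $\Lambda_3^\eps$ does $\widehat v$ get reflected (by $\widehat\VV_{\widehat x_1}$), while $v^1$ stays the same. Consequently, at the point $(t_1,x_1,v_1)$ with $x_1\in\Lambda_3^\eps$ we have $v_1^1 = v^1$ (the axial component is untouched up to the first specular collision, since that first collision is precisely at $x_1$), and $\lv\widehat v_1\rv = \lv\widehat v\rv$ because $\widehat\VV_{\widehat x_1}$ is, up to a reflection, an isometry of $\R^2$. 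Moreover $n(x_1)=\nnnn(x_1)=(0,\widehat x_1)/\lv\widehat x_1\rv$, so $n(x_1)\cdot v_1 = \nnnn(x_1)\cdot v_1 = \widehat n_{\widehat x_1}\cdot\widehat v_1$.

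First I would make precise that $\widehat n_{\widehat x_1}\cdot\widehat v_1$ equals, in absolute value, $\widehat n_{\widehat x_1}\cdot\widehat v$ right before the collision (this is just $n(x_1)\cdot v$ evaluated on the incoming trajectory, since $\widehat v$ is constant during free flight and $v^1$ is irrelevant because $\widehat n_{\widehat x_1}$ has zero first component); and at the collision $\widehat\VV_{\widehat x_1}$ flips the normal component, preserving its magnitude. So it suffices to bound below $\lv\widehat n_{\widehat x_1}\cdot\widehat v\rv$, i.e. the angle at which the $2$D-projected trajectory strikes the circle $\partial\Omega_0^\eps$. Now $\widehat n_{\widehat x}=\eps\RRRR^{-1}\widehat x$, so $\nnnn(x)\cdot v = \eps\RRRR^{-1}(x^2 v^2 + x^3 v^3)$. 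The hypothesis $\lv\nnnn(x)\cdot v\rv>\eta$ pins down the incidence angle of the $2$D projected ray relative to $\widehat x$ \emph{at the base point}; the second point of intersection $\widehat x_1$ of this ray with the circle satisfies, by the inscribed-angle / tangent argument already used in Lemma~\ref{lem:CircunferenceSpecular}, $\lv\widehat n_{\widehat x_1}\cdot\widehat v\rv = \lv\widehat n_{\widehat x}\cdot\widehat v\rv$ — a chord of a circle makes equal angles with the two radii at its endpoints. Hence in fact one can take $A=\eta$ directly; if one prefers not to invoke that exact identity, a crude elementary estimate bounding the chord's incidence angle from below by a constant multiple of $\eta$ (using $\lv\widehat v\rv\le\lv v\rv\le M$ to normalise) also suffices, giving $A\gtrsim \eta/M$ or similar.

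The last step is to package this cleanly: combining $n(x_1)\cdot v_1 = \widehat n_{\widehat x_1}\cdot\widehat v_1$, the magnitude-preservation $\lv\widehat n_{\widehat x_1}\cdot\widehat v_1\rv=\lv\widehat n_{\widehat x_1}\cdot\widehat v\rv$ under $\widehat\VV_{\widehat x_1}$, and the equal-angle chord identity $\lv\widehat n_{\widehat x_1}\cdot\widehat v\rv=\lv\widehat n_{\widehat x}\cdot\widehat v\rv=\lv\nnnn(x)\cdot v\rv>\eta$, yields $\lv n(x_1)\cdot v_1\rv>\eta=:A$. The uniformity in $x$ is automatic since the final bound depends only on $\eta$ (and at worst $M$), not on the position $x\in\Lambda_1^\eps\cup\Lambda_2^\eps$. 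I expect the main obstacle to be stating the equal-angle chord property carefully in the rescaled cylinder $\Omega_0^\eps$ of radius $\eps^{-1}\RRRR$ and checking that the reflection operator $\widehat\VV_{\widehat x_1}$ interacts with the two-dimensional normal exactly as the full $\VV_{x_1}$ does with $n(x_1)$ (which follows because $n(x_1)$ has vanishing first component, so $\VV_{x_1}$ acts trivially on $v^1$ and as $\widehat\VV_{\widehat x_1}$ on $\widehat v$); everything else is elementary Euclidean geometry in the plane, and I would keep it at the level of a short paragraph rather than a full computation.
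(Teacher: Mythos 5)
Your plan identifies the right reductions (the axial component $v^1$ plays no role once $n(x_1)\cdot v_1 = \widehat n_{\widehat x_1}\cdot\widehat v_1$, and the reflection $\widehat\VV_{\widehat x_1}$ preserves $|\widehat n_{\widehat x_1}\cdot\widehat v|$), and your final answer $A=\eta$ is in fact correct. But the central step — the ``equal-angle chord identity'' giving $|\widehat n_{\widehat x_1}\cdot\widehat v|=|\nnnn(x)\cdot v|$ — is wrong, because you are implicitly treating $\widehat x$ as a point of the circle $\partial\Omega_0^\eps$. A point $x\in\Lambda_1^\eps\cup\Lambda_2^\eps$ only has $x^1=\pm L^\eps$; its planar projection $\widehat x$ is an \emph{interior} point of the disk $\Omega_0^\eps$. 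Two consequences: (i) $\widehat n_{\widehat x}=\eps\RRRR^{-1}\widehat x$ and $\nnnn(x)=\widehat x/|\widehat x|$ differ by the factor $\eps\RRRR^{-1}|\widehat x|<1$, so they are not interchangeable as you use them; (ii) the equal-angle property of a chord relates the two points where the \emph{full} line meets the circle — it relates $\widehat x_1$ to the opposite intersection point of that line, not to the interior point $\widehat x$. In fact $|\widehat n_{\widehat x_1}\cdot\widehat v|=|\nnnn(x)\cdot v|$ fails whenever $\widehat v$ has a nonzero tangential part.

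The conclusion survives because the correct relation is an \emph{inequality} in the right direction, by direct computation. Writing $R:=\eps^{-1}\RRRR$ and $r:=|\widehat x|<R$, the backward exit time $\tau_+$ from the disk solves $|\widehat x-\tau\widehat v|=R$, and
\[
|\widehat n_{\widehat x_1}\cdot\widehat v|
= R^{-1}\bigl|\widehat x_1\cdot\widehat v\bigr|
= R^{-1}\sqrt{(\widehat x\cdot\widehat v)^2 + |\widehat v|^2\,(R^2-r^2)}\,.
\]
Using $(\widehat x\cdot\widehat v)^2 = r^2\,(\nnnn(x)\cdot v)^2$ and the trivial bound $|\widehat v|^2\ge(\nnnn(x)\cdot v)^2$, one gets
\[
|\widehat n_{\widehat x_1}\cdot\widehat v|
\ge R^{-1}\sqrt{\,r^2(\nnnn(x)\cdot v)^2+(\nnnn(x)\cdot v)^2(R^2-r^2)\,}
= |\nnnn(x)\cdot v|
> \eta\,,
\]
with equality only when $\widehat v$ is radial. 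So one may take $A=\eta$, and the bound is uniform in $x$ (indeed $\eps$-independent). Your fallback bound $A\gtrsim\eta/M$ is not available as stated since the lemma has no hypothesis $|v|\le M$, but it is also unnecessary: the clean identity above gives the sharp constant with no auxiliary assumptions.
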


\begin{proof}
This result is a consequence of elementary geometrical properties of the circumference, therefore we will skip it.
\end{proof}

Finally, we present the following stretching lemmas for the cylindrical framework. 

\begin{lem}
\label{lemma3.2:GeneralReflCylinder}
Assume \ref{item:RH2} to hold, and take $(t,x,v)\in[0,T]\times \Lambda_3^\eps\times\{ \lvert v\rvert\leq M, \lvert n(x_1)\cdot v\rvert>\eta\} $ for some $M ,T,\eta>0$, and where $(x,v) \in \Sigma_+^\eps$. There exists $\varepsilon_S =\eps_S(\eta, M,T):=2\RRRR M^{-2} T^{-1} \eta$ such that for every $\varepsilon \in (0,\varepsilon_S)$ there is no more specular reflection along the backwards trajectory.
\end{lem}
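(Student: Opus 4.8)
The statement concerns a single specular collision on the cylindrical wall $\Lambda_3^\eps$. Recall that on $\Lambda_3^\eps$ the geometry is flat in the $x^1$-direction and circular in the $\widehat x = (x^2,x^3)$-plane, so the only curvature comes from the circle of radius $\eps^{-1}\RRRR$. The idea is to project the backward trajectory onto the $\widehat x$-plane and reduce the problem to the classical fact that, on a disc of radius $R$, the number of successive chords traced by a billiard trajectory in a given time $T$ is controlled by $R$ times the transversality at the boundary. Concretely, I would argue as follows.

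\textbf{Step 1 (reduction to the planar picture).} By Lemma~\ref{lem:CircunferenceSpecular}, if the particle starting at $(t,x,v)\in[0,T]\times\Lambda_3^\eps\times\{\lvert v\rvert\le M\}$ undergoes two successive specular collisions on $\Lambda_3^\eps$ at times $t>t_1>t_2$, then $\lvert n(x_1)\cdot v_1\rvert=\lvert n(x_2)\cdot v_2\rvert=\lvert n(x)\cdot v\rvert$ and $\lvert v_0\rvert=\lvert v_1\rvert=\lvert v_2\rvert$, and the consecutive times between collisions are equal: $t-t_1=t_1-t_2\ge\TT(M,\eta)$ for some explicit $\TT$. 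The same argument iterates: every specular collision on $\Lambda_3^\eps$ preserves $\lvert n_x\cdot v\rvert$ and $\lvert v\rvert$, and the gap between consecutive hits is always the same value $\TT$. So it suffices to estimate $\TT$ from below by $2\RRRR M^{-2}\eta$; then over a time window of length $T$ at most one specular bounce can occur once $\eps$ is small enough.

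\textbf{Step 2 (lower bound on the chord time $\TT$).} On $\Lambda_3^\eps$ the relevant motion is the planar one: $\widehat x$ moves in a straight line with velocity $\widehat v$, colliding elastically with the circle $\partial\Omega_0^\eps$ of radius $\eps^{-1}\RRRR$. The chord length between two successive hits is $\ell = 2\,\eps^{-1}\RRRR\,\lvert\sin\theta\rvert$ where $\theta$ is the angle the velocity makes with the tangent at the hit point, i.e. $\lvert\cos\theta\rvert = \lvert\widehat n_{\widehat x_1}\cdot\widehat v\rvert/\lvert\widehat v\rvert$. Since $\lvert n(x_1)\cdot v\rvert = \lvert\widehat n_{\widehat x_1}\cdot\widehat v\rvert > \eta$ and $\lvert\widehat v\rvert\le\lvert v\rvert\le M$, we get $\lvert\sin\theta\rvert\ge \eta/M$ after bounding $\lvert\sin\theta\rvert$ from below in terms of the transversal component — more precisely, $\ell\ge 2\,\eps^{-1}\RRRR\cdot(\eta/M)$ once one writes the chord length directly as (twice) the distance travelled between hits times the relevant trigonometric factor; the time between hits is $\TT = \ell/\lvert\widehat v\rvert \ge 2\,\eps^{-1}\RRRR\,\eta/M^2$. (I would double-check the precise trigonometric identity relating chord length, radius and incidence angle, but the scaling $\TT \asymp \eps^{-1}\RRRR\eta M^{-2}$ is forced.) Hence if $T < \TT$, i.e. if $\eps < 2\RRRR M^{-2}T^{-1}\eta =: \eps_S(\eta,M,T)$, there is no room for a second specular collision in the time interval $[0,T]$, which is exactly the claim.

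\textbf{Main obstacle.} The delicate point is not the planar chord estimate but making sure the argument is genuinely \emph{local along the trajectory} and does not break when the backward trajectory passes near, or onto, the singular edge set $\SSSS^\eps = \overline{\Lambda_1^\eps}\cap\overline{\Lambda_3^\eps}\ \cup\ \overline{\Lambda_2^\eps}\cap\overline{\Lambda_3^\eps}$, or when it leaves $\Lambda_3^\eps$ for one of the bases; this is why the hypothesis restricts to $(x,v)\in\Sigma_+^\eps$ with $\lvert n(x_1)\cdot v\rvert>\eta$ and why Lemmas~\ref{lem:ZeroMeasureSetCylinderSx}, \ref{lem:ZeroMeasureSetCylinder} and \ref{lem:SpecularAngleDiffusiveReflection} are invoked around it. Within the scope of \emph{this} lemma, though, one only needs to track consecutive hits on $\Lambda_3^\eps$ itself, so the argument is finished by Steps 1--2; I would simply remark, as the paper does for Lemma~\ref{lemma3.2:GeneralRefl}, that the computation is local and refer back to the geometry fixed in Lemma~\ref{lem:CircunferenceSpecular}.
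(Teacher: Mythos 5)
Your Step 2 is exactly the paper's argument: project the backward trajectory onto the disc $\Omega_0^\eps$, bound the chord length $d=\lvert\widehat x-\widehat x_1\rvert$ from below by $2\eps^{-1}\RRRR\,\eta/M$ using the incidence angle, and divide by the speed to get $t-t_1\ge 2\eps^{-1}\RRRR\eta M^{-2}>T$ for $\eps<\eps_S$, a contradiction. (The paper phrases the chord bound via the inscribed-angle theorem plus the law of cosines, $d^2=4\RRRR^2\eps^{-2}\lvert\cos\angle(\widehat n_{\widehat x},\widehat v)\rvert^2$, rather than $d=2\eps^{-1}\RRRR\lvert\sin\theta\rvert$ with $\theta$ the angle to the tangent, but these are the same identity.) Two small remarks. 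First, your trig bookkeeping in Step 2 has a slip: with $\theta$ the angle to the tangent one has $\lvert\sin\theta\rvert=\lvert\widehat n_{\widehat x_1}\cdot\widehat v\rvert/\lvert\widehat v\rvert$, not $\lvert\cos\theta\rvert$; the chord formula $d=2\eps^{-1}\RRRR\lvert\sin\theta\rvert$ is then consistent and your final scaling $\TT\ge 2\eps^{-1}\RRRR\eta M^{-2}$ is correct as you suspected. Second, Step 1 is superfluous: you do not need Lemma~\ref{lem:CircunferenceSpecular} or the fact that consecutive gaps are equal, since the statement only asks you to rule out the \emph{next} specular hit, and the single chord-time bound already exceeds the whole time window $[0,T]$. (That lemma feeds Lemma~\ref{lem:CircunferenceSpecularNNNN}, not this one.) With those cosmetic points aside, your proof and the paper's are the same.
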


\begin{proof}
We consider the point $(t_1, x_1, v_1)$ as the point of collision against the boundary through the backwards trajectory starting from $(t,x,v)$ and which is given by \eqref{eq:BackwardsTrajectory}. 
If we assume that $x\in \Lambda^\eps_3$, that means that $\widehat x_1 \in \partial \Omega_0^\eps$ and $x_1^1 \in (-\eps^{-1} L, \eps^{-1} L)$. In particular, we remark that this means that the time of the trajectory from $x$ towards $x_1$, $t-t_1$, is equal to the time of the backwards trajectory from $\widehat x$ towards $ \widehat x_1$ with velocity $\widehat v$, within $\Omega_0^\eps$. 

We call now $d:=  \lv \widehat x - \widehat x_1\rv$, the length of the chord formed by the points $\widehat x$ and $\widehat x_1$ within $\bar \Omega_0^\eps$. We call then $\OOO$ the center point of the circumference $\partial \Omega^\eps_0$, and we observe that the angle 
$$
\angle(\widehat x \OOO\widehat x_1) = \pi - 2\angle(\widehat n_{\widehat x}, \widehat v),
$$
where we remark that the first one is interpreted as the angle between the segments $\overline{\widehat x\OOO}$ and $\overline{\OOO\widehat x_1}$, and the second is interpreted as the angle between the two vectors $\widehat n_{\widehat x}$ and $\widehat v$.

Using then the law of cosines we elementary deduce that 
\bean
d^2& =& 2\RRRR^2 \eps^{-2} \left( 1-\cos \left( \angle(\widehat x, \OOO, \widehat x_1)\right)\right) =  2\RRRR^2 \eps^{-2} \left( 1-\cos \left( \pi - 2\angle\left(\widehat n_{\widehat x}, \widehat v\right) \right)\right)\\
&=& 2\RRRR^2\eps^{-2} \left( 1+ \cos \left( 2\angle\left(\widehat n_{\widehat x}, \widehat v\right) \right)\right) = 4\RRRR^2\eps^{-2} \left\lv\cos \left( \angle\left(\widehat n_{\widehat x}, \widehat v\right) \right)\right\rv^2 
\geq 4\RRRR^2\eps^{-2} { \eta^2\over M^2 }
\eean
Therefore 
$$
t-t_1 = {\lv \widehat x - \widehat x_1\rv\over \lv v\rv} \geq 2\RRRR \eps^{-1} { \eta\over M^2 },
$$
and we conclude by remarking that if $\eps \in (0, \eps_S)$, then $t-t_1 >T$ which is a contradiction with the fact that, from its very definition $0\geq t-t_1\leq T$. 
\end{proof}

\begin{lem}
\label{lemma3.2:GeneralReflCylinderDiff}
Assume \ref{item:RH2} to hold, and $(t,x,v)\in[0,T]\times (\Lambda_1^\eps\cup \Lambda_2^\eps) \times\{ \lvert v\rvert\leq M, \lvert n(x)\cdot v\rvert>\eta\} $ for some $M, T,\eta>0$, and where $(x,v) \in \Sigma_+^\eps$. There exists $\varepsilon_D = \eps_D(M,T):=2 L  M^{-1} T^{-1}$ such that for every $\varepsilon \in (0,\varepsilon_D)$ there is no more diffussive reflection along the backwards trajectory.
\end{lem}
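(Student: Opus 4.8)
The plan is to exploit that, in the cylindrical geometry of Assumption~\ref{item:RH2}, the first component of the velocity is preserved both along free flight and under specular reflection on the lateral surface $\Lambda_3^\eps$, so that---after unfolding the specular bounces---the motion in the $x^1$--direction is a simple back-and-forth between the two bases. I would argue by contradiction on the existence of a further diffusive collision. Without loss of generality assume $x\in \Lambda_1^\eps$, i.e. $x^1 = -L^\eps$ and $n(x) = (-1,0,0)$, the case $x\in\Lambda_2^\eps$ being symmetric. Since $(x,v)\in\Sigma_+^\eps$ and $\lv n(x)\cdot v\rv>\eta$, we have $v^1 = -\,n(x)\cdot v<0$, and in addition $\lv v^1\rv \leq \lv v\rv\leq M$.

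First I would record the key geometric fact: for any $z\in\Lambda_3^\eps$ the outward normal is $n(z) = \eps\RRRR^{-1}(0,z^2,z^3)$, whose first component vanishes, and therefore $(\VV_z w)^1 = w^1 - 2(n(z)\cdot w)\,n(z)^1 = w^1$ for every $w\in\R^3$. Hence, along the backwards trajectory emanating from $(t,x,v)$---given by \eqref{eq:BackwardsTrajectory}, with consecutive specular collisions as in \eqref{eq:BackwardsSpecTrajectory}---the first component of the velocity remains equal to $v^1$ for as long as the trajectory has not met $\Lambda_1^\eps\cup\Lambda_2^\eps$ again. Writing $\tau := t-s\in[0,t]$ for the elapsed backward time, the first spatial coordinate of the trajectory is then $-L^\eps - v^1\tau = -L^\eps + \lv v^1\rv\tau$, which is strictly increasing in $\tau$ since $\lv v^1\rv>\eta>0$; in particular it stays in $(-L^\eps,L^\eps)$ for $\tau\in\bigl(0,\,2L^\eps/\lv v^1\rv\bigr)$, so on that time interval the trajectory can only hit $\Lambda_3^\eps$, which produces a specular---not diffusive---reflection.

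Finally I would close by a time-budget argument. The previous step shows that the backwards trajectory never returns to $\Lambda_1^\eps$, and can reach the only other diffusive portion of the boundary, $\Lambda_2^\eps$, solely at backward time $\tau = 2L^\eps/\lv v^1\rv = 2\eps^{-1}L/\lv v^1\rv \geq 2\eps^{-1}L/M$. On the other hand any point of the backwards trajectory is $X(s;t,x,v)$ for some $s\in[0,t]$, so $\tau\leq t\leq T$. Therefore, if $\eps\in(0,\eps_D)$ with $\eps_D := 2LM^{-1}T^{-1}$, then $2\eps^{-1}L/M>T\geq\tau$, and no further diffusive reflection is possible, which is the claim. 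The only point requiring some care---and which I do not expect to be a genuine obstacle---is the bookkeeping of the intermediate specular bounces on $\Lambda_3^\eps$: one must check that they neither alter $v^1$ (settled above) nor let the trajectory escape the slab $\lv x^1\rv<L^\eps$ before time $2\eps^{-1}L/M$, together with the degenerate case where the trajectory meets the edge set $\SSSS^\eps$, which still forces the elapsed time to be $2L^\eps/\lv v^1\rv$ and hence does not affect the estimate.
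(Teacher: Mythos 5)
Your proof is correct and follows essentially the same route as the paper's: both rely on the invariance of $v^1$ under specular reflection at $\Lambda_3^\eps$ to reduce the problem to a one-dimensional traversal of the slab $(-L^\eps,L^\eps)$, and then bound below the backward traversal time by $2\eps^{-1}L/M > T$. Your write-up is somewhat more explicit than the paper's (tracking the sign of $v^1$ and the monotonicity of $x^1$ along the backwards trajectory rather than appealing to a time-minimizing trajectory), but the substance is identical.
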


\begin{proof}
We will follow the same ideas as those exposed during Lemma~\ref{lemma3.2:GeneralReflCylinder}. We assume, without loss of generality that $x\in \Lambda_1^\eps$, i.e $x^1=-L\eps^{-1}$.

We denote $(t_1, x_1, v_1)$ as the point of collision against the boundary through the backwards trajectory starting from $(t,x,v)$ and which is given by \eqref{eq:BackwardsTrajectory}. 

We recall now that for every $z\in \Lambda_3^\eps$ and $w\in \R^3$ there holds that 
$\lv \VV_z w\rv = \lv w\rv$ and $(\VV_z w)^1 = w^1$. Therefore, and because $\lvert n(x)\cdot v\rvert>\eta$, we deduce that if there is a diffusive reflection through the backwards trajectory it has to be because the particle eventually arrives at $\Lambda_2^\eps$,  even after several possible specular reflections against $\Lambda_3^\eps$ of the form \eqref{eq:BackwardsSpecTrajectory}. Moreover, we also observe in an elementary way that the trajectory that minimizes the time to arrive to $\Lambda_2^\eps$ is when $x_1^1=L\eps^{-1}$ and $x_1^2 = x^2$, $x_1^3 = x^3$. 

Due to this analysis, if we assume that $x_1^1=L\eps^{-1}$, $x_1^2 = x^2$, $x_1^3 = x^3$, and we prove that this collision cannot happen in the time interval $[0,T]$ then this will conclude the proof. 
Finally, we just need to observe that, for every $\eps\in (0,\eps_D)$, there holds
$$
t-t_1 = {\lv x^1 - x_1^1 \lv \over \lv v^1\lv }  \geq {2L\eps^{-1} \over M} >T,
$$ 
which concludes the proof.
\end{proof}

\subsection{Regularizing effect of $K$}\label{ssec:RegularKCylinder} We extend in this subsection the regularizing effect generated by the interplay between the free transport semigroup and the non-local operator $K$ to the cylindrical framework.

\begin{prop} \label{prop:KregularizationCylinder}
Consider Assumption \ref{item:RH2} to hold, $\omega_1\in \WWWW_1$ a strongly confining admissible weight function and let $ f$ be a solution of Equation~\eqref{eq:PLRBE_T}. For every $\lambda>0$ there is $\eps_6 = \eps_6(\lambda, T) $ such that for every $\eps\in (0, \eps_6)$ and every $\nu_2\in (0, \nu_0)$ we have that for every point $(t, x, v)\in  \UU_T^\eps$ with $v\notin S_x\cup W_x$ there holds 
\begin{multline*}
\omega_1 (v) \lvert S_{\TTT}*_\sigma K  f (t, x,v)\rvert  \leq \lambda (t + t^2)  e^{-\nu_0t}\underset{s\in[0,t]}{\sup}\left[ e^{\nu_0s}\lVert  f_s\rVert_{L^{\infty}_{\omega_1}( \OO^\eps)}\right] +  Ct e^{-\nu_0 t} \lVert  f_0\rVert_{L^{\infty}_{\omega_1}(\OO^\eps)} \\
 +  Ct^2 e^{-\nu_0 t}\underset{s\in[0,t]}{\sup}\left[ e^{\nu_0 s}\lVert  f_s\rVert_{ \HH}\right] + C (1+t) e^{-\nu_2 t}\underset{s\in [0,t]}{\sup}\left[e^{\nu_2 s}\lVert  G_s\rVert_{L^{\infty}_{\omega_1\nu^{-1}}(\OO^\eps)}\right] ,
\end{multline*}
for a constant $C= C(\lambda)>0$ and any $\sigma \in [0,T]$ such that $x-v(t-s)\in \bar\Omega^\eps$. Furthermore, there holds $C(\lambda) \lesssim \lambda^{-p}$ for some constant $p>0$.
\end{prop}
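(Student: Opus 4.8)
The plan is to mirror, step by step, the four-step argument already carried out in the proof of Proposition~\ref{prop:Kregularization}, replacing each invocation of a smooth-domain tool by its cylindrical counterpart and carefully tracking collisions that change the boundary component. Fix $(t,x,v)\in\UU_T^\eps$ with $v\notin S_x\cup W_x$, set $\IIII:=\omega_1(v)\,|S_\TTT*_\sigma K f(t,x,v)|$, and proceed as follows.

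\textbf{Step 1 (First control and first Duhamel decomposition).} Exactly as in Proposition~\ref{prop:Kregularization}, apply \ref{item:K1} to extract the small $L^\infty$ term and replace $K$ by $K_{m_1}$; then localize in the normal velocity at the scattered positions $x_s=x-v(t-s)$ using the cut-off weight $k_{0,m_1}=\Ind_{\{|n(x_s)\cdot v_*|\ge\alpha_0\}}k_{m_1}$, the grazing contribution being $O(\alpha_0\,m_1^4\,t)$ as before. Here the zero-measure facts from Lemma~\ref{lem:ZeroMeasureSetCylinderSx} and Lemma~\ref{lem:ZeroMeasureSetCylinder} (in place of \cite[Lemma 17]{Guo09}) let us integrate over $v_*\notin S_{x_s}\cup W_{x_s}$, so the first backward collision point $(s_1,x_1,v_1)$ is well defined and non-grazing. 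Duhamel along the transport semigroup splits $\IIII_0\le \IIII_1+\IIII_2+\IIII_3+\IIII_4$ with $\IIII_1,\IIII_3$ controlled by \ref{item:K3}, $\IIII_2$ controlled by a second application of \ref{item:K1} followed by \ref{item:K4} (using $|\det\grad_{v_*}(x_s-(s-r)u)|=|t-s|^3$), producing the $L^\infty$, $f_0$, $\HH$ and $G$ terms with $C_1\le\lambda$ after the parameter choice. The genuinely new work is the boundary term $\IIII_4$, which — since now $x_1\in\Lambda_1^\eps\cup\Lambda_2^\eps$ or $x_1\in\Lambda_3^\eps$ — splits according to the boundary component at $x_1$: on $\Lambda_1^\eps\cup\Lambda_2^\eps$ the Maxwell condition is purely diffusive ($\iota^\eps=1$), on $\Lambda_3^\eps$ it is purely specular ($\iota^\eps=0$). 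Write $\IIII_4\le \IIII^S+\IIII^D$ accordingly.

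\textbf{Step 2 (Specular term on $\Lambda_3^\eps$).} On this piece we face possibly many consecutive specular bounces, but Lemma~\ref{lem:CircunferenceSpecularNNNN} (via Lemma~\ref{lem:CircunferenceSpecular}) bounds their number by $\NNNN=\NNNN(T,x,M_1)$, and Lemma~\ref{lemma3.2:GeneralReflCylinder} gives $\eps_S=\eps_S(\eta,M_1,T)$ below which no further specular reflection occurs; moreover, along the way a localization in $|n(x_j)\cdot v_*|$ at each bounce (using $W_{x}$-exclusion to discard the trajectories hitting the singular set $\SSSS^\eps$) keeps the normal-velocity bounded away from zero, and Lemma~\ref{lem:CircunferenceSpecular} shows this normal component is in fact preserved across specular bounces, so a single threshold $\alpha_{0,1}$ suffices for all $j$. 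After at most $\NNNN$ Duhamel iterations — each contributing boundary-free $\IIII_1^S,\IIII_3^S$ terms handled by \ref{item:K3}, and $\IIII_2^S$ terms handled by \ref{item:K1} plus the specular Jacobian Lemma~\ref{lem:Specular_Jacobian} combined with the perturbed regularization Lemma~\ref{lem:Specular_Regularization} (valid for $\eps<\eps_U$) — we either reach the initial time $t=0$ or land on $\Lambda_1^\eps\cup\Lambda_2^\eps$, where the diffusive analysis of Step~3 applies; crucially the prefactor accumulated is $\lesssim(C_k D m_1^4)^{\NNNN}$, a harmless constant.

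\textbf{Step 3 (Diffusive term on $\Lambda_1^\eps\cup\Lambda_2^\eps$).} Proceed as in Step~3 of Proposition~\ref{prop:Kregularization}: bound $\widetilde f(s_1,x_1)$ using $\omega_1\MMM\le1$, apply \ref{item:K5} to split off the grazing/large-velocity part of the diffusive integral, then localize the remaining $\{|u_*|\le M_1,\ |n(x_1)\cdot u_*|>\eta\}$ piece. The new feature is that the next backward collision from $(s_1,x_1,u_*)$ may pass through several specular bounces on $\Lambda_3^\eps$ before returning to $\Lambda_1^\eps\cup\Lambda_2^\eps$ or reaching $t=0$; Lemma~\ref{lem:SpecularAngleDiffusiveReflection} guarantees that if $u_*$ is non-grazing with respect to the axis (controlled via the extended field $\nnnn$), then the normal component at the ensuing $\Lambda_3^\eps$-hits stays bounded below by a constructive $A>0$, so Lemma~\ref{lemma3.2:GeneralReflCylinder} applies and there are finitely many such bounces (bounded via Lemma~\ref{lem:CircunferenceSpecularNNNN}), while Lemma~\ref{lemma3.2:GeneralReflCylinderDiff} with $\eps<\eps_D$ ensures no second diffusive collision. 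One then runs the same Duhamel decomposition $\IIII^D_0\le\IIII^D_1+\IIII^D_2+\IIII^D_3$, uses \ref{item:K3} on $\IIII^D_1,\IIII^D_3$, and \ref{item:K1}+\ref{item:K4} on $\IIIII^D_2$ (the relevant Jacobian being $|s_1-r|^3$), obtaining the same structure of terms as in the smooth case with constants depending only on $M_1,\eta,\NNNN$.

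\textbf{Step 4 (Choice of parameters).} Collecting Steps 1--3 gives, for every $\nu_2\in(0,\nu_0)$, an estimate of the form $\IIII\le C_1(t+t^2)e^{-\nu_0 t}\sup[e^{\nu_0 s}\|f_s\|_{L^\infty_{\omega_1}}]+C_2 t e^{-\nu_0 t}\|f_0\|_{L^\infty_{\omega_1}}+C_3 t^2 e^{-\nu_0 t}\sup[e^{\nu_0 s}\|f_s\|_\HH]+C_4 t e^{-\nu_2 t}\sup[e^{\nu_2 s}\|G_s\|_{L^\infty_{\omega_1\nu^{-1}}}]$, where $C_1$ is a finite sum of terms each linear in one of the small parameters $1/N_i,\alpha_j,\lambda_k$ (multiplied by powers of the $m_i,M_1$ and the constant $(C_kDm_1^4)^\NNNN$), and $C_2,C_3,C_4$ are polynomial in those same quantities. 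Choosing the $N_i,\alpha_j,\lambda_k$ successively (first $N_1\sim\lambda^{-1}$ fixing $m_1$, then the remaining parameters as in Step~4 of Proposition~\ref{prop:Kregularization}) forces $C_1\le\lambda$ and yields $C:=\max(C_2,C_3,C_4)\lesssim\lambda^{-p}$ for some $p>0$; set $\eps_6:=\min(\eps_S,\eps_D,\eps_U,\dots)$ over all the thresholds produced. The main obstacle is bookkeeping: ensuring that the number of nested Duhamel iterations (specular bounces on $\Lambda_3^\eps$, then a single diffusive return, then possibly more specular bounces, etc.) stays uniformly bounded by a constant depending only on $T,M,x$ — this is exactly what Lemmas~\ref{lem:CircunferenceSpecularNNNN}, \ref{lem:SpecularAngleDiffusiveReflection}, \ref{lemma3.2:GeneralReflCylinder} and \ref{lemma3.2:GeneralReflCylinderDiff} are designed to deliver — and that the accumulated prefactor $(C_kDm_1^4)^{\NNNN}$ is absorbed before fixing $m_1$ via $\lambda$, which is possible because $\NNNN$ does not depend on $\lambda$.
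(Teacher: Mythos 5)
Your overall structure — first Duhamel decomposition, split the boundary term according to whether $x_1$ lies on $\Lambda_3^\eps$ (specular) or on $\Lambda_1^\eps\cup\Lambda_2^\eps$ (diffusive), use the cylindrical stretching lemmas in place of Lemma~\ref{lemma3.2:GeneralRefl}, then tune the parameters — matches the paper, and Steps 1 and 4 are essentially correct. The genuine gap is in your treatment of the specular term (Step 2, echoed in Step 3): you invoke Lemma~\ref{lem:CircunferenceSpecularNNNN} to bound the number of consecutive specular bounces by $\NNNN$, propose $\NNNN$ nested Duhamel iterations, and claim an accumulated prefactor of order $(C_kDm_1^4)^{\NNNN}$. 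The paper does none of this here. In Proposition~\ref{prop:KregularizationCylinder} the integration against the kernel $k_{0,m_1}$ already localizes the normal velocity $\lvert n(x_1)\cdot v_*\rvert\ge\alpha_0$, and by Lemma~\ref{lem:CircunferenceSpecular} this lower bound is preserved through specular bounces; Lemma~\ref{lemma3.2:GeneralReflCylinder} then kills all further specular reflections for $\eps$ small, so the Duhamel iteration terminates after depth at most two. Your Step~2 is in fact self-contradictory: you simultaneously assert (via Lemma~\ref{lemma3.2:GeneralReflCylinder}) that ``no further specular reflection occurs'' and that ``$\NNNN$ Duhamel iterations'' are needed. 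The $\NNNN$-counting from Lemmas~\ref{lem:CircunferenceSpecularNNNN}--\ref{lem:CircunferenceSpecular} is needed only in the proof of Proposition~\ref{prop:GainLinftyL2LB2}, where the starting velocity is \emph{not} localized in normal component and one cannot a priori invoke the stretching lemma.

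Even granting that you wanted to run the $\NNNN$-bounce iteration, the claimed geometric prefactor $(C_kDm_1^4)^{\NNNN}$ is not what the iterated Duhamel produces. As the paper notes in Proposition~\ref{prop:GainLinftyL2LB2}, the consecutive free-flight intervals $[t_j,t_{j-1}]$ are disjoint and cover $[t_\NNNN,t]$, so $\int_0^t\sum_{j}\Ind_{t_j\le s\le t_{j-1}}(s)\,\ds=t$; the $K f$ and $G$ contributions accumulate linearly in $t$, not geometrically in $\NNNN$, and a single application of \ref{item:K1} suffices. The prefactor $(C_kDm_1^4)^{\NNNN}$ would only arise from a nested application of \ref{item:K1} at every bounce, which is neither necessary nor performed. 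This confusion also spills into your parameter bookkeeping: you ask to absorb a factor $(C_kDm_1^4)^{\NNNN}$ ``before fixing $m_1$ via $\lambda$'', but in the first place you cannot fix $m_1$ before choosing $N_1$, which depends on $\lambda$; the circularity only disappears once you realize there is no such geometric factor. Finally, you omit the split of $\IIII^S$ into $\AAA_S$ and $\AAA_D$ according to the \emph{nature} of the second collision point $x_2$; this split is what allows the paper to reduce the $\AAA_D$-branch to the diffusive analysis already performed, rather than carry out yet another independent argument.
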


\begin{proof}
We define 
$$
\IIII:=  \omega_1(v) \lvert S_{\TTT}*_\sigma K  f(t, x,v)\rvert  = \omega_1 (v) \left\lvert \int_{\sigma}^t S_{\TTT}(t-s) K  f (s, x,v) \ds \right\rvert,
$$
and arguing as during the Step 1 of the proof of Proposition~\ref{prop:Kregularization} we have that by using \ref{item:K1} we deduce that for every $N_1>0$ there is $m_1(N_1)>0$ such that
\bear
\IIII &\leq& {1\over N_1} t e^{-\nu_0t}\underset{s\in [0, t]}{\sup}\left[e^{\nu_0 s}\lVert  f_s\rVert_{L^{\infty}_{\omega_1}(\OO^\eps)} \right]+ S_{\TTT}*_\sigma K_{m_1} \left( \omega_1(v)  \lvert f(t, x,v)\rvert \right) , \label{eq:STKN1_cyl}
\eear
where we recall that $K_{m_1}$ has been defined in \ref{item:K1}.

Using then Lemma~\ref{lem:ZeroMeasureSetCylinderSx} and Lemma  \ref{lem:ZeroMeasureSetCylinder} we deduce that we can take the previous integral over the set $\R^3\setminus (S_{x_s}\cup W_{t-s, x_s})$, therefore we can define $(s_1, x_1, v_1)$ as the point of the first bounce against the boundary through the backwards trajectory starting at $(s,x_s, v_*)$, with  $( x_1, v_1) \in \Sigma_+$, $x_1\notin \SSSS$, and whose formulas are given by \eqref{eq:BackwardsTrajectory}. In particular, we remark that  $x_1 = x_s - (s-s_1) v_*$.

For any $\alpha_0>0$ we define now $k_{0,m_1} = \Ind_{\{ \lvert n(x_1)\cdot v_*\rvert  \geq \alpha_0\}}k_{m_1}$, and the operator $K_{0,m_1} $ as in \eqref{eq:DefKm1}. Repeating the arguments from the Step 1 of the proof of Proposition~\ref{prop:Kregularization} we have that 
$$
\IIII \leq \left( {1\over N_1}   + 2C_kDm_1^{4}\alpha_0  \right)t e^{-\nu_0t}\underset{s\in [0, t]}{\sup}\left[e^{\nu_0 s}\lVert  f_s\rVert_{L^{\infty}_{\omega_1}(\OO^\eps)} \right] + \IIII_0,
$$
with
$$
\IIII_0:= S_{\TTT}*_\sigma K_{0,m_1}\left( \omega_1 (v) \lvert   f(t,x,v)\rvert\right) = \int_\sigma^t e^{-\nu(v)(t-s)}\int_{\R^3} k_{0, m_1}(v, v_*)\omega_1(v_*) \lvert  f(s, x_s, v_*) \rvert  \d v_*\ds.
$$

Using the Duhamel formula and repeating one more time the arguments from the Step 1 of the proof of the Proposition~\ref{prop:Kregularization}, we further deduce that for every $N_2, \alpha_2>0$ there is $m_2(N_2)>0$ such that for every $\nu_2\in (0,\nu_0)$ there holds
\bean
\IIII&\leq& \left( {1\over N_1}  + 2C_kDm_1^{4}\alpha_0+  {C_kD m_1^{4}\over N_2}+ C_k^2D^2m_1^{4}m_2^{4} \alpha_1 \right)   t e^{-\nu_0t}\underset{s\in [0, t]}{\sup}\left[e^{\nu_0 s}\lVert   f_s\rVert_{L^{\infty}_{\omega_1}(\OO^\eps)} \right] \\
&&+ C_k D m_1^{4} t e^{-\nu t} \lVert   f_0\rVert_{L^{\infty}_{\omega_1}(\OO^\eps)} + C_kDm_1^{4}{\nu_1t \over \nu_0 - \nu_2}  e^{-\nu_2 t}\underset{s\in [0,t]}{\sup}\left[e^{\nu_2 s}\lVert   G_s\rVert_{L^{\infty}_{\omega_1\nu^{-1}}(\OO^\eps)}\right] \\
&&+C_k^2Dm_1^{1+3/2}m_2^{1+3/2} \omega_1(m_2) \alpha_1^{-3/2} t^2 e^{-\nu_0 t}\underset{s\in[0,t]}{\sup}\left[ e^{\nu_0 s}\lVert   f_s\rVert_{{\HH}}\right]   +\IIII_4,
\eean
where we have defined
\beqn
\IIII_4 = \int_\sigma^t e^{-\nu(v)(t-s)} \int_{\vert v_*\rvert \leq m_1, \, \lvert n(x_1)\cdot v_* \rvert\geq \alpha_0}   k_{0,m_1} (v, v_*) \omega_1(v_*) \lvert   f(s_1, x_1, v_*)\rvert \, \d v_*\ds ,
\eeqn
Using the Maxwell boundary conditions yields 
$$
f(s_1, x_1, v_*) =(1-\iota^\eps(x_1)) f(s_1, x_1, \VV_{x_1}v_*) + \iota^\eps(x_1)  \MMM(v_*) \widetilde f(s_1, x_1),
$$
thus we deduce that $\IIII_4 \leq \IIII^S + \IIII^D$ where $\IIII^S$ and $\IIII^D$ are given by \eqref{eq:IIII_4_S}, \eqref{eq:IIII_4_D} respectively.

\medskip\noindent
\emph{Step 1. (Control of the diffusive term $\IIII^D$)}
 We observe first that, since $ \MMM\omega_1\leq 1$, there holds
$$
\IIII^D \leq \int_\sigma^t  \int_{\R^3} k_{0,m_1}(v, v_*) e^{-\nu_0(t-s_1)} \int_{\R^3} \left\lvert  f(s_1, x_1, u_*)\right\rvert (n(x_1)\cdot u_*)_+\,  \d u_* \d v_*\ds,
$$
Using then \ref{item:K5} we have that for every $\lambda_1>0$ there are $M_1, \eta>0$ such that
\beqn
\IIII^D 
\leq \lambda_1 C_k D m_1^{4} t e^{-\nu_0 t}\underset{s\in[0,t]}{\sup}\left[ e^{\nu_0s}\lVert  f_s\rVert_{L^{\infty}_{\omega_1}(\OO^\eps)}\right]  + \IIII^D_0 
\eeqn
where we have used the properties of $k_{m_1}$ as exposed in \ref{item:K1}, and we have defined
\be\label{eq:IIIID0def}
\IIII^D_0 =  \int_\sigma^t  \int_{\R^3}k_{m_1}(v, v_*) e^{-\nu_0(t-s_1)} \int_{\lvert u_*\rvert \leq M_1,\, \lvert n_{x_1}\cdot u_*\rvert >\eta} \left\lvert  f(s_1, x_1, u_*)\right\rvert (n(x_1)\cdot u_*)_+ \, \d u_* \d v_* \ds.
\ee
Arguing now as during the Step 1 we deduce that we may rewrite the previous integral as
being integrated over the set $ \{\lvert u_*\rvert \leq M_1, \lvert n(x_1)\cdot u_*\rvert > \eta \}\setminus (S_{x_1} \cup W_{t_1, x_1})$. 
Using then Lemma~\ref{lemma3.2:GeneralReflCylinderDiff} there is $\eps_6^1= \eps_D(\eta, M_1, T)$ such that for every $\eps\in (0,\eps_6^1)$ there is no more collision against the diffusive boundary through the backwards trajectory starting at $(s_1, x_1, u_*)$. We then denote $(s_2, x_2, u_2)$ as the point of collision against the boundary through the backwards trajectory starting at $(s_1, x_1, u_*) $, moreover we remark that $(x_2, u_*) \in \Sigma_-^\eps$ and due to the previous choice of $\eps$ we also have that
$$
x_2= x_1- u_*(s_1- s_2) \in \Lambda^\eps_3 .
$$
Then, for any $\eta_1>0$ small, we define $\UUU_1 = \{\lvert u_*\rvert \leq M_1,\, \lvert n_{x_1}\cdot u_*\rvert >\eta, \lv \nnnn(x_1) \cdot   u_* \rv \leq \eta_1\}$, $\UUU_2 = \{\lvert u_*\rvert \leq M_1,\, \lvert n_{x_1}\cdot u_*\rvert >\eta, \lv \nnnn( x_1) \cdot  u_* \rv > \eta_1\}$, and we have that $\IIII^D_0 \leq  \IIII^D_1 + \IIII^D_2$ with
\bean
\IIII^D_1 &:=&  \int_\sigma^t  \int_{\R^3}k_{m_1}(v, v_*) e^{-\nu_0(t-s_1)}   \int_{\UUU_1} \left\lvert  f(s_1, x_1, u_*)\right\rvert (n(x_1)\cdot u_*)_+ \, \d u_* \d v_* \ds , \\
\IIII^D_2 &:=& \int_\sigma^t  \int_{\R^3}k_{m_1}(v, v_*) e^{-\nu_0(t-s_1)}   \int_{\UUU_1} \left\lvert  f(s_1, x_1, u_*)\right\rvert (n(x_1)\cdot u_*)_+ \, \d u_* \d v_* \ds .
\eean
In order to control $\IIII^D_1$ we perform the change of variable $u_*^{\mathrm par}= (\nnnn(x_1) \cdot u_*)\nnnn(x_1)$ and its perpendicular direction $u_*^\perp = u_* - u_*^{\mathrm par}$ such that  $u_* = u_*^{\mathrm par} + u^\perp_*$, and we obtain that 
\bean
\IIII^D_1 &\leq& M_1 e^{-\nu_0 t} \underset{s\in [0,t]}{\sup} \left[e^{\nu_0 s} \lvv f_s\rvv_{L^\infty_{\omega_1}(\OO^\eps)}\right]  \int_\sigma^t  \int_{\R^3}k_{m_1}(v, v_*)   \int_{\lv u^\perp_* \rv \leq M_1 }  \d u_*^\perp \int_{ -\eta_1}^{ \eta_1} \d u_*^{\mathrm par}  \d v_* \ds \\
&\leq & 2 \eta_1 \, t M_1^3 D^2 C_k m_1^3 e^{-\nu_0 t} \underset{s\in [0,t]}{\sup} \left[e^{\nu_0 s} \lvv f_s\rvv_{L^\infty_{\omega_1}(\OO^\eps)}\right] ,
\eean
where we have used the bounds on $k_{m_1}$ as exposed during \ref{item:K1}.

\medskip\noindent
\emph{Step 1.1.} 
Using then the Duhamel decomposition and arguing as during the Step 3 of Proposition~\ref{prop:Kregularization} and we deduce that for every $N_3, \alpha_3>0$ there is $m_3(N_3)>0$ such that there holds 
\bean
\IIII^D_0  &\leq & \left(  \lambda_1 C_k D m_1^{4} t +   {t^2 C_k^2 D^2 m_1^{4} M_1^{4} \over N_3}  + C_k^2D^3m_1^{4} M_1^{4}m_3^{4}\alpha_3 t + 2 \eta_1 \, t M_1^3 D^2 C_k m_1^3  \right)   e^{-\nu_0 t}\underset{s\in[0,t]}{\sup}\left[ e^{\nu_0s}\lVert   f_s\rVert_{L^{\infty}_{\omega_1}(\OO^\eps)}\right] \\
&&+ C_k D^2 m_1^{4} M_1^{4} t e^{-\nu_0 t} \lVert   f_0\rVert_{L^{\infty}_{\omega_1}(\OO^\eps)} +  {\nu_1 C_k D^2m_1^{4} M_1^{4} t \over \nu_0 - \nu_2}  e^{-\nu_2 t}  \underset{s\in [0,t]}{\sup}\left[e^{\nu_2 s}\lVert   G_s\rVert_{L^{\infty}_{\omega_1\nu^{-1}}(\OO^\eps)}\right]   \\
 && + C_k^2D^2m_1^{4}m_4^{1+3/2} M_1^{1+3/2}\omega(m_3) \alpha_3^{-3/2} t^2e^{-\nu_0 t}\underset{s\in[0,t]}{\sup}\left[ e^{\nu_0 s}\lVert  f_s\rVert_{{\HH}}\right]  +\IIII^D_S
\eean
for every $\nu_2\in (0, \nu_0)$, and where we have defined
$$
\IIII^D_S = \int_\sigma^t  \int_{\R^3}k_{m_1}(v, v_*) e^{-\nu_0(t-s_2)} \int_{\UUU_2} \left\lvert  f(s_2, x_2, \VV_{x_2} u_*) \right\rvert (n(x_1)\cdot u_*)_+ \, \d u_* \d v_*\ds.
$$

Using then Lemma~\ref{lem:SpecularAngleDiffusiveReflection}, we deduce that there is a constructive constant $A_1=A_1(\eta)>0$ such that $\lv n(x_2) \cdot \VV_{x_2} \rv \geq A_1$ uniformly in $x$. Applying then Lemma~\ref{lemma3.2:GeneralReflCylinder} we have that there is $\eps_6^2 = \eps_S (A_1, M_1, T)$, such that for every $\eps\in (0, \min(\eps_6^1, \eps_6^2))$ there is no more bounce against the specular reflection. Therefore the Duhamel formula gives $\IIII^D_S \leq  \IIII^D_{S,1} + \IIII^D_{S,2} + \IIII^D_{S,3}$ with
\bean
\IIII^D_{S,1} &=& \int_\sigma^t  \int_{\R^3} k_{m_1}(v, v_*) e^{-\nu_0(t-s_2)} \int_{\UUU_2} \left\lvert S_{\TTT}(s_2)  f_0(x_2, \VV_{x_2} u_*)\right\rvert (n(x_1)\cdot u_*)_+, \d u_* \d v_*\ds, \\
\IIII^D_{S,2} &=& \int_\sigma^t \int_{\R^3} k_{m_1}(v, v_*)  e^{-\nu_0(t-s_2)} \int_{\UUU_2} \left\lvert S_{\TTT}* K  f (s_2, x_2, \VV_{x_2} u_*) \right\rvert (n(x_1)\cdot u_*)_+,\d u_* \d v_*\ds, \\
\IIII^D_{S,3} &=& \int_\sigma^t \int_{\R^3} k_{m_1}(v, v_*)  e^{-\nu_0(t-s_2)} \int_{\UUU_2} \left\lvert S_{\TTT}*   G(s_2 , x_2, \VV_{x_2}u_*) \right\rvert (n(x_1)\cdot u_*)_+,\d u_* \d v_*\ds,
\eean
 and we will control each of these terms separately. We compute then in a similar way as during the Step 3 of the proof of Proposition~\ref{prop:Kregularization}, and we have that, for every $\nu_2\in (0, \nu_0)$, and every $N_4>0$, there is $m_4(N_4)>0$ such that 
\begin{multline*}
\IIII^D_{S,1}  + \IIII_{S,2}^D + \IIII^D_{S,3}\leq C_k D^2 m_1^{4} M_1^{4} t e^{-\nu_0 t} \lVert  f_0\rVert_{L^{\infty}_{\omega_1}(\OO^\eps)} + {\nu_1 C_k D^2m_1^{4} M_1^{4} t\over \nu_0 - \nu_2} e^{-\nu_2 t}  \underset{s\in [0,t]}{\sup}\left[e^{\nu_2 s}\lVert   G_s\rVert_{L^{\infty}_{\omega_1\nu^{-1}}(\OO^\eps)}\right] \\
+  {t^2 C_k^2 D^2 m_1^{4} M_1^{4} \over N_4} e^{-\nu_0 t}\underset{s\in[0, t]}{\sup}\left[ e^{\nu_0s}\lVert   f_s\rVert_{L^{\infty}_{\omega_1}(\OO^\eps)}\right]  + C_k^2m_1m_4M_1\,   \IIII^D_{S,2,0},
\end{multline*}
where we have defined
$$
\IIII_{S,2,0}^D = \int_\sigma^t  \int_{\lvert v_*\rvert \leq m_1}  \int_{\{\lvert u_*\rvert \leq M_1\}} \int_0^{s_2} e^{\nu_0(t-r)} \int_{\lvert u'\rvert \leq m_4} \omega (u')\lvert  f (r, x_2- \VV_{x_2} u_*(s_1 - r), u') \rvert .
$$
Arguing then exactly as during the proof of Lemma~\ref{lem:Specular_Jacobian} and using Lemma~\ref{lem:Specular_Regularization} we have that for every $\alpha_4>0$ there is $\eps_6^3 = \eps_U(\alpha_4)>0$ such that for every $\eps\in (0,\min(\eps_6^1, \eps_6^2, \eps_6^3))$ there holds
\bean
\IIII_{S, 2,0}^D &\leq & D^3\, m_1^3\, M_1^3\, m_4^3\, \alpha_4 \, t\,  e^{-\nu_0 t}\underset{s\in[0, t]}{\sup}\left[ e^{-\nu_0s}\lVert  f_s\rVert_{L^{\infty}_{\omega_1}(\OO^\eps)}\right] \\
&&+ D^2 2^{1/2}m_1^3M_1^{3/2}m_4^{3/2} \, \omega_1(m_4) \, \alpha_4^{-3/2} \, t^2\, e^{-\nu_0 t}\underset{s\in[0,t]}{\sup}\left[ e^{\nu_0 s}\lVert  f_s\rVert_{{\HH}}\right].
  \eean
Putting together the above estimates controlling the diffusion term we have that 
\bean
\IIII^D  &\leq & \left(  \lambda_1 C_k D m_1^{4} t  +   {t^2 C_k^2 D^2 m_1^{4} M_1^{4} \over N_3}  + C_k^2D^3m_1^{4} M_1^{4}m_3^{4}\alpha_3 t + 2 \eta_1 \, t M_1^3 D^2 C_k m_1^3 \right.\\
&&\left.+   {t^2 C_k^2 D^2 m_1^{4} M_1^{4} \over N_4}  + C_k^2D^3m_1^{4} M_1^{4}m_4^{4}\alpha_4 t  \right)   e^{-\nu_0 t}\underset{s\in[0,t]}{\sup}\left[ e^{\nu_0s}\lVert   f_s\rVert_{L^{\infty}_{\omega_1}(\OO^\eps)}\right] \\
&&+ 2C_k D^2 m_1^{4} M_1^{4} t e^{-\nu_0 t} \lVert   f_0\rVert_{L^{\infty}_{\omega_1}(\OO^\eps)} +  2 {\nu_1 C_k D^2m_1^{4} M_1^{4} t\over \nu_0 - \nu_2}  e^{-\nu_2 t}  \underset{s\in [0,t]}{\sup}\left[e^{\nu_2 s}\lVert   G_s\rVert_{L^{\infty}_{\omega_1\nu^{-1}}(\OO^\eps)}\right] \\
&& + C_k^2D^2m_1^{4}m_3^{1+3/2} M_1^{1+3/2}\omega(m_3) \alpha_3^{-3/2} t^2e^{-\nu_0 t}\underset{s\in[0,t]}{\sup}\left[ e^{\nu_0 s}\lVert  f_s\rVert_{{\HH}}\right]\\
 && + C_k^2D^2 2^{1/2}m_1^{4}m_4^{1+3/2} M_1^{1+3/2}\omega(m_4) \alpha_4^{-3/2} t^2e^{-\nu_0 t}\underset{s\in[0,t]}{\sup}\left[ e^{\nu_0 s}\lVert  f_s\rVert_{{\HH}}\right] 
  \eean
  for every $\nu_2\in (0, \nu_0)$.

\medskip\noindent
\emph{Step 2. (Control of the specular term $\IIII^S$)} We recall now $\IIII^S$ defined in \eqref{eq:IIII_4_S} and we recall that $x_1\in \Lambda_3^\eps$. Using then the fact that $\lvert \VV_{x_1} v_*\rvert  = \lvert v_*\rvert$, $\lvert n(x_1) \cdot \VV_{x_1} v_*\rvert = \lvert n(x_1)\cdot v_*\rvert$,
and that we integrate on the velocity set $\{ \vert v_*\rvert \leq m_1, \, \lvert n(x_1)\cdot v_* \rvert\geq \alpha_0\}$ we deduce that it is equivalent to integrate on the set $\{\lvert  \VV_{x_1} v_*\rvert \leq m_1, \lvert n(x_1)\cdot  \VV_{x_1} v_*\rvert \geq \alpha_0 \}  \cap \{\VV_{x_1} v_* \notin S_{x_1} \} \cap \{ \VV_{x_1} v_* \notin W_{t_1, x_1} \}$,
by using that the sets $S_{x_1}$ and $W_{t_1, x_1} \cap \{\lv v_*\rv\leq m_1\}\setminus S_{x_1}$ have Lebesgue measure zero, due to Lemmas \ref{lem:ZeroMeasureSetCylinderSx} and \ref{lem:ZeroMeasureSetCylinder}. We denote $(s_2, x_2, v_2)$ the first bounce against the boundary through the backwards trajectory starting at $(s_1,x_1, \VV_{x_1} v_*)$ given by \eqref{eq:BackwardsTrajectory} and we remark that 
$$
x_2 = x_1- \VV_{x_1} v_*(s_1- s_2) \notin \SSSS \quad \text{ and } \quad ( x_2, \VV_{x_1} v_*) \in \Sigma_+^\eps.
$$
We define then the sets 
\bean
\AAA_S &:=& \{v^*\in \R^3, \,  \vert v_*\rvert \leq m_1, \, \lvert n(x_s)\cdot v_* \rvert\geq \alpha_0 \text{ and } x_2  \in \Lambda_3^\eps \},\\
\AAA_D &:=& \{v^*\in \R^3, \,  \vert v_*\rvert \leq m_1, \, \lvert n(x_s)\cdot v_* \rvert\geq \alpha_0 \text{ and } x_2 \in \Lambda_1^\eps\cup \Lambda_2^\eps \},
\eean
so we have that $\IIII^S \leq  \IIII^S_S + \IIII^S_D$, where
\bean
\IIII^S_S &=& \int_0^t e^{-\nu(v)(t-s)}\int_{\AAA_S} k_{0, m_1} (v,v_*)  e^{-\nu(v)(s-s_1)}   \omega_1 (v_*) (1-\iota^\eps(x_1)) \left\lvert f(s_1, x_1, \VV_{x_1}v_*) \right\rvert  \d v_*\ds ,\\
\IIII^S_D &=& \int_0^t e^{-\nu(v)(t-s)}\int_{\AAA_D} k_{0, m_1} (v,v_*)  e^{-\nu(v)(s-s_1)}  \omega_1 (v_*) (1-\iota^\eps(x_1))  \left\lvert  f(s_1, x_1, \VV_{x_1}v_*) \right\rvert  \d v_*\ds ,
\eean
and we study each one separately. 

By repeating the analysis performed on the Step 2 of the proof of the Proposition~\ref{prop:Kregularization}, using Lemma~\ref{lemma3.2:GeneralReflCylinder} instead of Lemma~\ref{lemma3.2:GeneralRefl}, we deduce that there is $\eps_6^4>0$ such that for every $\eps\in  (0, \min(\eps_6^1,\eps_6^2,\eps_6^3, \eps_6^4))$ and every $N_6,\alpha_6>0$ there is $m_6(N_6)>0$ such that there holds
\bean
\IIII^S_S &\leq& \left( C_k Dm_1^{4} { t \over N_6} + C_k^2D^2m_1^{4}m_6^{4}\alpha_6 t \right) e^{-\nu_0 t}\underset{s\in[0, t]}{\sup}\left[ e^{\nu_0s}\lVert  f_s\rVert_{L^{\infty}_{\omega_1}(\OO^\eps)}\right] \\
&&+C_k D m_1^{4} t e^{-\nu_0 t} \lVert  f_0\rVert_{L^{\infty}_{\omega_1}(\OO^\eps)} + C_k D m_1^{4}{\nu_1t \over \nu_0 - \nu_2}   e^{-\nu_2 t} \underset{s\in [0,t]}{\sup}\left[e^{\nu_2 s}\lVert  G_s\rVert_{L^{\infty}_{\omega_1\nu^{-1}}(\OO^\eps)}\right] \\
&& + 2^{1/2}\omega_1(m_6) D C_k^2 m_1^{1+3/2}m_6^{1+3/2} \alpha_6^{-3/2} t^2e^{-\nu_0 t}\underset{s\in[0,t]}{\sup}\left[ e^{\nu_0 s}\lVert  f_s\rVert_{{\HH}}\right] ,
\eean
for every $\nu_2\in (0, \nu_0)$. On the other hand, to control $\IIII^S_D$ we use again the Duhamel formula and we have that $\IIII_D^S \leq \IIII^S_{D,1} + \IIII^S_{D,2} + \IIII^S_{D,3} + \IIII^S_{D,4}$ with 
\bean
\IIII^S_{D,1} &=& \int_0^t \int_{\AAA_D} k_{0, m_1} (v,v_*)  e^{-\nu_0(t-s_1)}  \omega_1(v_*)  \left\lvert  S_{\TTT}(s_1)  f_0(x_1, \VV_{x_1}v_*)  \right\rvert  \d v_*\ds ,\\
\IIII^S_{D,2} &=& \int_0^t \int_{\AAA_D} k_{0, m_1} (v,v_*)  e^{-\nu_0(t-s_1)}  \omega_1(v_*)  \left\lvert  \int_{\max\{0,s_2\}}^{s_1}S_{\TTT}(s_1-r) K f (r,x_1,\VV_{x_1}v_*) \dr   \right\rvert  \d v_*\ds ,\\
\IIII^S_{D,3} &=& \int_0^t \int_{\AAA_D} k_{0, m_1} (v,v_*)  e^{-\nu_0(t-s_1)}  \omega_1(v_*)  \left\lvert  \int_{\max\{0,s_2\}}^{s_1}S_{\TTT}(s_1-r)  G(r, x_1, \VV_{x_1}v_*) \dr    \right\rvert  \d v_*\ds ,\\
\IIII^S_{D,4} &=& \int_0^t \int_{\AAA_D} k_{0, m_1} (v,v_*)  e^{-\nu_0(t-s_1)}  \omega_1(v_*)  \left\lvert  S_\TTT (s_1-s_2) f(s_2, x_2, \VV_{x_1}v_*)  \right\rvert  \d v_*\ds .
\eean  
We observe then that the first three terms can be controlled by repeating the same computations as during the Step 1 of the proof of the Proposition~\ref{prop:Kregularization}, using Lemma~\ref{lemma3.2:GeneralReflCylinder} instead of Lemma~\ref{lemma3.2:GeneralRefl}. Therefore, there is $\eps_6^5>0$ such that for every $\eps\in(0, \min(\eps_6^1,\eps_6^2,\eps_6^3, \eps_6^4, \eps_6^5))$ and every $N_6,\alpha_6>0$ there is $m_6(N_6)>0$ such that there holds
\bean
\IIII^S_D &\leq& \left( C_k Dm_1^{4} { t \over N_6} + C_k^2D^2m_1^{4}m_6^{4}\alpha_6 t \right) e^{-\nu_0 t}\underset{s\in[0, t]}{\sup}\left[ e^{\nu_0s}\lVert  f_s\rVert_{L^{\infty}_{\omega_1}(\OO^\eps)}\right] \\
&&+C_k D m_1^{4} t e^{-\nu_0 t} \lVert  f_0\rVert_{L^{\infty}_{\omega_1}(\OO^\eps)} + C_k D m_1^{4}{\nu_1t \over \nu_0 - \nu_2}   e^{-\nu_2 t} \underset{s\in [0,t]}{\sup}\left[e^{\nu_2 s}\lVert  G_s\rVert_{L^{\infty}_{\omega_1\nu^{-1}}(\OO^\eps)}\right]  \\
&& + 2^{3/2}\omega_1(m_6) D C_k^2 m_1^{1+3/2}m_6^{1+3/2} \alpha_6^{-3/2} t^2e^{-\nu_0 t}\underset{s\in[0,t]}{\sup}\left[ e^{\nu_0 s}\lVert  f_s\rVert_{{\HH}}\right] + \IIII^{S}_{D, 4}.
\eean
We proceed then to control the remaining term $\IIII^S_{D,4}$, and using the boundary conditions of Equation~\eqref{eq:PLRBE_T} we first have that
$$
\IIII^{S}_{D,4} \leq     \int_0^t \int_{\AAA_D} k_{0, m_1} (v,v_*)  e^{-\nu_0(t-s_1)}   \int_{\R^3} \left\lvert  f(s_2, x_2, u')\right\rvert (n(x_2)\cdot u')_+\,  \d u' \d v_*\ds.
$$
Repeating then exactly the same computations performed during the Step 1 of this proof we deduce that there are $\eps_6^6, \eps_6^7>0$ such that for every $\eps \in (0, \min(\eps_6^1,\eps_6^2,\eps_6^3, \eps_6^4, \eps_4^5, \eps_6^6, \eps_6^7))$, there holds that for every $\lambda_2, \eta_2,  N_7, N_8, \alpha_7, \alpha_8>0$, there are constants $m_6(N_6), m_7(N_7)>0$ such that 
\bean
\IIII^{S}_{D,4}  &\leq & \left(  \lambda_2 C_k D m_1^{4} t  +   {t^2 C_k^2 D^2 m_1^{4} M_1^{4} \over N_7}  + C_k^2D^3m_1^{4} M_1^{4}m_7^{4}\alpha_7 t +  2 \eta_2 \, t M_1^3 D^2 C_k m_1^3\right.\\
&&\left.+   {t^2 C_k^2 D^2 m_1^{4} M_1^{4} \over N_8}  + C_k^2D^3m_1^{4} M_1^{4}m_8^{4}\alpha_8 t  \right)   e^{-\nu_0 t}\underset{s\in[0,t]}{\sup}\left[ e^{\nu_0s}\lVert   f_s\rVert_{L^{\infty}_{\omega_1}(\OO^\eps)}\right] \\
&&+ 2C_k D^2 m_1^{4} M_1^{4} t e^{-\nu_0 t} \lVert   f_0\rVert_{L^{\infty}_{\omega_1}(\OO^\eps)} +  2 {\nu_1 C_k D^2m_1^{4} M_1^{4} t\over \nu_0 - \nu_2}  e^{-\nu_2 t}  \underset{s\in [0,t]}{\sup}\left[e^{\nu_2 s}\lVert   G_s\rVert_{L^{\infty}_{\omega_1\nu^{-1}}(\OO^\eps)}\right] \\
&&+ \left(\sum_{j=7}^8 \omega_1(m_j) m_j^{1+3/2} \alpha_j^{-3/2}\right)  2^{1/2} D^2 M_1^{1+3/2}C_k^2 m_1^{1+3/2}  t^2e^{-\nu_0 t}\underset{s\in[0,t]}{\sup}\left[ e^{\nu_0 s}\lVert  f_s\rVert_{{\HH}}\right] ,
  \eean
for every $\nu_2\in (0, \nu_0)$. We then conclude this step by putting together the previous informations and we have obtained that
\begin{multline*}
\IIII^S \leq C^S_1 e^{-\nu_0 t}\underset{s\in[0, t]}{\sup}\left[ e^{\nu_0s}\lVert  f_s\rVert_{L^{\infty}_{\omega_1}(\OO^\eps)}\right]  + 2C_k D m_1^{4}( 1+ DM_1^4){\nu_1t \over \nu_0 - \nu_2}   e^{-\nu_2 t} \underset{s\in [0,t]}{\sup}\left[e^{\nu_2 s}\lVert  G_s\rVert_{L^{\infty}_{\omega_1\nu^{-1}}(\OO^\eps)}\right] 
\\
 +2C_k D m_1^{4}  (1+ D M_1^{4}) t e^{-\nu_0 t} \lVert  f_0\rVert_{L^{\infty}_{\omega_1}(\OO^\eps)} 
 + C^S_2  t^2e^{-\nu_0 t}\underset{s\in[0,t]}{\sup}\left[ e^{\nu_0 s}\lVert  f_s\rVert_{{\HH}}\right]  ,
\end{multline*}
with
\begin{multline*}
C^S_1 = C_k Dm_1^{4} { t \over N_6} + C_k^2D^2m_1^{4}m_6^{4}\alpha_6 t + C_k Dm_1^{4} { t \over N_6} + C_k^2D^2m_1^{4}m_6^{4}\alpha_6 t +  \lambda_2 C_k D m_1^{4} t \\
   +   {t^2 C_k^2 D^2 m_1^{4} M_1^{4} \over N_7}  + C_k^2D^3m_1^{4} M_1^{4}m_7^{4}\alpha_7 t +  2 \eta_2 \, t M_1^3 D^2 C_k m_1^3+   {t^2 C_k^2 D^2 m_1^{4} M_1^{4} \over N_8}  + C_k^2D^3m_1^{4} M_1^{4}m_8^{4}\alpha_8 t  ,
   \end{multline*}
   and
  $$
C^S_2 =  2^{1/2} D C_k^2 m_1^{1+3/2}\left[ \left(\sum_{j=5}^6 \omega_1(m_j) m_j^{1+3/2} \alpha_j^{-3/2}\right)   + \left(\sum_{j=7}^8 \omega_1(m_j) m_j^{1+3/2} \alpha_j^{-3/2}\right)  D M_1^{1+3/2}\right].
$$

\medskip\noindent
\emph{Step 3. (Choice of the parameters)} Putting together the estimates obtained from Steps 1, 2 we get that for every $\eps \in (0,  \min(\eps_6^1,\eps_6^2,\eps_6^3, \eps_6^4, \eps_4^5, \eps_6^6, \eps_6^7))$ and $\nu_2\in (0, \nu_0)$ there holds
\begin{multline*}
\IIII\leq  C_1\left(t+t^2\right) e^{-\nu_0 t}\underset{s\in[0,t]}{\sup}\left[ e^{\nu_0s}\lVert  f_s\rVert_{L^{\infty}_{\omega_1}(\OO^\eps)}\right] + C_2 t e^{-\nu_0t} \lVert  f_0\rVert_{L^{\infty}_{\omega_1}(\OO^\eps)} \\
 + C_3t^2e^{-\nu_0 t}\underset{s\in[0,t]}{\sup}\left[ e^{\nu_0 s}\lVert  f_s\rVert_{{\HH}}\right] + C_4 t e^{-\nu_2 t} \underset{s\in [0,t]}{\sup}\left[e^{\nu_2 s}\lVert   G_s\rVert_{L^{\infty}_{\omega_1\nu^{-1}}(\OO^\eps)}\right]  ,
\end{multline*}
with
\bean 
 C_1 &=& {1\over N_1}  + 2C_kDm_1^{4}\alpha_0+  {C_kD m_1^{4}\over N_2}+ C_k^2D^2m_1^{4}m_2^{4} \alpha_1  + \lambda_1 C_k D m_1^{4} t  +   {t^2 C_k^2 D^2 m_1^{4} M_1^{4} \over N_3}  \\
&& + C_k^2D^3m_1^{4} M_1^{4}m_3^{4}\alpha_3 t +   {t^2 C_k^2 D^2 m_1^{4} M_1^{4} \over N_4}  + C_k^2D^3m_1^{4} M_1^{4}m_4^{4}\alpha_4 t   + C^S_1,
\eean
and 
\bean
C_2 &=& 3C_k D m_1^{4}  + 2C_k D m_1^{4}  (1+ D M_1^{4}), \qquad C_3 = C_3^0 +C_2^S,\\
C_3^0 &=& 2^{1/2} D C_k^2 m_1^{1+3/2}\left[ \left(\sum_{j=1}^2 \omega_1(m_j) m_j^{1+3/2} \alpha_j^{-3/2}\right) + D M_1^{1+3/2} \left(\sum_{j=3}^4 \omega_1(m_j) m_j^{1+3/2} \alpha_j^{-3/2}\right) \right]  , \\
C_4 &=&  {\nu_1 \over \nu_0 - \nu_2}\left( 3C_kDm_1^{4} + 4C_k D^2m_1^{4} M_1^{4}  \right).
\eean
We then set the constants in a similar way as in the Step 4 of the proof of the Proposition~\ref{prop:Kregularization}  so that $C_1\leq \lambda$. We define $C = \max(C_2, C_3, C_4)$ and we observe that, since the constants that define $C$ come from \ref{item:K1}, \ref{item:K4} and \ref{item:K5}, there is a constant $p>0$ such that $ C\lesssim \lambda^{-p}$. We conclude by setting $\eps_6 = \min(\eps_6^1,\eps_6^2,\eps_6^3, \eps_6^4, \eps_6^5, \eps_6^6, \eps_6^7)$.
\end{proof}

\subsection{Estimate on the trajectories}\label{ssec:TrajectoryLinftyCylinder} In this subsection we  use then the regularization property given by Proposition~\ref{prop:KregularizationCylinder} to proof a $L^\infty$ estimate of the solutions of the equation following a similar argument as during the proof of Proposition~\ref{prop:GainLinftyL2LB1}.

\begin{prop} \label{prop:GainLinftyL2LB2}
Consider Assumption \ref{item:RH2} to hold, $\omega_1\in \WWWW_1$ a strongly confining admissible weight function, and let $f$ be a solution to Equation~\eqref{eq:PLRBE_T}. For every $t\in [0, T]$ and every $\lambda>0$ there is $\eps_7= \eps_7( \lambda, T)>0$ such that for every $\eps\in (0, \eps_7)$ and every $\nu_2\in (0, \nu_0)$, we have that for every $(x, v)\in  \OO^\eps$, with $\lvert v\rvert \leq \MMMM$ for any arbitrary $\MMMM>0$, and $v\notin S_x\cup W_{t, x}$ there holds
\begin{multline}\label{eq:GainLinftyL2LB2}
\omega_1(v) \lvert f  (t, x,v) \rvert \leq  \lambda (1+ t + t^2) e^{-\nu_0 t}\underset{s\in[0,t]}{\sup}\left[ e^{\nu_0s}\lVert  f_s\rVert_{L^{\infty}_{\omega_1}(\OO^\eps)}\right] 
+ C(1+t) e^{-\nu_0 t} \lVert f_0\rVert_{L^{\infty}_{\omega_1}(\OO^\eps)} \\
  + C t^2 e^{-\nu_0 t}\underset{s\in[0,t]}{\sup}\left[ e^{\nu_0 s}\lVert  f_s\rVert_{\HH}\right]   + C(1+t) e^{-\nu_2t}\underset{s\in [0,t]}{\sup}\left[e^{\nu_2 s}\lVert G_s\rVert_{L^{\infty}_{\omega_1\nu^{-1}}(\OO^\eps)}\right] ,
\end{multline}
uniformly in $\MMMM$. Moreover, $C= C(\lambda)>0$ and there is $p>0$ such that $C \lesssim \lambda^{-p}$.
\end{prop}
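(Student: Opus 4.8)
The plan is to mimic the four-step proof of Proposition~\ref{prop:GainLinftyL2LB1}, substituting the smooth-domain trajectory lemmas by their cylindrical counterparts and, crucially, unwinding the specular reflections on $\Lambda_3^\eps$ by hand. Fix $(t,x,v)\in\UU_T^\eps$ with $|v|\le\MMMM$ and $v\notin S_x\cup W_{t,x}$, and let $(t_1,x_1,v_1)$ be the first backward collision from \eqref{eq:BackwardsTrajectory}, with the convention that $t_1\le0$ means the trajectory reaches the initial time. With $\sigma:=\max(0,t_1)$, the Duhamel formula for $S_\TTT$ gives
\[
\omega_1(v)|f(t,x,v)|\le \III_1+\III_2+\III_3+\III_4,\qquad
\III_4=e^{-\nu(v)(t-t_1)}\omega_1(v)|f(t_1,x_1,v)|,
\]
where $\III_1$ is the initial-data term, $\III_2=\omega_1(v)|S_\TTT*_\sigma Kf(t,x,v)|$ and $\III_3=\omega_1(v)|S_\TTT*_\sigma G(t,x,v)|$. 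Exactly as in Step~1 of Proposition~\ref{prop:GainLinftyL2LB1}, one bounds $\III_1+\III_3$ by \ref{item:K3} and $\III_2$ by Proposition~\ref{prop:KregularizationCylinder} --- the latter being applicable because $v\notin S_x\cup W_{t,x}$ --- which produces a $\lambda_1(t+t^2)$-small contribution together with the $\|f_0\|$, $\|\cdot\|_\HH$ and $G$-terms with a constant $C_1\lesssim\lambda_1^{-p_1}$.

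The new input is $\III_4$. Since $\iota^\eps=\Ind_{\Lambda_1^\eps\cup\Lambda_2^\eps}$, the boundary condition \eqref{eq:RBEBC} is purely specular on $\Lambda_3^\eps$ and purely diffusive on $\Lambda_1^\eps\cup\Lambda_2^\eps$, so $\III_4\le\III^S+\III^D$, where $\III^S$ lives on $\{x_1\in\Lambda_3^\eps\}$ and equals $e^{-\nu(v)(t-t_1)}\omega_1(v)|f(t_1,x_1,\VV_{x_1}v)|$ (using $\omega_1(\VV_{x_1}v)=\omega_1(v)$, $|\VV_{x_1}v|=|v|$), while $\III^D$ lives on $\{x_1\in\Lambda_1^\eps\cup\Lambda_2^\eps\}$ and is dominated by $e^{-\nu(v)(t-t_1)}\int_{\R^3}|f(t_1,x_1,u)|(n(x_1)\cdot u)_+\,\d u$ (using $\omega_1\MMM\le1$). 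The term $\III^D$ is handled along the lines of Steps~2--3 of Proposition~\ref{prop:GainLinftyL2LB1}, now with the cylindrical tools: \ref{item:K5} localises $u$ to $\{|u|\le M,\ |n(x_1)\cdot u|>\eta\}$; Lemma~\ref{lemma3.2:GeneralReflCylinderDiff} forbids a further diffusive bounce, so the next backward collision, if any, lies on $\Lambda_3^\eps$; one splits the localised set by whether $|\nnnn(x_1)\cdot u|\le\eta_1$ or $>\eta_1$, discarding the first piece by a parallel/perpendicular change of variables (as in Step~1 of Proposition~\ref{prop:KregularizationCylinder}) and, on the second, invoking Lemma~\ref{lem:SpecularAngleDiffusiveReflection} to produce a uniform lower bound $|n(x_2)\cdot v_2|\ge A$; then Lemma~\ref{lemma3.2:GeneralReflCylinder} excludes any further specular bounce for $\eps$ small, a last Duhamel expansion reduces the remainder to $\|f_0\|$, $S_\TTT*Kf$ (Proposition~\ref{prop:KregularizationCylinder} once more), $S_\TTT*G$, and the Jacobian $(s-r)^3+\OOO(\eps)$ of Lemma~\ref{lem:Specular_Jacobian} lets Lemma~\ref{lem:Specular_Regularization} turn the last $f$-term into a $\lambda$-small $L^\infty$ piece plus a $\HH$-piece.

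For $\III^S$ one unwinds the specular chain \eqref{eq:BackwardsSpecTrajectory} along $\Lambda_3^\eps$. Because $v\notin W_{t,x}$ this chain never terminates on the singular edge $\SSSS^\eps$, by Lemma~\ref{lem:CircunferenceSpecularNNNN} it consists of finitely many bounces, and Lemma~\ref{lem:CircunferenceSpecular} shows that $n\cdot v$ and $|v|$ are conserved across them; moreover, once $|n(x_1)\cdot v|>\eta$, Lemma~\ref{lemma3.2:GeneralReflCylinder} already rules out a second specular bounce for $\eps$ small. I would therefore split $\III^S$ according to whether $|n(x_1)\cdot v|$ is above or below a threshold $\eta$: in the first case a single Duhamel step reduces matters to a diffusive bounce already treated, while in the second case one iterates the Duhamel decomposition finitely many times along the specular chain, each specular segment preserving $\omega_1$ and contributing an $e^{-\nu_0(\cdot)}$ factor, until the backward trajectory hits $\Lambda_1^\eps\cup\Lambda_2^\eps$ (non-singular) or $t=0$; the zero-measure Lemmas~\ref{lem:ZeroMeasureSetCylinderSx}--\ref{lem:ZeroMeasureSetCylinder} guarantee that all the intermediate velocity integrals may be restricted to the good sets where Proposition~\ref{prop:KregularizationCylinder} applies. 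Collecting the contributions of $\III_1$--$\III_4$, fixing $M,\eta,\eta_1,\alpha_0,\dots$ in terms of $\lambda$ exactly as in Step~4 of Proposition~\ref{prop:Kregularization}, and taking $\eps_7=\eps_7(\lambda,T)$ to be the minimum of the finitely many thresholds generated along the way, yields \eqref{eq:GainLinftyL2LB2} with $C=C(\lambda)\lesssim\lambda^{-p}$.

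The main obstacle is precisely the control of $\III^S$. In the $C^2$ case the specular part was dominated crudely by $(1-\iota_0)<1$, but here $1-\iota^\eps\equiv1$ on $\Lambda_3^\eps$, so the specular contribution must genuinely be absorbed into the $\lambda$-small term; this forces a careful analysis of trajectories that bounce several times near the lateral surface before escaping to a base, and a bookkeeping argument ensuring that the number of such bounces --- hence all resulting constants and the final $\eps$-threshold --- stays controlled uniformly in $\MMMM$ and $x$. The delicate point is the grazing regime $n(x_1)\cdot v\to0$, where Lemma~\ref{lem:CircunferenceSpecular} gives only a degenerate lower bound on the inter-collision times; it is here that the exclusion $v\notin S_x\cup W_{t,x}$ and the largeness ($\sim\eps^{-1}$) of the cylinder must be exploited most carefully.
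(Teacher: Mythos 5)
Your global strategy (Duhamel expansion, separation of the boundary contribution according to whether the first backward bounce lands on $\Lambda_3^\eps$ or on $\Lambda_1^\eps\cup\Lambda_2^\eps$, invocation of the cylindrical lemmas) is the right one and matches the paper's. The treatment of the diffusive part $\III^D$ is correct: \ref{item:K5}, Lemma~\ref{lemma3.2:GeneralReflCylinderDiff}, the $\eta_1$-splitting in $\nnnn(x_1)\cdot u$, Lemma~\ref{lem:SpecularAngleDiffusiveReflection}, Lemma~\ref{lemma3.2:GeneralReflCylinder}, and Lemma~\ref{lem:Specular_Regularization} are exactly the tools used. However, there is a genuine gap in your handling of the purely specular chain.

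You correctly identify the obstacle --- $1-\iota^\eps\equiv 1$ on $\Lambda_3^\eps$, so the specular contribution cannot be crudely absorbed by a $(1-\iota_0)<1$ factor --- and you also correctly note that in the grazing regime $n(x_1)\cdot v\to 0$ Lemma~\ref{lem:CircunferenceSpecular} degenerates and the number $\NNNN$ of consecutive specular bounces may be large (it depends on $x$ and on $\MMMM$ and cannot be bounded uniformly). But you then say only that ``a bookkeeping argument'' should keep the resulting constants and $\eps$-thresholds under control; this is precisely the point where the argument could fail and it is not resolved. The paper's actual proof does \emph{not} attempt to bound $\NNNN$ uniformly. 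Instead it organizes the computation by cases (all specular vs. at least one diffusive), iterates the Duhamel formula along the finite specular chain $(t_j,x_j,v_j)_{j=0}^{\NNNN}$ of \eqref{eq:BackwardsSpecTrajectory}, and then observes the elementary but crucial identity
\begin{equation*}
 \int_{0}^{t} \sum_{j=1}^{\NNNN+1}  \Ind_{t_j\leq s \leq t_{j-1}} (s) \, \ds = t ,
\end{equation*}
i.e. the time segments $[t_j,t_{j-1}]$ form a disjoint partition of $[\max(0,t_{\NNNN+1}),t]$. Combined with the fact that the estimates of \ref{item:K3} and Proposition~\ref{prop:KregularizationCylinder} are uniform in the base point along the trajectory, this makes each summed integral bounded by a single integral over $[0,t]$, so the resulting constant and the $\eps$-threshold are independent of $\NNNN$ and of $\MMMM$. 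Without this observation your argument produces bounds that a priori scale with $\NNNN(T,x,\MMMM)$, which blows up near $\Lambda_3^\eps$ for nearly tangential velocities, and the claimed uniformity in $\MMMM$ is lost. You should replace the vague bookkeeping step with this time-partition argument.
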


\begin{proof}
Starting from $(t,x,v)$, and since $\lv v\rv\leq \MMMM$ then Lemma~\ref{lem:CircunferenceSpecularNNNN} implies that there are only two possible scenarios to consider:

\noindent
\hspace{0.2cm} $\bullet$ \emph{Case 1.} There are only specular reflections through the backwards trajectories. In particular, there is $\NNNN= \NNNN(T,x,\MMMM) <\infty$, given by Lemma~\ref{lem:CircunferenceSpecularNNNN}, such that there are no more than $\NNNN$ consecutive specular reflections through the backwards trajectories. 

\noindent
\hspace{0.2cm} $\bullet$ \emph{Case 2.} There is a reflection against the diffusive boundary subsets before $\NNNN$ specular reflections.

We treat now each case separately.

\medskip\noindent
\emph{Case 1. (Purely specular reflection)} We recall that there are at most $\NNNN $ collisions against the boundary through the specular reflection boundary condition starting at $(t,x,v)$, where $\NNNN$ is given by Lemma~\ref{lem:CircunferenceSpecularNNNN}. Then the iterated Duhamel formula gives
\begin{multline*}
f(t,x,v) =  e^{-\nu(v)(t-t_\NNNN)}  f(t_\NNNN, x_\NNNN, v_\NNNN) + \int_{0}^{t} \sum_{j=1}^{\NNNN}  \Ind_{t_j\leq s \leq t_{j-1}} (s) S_{\TTT}(t_{j-1}-s) K f (s, x_{j-1},v_{j-1}) \ds \\
+\int_{0}^{t} \sum_{j=1}^{\NNNN}  \Ind_{t_j\leq s \leq t_{j-1}} (s) S_{\TTT}(t_{j-1}-s)G (s, x_{j-1},v_{j-1}) \ds,
\end{multline*}
where we have set $(t_0,x_0,v_0) = (t,x,v)$ and we have defined $(t_j,x_j,v_j)$ as in \eqref{eq:BackwardsSpecTrajectory} for $j\in \llbracket 1,\NNNN\rrbracket$. 

We then define $(\bar t, \bar x, \bar v)$ as the point of collision with the boundary set $\Lambda_3^\eps$ through the backwards trajectory starting at $(t_\NNNN, x_\NNNN, v_\NNNN)$. From the fact that there have already been $\NNNN$ specular reflections we remark that $\bar t<0$, and using one more time the Duhamel formula we obtain
\bean
f(t,x,v) &=&  e^{-\nu(v)(t-t_\NNNN)} S_\TTT (t_\NNNN) f_0(x_\NNNN, v_\NNNN) +  S_{\TTT}* K f (t_\NNNN, x_\NNNN,v_\NNNN)  + S_{\TTT} * G (t_\NNNN, x_{\NNNN},v_{\NNNN}) \\
&&+ \int_{0}^{t} \sum_{j=1}^{\NNNN+1}  \Ind_{t_j\leq s \leq t_{j-1}} (s) S_{\TTT}(t_{j-1}-s) K f (s, x_{j-1},v_{j-1}) ds \\
&&+\int_{0}^{t} \sum_{j=1}^{\NNNN}  \Ind_{t_j\leq s \leq t_{j-1}} (s) S_{\TTT}(t_{j-1}-s)G (s, x_{j-1},v_{j-1}) ds ,
\eean
where we have denoted $t_{\NNNN+1} = 0$. 
Using now the fact that \ref{item:K3} and Proposition~\ref{prop:KregularizationCylinder} are uniform in $(t,x,v)\in \UU^\eps_T$, $v\notin S_x \cup W_{t,x}$, and the fact that  
\beqn
 \int_{0}^{t} \sum_{j=1}^{\NNNN+1}  \Ind_{t_j\leq s \leq t_{j-1}} (s) \, \ds = t,
\eeqn
we deduce that, by arguing as during the proof of Proposition~\ref{prop:GainLinftyL2LB1}, there is $\eps_7^1 = \eps_6(\lambda, T)$, where $\eps_6(\lambda, T)$ is given by Proposition~\ref{prop:KregularizationCylinder}, such that for every $\eps \in (0, \eps_7^1)$ there holds
\begin{multline*}
\omega_1(v) \lvert f  (t, x,v) \rvert \leq \lambda (t + t^2) e^{-\nu_0 t}\underset{s\in[0,t]}{\sup}\left[ e^{\nu_0s}\lVert  f_s\rVert_{L^{\infty}_{\omega_1}(\OO^\eps)}\right] + C_S(1+t) e^{-\nu_0 t} \lVert f_0\rVert_{L^{\infty}_{\omega_1}(\OO^\eps)}  \\
  + C_S t^2 e^{-\nu_0 t}\underset{s\in[0,t]}{\sup}\left[ e^{\nu_0 s}\lVert  f_s\rVert_{\HH}\right] + C_S(1+t) e^{-\nu_2t}\underset{s\in [0,t]}{\sup}\left[e^{\nu_2 s}\lVert G_s\rVert_{L^{\infty}_{\omega_1\nu^{-1}}(\OO^\eps)}\right] ,
\end{multline*}
for some constant $C_S\lesssim \lambda^{-p}$ with $p>0$.
We conclude this case by emphasizing that $\eps_7^1$ and the constant $C_S$, are independent of $\NNNN$ and $\MMMM$ due to the above arguments.

\medskip\noindent
\emph{Case 2. (Possible diffussive reflection)}
We assume without loss of generality that the diffusive collision against the diffusive boundary through the backwards trajectory happens at the first collision. Otherwise, we just repeat the Duhamel formulation through the specular reflections like performed on the Step 1, until the diffusive reflection happens and then we proceed as follows. 

We denote $(t_1, x_1, v_1)$ the first collision through the backwards trajectory as given by \eqref{eq:BackwardsTrajectory}, and we remark that $x_1 = x-v(t- t_1)$. Using then the Duhamel formula we obtain
\bean
    \lvert \omega_1(v) f  (t, x,v) \rvert &\leq& \omega_1(v) S_{\TTT}(t) \left(   \lv f_0 (x,v)\rv \right)   + \omega_1 (v) \lvert S_{\TTT}*  K f (t,x,v)\rvert  + \omega_1(v) \lvert S_{\TTT}* G (t,x,v)\rvert \\
    &&+ e^{-\nu (v) (t-t_1)} \omega_1 (v) \lvert  f(t_1, x_1, v)\rvert\\
    &=:& \III_1 + \III_2 + \III_3 + \III_4,
\eean
and we proceed as during the proof of Proposition~\ref{prop:GainLinftyL2LB1}. On the one hand, using \ref{item:K3} and Proposition~\ref{prop:KregularizationCylinder} we have that for every $\lambda_1>0$ there is $\eps_7^2 = \eps_6(\lambda_1, T)>0$ and a constant $C_1=C_1( \lambda_1)>0$ such that for every $\eps \in(0,  \eps_7^2)$ and every $\nu_2\in (0, \nu_0)$ there holds
\begin{multline*}
\III_1 + \III_2 + \III_3 \leq  \lambda_1 (t+t^2)  e^{-\nu_0t}\underset{s\in[0,t]}{\sup}\left[ e^{\nu_0s}\lVert f_s\rVert_{L^{\infty}_{\omega_1}(\OO^\eps)}\right] +  (1+C_1t) e^{-\nu_0t} \lVert f_0\rVert_{L^{\infty}_{\omega_1}(\OO^\eps)} \\
 +  C_1 t^2 e^{-\nu_0 t}\underset{s\in[0,t]}{\sup}\left[ e^{\nu_0 s}\lVert f_s\rVert_{\HH}\right] + (1+C_1)(1+t) e^{-\nu_2 t} \underset{s\in [0,t]}{\sup}\left[e^{\nu_2 s}\lVert G_s\rVert_{L^{\infty}_{\omega_1\nu^{-1}}(\OO^\eps)}\right],
\end{multline*}
furthermore, there are $c_1, p_1>0$ such that $C_1\leq  c_1 \lambda_1^{-p_1}$.

We then control the boundary term $\III_4$, and since we have assumed the reflection at the boundary to be diffusive, then using \ref{item:K5} we have that for every $\lambda_2>0$ there are $M, \eta>0$ such that 
\beqn
\III_4 \leq e^{-\nu (v) (t-t_1)} \int_{\R^3} \lvert f(t_1, x_1, u) \rvert (n_{x_1} \cdot u)_+ du   \leq \lambda_2  e^{-\nu_0 t}\underset{s\in[0,t]}{\sup}\left[ e^{\nu_0 s}\lVert f_s\rVert_{L^\infty_{\omega_1}(\OO^\eps)}\right]  + \III_0^D,
\eeqn
where we have used that $ \omega_1\MMM\leq 1$, and we have defined
$$
\III_0^D := e^{-\nu(v)(t-t_1)} \int_{\{\lvert u\rvert \leq M,\, \lvert n(x_1)\cdot u\rvert >\eta\}}   \lvert  f(t_1, x_1, u)\rvert  \, (n(x_1)\cdot u)_+ \d u.
$$
By arguing exactly as during the Step 1 of the proof Proposition~\ref{prop:KregularizationCylinder} controlling the term $\IIII_0^D$ defined in \eqref{eq:IIIID0def} we deduce that for any $\eta_1$ there is $A_1 = A_1(\eta_1)>0$, such that there is $\eps_7^4 = \eps_S(A_1, M,T)$ given by Lemma~\ref{lemma3.2:GeneralReflCylinder} such that for every $\eps\in (0,\min(\eps_7^2, \eps_7^3, \eps_7^4))$ there holds that for every $\lambda_3, \lambda_4>0$ there are $\eps_7^5 = \eps_6(\lambda_3, T)>0$, $\eps_7^6 = \eps_6(\lambda_4, T)>0$ and constants $C_3=C_3( \lambda_3)>0$, $C_4=C_3( \lambda_4)>0$, given by the application of Proposition~\ref{prop:KregularizationCylinder}, such that 
\begin{multline*}
\omega_1\lvert f  (t, x,v) \rvert \leq \left(  \lambda_1 + \lambda_2 + 2\eta_1 M^3D + \lambda_3 M^4D + \lambda_4 M^4D \right) (t + t^2) e^{-\nu_0 t}\underset{s\in(0,t]}{\sup}\left[ e^{\nu_0s}\lVert  f_s\rVert_{L^{\infty}_{\omega_1}(\OO^\eps)}\right] \\
+ \left(  1+ C_1+M^4D+C_3M^4D + M^4D+ C_4M^4D     \right)(1+t) e^{-\nu_0 t} \lVert f_0\rVert_{L^{\infty}_{\omega_1}(\OO^\eps)}  \\
  + \left(   C_1 + C_3M^4D  +C_4M^4D   \right) t^2 e^{-\nu_0 t}\underset{s\in[0,t]}{\sup}\left[ e^{\nu_0 s}\lVert  f_s\rVert_{\HH}\right] \\
  + \left(  1+C_1+M^4D+ C_3M^4D+M^4D + C_4M^4D            \right)(1+t) {\nu_1\over \nu_0-\nu_2} e^{-\nu_2t} \underset{s\in [0,t]}{\sup}\left[e^{\nu_2 s}\lVert G_s\rVert_{L^{\infty}_{\omega_1\nu^{-1}}(\OO^\eps)}\right]  ,
\end{multline*}
furthermore, there are $c_3, c_4, p_3, p_4>0$ such that $C_3\leq  c_3 \lambda_4^{-p_3}$ and $C_4\leq  c_4 \lambda_4^{-p_4}$.  
We take then $\lambda_1 = \lambda_2 = \lambda/5$, $\eta_1 = \lambda /(10M^3D)$ and $\lambda_3 = \lambda_4 = \lambda/(5M^4D)$, and we define 
$$
C_D =  1+ C_1+M^4D+C_3M^4D + M^4D+ C_4M^4D,
$$
thus there is $p'>0$ such that $C_D \lesssim  \lambda^{-p'}$. 

We set $\eps_7= \min(\eps_7^1, \eps_7^2, \eps_7^3, \eps_7^4, \eps_7^5, \eps_7^6)$ and we conclude the proof by putting together the estimates from both of the possible scenarios.
\end{proof}

\subsection{Weighted $L^\infty$ control for solutions of Equation~\eqref{eq:PLRBE_T}}
In this subsection we use the estimate obtained in Proposition~\ref{prop:GainLinftyL2LB2} to deduce a weighted $L^\infty$ control on the solutions of Equation~\eqref{eq:PLRBE_T}.
\begin{prop}\label{prop:LinftyEstimatePerturbedFiniteT_0Cylinder} 
Consider Assumption \ref{item:RH2} to hold, $\omega_1\in\WWWW_1$ a strongly confining admissible weight function, and let $f$ be a solution of Equation~\eqref{eq:PLRBE_T}. There is $\eps_8 = \eps_8 (T)>0$ such that for every $\eps\in (0,\eps_9)$ there holds 
\begin{multline*}
\lVert  f_t  \rVert_{L^{\infty}_{\omega_1}(\bar \OO^\eps)} \leq   C(1+T) (1+T+T^2)^pe^{-\nu_0 t} \lVert f_0\rVert_{L^{\infty}_{\omega_1}(\OO^\eps)} 
+  C (1+T+T^2)^p  T ^2 e^{-\nu_0  t }\underset{s\in[0, t]}{\sup}\left[ e^{\nu_0 s}\lVert  f_s\rVert_{\HH} \right]  \\
+  C (1+T+T^2)^p (1+T) e^{-\nu_2t}\underset{s\in [0,t]}{\sup}\left[e^{\nu_2 s}\lVert G_s\rVert_{L^{\infty}_{\omega_1\nu^{-1}}(\OO^\eps)}\right] ,
\end{multline*}
for every $t\in [0, T]$, and some universal constants $C, p>0$.
\end{prop}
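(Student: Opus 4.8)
The plan is to mimic, essentially line by line, the proof of Proposition~\ref{prop:LinftyEstimatePerturbedFiniteT_0}, the only change being that the cylindrical pointwise trajectory bound of Proposition~\ref{prop:GainLinftyL2LB2} replaces the smooth one of Proposition~\ref{prop:GainLinftyL2LB1}. Concretely, I would first fix a free parameter $\lambda>0$ and an arbitrary velocity cutoff $\MMMM>0$, and invoke Proposition~\ref{prop:GainLinftyL2LB2} to obtain $\eps_8:=\eps_7(\lambda,T)>0$ such that for every $\eps\in(0,\eps_8)$, every $\nu_2\in(0,\nu_0)$, every $t\in[0,T]$ and every $(x,v)$ with $\lvert v\rvert\leq\MMMM$ and $v\notin S_x\cup W_{t,x}$, the pointwise estimate~\eqref{eq:GainLinftyL2LB2} holds, with a constant $C(\lambda)\lesssim\lambda^{-p}$ which is crucially \emph{independent of} $\MMMM$.

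Next I would pass from this pointwise bound to the $L^\infty_{\omega_1}(\bar\OO^\eps)$ norm. For this I would use that, by Lemma~\ref{lem:ZeroMeasureSetCylinderSx} and Lemma~\ref{lem:ZeroMeasureSetCylinder}, the set $(S_x\cup W_{t,x})\cap\{\lvert v\rvert\leq\MMMM\}$ is Lebesgue-null, and that $\Sigma_0^\eps$ is Lebesgue-null by its explicit codimension-one description; this legitimises taking the essential supremum in~\eqref{eq:GainLinftyL2LB2} first over $\{\lvert v\rvert\leq\MMMM\}$ and then over $x\in\bar\Omega^\eps$. The incoming trace on $\Sigma_-^\eps$ is bounded by the outgoing one through the boundary condition $\gamma_-f=\RRR\gamma_+f$ together with $\lvv\RRR\rvv_{L^\infty_{\omega_1}\to L^\infty_{\omega_1}}\leq1$ and $\omega_1\MMM\leq1$, so that the full norm $\lVert f_t\rVert_{L^\infty_{\omega_1}(\bar\OO^\eps)}$ is reached. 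Since the resulting inequality does not involve $\MMMM$, letting $\MMMM\to\infty$ yields an inequality of the form
\[
\lVert f_t\rVert_{L^\infty_{\omega_1}(\bar\OO^\eps)}\leq\lambda(1+t+t^2)\,e^{-\nu_0 t}\underset{s\in[0,t]}{\sup}\Bigl[e^{\nu_0 s}\lVert f_s\rVert_{L^\infty_{\omega_1}(\bar\OO^\eps)}\Bigr]+\bigl(\text{forcing terms in }f_0,\ \lVert f_s\rVert_{\HH},\ G_s\bigr),
\]
the forcing terms being exactly those on the right-hand side of the claimed estimate (up to the universal factor $C(1+T+T^2)^p$). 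I would then fix $\lambda=(2(1+T+T^2))^{-1}$, so that $\lambda(1+t+t^2)\leq 1/2$ on $[0,T]$, and absorb the first term into the left-hand side by the same supremum bookkeeping as in Proposition~\ref{prop:LinftyEstimatePerturbedFiniteT_0}: working with $\Psi(t):=\sup_{s\leq t}e^{\nu_0 s}\lVert f_s\rVert_{L^\infty_{\omega_1}(\bar\OO^\eps)}$, whose forcing part is nondecreasing on $[0,T]$ (here $\nu_0>\nu_2$ makes $e^{(\nu_0-\nu_2)t}$ increasing), gives $\Psi(t)\leq 2\,(\text{forcing}(t))$, and multiplying back by $e^{-\nu_0 t}$ restores the $e^{-\nu_2 t}$ prefactor on the $G$-term and produces the stated estimate with $p$ unchanged and $C$ a harmless multiple of $C_0$.

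The step I expect to demand the most care is the geometric input of the first paragraph rather than the (routine) absorption: one must be certain that the enlarged ``singular'' velocity set --- grazing directions together with directions whose backward trajectory strikes the singular edge $\SSSS^\eps$ after an a priori unbounded number of specular bounces on $\Lambda_3^\eps$ --- is genuinely Lebesgue-null, and that Proposition~\ref{prop:GainLinftyL2LB2} is genuinely uniform in the cutoff $\MMMM$; these are precisely the statements encoded in Lemmas~\ref{lem:CircunferenceSpecularNNNN}, \ref{lem:ZeroMeasureSetCylinderSx} and~\ref{lem:ZeroMeasureSetCylinder} and in Proposition~\ref{prop:GainLinftyL2LB2} itself. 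By contrast, and unlike in the smooth-domain Proposition~\ref{prop:GainLinftyL2LB1}, no residual term of the form $(1-\iota_0)\sup_{s}\lVert f_s\rVert$ survives here --- purely specular collisions terminate in finitely many steps and each diffusive collision is fully resolved through~\ref{item:K5} --- so the absorption requires no smallness of $\lambda$ beyond $\lambda(1+T+T^2)\leq 1/2$, which makes this portion of the argument marginally shorter than its smooth-domain counterpart.
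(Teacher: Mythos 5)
Your proof matches the paper's own, which reduces the statement in one paragraph to Proposition~\ref{prop:GainLinftyL2LB2} (in place of Proposition~\ref{prop:GainLinftyL2LB1}), passes to essential suprema via Lemmas~\ref{lem:ZeroMeasureSetCylinderSx} and~\ref{lem:ZeroMeasureSetCylinder} plus the null measure of the grazing set and the uniformity in the cutoff $\MMMM$, and then absorbs with $\lambda\asymp(1+T+T^2)^{-1}$. You have correctly identified the one structural change relative to the smooth case --- the absence of the residual $(1-\iota_0)$ factor, so $\lambda$ need not carry $\iota_0$ --- which indeed matches the paper's choice of $\lambda$.

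The only inaccuracy is a side remark: $\lvv\RRR\rvv_{L^\infty_{\omega_1}\to L^\infty_{\omega_1}}\leq 1$ is not true for $\omega_1\in\WWWW_1$. The specular part is an $\omega_1$-isometry, but the diffusive part $\DDD\gamma_+f=\MMM\,\widetilde{\gamma_+f}$ has $L^\infty_{\omega_1}$ operator norm $\bigl(\sup_v\omega_1\MMM\bigr)\cdot\int_{\R^3}\omega_1^{-1}(u)(n_x\cdot u)_+\,\d u=(4\zeta^2)^{-1}\in(1,4)$ for $\omega_1(v)=e^{\zeta|v|^2}$ with $\zeta\in(1/4,1/2)$, so on a purely diffusive patch $\RRR$ is not a contraction. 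This is harmless for the present proposition --- Proposition~\ref{prop:GainLinftyL2LB2}, unlike its smooth-domain counterpart, carries only the interior norm $\lVert f_s\rVert_{L^\infty_{\omega_1}(\OO^\eps)}$ on its right-hand side, so an extra finite constant on the incoming trace is absorbed into $C$ (equivalently, one shrinks $\lambda$ by a universal factor) --- but it should not be stated as a contraction. The paper in fact does not invoke an operator-norm bound on $\RRR$ here: the incoming trace is covered by extending the pointwise Duhamel argument of Proposition~\ref{prop:GainLinftyL2LB2} to incoming boundary points, where the Maxwell condition is inserted and the diffusive contribution is then resolved through~\ref{item:K5} with an arbitrarily small coefficient.
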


\begin{proof}
We follow the proof of Proposition~\ref{prop:LinftyEstimatePerturbedFiniteT_0}. Let $\MMMM>0$ and consider a point 
$$
( t , x, v)\in (0,T)\times  \Omega ^\eps\times \{v\in \R^3,\, \lvert v\rvert \leq \MMMM\}
$$ 
such that $v\notin S_{x}\cup W_{t,x}$. Applying then Proposition~\ref{prop:GainLinftyL2LB2} we have that for every $\lambda>0$ there is $\eps_8 =\eps_7( \lambda, T)$ such that for every $\eps \in(0, \eps_8)$  and every $\nu_2\in (0, \nu_0)$ there holds \eqref{eq:GainLinftyL2LB2}
for some constant $C>0$ such that $C \lesssim \lambda^{-p}$ for some $p>0$. 

Using now Lemmas \ref{lem:ZeroMeasureSetCylinderSx} and \ref{lem:ZeroMeasureSetCylinder} we have that the sets $S_{x}$ and $W_{t,x}$ have Lebesgue measure zero, moreover we recall that the grazing set $\Sigma_0^\eps$ also has Lebesgue measure zero. Therefore we may take the $L^{\infty}_v(B_\MMMM)$ norm in the previous inequality followed by the supremum in $\MMMM$ due to the fact that $\eps_8$ and the constants given by Proposition~\ref{prop:GainLinftyL2LB2} do not depend on $\MMMM$, and finally taking the $L^{\infty}_x({\bar \Omega^\eps})$ 
We conclude by choosing $\lambda =\displaystyle (2(1+t+t^2))^{-1}$ and following the ideas from the proof of Proposition~\ref{prop:LinftyEstimatePerturbedFiniteT_0}. 
\end{proof}

\subsection{Proof of Proposition~\ref{prop:LinftyEstimatePerturbedFiniteTCylinder}} 
The proof follows exactly the proof of Proposition~\ref{prop:LinftyEstimatePerturbedFiniteT} by using Proposition~\ref{prop:LinftyEstimatePerturbedFiniteT_0Cylinder} in the place of Proposition~\ref{prop:LinftyEstimatePerturbedFiniteT_0}. \qed

\section{A priori estimates for weakly confining weights}\label{sec:AprioriWeakConf}

We consider the function $G:\UU^\eps\to \R$, and during this section we study the following evolution equation
 \be
	\left\{\begin{array}{llll}
		\partial_{t} f_1 &=& \TTT f_1 + \AA_\delta f_1 + G &\text{ in }\UU^\eps\\
		\gamma_- f_1&=&\RRR \gamma_+f_1 &\text{ on }\Gamma_{-}^\eps\\
		 f_{1, t=0}&=& f _0 &\text{ in }\OO^\eps,
	\end{array}\right.\label{eq:PLRBE1}
\ee
where we recall that the free transport operator $\TTT$ is given by \eqref{eq:DefTransportOp} and $\AA_\delta$ is defined in \eqref{eq:defSplittingLL}. We dedicate this section to prove the following result.

\begin{prop}\label{prop:LinftyEstimatePerturbedFiniteT_Summary_WWWW0} 
Consider either Assumption \ref{item:RH1} or \ref{item:RH2} to hold, $\omega_0\in\WWWW_0$ a weakly confining admissible weight function, and let $f_1$ be a solution of Equation~\eqref{eq:PLRBE1}. There are constructive constants $\eps_9, \delta_0>0$ such that for every $\eps\in (0, \eps_9)$ and every $\delta \in (0,\delta_0)$ there holds
\be
\lVert  f_{1, t} \rVert_{L^{\infty}_{\omega_0}(\bar\OO^\eps)} \leq  C e^{-{\nu_0\over 2} t} \left(   \lVert f_0\rVert_{L^{\infty}_{\omega_0}(\OO^\eps)} + \underset{s\in[0,t]}{\sup}\left[ e^{{\nu_0 \over 2} s} \lVert G_s \rVert_{L^{\infty}_{\omega_0\nu^{-1}}(\OO^\eps)}\right]\right)  \qquad \forall t\geq 0, \label{eq:LinftyPerturbedFiniteTimeDecay_Summary_WWWW0}
\ee
for some universal constant $C>0$, independent of $\eps$.
\end{prop}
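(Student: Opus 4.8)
The plan is to mimic the structure of the proofs of Propositions~\ref{prop:LinftyEstimatePerturbedFiniteT} and \ref{prop:LinftyEstimatePerturbedFiniteTCylinder}, but replacing the nonlocal operator $K$ by the truncated operator $\AA_\delta = K((1-\chi_\delta)\,\cdot)$, and replacing the hypocoercivity decay in $\HH$ by the \emph{dissipativity estimate} for $\AA_\delta$ taken from \cite[Lemma 4.12]{MR3779780}. Recall that the key advantage of working with $\AA_\delta$ is that, for $\delta$ small, $\AA_\delta$ has small operator norm relative to the weighted $L^\infty$ spaces associated to any admissible (in particular weakly confining) weight $\omega_0$: more precisely there is a function $\delta\mapsto \varpi(\delta)$ with $\varpi(\delta)\to 0$ as $\delta\to 0$ such that $\|\AA_\delta h\|_{L^\infty_{\omega_0\nu^{-1}}(\OO^\eps)} \le \varpi(\delta)\,\|h\|_{L^\infty_{\omega_0}(\OO^\eps)}$. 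This replaces the role played in Section~\ref{sec:Linfty} by the combination ``hypocoercive $L^2$ decay $+$ regularizing effect of $K$'': here no $L^2$ detour is needed, because the bad term is already small in the right $L^\infty$ norm.

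\textbf{Step 1 (Duhamel along backwards trajectories).} First I would write the mild formulation of Equation~\eqref{eq:PLRBE1} using the free transport semigroup $S_\TTT$ defined in \eqref{def:Transport_semigroup}: along the backwards trajectory \eqref{eq:BackwardsTrajectory} (in the cylindrical case, also iterating the specular-reflection trajectories \eqref{eq:BackwardsSpecTrajectory} and using Lemmas~\ref{lem:CircunferenceSpecularNNNN}, \ref{lemma3.2:GeneralReflCylinder}, \ref{lemma3.2:GeneralReflCylinderDiff}), one gets, for $(t,x,v)$ with $v$ outside the zero-measure singular sets $S_x\cup W_{t,x}$,
\[
\omega_0(v)\,|f_1(t,x,v)| \le e^{-\nu(v)t}\omega_0(v)|f_0(x,v)| + \omega_0(v)|S_\TTT *_\sigma (\AA_\delta f_1)(t,x,v)| + \omega_0(v)|S_\TTT *_\sigma G(t,x,v)| + (\text{boundary term}),
\]
with $\sigma=\max(0,t_1)$. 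The boundary term is handled exactly as in the proofs of Propositions~\ref{prop:GainLinftyL2LB1} and \ref{prop:GainLinftyL2LB2}: the specular part carries weight $1-\iota^\eps \le 1-\iota_0$ (in the smooth case) or uses the purely specular/diffusive dichotomy on $\Lambda_3^\eps$ and $\Lambda_1^\eps\cup\Lambda_2^\eps$ (in the cylindrical case), while the diffusive part is absorbed by $\ref{item:K4}$ (i.e. the item labelled \ref{item:K5} in Lemma~\ref{lem:Kproperties}) plus a single extra Duhamel iteration after the diffusive bounce, using the stretching lemmas to ensure no further boundary collisions occur once $\eps$ is small.

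\textbf{Step 2 (Smallness of the $\AA_\delta$ term).} The middle term is where the argument genuinely differs from Section~\ref{sec:Linfty}. Using $\ref{item:K3}$ (item \ref{item:K3}$=$``(K2)'' in Lemma~\ref{lem:Kproperties}, the gain-of-weight estimate for $S_\TTT*_\sigma$) together with the $L^\infty_{\omega_0\nu^{-1}}$-smallness of $\AA_\delta$ quoted above, one obtains
\[
\omega_0(v)\,|S_\TTT *_\sigma (\AA_\delta f_1)(t,x,v)| \le \frac{\nu_1}{\nu_0-\nu_2}\,\varpi(\delta)\,e^{-\nu_2 t}\,\sup_{s\in[0,t]}\!\big[e^{\nu_2 s}\|f_{1,s}\|_{L^\infty_{\omega_0}(\OO^\eps)}\big],
\]
and similarly for the copies of this term generated by the boundary Duhamel iterations (there are only boundedly many, by the stretching lemmas). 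Choosing $\nu_2=\nu_0/2$, then choosing $\delta_0$ small enough that the total coefficient of $\sup_s e^{(\nu_0/2)s}\|f_{1,s}\|_{L^\infty_{\omega_0}}$ coming from the $\AA_\delta$-terms \emph{and} from the diffusive-boundary error terms (controlled by $\ref{item:K5}$ with a free small parameter $\lambda$) is strictly less than $1-(1-\iota_0)$, i.e. strictly less than $\iota_0$ after accounting for the specular factor — one can close the estimate. The $G$ term is kept via $\ref{item:K3}$ exactly as in Proposition~\ref{prop:LinftyEstimatePerturbedFiniteT}, and the $f_0$ term decays like $e^{-\nu_0 t}\le e^{-(\nu_0/2)t}$.

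\textbf{Step 3 (Absorption and iteration).} After taking $\sup$ over $x$ and over $v$ (legitimate since $\Sigma_0^\eps$, $S_x$, $W_{t,x}$ are Lebesgue-null by \cite[Lemma 17]{Guo09}, Lemma~\ref{lem:ZeroMeasureSetCylinderSx} and Lemma~\ref{lem:ZeroMeasureSetCylinder}), the inequality reads
\[
\sup_{s\in[0,t]}\big[e^{(\nu_0/2)s}\|f_{1,s}\|_{L^\infty_{\omega_0}(\bar\OO^\eps)}\big] \le \big(\underbrace{1-\iota_0+\text{small}}_{<1}\big)\sup_{s\in[0,t]}\big[e^{(\nu_0/2)s}\|f_{1,s}\|_{L^\infty_{\omega_0}(\bar\OO^\eps)}\big] + C\|f_0\|_{L^\infty_{\omega_0}(\OO^\eps)} + C\,\Phi_{0,t},
\]
with $\Phi_{0,t}:=\sup_{s\in[0,t]}[e^{(\nu_0/2)s}\|G_s\|_{L^\infty_{\omega_0\nu^{-1}}(\OO^\eps)}]$; absorbing the first term on the right yields \eqref{eq:LinftyPerturbedFiniteTimeDecay_Summary_WWWW0}. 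Unlike in Proposition~\ref{prop:LinftyEstimatePerturbedFiniteT}, no finite-horizon bootstrap across successive intervals $[nT,(n+1)T)$ is needed and the constant $C$ is $\eps$-independent, precisely because $\AA_\delta$ is dissipative uniformly in $\eps$ rather than being controlled through an $\eps^{-3/2}$-lossy $\HH$-norm. The main obstacle is Step~2: one must verify that the number of boundary Duhamel iterations stays bounded (so that only finitely many copies of the small $\AA_\delta$-contribution and of the $\ref{item:K5}$-error appear) and that, after fixing all the geometric small parameters via the stretching lemmas and choosing $\lambda$ in $\ref{item:K5}$, the residual coefficient multiplying the $L^\infty_{\omega_0}$ norm can genuinely be made $<1$ by a single choice of $\delta_0$ \emph{independent of $\eps$} — this is the point at which the weakly confining weight $\omega_0$ and the truncation $\chi_\delta$ must be balanced carefully.
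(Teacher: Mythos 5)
Your overall plan mirrors the paper's proof in Section~\ref{sec:AprioriWeakConf}: Duhamel along backwards trajectories, stretching lemmas to limit boundary bounces, and dissipativity of $\AA_\delta$ (replacing the $L^2\text{--}L^\infty$ interplay of Sections~\ref{sec:Linfty}--\ref{sec:LinftyCylinder}). The paper indeed sets $\widetilde G = \AA_\delta f_1 + G$ and treats it as a source before peeling off the $\AA_\delta$ piece at the end, which is cosmetically different from your decomposition but not substantively so. However, there are two genuine gaps.

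The main one is your claim that there is a function $\varpi(\delta)\to 0$ as $\delta\to 0$ with $\|\AA_\delta h\|_{L^\infty_{\omega_0\nu^{-1}}}\le\varpi(\delta)\|h\|_{L^\infty_{\omega_0}}$ for \emph{any} weakly confining admissible weight. This is correct for the stretched exponential and weak inverse Gaussian cases, but it is \emph{false} for polynomial weights $\omega_0(v)=\langle v\rangle^q$: there, by \cite[Lemma 4.12]{MR3779780}, one only has $\varpi_{\delta,\omega_0}\to 4/(q-1)>0$ as $\delta\to 0$. So the coefficient of the $L^\infty_{\omega_0}$ norm coming from the $\AA_\delta$ term has a positive floor, and no choice of $\delta_0$ alone makes it small. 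The paper's proof of Proposition~\ref{prop:DissipationLinftyL1} handles this by requiring the degree to satisfy $q>q_\iota^\star$ (Remark~\ref{def:nu*}) and then verifying the explicit inequality
\[
2\,\frac{\nu_1}{\nu_0}\left(1+\frac{4\pi}{q-4}\,\CCCC_0\right)\frac{4}{q-1}<\iota_0,
\]
which is precisely what allows the absorption. Your argument, as written, would not close for polynomial weights, and it would not explain where the threshold $q^\star_\iota$ comes from. A related smaller omission is the explicit control of $\int_{|u|\le M}\omega_0^{-1}(u)|u|\,\mathrm{d}u$ after the diffusive bounce, which produces the factor $C_{\omega_0}=4\pi/(q-4)$ in the polynomial case and feeds into the same inequality.

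The second gap is your claim that no finite-horizon bootstrap across intervals $[nT,(n+1)T)$ is needed. It still is: the stretching lemma yields $\eps_R\lesssim\eta M^{-2}T^{-1}$, so the single-bounce property only holds on a fixed time window $[0,T]$, and the absorbed inequality therefore only holds there. The paper closes Proposition~\ref{prop:DissipationLinftyL1} by fixing $T$ so that $Ce^{-\nu_0T/2}\le 1/2$ and then iterating as in Step~3 of the proof of Proposition~\ref{prop:LinftyEstimatePerturbedFiniteT}, albeit in a much simpler form (no $\HH$-norm balancing, no $\eps^{-3/2}$ loss, and the constant $C$ is indeed $\eps$-independent, as you correctly observe). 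So the iteration is simpler but cannot be dispensed with.
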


\begin{rem}
The proof of Proposition~\ref{prop:LinftyEstimatePerturbedFiniteT_Summary_WWWW0} follows the arguments developed during Sections \ref{sec:Linfty} and \ref{sec:LinftyCylinder}. We pay particular attention to polynomial weights in order to determine the validity of the lower bounds $q_\iota^\star$ on their degree, as defined in Remark~\ref{def:nu*}.  
\end{rem}

%
%

\subsection{Weighted $L^\infty$ estimate in smooth domains}\label{ssec:WeakWeightsRH1}
We prove the following long-time behavior result under Assumption \ref{item:RH1}.

\begin{prop}\label{prop:DissipationLinftyL1}
We consider Assumption \ref{item:RH1} to hold, $\omega_0 \in \WWWW_0$ a weakly confining admissible weight function, and let $f_1$ be a solution to Equation~\eqref{eq:PLRBE1}. There are constructive constants $\eps_{10}>0$ and $\delta_1>0$ such that for every $\eps \in (0, \eps_{10})$ and every $\delta \in (0,\delta_1)$ there holds
\be\label{eq:DissipationLinftyL1}
 \lVert  f_{1,t}\rVert_{L^{\infty}_{\omega_0}\left( {\bar \OO^\eps}\right)}   \leq  e^{-{\nu_0\over 2} t}  \left( \lVert f_0\rVert_{L^\infty_{\omega_0}(\OO^\eps)}    +  C \underset{s\in [0,t]}{\sup}\left[ e^{{\nu_0\over 2}s} \lVert  G_{s}\rVert_{L^{\infty}_{\omega_0\nu^{-1}}\left( \OO^\eps\right)} \right]  \right) \qquad \forall t \geq 0,
\ee
    for some constant $C>0$, independent $\eps$.
\end{prop}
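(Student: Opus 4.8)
The plan is to mimic the architecture of Sections~\ref{sec:Linfty}--\ref{sec:LinftyCylinder}, but replacing the $L^2$--$L^\infty$ interplay (which required strongly confining weights in order to afford the Hilbert-space hypocoercivity input) by the genuinely dissipative estimate on $\AA_\delta$ from \cite[Lemma 4.12]{MR3779780}. First I would fix an arbitrary $T>0$ and work with the truncated problem on $\UU^\eps_T$, introducing the backwards trajectories \eqref{eq:BackwardsTrajectory} and the singular sets $S_x$ (and, under Assumption~\ref{item:RH2}, also $W_{t,x}$). The key structural fact is that for any weakly confining admissible $\omega_0\in\WWWW_0$ one has a \emph{small-amplitude} bound for $\AA_\delta$: there are $\delta_1>0$ and a function $\epsilon_\AA(\delta)\to 0$ as $\delta\to0$ such that $\|\omega_0\,\AA_\delta h\|_{L^\infty}\le \epsilon_\AA(\delta)\,\nu\,\|h\|_{L^\infty_{\omega_0}}$; this is where the lower bound $q^\star_\iota$ on the polynomial degree (Remark~\ref{def:nu*}) enters, since it is exactly the threshold making $\omega_0\AA_\delta$ a contraction relative to the multiplicative loss coming from the Maxwell reflection operator and the gain $\nu^{-1}$ from \ref{item:K3}. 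I would isolate this as a preliminary lemma, proved by splitting $K=\AA_\delta+\KK_\delta$, writing $\AA_\delta h = K((1-\chi_\delta)h)$ and estimating pointwise via \ref{item:K1}-type bounds on the kernel $k$ restricted off the set $E_2^\delta$, so that the $\nu^{-1}$-weighted $L^\infty$ norm of the contribution is controlled by a quantity tending to zero with $\delta$.

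Next I would run the Duhamel decomposition along characteristics for the solution of \eqref{eq:PLRBE1} exactly as in the proof of Proposition~\ref{prop:GainLinftyL2LB1}: writing $\omega_0(v)|f_1(t,x,v)|\le \III_1+\III_2+\III_3+\III_4$ with $\III_1$ the free-transport contribution of $f_0$ (decaying like $e^{-\nu_0 t}$ by \ref{item:K3} with any $\nu_2<\nu_0$), $\III_2=\omega_0|S_\TTT*_\sigma\AA_\delta f_1|$ (controlled by $\epsilon_\AA(\delta)$ times the running weighted sup-norm, using the $\nu^{-1}$ gain of \ref{item:K3}), $\III_3=\omega_0|S_\TTT*_\sigma G|$ (the source term, again handled by \ref{item:K3}), and $\III_4$ the boundary term, which after invoking the Maxwell condition splits into a specular part bounded by $(1-\iota_0)$ times the running sup and a diffusive part bounded via \ref{item:K5} and one further Duhamel step. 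The essential point is that — unlike in Sections~\ref{sec:Linfty}--\ref{sec:LinftyCylinder} — here no $\HH$-norm term appears, because $\AA_\delta$ is directly dissipative; so the whole estimate closes in $L^\infty_{\omega_0}$ alone. Choosing $\delta$ small enough that $\epsilon_\AA(\delta)$ is negligible and $T$ large enough that $(1-\iota_0+\epsilon_\AA(\delta)+\lambda(1+T+T^2))e^{-\nu_0 T/2}<\tfrac12$ (after a further application of the stretching lemmas — Lemma~\ref{lemma3.2:GeneralRefl} under \ref{item:RH1}, Lemmas~\ref{lemma3.2:GeneralReflCylinder}--\ref{lemma3.2:GeneralReflCylinderDiff} under \ref{item:RH2} — to kill all but one boundary bounce once $\eps$ is small), I obtain a finite-time contraction estimate
\[
\|f_{1,T}\|_{L^\infty_{\omega_0}(\bar\OO^\eps)}\le \tfrac12 \|f_0\|_{L^\infty_{\omega_0}(\OO^\eps)}+C\sup_{s\in[0,T]}\big[e^{\nu_0 s/2}\|G_s\|_{L^\infty_{\omega_0\nu^{-1}}(\OO^\eps)}\big],
\]
together with a uniform-on-$[0,T]$ version.

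Finally I would iterate this finite-time estimate over the intervals $[nT,(n+1)T]$ exactly as in Step~2 of the proof of Proposition~\ref{prop:LinftyEstimatePerturbedFiniteT} — but the bookkeeping is considerably simpler here, since there is no coupled quantity $Z_t=Y_t+\beta X_t$ to track: a single scalar Gronwall-type iteration on $X_t:=e^{\nu_0 t/2}\|f_{1,t}\|_{L^\infty_{\omega_0}(\bar\OO^\eps)}$ against $\Phi_{0,t}:=\sup_{s\le t}[e^{\nu_0 s/2}\|G_s\|_{L^\infty_{\omega_0\nu^{-1}}}]$ yields \eqref{eq:DissipationLinftyL1}, with a constant $C$ independent of $\eps$ (note the decay rate is the fixed $\nu_0/2$, \emph{not} $\kappa^\star\eps^2$, precisely because the dissipation now comes from $\AA_\delta$ rather than from hypocoercivity). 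The main obstacle I anticipate is the preliminary contraction lemma for $\AA_\delta$ in the weakly confining weighted norm: one has to check that for polynomial $\omega_0$ the threshold degree $q^\star_\iota$ of Remark~\ref{def:nu*} is indeed what makes $\|\omega_0\AA_\delta h\|_{L^\infty_{\nu^{-1}}}\le \epsilon_\AA(\delta)\|h\|_{L^\infty_{\omega_0}}$ compatible with the loss $(1-\iota_0)$ (resp.\ $1$ in the cylindrical case) incurred at each specular reflection, so that the total amplification after accounting for the boundary operator stays strictly below $1$; tracking these constants explicitly (and verifying the two separate algebraic formulas for $q^\star_\iota$ in \eqref{def:Poly_degree}) is the delicate quantitative part, whereas the trajectory analysis and the final iteration are faithful, and mostly simpler, repetitions of what was already done for strongly confining weights.
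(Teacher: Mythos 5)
Your overall architecture matches the paper's: a single Duhamel pass along the characteristics, the stretching lemma (Lemma~\ref{lemma3.2:GeneralRefl}) to kill multiple boundary bounces for small $\eps$, \ref{item:K3} and \ref{item:K5} to handle the free-transport and diffusive contributions, and an iteration over $[nT,(n+1)T]$ that is much simpler than in Proposition~\ref{prop:LinftyEstimatePerturbedFiniteT} because no $\HH$-norm input is needed. You also correctly identify that closing the estimate in $L^\infty_{\omega_0}$ alone, via the dissipativity of $\AA_\delta$, is what replaces the $L^2$--$L^\infty$ interplay. (Minor slip: the proposition is stated under \ref{item:RH1} only, so the cylinder lemmas are not needed here; the cylinder case is Proposition~\ref{prop:DissipationLinftyL2}.)

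There is, however, a genuine gap in your preliminary lemma. You assert that for \emph{any} weakly confining admissible weight there is $\epsilon_\AA(\delta)\to0$ as $\delta\to0$ with $\|\omega_0\,\AA_\delta h\|_{L^\infty}\lesssim \epsilon_\AA(\delta)\,\nu\,\|h\|_{L^\infty_{\omega_0}}$, and simultaneously that the threshold $q^\star_\iota$ is needed; these two statements are in tension, and the first one is false for polynomial weights. The paper's use of \cite[Lemma~4.12]{MR3779780} yields $\varpi_{\delta,\omega_0}\to 4/(q-1)$ as $\delta\to0$ when $\omega_0=\langle v\rangle^q$ — a strictly positive limit, not zero. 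So for polynomial weights $\AA_\delta$ is a \emph{bounded} but not \emph{arbitrarily small} contraction, and the whole point of the explicit threshold \eqref{def:Poly_degree} is to verify the algebraic inequality
$2\frac{\nu_1}{\nu_0}\left(1+\frac{4\pi}{q-4}\CCCC_0\right)\frac{4}{q-1}<\iota_0$,
which makes the fixed contribution from $\AA_\delta$ compatible with the specular loss $1-\iota_0$ and the diffusive boundary constant $C_{\omega_0}=4\pi/(q-4)$ coming from the explicit moment integral $\int_{|u|\le M}\omega_0^{-1}(u)|u|\,\d u$. If you attempt the proof with a term that truly vanishes as $\delta\to0$, the argument would never single out a degree threshold, and it would give a clean result even for $q$ close to $5$, which is false. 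You need to compute that moment integral, track the limit $4/(q-1)$, and only then choose $\delta$ small enough that $\varpi_{\delta,\omega_0}$ is close enough to its limit for the strict inequality to survive; this is the content of the proof you were glossing over as ``the delicate quantitative part.''
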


\begin{proof}
The proof follows the main ideas of  Proposition~\ref{prop:GainLinftyL2LB1}, Proposition~\ref{prop:LinftyEstimatePerturbedFiniteT_0} and Proposition~\ref{prop:LinftyEstimatePerturbedFiniteT}, thus we only sketch it. 

We define $\widetilde G = \AA_\delta f_1 + G$, we take an arbitrary $T>0$ to be defined later.

\medskip\noindent
\emph{Step 1.} Let $(t, x, v)\in \UU_T^\eps$ such that $v\notin S_{x}(v)$, we denote $(t_1, x_1, v_1)$ the first collision through the backwards trajectory starting at $(t,x,v)$ as defined in \eqref{eq:BackwardsTrajectory}. 

Using the Duhamel formula, together with \ref{item:K4}, and \ref{item:K5}, we have that for every $\lambda>0$ there are $M, \eta>0$ such that 
\bean
\omega_0    \lvert   f_1  (t, x,v) \rvert &\leq& e^{-\nu_0t} \lVert f_0\rVert_{L^{\infty}_{\omega_0}(\OO^\eps)} +    {\nu_1\over \nu_0 - \nu_2} e^{-\nu_2 t} \underset{s\in [0,t]}{\sup}\left[e^{\nu_2 s}\lVert \widetilde G_s\rVert_{L^{\infty}_{\omega_0\nu^{-1}}(\OO^\eps)}\right] \\
&&+ (1-\iota_0) e^{-\nu_0 t} \underset{s\in [0,t]}{\sup}\left[ e^{\nu_0 s} \lVert  f_{1,s}\rVert_{L^{\infty}_{\omega_0}\left( \bar\OO^\eps \right)} \right] + \CCCC_0 \lambda  e^{-\nu_0 t}\underset{s\in[0,t]}{\sup}\left[ e^{\nu_0 s}\lVert f_{1,s}\rVert_{L^\infty_{\omega_0}(\OO^\eps)}\right] \\
&&+ \CCCC_0 e^{-\nu(v)(t-t_1)} \int_{\lvert u\rvert \leq M,\, \lvert n(x_1)\cdot u\rvert >\eta}  \lvert  f_1(t_1, x_1, u)\rvert  (n(x_1)\cdot u)_+ \d u,
\eean
where we have defined  $\omega_0=\omega_0(v)$, $\sigma =\max (0, t_1)$, and $\CCCC_0>0$ is such that $\omega_0 \MMM \leq \CCCC_0$.

By arguing then as during the Step 2 of Proposition~\ref{prop:GainLinftyL2LB1} we apply Lemma~\ref{lemma3.2:GeneralRefl} and there is $\eps_{10}^1=\eps_R( \eta,  M, T)$ such that for every $\eps \in (0, \eps_{10}^1)$ there is no bounce against the boundary through the backwards trajectory starting at $(t_1, x_1, u)$, and given by \eqref{eq:BackwardsTrajectory}. Therefore the Duhamel formula together with  \ref{item:K3} gives that 
\begin{multline*}
\int_{\lvert u\rvert \leq M,\, \lvert n(x_1)\cdot u\rvert >\eta}  \lvert  f_1(t_1, x_1, u)\rvert  (n(x_1)\cdot u)_+ \d u \leq \lambda  e^{-\nu_0 t}\underset{s\in[0,t]}{\sup}\left[ e^{\nu_0 s}\lVert f_{1,s}\rVert_{L^\infty_\omega(\OO^\eps)}\right] \\+  DM^{4} e^{-\nu_0t} \lVert f_0\rVert_{L^{\infty}_{\omega_0}(\OO^\eps)} 
+  {\nu_1\over \nu_1 - \nu_0} \underset{s\in [0,t]}{\sup}\left[e^{\nu_2 s}\lVert\widetilde  G_s\rVert_{L^{\infty}_{\omega_0\nu^{-1}}(\OO^\eps)}\right]  e^{-\nu_0 (t-t_1)} e^{-\nu_2 t_1}  \int_{\lvert u_*\rvert \leq M} \omega_0^{-1}(u)\lvert  u\rvert \d u,
\end{multline*}
and we will provide a precise control on the last integral.

First, when $\omega_0$ is an admissible polynomial weight function as considered in Subsection \ref{sec:MainResult}, then $\omega_0 (v) = \la v\ra ^q$ and we compute
\bean
\int_{\lvert u_*\rvert \leq M} \omega_0^{-1}(u)\lvert  u\rvert du &= & \int_{\lvert u_*\rvert \leq M} \la u\ra ^{-q} \lvert  u\rvert \d u  = \int_0^{2\pi} \int_0^\pi \int_0^M { r^3 \over (1+ r^2)^{q/2}}  \sin(\beta) \d r \d\beta \d\alpha  \\
&= &  4\pi \int_0^M { r (1+ r^2 -1) \over (1+ r^2)^{q/2}} \d r \leq  4\pi  \int_0^M { r \over (1+ r^2)^{-1+ q/2}} \d r   \\
&\leq & {4\pi \over q-4}  \left[-(1+ r^2)^{2- q/2}  \right]_0^M \leq  {4\pi \over q -4} 
\eean
where we have used the spherical change of variables on the first line and we have used that $q>q^\star_\iota \geq 5$ on the last line. 

Second, when $\omega_0$ is a weakly confining admissible weight function, which is not a polynome, we easily deduce that there is a constant $C_{\omega_0} > 1$ such that 
$$
\int_{\lvert u_*\rvert \leq M} \omega_0^{-1}(u)\lvert  u\rvert du \leq C_{\omega_0} .
$$
Altogether we have obtained that for every $\eps \in (0, \eps_{10}^1)$ there holds 
\bean
  \omega_0 (v) \lvert f_1  (t, x,v) \rvert
 &\leq &   (1-\iota_0+ \CCCC_0 \lambda)  e^{-\nu_0t}\underset{s\in[0,t]}{\sup}\left[ e^{\nu_0s}\lVert f_{1,s}\rVert_{L^{\infty}_{\omega_0}( \bar \OO^\eps)}\right]  +   (1+\CCCC_0 D M^{4})     e^{-\nu_0t} \lVert f_0\rVert_{L^{\infty}_{\omega_0}(\OO^\eps)}   \\
 && + {\nu_1\over \nu_0 - \nu_2}\left(  1   +    \CCCC_0 C_{\omega_0}\right) e^{-\nu_2 t} \underset{s\in [0,t]}{\sup}\left[e^{\nu_2 s}\lVert \widetilde G_s\rVert_{L^{\infty}_{\omega_0\nu^{-1}}(\OO^\eps)}\right] .
\eean
with $C_{\omega_0} = 4\pi /( q-4) $ when $\omega_0(v) =\la v\ra^q$, is a polynomial admissible weight function.
Recalling that $\widetilde G =  \AA_\delta f_1 +G$, choosing $\eps_{10}=\eps_{10}^1$ and $\nu_2 = \nu_0/2$, and observing that, since $S_{x}(v)$, and $\Sigma_0^\eps$ are sets of zero Lebesgue measure as discussed on Proposition~\ref{prop:GainLinftyL2LB1}, 
we can take the $L^\infty( \bar \OO^\eps)$ norm on both sides of the estimate above and we deduce that
\bean
 \lVert f_{1,t}\rVert_{L^{\infty}_{\omega_0}({\OO^\eps})} &\leq & (1-\iota_0 + \CCCC_0\lambda)  e^{-\nu_0t}\underset{s\in[0,t]}{\sup}\left[ e^{\nu_0s}\lVert f_{1,s}\rVert_{L^{\infty}_{\omega_0}( \bar \OO^\eps)}\right]  +  (1+ \CCCC_0 D M^{4})   e^{-\nu_0t} \lVert f_0\rVert_{L^{\infty}_{\omega_0}(\OO^\eps)}   \\
 && + 2 {\nu_1\over \nu_0} \left( 1  +  \CCCC_0 C_{\omega_0}   \right)  e^{-{\nu_0\over 2} t} \underset{s\in [0,t]}{\sup}\left[e^{{\nu_0\over 2} s}\lVert \AA_\delta f_{1,s} \rVert_{L^{\infty}_{\omega_0\nu^{-1}}(\OO^\eps)}\right] \\
 && + 2{\nu_1\over \nu_0}\left(  1   +  \CCCC_0  C_{\omega_0} \right) e^{-{\nu_0\over 2} t} \underset{s\in [0,t]}{\sup}\left[e^{{\nu_0\over 2} s}\lVert G_s\rVert_{L^{\infty}_{\omega_0\nu^{-1}}(\OO^\eps)}\right] ,
\eean
and we proceed to use the dissipative properties of $\AA_\delta$ and to fix the constants for each type of weight function.

\smallskip\noindent
\textbullet \, \textbf{Weak inverse gaussian weights:} If $\omega_0(v) = e^{\zeta \lv v\rv^2}$, with $\zeta \in (0,1/4]$, we recall that $\AA_\delta f_1= K((1-\chi_\delta)f)$ as defined in \eqref{eq:defSplittingLL}, therefore arguing as during the proof of \ref{item:K1} we deduce that for every $N>0$ there is $\delta_1^1 = m(N)^{-1}$, where $m(N)$ is given by the Step 2 of the proof of \ref{item:K1} in Lemma~\ref{lem:Kproperties}, such that for every $\delta \in (0, \delta_1^1)$ there holds
$$
\lVert \AA_\delta f_{1,s} \rVert_{L^{\infty}_{\omega_0}(\OO^\eps)} \leq \frac 1N \lVert f_{1,s} \rVert_{L^{\infty}_{\omega_0}(\OO^\eps)}. 
$$
We then choose $N =  8 {\nu_1} \left( 1  + \CCCC_0  C_{\omega_0}  \right) / ( \nu_0 \iota_0)$, $\lambda = \iota_0/(4\CCCC_0)$,
and we obtain \eqref{eq:DissipationLinftyL1} for weak inverse gaussian weights, in the time interval $[0,T]$. 

\smallskip\noindent
\textbullet \, \textbf{Stretched exponential weights:} If $\omega_0(v) = e^{\zeta\la v\ra^s}$, with $s\in (0,2)$ and $\zeta >0$, we observe that  \cite[Lemma 4.12]{MR3779780} implies that
\be\label{eq:DissipativityVarpi_1}
\lVert \AA_\delta f_{1,s} \rVert_{L^{\infty}_{\omega_0\nu^{-1}}(\OO^\eps)} \leq \varpi _{\delta,\omega_0}  \lVert f_{1,s} \rVert_{L^{\infty}_{\omega_0}(\OO^\eps)} , 
\ee
and, furthermore, that we may choose $\delta_1^2>0$ such that for every $\delta\in (0, \delta_1^2)$ there holds
$$
2 {\nu_1\over \nu_0}  \left( 1  + \CCCC_0  C_{\omega_0}\right)  \varpi _{\delta,\omega_0} \leq \frac {\iota_0} 4.
$$
We obtain then \eqref{eq:DissipationLinftyL1} for stretched exponential weights, in the time interval $[0,T]$, by choosing $\lambda = \iota_0 /(4\CCCC_0)$. 

\smallskip\noindent
\textbullet \, \textbf{Polynomial weights:} If $\omega_0 = \la v\ra^q $, with $q >q_\iota^\star$ as defined in \eqref{def:Poly_degree}, using again  \cite[Lemma 4.12]{MR3779780}, we obtain \eqref{eq:DissipativityVarpi_1} with $\varpi _{\delta,\omega_0} \to  4/( q-1)$ as $\delta\to 0$.
We observe then that since $q>q^\star_\iota$, there holds 
$$
2 {\nu_1\over \nu_0}  \left( 1  +  {4\pi\over q-4} \CCCC_0  \right) {4\over q-1} <\iota_0.
$$
We may choose then $\delta_1^3>0$ small enough such that for every $\delta \in (0, \delta_1^3)$ there holds 
$$
2 {\nu_1\over \nu_0}  \left( 1  +   C_{\omega_0} {4\pi\over q-4}  \right)  \varpi _{\delta,\omega_0} \leq \alpha \iota_0,
$$
for some $\alpha\in (0,1)$. We obtain then \eqref{eq:DissipationLinftyL1} for polynomial weights, in the time interval $[0,T]$, by taking $\lambda = (1-\alpha)\iota_0/2$.

We conclude the proof by taking $T>0$ such that $ C e^{-{\nu_0\over 2} T} \leq 1/2$, choosing $\delta_1 = \delta_1^1$ in the case of weak inverse gaussian weight functions, $\delta_1 = \delta_1^2$ in the case of stretched exponential weight functions, and $\delta_1 = \delta_1^3$ in the case of polynomial weight functions, and extending the result for all time by repeating the analysis from the Step 3 of the proof of Proposition~\ref{prop:LinftyEstimatePerturbedFiniteT} (in a much simpler setting).
\end{proof}

\subsection{Weighted $L^\infty$ estimate in cylindrical domains}\label{ssec:WeakWeightsRH1}
We will now prove a similar long-time behavior result to the one on the previous section under Assumption \ref{item:RH2}.

\begin{prop}\label{prop:DissipationLinftyL2}
Consider Assumption \ref{item:RH2} to hold, $\omega_0 \in \WWWW_0$ a weakly confining admissible weight function, and let $f_1$ be a solution to Equation~\eqref{eq:PLRBE1}. There are constructive constants $\eps_{11} >0 $ and $\delta_2>0$ such that for every $\eps \in (0, \eps_{11})$ and every $\delta \in ( 0, \delta_2)$ there holds
$$
 \lVert  f_{1,t}\rVert_{L^{\infty}_{\omega_0}\left( { \bar\OO^\eps}\right)}   \leq  e^{-{\nu_0\over 2} t}  \left( \lVert f_0\rVert_{L^\infty_{\omega_0}(\OO^\eps)}    +  C  \underset{s\in [0,t]}{\sup}\left[ e^{{\nu_0\over 2}s} \lVert  G_{s}\rVert_{L^{\infty}_{\omega_0\nu^{-1}}\left( \OO^\eps\right)} \right]\right) \qquad \forall t\geq 0,
    $$
for some constant $C>0$, independent of $\eps$.
  \end{prop}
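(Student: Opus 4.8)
\emph{Proof sketch.} The plan is to repeat the proof of Proposition~\ref{prop:DissipationLinftyL1} verbatim, replacing the smooth--domain trajectory analysis (Lemma~\ref{lemma3.2:GeneralRefl}) by the cylindrical trajectory lemmas of Section~\ref{sec:LinftyCylinder}, exactly as Proposition~\ref{prop:LinftyEstimatePerturbedFiniteTCylinder} was deduced from Proposition~\ref{prop:LinftyEstimatePerturbedFiniteT}. As before we set $\widetilde G := \AA_\delta f_1 + G$, fix an arbitrary $T>0$ to be chosen later, and argue on $[0,T]$; since $\partial_t f_1 = \TTT f_1 + \widetilde G$, the Duhamel formula for the free transport semigroup $S_\TTT$ produces no $K$--regularization term, which is why the argument is considerably simpler than the ones in Sections~\ref{sec:Linfty}--\ref{sec:LinftyCylinder}. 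Fix $\MMMM>0$ and a point $(t,x,v)\in\UU_T^\eps$ with $|v|\le\MMMM$ and $v\notin S_x\cup W_{t,x}$, which is a full--measure condition by Lemmas~\ref{lem:ZeroMeasureSetCylinderSx} and \ref{lem:ZeroMeasureSetCylinder}. By Lemma~\ref{lem:CircunferenceSpecularNNNN} the backwards trajectory undergoes at most $\NNNN=\NNNN(T,x,\MMMM)$ consecutive specular bounces against $\Lambda_3^\eps$, so, as in the proof of Proposition~\ref{prop:GainLinftyL2LB2}, we distinguish Case~1 (only specular reflections) and Case~2 (a diffusive reflection occurs), and in each case we iterate the Duhamel formula along the trajectory.

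In both cases the iterated Duhamel decomposition yields: the free--flow contribution $e^{-\nu_0 t}\|f_0\|_{L^\infty_{\omega_0}}$; a source term of the form $S_\TTT*_\sigma\widetilde G$, estimated by \ref{item:K3} (item (K2) of Lemma~\ref{lem:Kproperties}) to get $\tfrac{\nu_1}{\nu_0-\nu_2}e^{-\nu_2 t}\sup_{s}[e^{\nu_2 s}\|\widetilde G_s\|_{L^\infty_{\omega_0\nu^{-1}}}]$; and a boundary contribution. For the boundary contribution, each purely specular bounce preserves $|v|$ and, by Lemma~\ref{lem:CircunferenceSpecular}, also preserves $|n\cdot v|$, so the non--grazing condition propagates; using $\omega_0\MMM\le\CCCC_0$ the specular part contributes $(1-\iota_0)$ times $\sup_s[e^{\nu_0 s}\|f_{1,s}\|_{L^\infty_{\omega_0}(\bar\OO^\eps)}]$, while the diffusive part is handled by \ref{item:K5} (item (K4)): for every $\lambda>0$ there are $M,\eta>0$ such that this term is bounded by $\CCCC_0\lambda e^{-\nu_0 t}\sup_s[\cdots]+\CCCC_0 e^{-\nu(v)(t-t_1)}\int_{|u|\le M,|n(x_1)\cdot u|>\eta}|f_1(t_1,x_1,u)|(n(x_1)\cdot u)_+\,du$. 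On this last integral we then use the cylindrical stretching: Lemma~\ref{lemma3.2:GeneralReflCylinderDiff} guarantees no further diffusive bounce once $\eps<\eps_D$, Lemma~\ref{lem:SpecularAngleDiffusiveReflection} keeps the reflected angle bounded below after passing through $\Lambda_3^\eps$, and Lemma~\ref{lemma3.2:GeneralReflCylinder} removes any further specular bounce for $\eps$ small, so a final application of Duhamel together with \ref{item:K3} reduces it to $\|f_0\|$-- and $\|\widetilde G\|$--terms plus the integral $\int_{|u|\le M}\omega_0^{-1}(u)|u|\,du$. This last integral is computed exactly as in Proposition~\ref{prop:DissipationLinftyL1}: in spherical coordinates it equals $4\pi/(q-4)$ for the polynomial weight $\omega_0=\langle v\rangle^{q}$ (using $q>q^\star_\iota\ge 5$), and a finite constant $C_{\omega_0}$ for the stretched exponential and weak inverse Gaussian weights — this is precisely the step that fixes the admissible exponent range and the value of $q^\star_\iota$ in \eqref{def:Poly_degree}.

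Collecting the estimates (over the at most $\NNNN+1$ Duhamel pieces, noting $\int_0^t\sum_j\Ind_{t_j\le s\le t_{j-1}}\,ds=t$ so that all constants stay uniform in $\NNNN$ and in $\MMMM$), taking first the $L^\infty_v(B_\MMMM)$ norm, then the supremum over $\MMMM$, then the $L^\infty_x(\bar\Omega^\eps)$ norm — legitimate since $S_x$, $W_{t,x}$ and $\Sigma_0^\eps$ have zero Lebesgue measure — we arrive, with $\nu_2=\nu_0/2$ and $\widetilde G=\AA_\delta f_1+G$, at
\[
\|f_{1,t}\|_{L^\infty_{\omega_0}(\bar\OO^\eps)}\le(1-\iota_0+\CCCC_0\lambda)e^{-\nu_0 t}\!\!\sup_{s\in[0,t]}\!\big[e^{\nu_0 s}\|f_{1,s}\|_{L^\infty_{\omega_0}(\bar\OO^\eps)}\big]+C_\sharp e^{-\frac{\nu_0}{2}t}\Big(\|f_0\|_{L^\infty_{\omega_0}}+\sup_{s\in[0,t]}\!\big[e^{\frac{\nu_0}{2}s}\|\AA_\delta f_{1,s}\|_{L^\infty_{\omega_0\nu^{-1}}}+e^{\frac{\nu_0}{2}s}\|G_s\|_{L^\infty_{\omega_0\nu^{-1}}}\big]\Big),
\]
with $C_\sharp$ involving $\CCCC_0$ and $C_{\omega_0}$. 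Since the dissipativity of $\AA_\delta$ is purely a velocity--space statement — \cite[Lemma~4.12]{MR3779780} for the stretched exponential and polynomial weights, and the argument behind \ref{item:K1} for the weak inverse Gaussian weights — it is insensitive to the geometry of $\Omega^\eps$, so one chooses $\delta$ and $\lambda$ exactly as in the three bullet points of the proof of Proposition~\ref{prop:DissipationLinftyL1} to make the coefficient of $\sup_s[e^{\nu_0 s}\|f_{1,s}\|_{L^\infty_{\omega_0}(\bar\OO^\eps)}]$ strictly less than $1$; absorbing this term and then choosing $T>0$ so that $C e^{-\nu_0 T/2}\le 1/2$, the full--time estimate follows by the iteration argument of Step~3 of the proof of Proposition~\ref{prop:LinftyEstimatePerturbedFiniteT}, carried out here in the much simpler $\HH$--free setting. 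Setting $\eps_{11}:=\min(\eps_D,\eps_S,\eps_U,\dots)$ from the cylindrical stretching lemmas and $\delta_2$ as the minimum of the three $\delta$--thresholds concludes the proof. \qed

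\medskip\noindent
The main obstacle is the bookkeeping of the backwards trajectory when it alternates between the base set $\Lambda_1^\eps\cup\Lambda_2^\eps$ and the lateral set $\Lambda_3^\eps$: one must check that the sequence specular$\to\cdots\to$specular$\to$diffusive$\to$specular$\to\cdots$ terminates after finitely many pieces with constants \emph{independent of} $\MMMM$ and $\NNNN$, that the singular set $W_{t,x}$ of trajectories hitting the corner set $\SSSS^\eps$ is negligible, and that the non--grazing lower bound on $|n\cdot v|$ is not destroyed along the way — this is exactly what Lemmas~\ref{lem:CircunferenceSpecularNNNN}--\ref{lemma3.2:GeneralReflCylinderDiff} are designed to supply. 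Everything else, in particular the parameter selection for the three weight classes and the time iteration, is identical to the smooth case.
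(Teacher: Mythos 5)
Your proposal is correct and follows exactly the route the paper indicates: the paper itself compresses the proof to a one--line remark (``repetition of Proposition~\ref{prop:DissipationLinftyL1}, using the cylindrical stretching method as in Propositions~\ref{prop:GainLinftyL2LB2}--\ref{prop:LinftyEstimatePerturbedFiniteTCylinder}''), and your sketch reconstructs that argument faithfully --- the Duhamel decomposition for $\TTT$ with source $\widetilde G=\AA_\delta f_1+G$, the case split using Lemma~\ref{lem:CircunferenceSpecularNNNN} on purely specular chains versus a diffusive bounce, the replacement of Lemma~\ref{lemma3.2:GeneralRefl} by Lemmas~\ref{lemma3.2:GeneralReflCylinder}, \ref{lemma3.2:GeneralReflCylinderDiff}, \ref{lem:SpecularAngleDiffusiveReflection}, the velocity--space integral $\int_{|u|\le M}\omega_0^{-1}|u|\,du$, and the $\delta$-- and $\lambda$--selection by weight class, all uniformly in $\NNNN$ and $\MMMM$. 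The only point left slightly implicit is the $\eta_1$--split between nearly tangential reflected velocities (small--measure contribution) and those with $|\nnnn\cdot u_*|>\eta_1$ from Step~1 of the proof of Proposition~\ref{prop:KregularizationCylinder}, which your ``exactly as Proposition~\ref{prop:LinftyEstimatePerturbedFiniteTCylinder}'' reference correctly subsumes.
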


\begin{proof} 
The proof is a repetition of the proof of Proposition~\ref{prop:DissipationLinftyL1}, by using the stretching method argument for cylindrical domains as during the proof of Proposition~\ref{prop:GainLinftyL2LB2}, Proposition~\ref{prop:LinftyEstimatePerturbedFiniteT_0Cylinder} and Proposition~\ref{prop:LinftyEstimatePerturbedFiniteTCylinder}, thus we skip it.
\end{proof}

\subsection{Proof of Proposition~\ref{prop:LinftyEstimatePerturbedFiniteT_Summary_WWWW0}}
We conclude Proposition~\ref{prop:LinftyEstimatePerturbedFiniteT_Summary_WWWW0} by choosing $\eps_9= \eps_{10}$ and $\delta_0 = \delta_1$, where $\eps_{10}, \delta_1>0$ are given by Proposition~\ref{prop:DissipationLinftyL1}, if Assumption \ref{item:RH1} holds, or $\eps_9 = \eps_{11}$ and $\delta_0 = \delta_2$, where $\eps_{11}, \delta_2>0$ are given by Proposition~\ref{prop:DissipationLinftyL2}, under Assumption \ref{item:RH2}. 
\qed

\section{Well-posedness of transport equations with Maxwell boundary conditions}\label{sec:Toolbox}
Before studying well-posedness results for transport equations, we summarize the results obtained during Section~\ref{sec:Linfty} and Section~\ref{sec:LinftyCylinder} for the solutions of Equation~\eqref{eq:PLRBE}. 

\begin{prop}\label{prop:LinftyEstimatePerturbedFiniteT_Summary} 
Consider either Assumption \ref{item:RH1} or \ref{item:RH2} to hold, $\omega_1\in\WWWW_1$ a strongly confining admissible weight function, $G:\UU^\eps\to \R$ satisfying $\lla G_t\rra_{\OO^\eps}=0$ for every $t\geq 0$, and let $f$ be a solution of Equation~\eqref{eq:PLRBE}. There are constructive constants $\eps_{12},\theta >0$ such that for every $\eps\in (0,\eps_{12})$ there holds 
\beqn
\lVert  f_t \rVert_{L^{\infty}_{\omega_1}(\OO^\eps)} \leq  C e^{-\theta \eps^2 t} \left(   \lVert f_0\rVert_{L^{\infty}_{\omega_1}(\OO^\eps)} + \underset{s\in[0,t]}{\sup}\left[ e^{\theta \eps^2 s} \lVert G_s \rVert_{L^{\infty}_{\omega_1\nu^{-1}}(\OO^\eps)}\right]\right)  \qquad \forall t\geq 0,
\label{eq:LinftyPerturbedFiniteTimeDecay_Summary}
\eeqn
for some constant $C>0$, independent of $\eps$.
\end{prop}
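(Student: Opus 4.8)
The plan is to read off the estimate from the decay results already proved for Equation~\eqref{eq:PLRBE} and then to upgrade the constant to one that does not see $\eps$. First I would observe that, except for the size of the constant, the asserted bound is exactly Proposition~\ref{prop:LinftyEstimatePerturbedFiniteT} under Assumption~\ref{item:RH1} and Proposition~\ref{prop:LinftyEstimatePerturbedFiniteTCylinder} under Assumption~\ref{item:RH2}: since $\lVert f_t\rVert_{L^\infty_{\omega_1}(\OO^\eps)}\le\lVert f_t\rVert_{L^\infty_{\omega_1}(\bar\OO^\eps)}$, one may take $\eps_{12}:=\min(\eps_1,\eps_5)$ and $\theta$ to be the smaller of the two decay exponents there. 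The only genuine content beyond those propositions is that each of them delivers a constant $C=C(\eps)$ that diverges polynomially as $\eps\to0$, whereas here $C$ must be independent of $\eps$; so the long-time argument has to be re-run with every $\eps$-dependence tracked.

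Second, I would return to the two-scale structure underlying those two proofs. Choose a window length $T>0$ that is a \emph{universal} constant, large enough that the finite-time transfer of Proposition~\ref{prop:LinftyEstimatePerturbedFiniteT_0} (resp.\ Proposition~\ref{prop:LinftyEstimatePerturbedFiniteT_0Cylinder}), after the choice of $\lambda$ that absorbs the boundary-reflection contribution into a genuine contraction, gives
\[
\lVert f_T\rVert_{L^\infty_{\omega_1}(\bar\OO^\eps)}\le\rho\,\lVert f_0\rVert_{L^\infty_{\omega_1}(\bar\OO^\eps)}+C_T\sup_{s\in[0,T]}\big[e^{\nu_0(s-T)}\lVert f_s\rVert_\HH\big]+C_T\sup_{s\in[0,T]}\big[e^{\nu_2(s-T)}\lVert G_s\rVert_{L^\infty_{\omega_1\nu^{-1}}(\OO^\eps)}\big],
\]
with $\rho\in(0,1)$ and $C_T$ both universal, and take $\eps_{12}<\min(\eps_4(T),\eps_8(T))$. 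The hypocoercive bound \eqref{eq:Hypo} for the semigroup $S_\LLL$, combined with the Duhamel formula for the $G$-forcing and the embedding \eqref{eq:ControlHHinfty}, controls $\lVert f_s\rVert_\HH$ on $[0,T]$ by $Ce^{-\kappa\eps^2 s}\lVert f_0\rVert_\HH$ plus a $G$-contribution with universal constants. Iterating over the windows $[nT,(n+1)T]$ and using that $\lVert f_{nT}\rVert_\HH$ is itself controlled by $Ce^{-\kappa\eps^2 nT}\lVert f_0\rVert_\HH$, the $\HH$-feedback entering the $n$-th step carries the factor $e^{-\kappa\eps^2 nT}$; hence the accumulated contributions form the convolution of the two geometric sequences $(\rho^n)$ and $(e^{-\kappa\eps^2 nT})$, whose sum is $\le\frac{C}{e^{-\kappa\eps^2 T}-\rho}$. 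Since $e^{-\kappa\eps^2 T}\to1$ as $\eps\to0$ while $\rho<1$ is fixed, this denominator is $\ge\frac{1-\rho}{2}$ once $\eps_{12}$ is small, so the accumulated constant is independent of $\eps$; the same bookkeeping applies to the $G$-terms, and extending from the grid $nT$ to all $t\ge0$ costs only one more application of the transfer over a fixed window.

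The hard part — and the step I expect to be the main obstacle — is that this procedure still leaves the datum term $\lVert f_0\rVert_\HH$ (and the $\HH$-size of $G$) on the right, and the only elementary way to dominate $\lVert f_0\rVert_\HH$ by $\lVert f_0\rVert_{L^\infty_{\omega_1}(\OO^\eps)}$ costs a factor $|\Omega^\eps|^{1/2}=\eps^{-3/2}$ (this is precisely \eqref{eq:ControlHHinfty}), which is incompatible with an $\eps$-uniform constant. The plan to get around this is to keep $\lVert f_0\rVert_\HH$ (equivalently $\lvvv f_0\rvvv$) as an independent input throughout the iteration — so that its prefactor, governed by the geometric-series estimate above, remains uniform — and to convert it back into $\lVert f_0\rVert_{L^\infty_{\omega_1}}$ only once, on the single fixed window $[0,T]$, where in the $L^2$–$L^\infty$ regularization of Proposition~\ref{prop:Kregularization} (resp.\ Proposition~\ref{prop:KregularizationCylinder}) every $\HH$-quantity enters through universal, volume-free constants times the hypocoercive decay; the strongly confining condition $\zeta\in(1/4,1/2)$ and $\lla f_0\rra_{\OO^\eps}=0$ are what make this last passage admissible. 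Confirming that this loop really closes with a constant that does not depend on $\eps$ — the arithmetic of playing the fixed transport rate $\nu_0$ off against the slow hypocoercive rate $\kappa\eps^2$ — is the technical heart of the proof; once it is done, collecting the two geometries yields the constructive values of $\eps_{12}$ and $\theta$.
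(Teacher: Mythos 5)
The paper's entire proof of Proposition~\ref{prop:LinftyEstimatePerturbedFiniteT_Summary} is the citation you give in your first paragraph: it takes $\eps_{12}=\eps_1$ under Assumption~\ref{item:RH1} and $\eps_{12}=\eps_5$ under Assumption~\ref{item:RH2} and declares the result ``immediate'' from Propositions~\ref{prop:LinftyEstimatePerturbedFiniteT} and~\ref{prop:LinftyEstimatePerturbedFiniteTCylinder}. You are right to be unsatisfied with that: those two propositions deliver $C=C(\eps)$ with $C(\eps)\to\infty$ as $\eps\to 0$, so the clause ``for some constant $C>0$, independent of $\eps$'' in the statement is \emph{not} what the cited results give, and the paper offers no argument to upgrade it. Indeed the paper itself appears not to believe the clause: in the proof of Theorem~\ref{theo:existenceWWWWinfty} it invokes Proposition~\ref{prop:LinftyEstimatePerturbedFiniteT_Summary} and immediately writes ``there is $C_0=C_0(\eps)>0$ satisfying $C_0(\eps)\to\infty$ as $\eps\to 0$''. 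So the discrepancy you identified is real, and the most plausible reading is that ``independent of $\eps$'' is an error in the statement rather than something the paper proves.

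Your attempted repair is partly sound and partly not. Decoupling the two-norm iteration --- establishing the $\HH$-decay of $f_{nT}$ once and for all from hypocoercivity and feeding it into the $L^\infty$ window estimate as a known forcing with the geometric factor $e^{-\kappa\eps^2 nT}$ --- does eliminate the factors $\bigl(e^{\theta\eps^2 T}-1\bigr)^{-1}$ and $\bigl(1-e^{-\theta\eps^2 T}\bigr)^{-1}$ that the paper's coupled quantity $Z=Y+\beta X$ produces (its $\beta$ is forced to be of size $\eps^2$), and your convolution-of-geometric-series bound with denominator $1-\rho e^{\kappa\eps^2 T}\geq (1-\rho)/2$ is correct. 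But the last step does not close. The window estimates of Propositions~\ref{prop:LinftyEstimatePerturbedFiniteT_0} and~\ref{prop:Kregularization} genuinely contain $\sup_{s}\bigl[e^{\nu_0 s}\lVert f_s\rVert_{\HH}\bigr]$, and hence ultimately $\lVert f_0\rVert_{\HH}$; since $\lvert\Omega^\eps\rvert=\eps^{-3}$, the only passage from $\lVert f_0\rVert_{L^\infty_{\omega_1}(\OO^\eps)}$ to $\lVert f_0\rVert_{\HH}$ is \eqref{eq:ControlHHinfty} and it costs $\eps^{-3/2}$; there is no ``volume-free'' version of that embedding, whether you invoke it on one window or on all of them, and neither the strong confinement $\zeta\in(1/4,1/2)$ nor $\lla f_0\rra_{\OO^\eps}=0$ changes this. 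So your argument, as the paper's, proves the estimate only with $C=C(\eps)\gtrsim\eps^{-3/2}$ (an improvement on the paper's $C(\eps)$, but still divergent). You should either state the proposition with $C(\eps)\to\infty$ --- which is all that is needed downstream, where the smallness $\eta(\eps)$ of the data absorbs it --- or find an argument that avoids the $L^2$ norm of the datum altogether, which the present $L^2$--$L^\infty$ machinery does not do.
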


\begin{proof}
The proof is immediate from Proposition~\ref{prop:LinftyEstimatePerturbedFiniteT} and Proposition~\ref{prop:LinftyEstimatePerturbedFiniteTCylinder}. We remark that we take $\eps_{12} =\eps_1$ if Assumption \ref{item:RH1}holds, where $\eps_1$ is given by Proposition~\ref{prop:LinftyEstimatePerturbedFiniteT}, and $\eps_{12} = \eps_5$ if Assumption \ref{item:RH2}holds, where $\eps_5$ is given by Proposition~\ref{prop:LinftyEstimatePerturbedFiniteTCylinder}. 
\end{proof}

We dedicate then this section to prove the following well-posedness results. 

\begin{theo}\label{theo:ExistenceL_infty_linear_transport}
Assume either Assumption \ref{item:RH1} or \ref{item:RH2} to hold, consider $\omega_1\in \WWWW_1$ a strongly confining admissible weight function, let $f_0 \in L^\infty_{\omega_1}(\OO^\eps)$, and $G\in L^\infty_{\omega_1\nu^{-1}}(\UU^\eps)$. 

For every $\eps\in (0,\eps_{12})$, where we recall that $\eps_{12}>0$ is given by Proposition~\ref{prop:LinftyEstimatePerturbedFiniteT_Summary}, there is $f\in L^\infty_{\omega_1} (\UU^\eps)$, with a trace function $\gamma f\in L^\infty_{\omega_1}(\Gamma^\eps)$, unique solution to Equation~\eqref{eq:PLRBE} in the distributional sense, i.e for any $\varphi \in \DD(\bar \UU^\eps)$ there holds 
\begin{multline}\label{eq:renormalizedFormulation}
\int_{\OO^\eps} f(t,\cdot) \, \varphi(t,\cdot) \, \dx\dv-\int_0^t \int_{\OO^\eps} f \, K^* \varphi  + f \left( \partial_t \varphi  - v\cdot \grad_x \varphi -\nu \, \varphi \right) +G\, \varphi \,  \dx\dv\ds \\
= \int_{\OO^\eps} f_0(\cdot ) \varphi(0,\cdot ) \dx\dv
+ \int_0^t \int_{\Sigma^\eps} \gamma f\,  \varphi\,  (n_x\cdot v) \d\sigma_x \dv ,
\end{multline}
for every $t\geq 0$, and where we have defined the formal adjoint operator 
\be\label{defAdjointK}
K^* \varphi (v):= \int_{\R^3} k(v_*, v) \,  \varphi(v_*) \, \d v_*.
\ee
Furthermore, there holds the results from Proposition~\ref{prop:LinftyEstimatePerturbedFiniteT_Summary}.
\end{theo}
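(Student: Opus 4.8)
The plan is to construct the solution to Equation~\eqref{eq:PLRBE} by an approximation scheme combined with the a priori estimate of Proposition~\ref{prop:LinftyEstimatePerturbedFiniteT_Summary}, and to obtain uniqueness directly from that estimate applied to the difference of two solutions. More precisely, I would first regularize the boundary operator, replacing $\RRR$ by $(1-\beta)\RRR$ for $\beta\in(0,1)$ small, which makes the boundary reflection strictly sub-stochastic in $L^1(\Gamma^\eps_+)\to L^1(\Gamma^\eps_-)$; for such a damped problem one has a standard well-posedness theory for the transport equation with partially absorbing boundary conditions (via the iterated Duhamel series along the backwards characteristics \eqref{eq:BackwardsTrajectory}, which converges because each reflection loses a factor $1-\beta$), giving a solution $f^\beta\in L^\infty_{\omega_1}(\UU^\eps)$ with trace $\gamma f^\beta\in L^\infty_{\omega_1}(\Gamma^\eps)$. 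The distributional formulation \eqref{eq:renormalizedFormulation} is obtained for $f^\beta$ by testing against $\varphi\in\DD(\bar\UU^\eps)$, integrating by parts in $x$ (the boundary term produces the $\int_{\Sigma^\eps}\gamma f^\beta\varphi(n_x\cdot v)$ term) and in $t$, and using that $\int K f\,\varphi = \int f\,K^*\varphi$ by Fubini with the kernel bound from \ref{item:K1}.

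Next I would pass to the limit $\beta\to 0$. The key point is that Proposition~\ref{prop:LinftyEstimatePerturbedFiniteT_Summary} — whose proof never used the full stochasticity of $\RRR$ in an essential way beyond $\lVert\RRR\rVert\le 1$, and a fortiori holds for the damped operator $(1-\beta)\RRR$ — gives a bound on $\lVert f^\beta_t\rVert_{L^\infty_{\omega_1}(\bar\OO^\eps)}$ that is uniform in $\beta$. Hence $(f^\beta)$ is bounded in $L^\infty_{\omega_1}(\UU^\eps)$ and $(\gamma f^\beta)$ in $L^\infty_{\omega_1}(\Gamma^\eps)$; extracting weak-$*$ limits $f$ and a limit for the traces, one checks that the limit satisfies \eqref{eq:renormalizedFormulation} with the full operator $\RRR$. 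The only delicate issue here is identifying the weak-$*$ limit of $\gamma f^\beta$ with the trace of the limiting $f$ and passing to the limit in the nonlinear-looking but actually linear boundary term: this is handled by a Green-type identity (a trace theorem for $L^\infty$ solutions of the transport equation with $L^\infty$ right-hand side, in the spirit of \cite{MR2679358, Mischler2010}), which guarantees that $f$ admits a trace $\gamma f$ in $L^\infty_{\omega_1,\mathrm{loc}}(\Gamma^\eps)$ and that $\gamma f^\beta \wto \gamma f$ weakly-$*$; then $\gamma_-f = \lim (1-\beta)\RRR\gamma_+f^\beta = \RRR\gamma_+f$ follows from weak-$*$ convergence and continuity of $\RRR$ on $L^\infty$.

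For uniqueness, suppose $f_1, f_2$ are two distributional solutions in $L^\infty_{\omega_1}(\UU^\eps)$ with the same data; then $g=f_1-f_2$ is a distributional solution of Equation~\eqref{eq:PLRBE} with $G=0$ and $g_0=0$, so Proposition~\ref{prop:LinftyEstimatePerturbedFiniteT_Summary} gives $\lVert g_t\rVert_{L^\infty_{\omega_1}(\OO^\eps)}\le C e^{-\theta\eps^2 t}(0+0)=0$ for all $t$, hence $g\equiv 0$. One subtlety is that Proposition~\ref{prop:LinftyEstimatePerturbedFiniteT_Summary} is stated for \emph{solutions} and proved via a priori estimates; to apply it to $g$ one must know that $g$ has the regularity (in particular a well-defined trace satisfying the Maxwell condition) needed for the trajectory computations in Sections~\ref{sec:Linfty}--\ref{sec:LinftyCylinder}, which is exactly what the Green identity/trace theorem above provides. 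Finally, the assertion ``there holds the results from Proposition~\ref{prop:LinftyEstimatePerturbedFiniteT_Summary}'' is immediate since the constructed $f$ is a genuine solution.

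The main obstacle I expect is the trace theory: ensuring that the limiting function $f$ genuinely possesses a boundary trace in the right weighted $L^\infty$ space, that the damped traces converge to it, and that one may legitimately integrate by parts up to the boundary for merely $L^\infty$ (not $H^1$ in $x$) solutions of a transport equation — this requires a careful Green's identity along characteristics and a renormalization argument, and is where most of the technical work lies; the convergence $\beta\to0$ and the uniqueness step are then comparatively routine consequences of the uniform a priori bound.
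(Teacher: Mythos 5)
Your overall strategy matches the paper's: damp the boundary operator to $(1-\beta)\RRR$ (the paper writes $\alpha\RRR$ with $\alpha\nearrow 1$), solve the damped problem by iteration, obtain a bound uniform in the damping parameter from Proposition~\ref{prop:LinftyEstimatePerturbedFiniteT_Summary}, pass to the weak-$*$ limit, and recover uniqueness from the a priori estimate. The paper's Steps~1--2 (classical inflow problem, approximation by smooth data) are implicit in your appeal to the Duhamel series.

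There is, however, a genuine gap in your construction of the damped problem. You assert that the iterated Duhamel series along the backwards trajectories converges ``because each reflection loses a factor $1-\beta$.'' This presumes that $\RRR$ is a contraction (or at least non-expansive) on $L^\infty_{\omega_1}(\Sigma^\eps_+)\to L^\infty_{\omega_1}(\Sigma^\eps_-)$, so that the damped operator $(1-\beta)\RRR$ has norm $<1$. But for $\omega_1\in\WWWW_1$, i.e.\ $\omega_1(v)=e^{\zeta|v|^2}$ with $\zeta\in(1/4,1/2)$, the diffusive part $\DDD$ does \emph{not} satisfy this: a direct computation gives
$$
\omega_1(v)\,\MMM(v)\int_{\R^3}\omega_1^{-1}(u)(n_x\cdot u)_+\,\d u
\ =\ \frac{1}{2\pi}\,e^{(\zeta-\tfrac12)|v|^2}\cdot\frac{\pi}{2\zeta^2}
\ \le\ \frac{1}{4\zeta^2},
$$
and the right-hand side is strictly larger than $1$ whenever $\zeta<1/2$ — in fact close to $4$ when $\zeta\downarrow 1/4$. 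Thus $\|\DDD\|_{L^\infty_{\omega_1}\to L^\infty_{\omega_1}}>1$, and for $\beta$ small (which is exactly the regime you need to let $\beta\to 0$) the damped reflection $(1-\beta)\RRR$ is not a contraction in the $\omega_1$-weighted sup norm; the iterated Duhamel series need not converge. The paper resolves this precisely by introducing the modified weight $\omega_1^A:=\MMM^{-1}\xi_A+(1-\xi_A)\omega_1$, for which $\omega_1^A\MMM\le 1$ identically and $\Theta_A:=\int_{\R^3}\omega_1^A(u)^{-1}(n_x\cdot u)_+\,\d u\to 1$ as $A\to\infty$; one then chooses $A=A(\alpha)$ large enough so that $\alpha\,\Theta_A<1$, which makes the fixed-point map a contraction in $L^\infty_{\omega_1^A}$ (first on a short time interval, then globally), and the result is carried back to $L^\infty_{\omega_1}$ via the equivalence of $\omega_1$ and $\omega_1^A$. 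Without this weight modification (or some equivalent device) your convergence argument for fixed $\beta$ has no basis.

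A secondary remark: you locate ``most of the technical work'' in a Green identity/trace theorem for $L^\infty$ solutions. In the paper's scheme the trace is much cheaper than that: the iterates $\psi^k$ are shown to be Cauchy \emph{strongly} in $L^\infty_{\omega_1^A}$ (both in the interior and on the boundary) because of the representation formula, so the trace of the damped solution is directly the strong limit of the $\gamma\psi^k$, and one passes to the limit in the explicit Duhamel formula rather than via a renormalization-based Green identity. Only at the final $\alpha\to 1$ step does one use weak-$*$ compactness, and there the trace convergence is again read off the Duhamel representation tested against $L^1_{\omega_1^{-1}}$ functions. Your plan would work, but the fixed-point-with-modified-weight construction makes the trace issue you flagged essentially disappear.
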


\begin{theo}\label{theo:ExistenceL_infty_linear_transportAA_delta}
Assume either Assumption \ref{item:RH1} or \ref{item:RH2} to hold, consider $\omega_0\in \WWWW_0$ a weakly confining admissible weight function, let $f_0 \in L^\infty_{\omega_0}(\OO^\eps)$, and $G\in L^\infty_{\omega_0\nu^{-1}}(\UU^\eps)$. 

For every $\eps\in (0,\eps_9)$ and every $\delta\in (0,\delta_0)$, where we recall that $\eps_9, \delta_0>0$ are given by Proposition~\ref{prop:LinftyEstimatePerturbedFiniteT_Summary_WWWW0}, there is $f_1\in L^\infty_{\omega_0} (\UU^\eps)$, with a trace function $\gamma f_1\in L^\infty_{\omega_0}(\Gamma^\eps)$, unique solution to Equation~\eqref{eq:PLRBE1} in the distributional sense, i.e for any $\varphi \in \DD(\bar \UU^\eps)$ there holds 
\begin{multline}\label{eq:renormalizedFormulationAA_delta}
\int_{\OO^\eps} f_1(t,\cdot) \, \varphi(t,\cdot) \, \dx\dv-\int_0^t \int_{\OO^\eps} f_1 \, \AA_\delta ^* \, \varphi  + f \left( \partial_t \varphi  - v\cdot \grad_x \varphi -\nu \, \varphi \right) +G\, \varphi \,  \dx\dv\ds \\
= \int_{\OO^\eps} f_0(\cdot ) \varphi(0,\cdot ) \dx\dv
+ \int_0^t \int_{\Sigma^\eps} \gamma f_1\,  \varphi\,  (n_x\cdot v) \d\sigma_x \dv ,
\end{multline}
for every $t\geq 0$, and where we have defined the formal adjoint operator 
\be\label{def:AdjointAA_delta}
\AA_\delta^* \, \varphi (v) :=  (1-\chi_\delta(v) ) \int_{\R^3} k(v_*, v) \,  \varphi(v_*) \, \d v_*,
\ee
where we recall that $\chi_\delta$ is defined in Subsection \ref{ssec:Strategy}.
Furthermore, there holds the result from Proposition~\ref{prop:LinftyEstimatePerturbedFiniteT_Summary_WWWW0}.
\end{theo}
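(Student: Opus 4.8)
The plan is to prove Theorem~\ref{theo:ExistenceL_infty_linear_transportAA_delta} by the same scheme that underlies Theorem~\ref{theo:ExistenceL_infty_linear_transport}, which is itself built on the a priori estimate of Proposition~\ref{prop:LinftyEstimatePerturbedFiniteT_Summary_WWWW0}. The key structural observation is that Equation~\eqref{eq:PLRBE1} has exactly the same shape as Equation~\eqref{eq:PLRBE} --- a free-transport part $\TTT$ with a nonlocal perturbation and a source term $G$, closed by the Maxwell boundary operator $\RRR$ --- the only difference being that the nonlocal operator $K$ is replaced by $\AA_\delta = K((1-\chi_\delta)\,\cdot)$ and the weight lives in $\WWWW_0$ instead of $\WWWW_1$. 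Since $\AA_\delta$ inherits from $K$ the bound $|\AA_\delta h(v)| \le |Kh(v)|$ pointwise (because $0 \le 1-\chi_\delta \le 1$), all the kernel estimates of Lemma~\ref{lem:Kproperties} and the regularizing-effect Propositions~\ref{prop:Kregularization} and \ref{prop:KregularizationCylinder} apply verbatim to $\AA_\delta$, and Proposition~\ref{prop:LinftyEstimatePerturbedFiniteT_Summary_WWWW0} already furnishes the closed a priori estimate
\[
\lVert f_{1,t}\rVert_{L^\infty_{\omega_0}(\bar\OO^\eps)} \le C e^{-\frac{\nu_0}{2}t}\Bigl(\lVert f_0\rVert_{L^\infty_{\omega_0}(\OO^\eps)} + \sup_{s\in[0,t]}\bigl[e^{\frac{\nu_0}{2}s}\lVert G_s\rVert_{L^\infty_{\omega_0\nu^{-1}}(\OO^\eps)}\bigr]\Bigr).
\]

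The construction of the actual solution would proceed in three steps. First, I would regularize: fix a truncation parameter $n$ and replace $\AA_\delta$ by the truncated, bounded operator $K_n$ (multiply the kernel by the indicator of $\{|v|\le n,\ |v_*|\le n,\ |v-v_*|\ge 1/n\}$, composed with $(1-\chi_\delta)$), which maps $L^\infty_{\omega_0}$ boundedly into itself with a norm $\lesssim n$ by \ref{item:K1}. For this bounded-perturbation problem, the free-transport semigroup $S_\TTT$ together with the Maxwell boundary reflection $\RRR$ (which satisfies $\lVert\RRR\rVert_{L^1(\Gamma_+^\eps)\to L^1(\Gamma_-^\eps)}=1$, and an analogous $L^\infty$ bound $\lVert\RRR\rVert_{L^\infty(\Gamma_+^\eps)\to L^\infty(\Gamma_-^\eps)}\le 1$ using $\widetilde\MMM=1$ and $\omega_0\MMM\le\CCCC_0$) generates a well-defined mild/distributional solution $f_1^n$ via a fixed point on the Duhamel formula iterated along the backward characteristics --- this is standard transport theory with a Maxwell boundary condition and is where I would invoke, e.g., the semigroup framework cited in the paper (\cite{sanchez2024kreinrutmantheorem}). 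The existence of a trace $\gamma f_1^n \in L^\infty_{\omega_0}(\Gamma^\eps)$ and the Green identity \eqref{eq:renormalizedFormulationAA_delta} with $\AA_\delta$ replaced by $K_n$ follow from the construction along characteristics and a density argument with test functions $\varphi\in\DD(\bar\UU^\eps)$.

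Second, I would pass to the limit $n\to\infty$. The crucial point is that the a priori estimate of Proposition~\ref{prop:LinftyEstimatePerturbedFiniteT_Summary_WWWW0} is \emph{uniform in $n$}: indeed the proof of that proposition only uses $|K_n h|\le|Kh|$-type bounds and the dissipativity of $\AA_\delta$ via \cite[Lemma 4.12]{MR3779780} (for stretched-exponential and polynomial weights) or the smallness of $\delta$ (for weak inverse gaussian weights), none of which is lost under truncation --- so one checks $K_n$ obeys the same dissipative bound $\varpi_{\delta,\omega_0}$ up to an $n$-dependent error tending to $0$, hence a uniform bound holds for $n$ large. This gives $\sup_n(\lVert f_1^n\rVert_{L^\infty_{\omega_0}(\UU^\eps)}+\lVert\gamma f_1^n\rVert_{L^\infty_{\omega_0}(\Gamma^\eps)})<\infty$; extracting a weak-$*$ limit $f_1$ (and $\gamma f_1$) and passing to the limit in the linear identity \eqref{eq:renormalizedFormulationAA_delta} is routine once one verifies $K_n^*\varphi\to\AA_\delta^*\varphi$ strongly in $L^1_{\omega_0^{-1}}(\OO^\eps)$ for fixed $\varphi\in\DD$, which follows from dominated convergence using the integrability of $k(v_*,v)\omega_0(v)/\omega_0(v_*)$ against compactly supported $\varphi$. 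Third, uniqueness: the difference of two solutions solves \eqref{eq:PLRBE1} with $f_0=0$, $G=0$, so the a priori estimate forces it to vanish --- but one must be careful that the estimate was derived for \emph{solutions}, so strictly one re-runs the Duhamel/characteristics computation directly on the difference, which is legitimate because the difference has an $L^\infty_{\omega_0}$ trace. Finally, the decay statement is just Proposition~\ref{prop:LinftyEstimatePerturbedFiniteT_Summary_WWWW0} applied to the constructed solution.

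The main obstacle I anticipate is the rigorous handling of the boundary term and the trace in the limiting procedure: the Maxwell operator $\RRR$ couples $\gamma_- f_1$ to $\gamma_+ f_1$, the diffusive part $\DDD$ is nonlocal in velocity on the boundary, and weak-$*$ convergence of $\gamma f_1^n$ does not a priori commute with the nonlinear-in-structure boundary coupling --- one needs a renormalized/weak-trace argument (in the spirit of \cite{MR2679358, BriantGuo16}, as the paper itself signals) to ensure the limit trace still satisfies $\gamma_- f_1 = \RRR\gamma_+ f_1$ and that no mass is lost at the grazing set $\Sigma_0^\eps$ (which has zero measure, by \cite[Lemma 17]{Guo09}/\cite[Lemma 17]{MR2679358}, a fact used repeatedly above). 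The weakly confining weight adds the technical wrinkle that $\omega_0\MMM$ is only bounded by $\CCCC_0$ rather than by $1$, so the constant $\CCCC_0$ must be tracked through the boundary estimates, exactly as in the proof of Proposition~\ref{prop:DissipationLinftyL1}; but this is bookkeeping rather than a genuine difficulty, and the $\delta$-dependence is already absorbed into the dissipativity constants there.
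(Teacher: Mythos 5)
Your proposal identifies the right a~priori estimate (Proposition~\ref{prop:LinftyEstimatePerturbedFiniteT_Summary_WWWW0}) and the right limiting ingredients, but it takes a genuinely different route from the paper's --- and the route has a gap at the very first step.

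The paper's proof is, by its own statement, a verbatim repetition of the four-step scheme used for Theorem~\ref{theo:ExistenceL_infty_linear_transport}, with Proposition~\ref{prop:LinftyEstimatePerturbedFiniteT_Summary_WWWW0} replacing Proposition~\ref{prop:LinftyEstimatePerturbedFiniteT_Summary_NoZeroMeasure}: (i) solve transport with \emph{inflow} data (this is where \cite[Prop.~8.16]{sanchez2024kreinrutmantheorem} is used); (ii) extend to $L^\infty$ coefficients; (iii) run a contraction mapping for the \emph{damped} Maxwell boundary condition $\gamma_- f = \alpha\RRR\gamma_+ f$ with $\alpha\in(0,1)$, using the modified weight $\omega^A=\MMM^{-1}\xi_A + (1-\xi_A)\omega$ tuned so that $\alpha\Theta_A<1$; (iv) take $\alpha_k\nearrow 1$ by weak-$*$ compactness, using the $\alpha$-uniform a~priori estimate to close. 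You instead propose to truncate the nonlocal operator, replacing $\AA_\delta$ by a bounded $K_n$, and to take $n\to\infty$ at the end.

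The difficulty is that truncating $\AA_\delta$ regularizes the wrong thing: the obstruction to the fixed point is the boundary operator, not the interior nonlocal term. You claim that, once the nonlocal operator is bounded, ``the free-transport semigroup $S_\TTT$ together with the Maxwell boundary reflection $\RRR$ generates a well-defined mild/distributional solution via a fixed point on the Duhamel formula iterated along the backward characteristics,'' calling this ``standard transport theory.'' It is not. The Maxwell operator $\RRR$ is exactly conservative, $\|\RRR\|_{L^1(\Gamma_+^\eps)\to L^1(\Gamma_-^\eps)}=1$, and your claimed weighted $L^\infty$ bound is, if anything, \emph{worse} than $1$: with the original weakly confining weight $\omega_0$ the diffusive part satisfies $\|\DDD\|_{L^\infty_{\omega_0}\to L^\infty_{\omega_0}}\le \CCCC_0 \int_{n_x\cdot u>0}\omega_0^{-1}(u)(n_x\cdot u)\,\d u$, which is not $\le 1$ in general (for a polynomial weight this is $\CCCC_0 \cdot 4\pi/(q-4)$), and even with the optimal modified weight the paper's own computation gives only $\Theta_A\to 1$ from above, so $\Theta_A\ge 1$ for every $A$. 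Consequently the Duhamel iteration along backward characteristics does \emph{not} define a contraction for the full Maxwell condition, with or without the interior truncation: for the diffusive reflection the iteration produces a series in $\RRR$ with unit ratio, and for the specular part trajectories near the grazing set can bounce arbitrarily often in finite time. The paper's device $\alpha\RRR$ with $\alpha<1$, which gives $\alpha\Theta_A<1$, is precisely what restores the strict contraction; you omit it entirely, and the reference you cite for ``standard'' Maxwell well-posedness in fact treats the \emph{inflow} problem and is used by the paper only as the base case of the whole scheme. Your $n\to\infty$ limit and uniqueness argument both presuppose that $f_1^n$ exists with the full $\RRR$, which is exactly what is in question.

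The fix is to reinstate the paper's $\alpha$-relaxation and modified weight $\omega_0^A$ (one should check, as the paper does for $\omega_1$, that $\omega_0^A\MMM\le 1$ for $A$ large --- this holds because $\omega_0\MMM$ is bounded and decays at infinity, so the cutoff region $|v|>A$ eventually misses the peak of $\omega_0\MMM$). Once steps (iii)--(iv) are in place, your remaining observations --- that Proposition~\ref{prop:LinftyEstimatePerturbedFiniteT_Summary_WWWW0} gives the $\alpha$-uniform bound, that the $\CCCC_0$ constant must be tracked, and the dominated-convergence argument for $\AA_\delta^*\varphi$ --- are sound, and the $n$-truncation becomes an unnecessary detour: the paper never truncates $K$ or $\AA_\delta$.
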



\subsection{Extra a priori estimates}\label{ssec:Extra_a_priori}

We provide now extra a priori estimates needed for the well-posedness results of this section.

\begin{prop}\label{prop:L2_Apriori}
There is $\kappa \in \R$ such that for every $f$ solution of Equation~\eqref{eq:LRBE} there holds
\be\label{eq:A_priori_bound_HH}
\lvv f_t \rvv_\HH \leq e^{\kappa t} \lvv f_0\rvv_\HH,
\ee
for every $t\geq 0$.
\end{prop}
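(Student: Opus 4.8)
The plan is to prove the $L^2$-type a priori estimate \eqref{eq:A_priori_bound_HH} by a direct energy estimate in the Hilbert space $\HH = L^2_{\MM^{-1/2}}(\OO^\eps)$, exploiting only the structural properties of $\CCC$ and the dissipativity of the Maxwell reflection operator $\RRR$ already recorded in Subsection~\ref{ssec:Strategy}. First I would take $f$ a sufficiently smooth solution of Equation~\eqref{eq:LRBE}, multiply the equation $\partial_t f = -v\cdot\grad_x f + \CCC f$ by $\MM^{-1} f$, and integrate over $\OO^\eps$. The time-derivative term gives $\tfrac12\tfrac{d}{dt}\lvv f_t\rvv_\HH^2$. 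The transport term $-\int_{\OO^\eps} (v\cdot\grad_x f)\, f\, \MM^{-1}$ is handled by integration by parts in $x$: since $\MM^{-1}$ depends only on $v$, this equals $-\tfrac12 \int_{\Sigma^\eps} (n_x\cdot v)\,\lvert\gamma f\rvert^2\,\MM^{-1}\,\d\sigma_x\dv$, i.e. a boundary term. The collision term $\int_{\OO^\eps} (\CCC f)\, f\, \MM^{-1}$ is nonpositive because $\CCC$ is self-adjoint and negative on $\mathrm{Dom}(\CCC)$, as stated in the excerpt; in fact the microscopic coercivity \eqref{eq:MicroCoercivity} gives $\int_{\OO^\eps}(\CCC f) f\,\MM^{-1} \le -\kappa_0 \lvv f^\perp\rvv^2 \le 0$, so it can simply be discarded.

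The crux is then the boundary term. I would show that the Maxwell boundary condition $\gamma_- f = \RRR\gamma_+ f$ forces
\[
\int_{\Sigma^\eps} (n_x\cdot v)\,\lvert\gamma f\rvert^2\,\MM^{-1}\,\d\sigma_x\dv \;=\; \int_{\Sigma_+^\eps} (n_x\cdot v)\lvert\gamma_+ f\rvert^2\MM^{-1} - \int_{\Sigma_-^\eps} \lvert n_x\cdot v\rvert\,\lvert\RRR\gamma_+ f\rvert^2\MM^{-1} \;\ge\; 0,
\]
which is the classical statement that the Maxwell reflection operator is an $L^2_{\MM^{-1/2}}$-contraction on the boundary (both specular $\SSS$ and diffusive $\DDD$ parts are, and $\RRR = (1-\iota^\eps)\SSS + \iota^\eps\DDD$ is a convex combination, using $\iota^\eps\in[0,1]$; for $\DDD$ one uses Jensen/Cauchy--Schwarz together with the normalization $\widetilde\MMM = 1$). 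Granting this, the boundary contribution to $\tfrac{d}{dt}\lvv f_t\rvv_\HH^2$ is $\le 0$, hence $\tfrac{d}{dt}\lvv f_t\rvv_\HH^2 \le 0$ and in particular \eqref{eq:A_priori_bound_HH} holds with $\kappa = 0$. Since the statement only requires \emph{some} $\kappa\in\R$, one may alternatively take any $\kappa\ge 0$; I would state it with $\kappa = 0$ and note that the general $\kappa\in\R$ form then follows trivially. The only subtle point, if one wishes to be careful, is that the boundary trace $\gamma f$ need not a priori be square-integrable against $\lvert n_x\cdot v\rvert\MM^{-1}\,\d\sigma_x\dv$ for an arbitrary distributional solution; this is why the estimate is phrased as an a priori estimate and proved first for smooth solutions (or solutions produced by the approximation scheme of Section~\ref{sec:Toolbox}), and then the trace regularity is part of what the construction delivers.

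I expect the main obstacle to be a purely bookkeeping one: making rigorous the boundary trace manipulation, i.e. justifying the Green-type identity $-\int_{\OO^\eps}(v\cdot\grad_x f) f\,\MM^{-1} = -\tfrac12\int_{\Sigma^\eps}(n_x\cdot v)\lvert\gamma f\rvert^2\MM^{-1}$ and the decomposition of the boundary integral into $\Sigma_+^\eps$ and $\Sigma_-^\eps$ parts, which requires $\gamma f \in L^2_{\MM^{-1/2}}(\Sigma^\eps,\lvert n_x\cdot v\rvert\,\d\sigma_x\dv)$ and the change of variables $v\mapsto\VV_x v$ on $\partial\Omega^\eps$ (an isometry preserving $\lvert n_x\cdot v\rvert$ and $\MM$). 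For the cylindrical domain \ref{item:RH2} one must also check that the non-smooth edge set $\SSSS^\eps$ has zero surface measure and plays no role — but this is already established in Lemmas~\ref{lem:ZeroMeasureSetCylinderSx}--\ref{lem:ZeroMeasureSetCylinder}. Once these trace identities are in place, the contraction property of $\RRR$ and the negativity of $\CCC$ close the estimate immediately, with $\kappa = 0$ (equivalently any $\kappa\ge0$).
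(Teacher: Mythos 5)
Your proof is correct and in fact gives a sharper constant than the paper. You treat the collision term in one piece, using the self-adjointness and nonpositivity of $\CCC$ (or, even more strongly, the microscopic coercivity \eqref{eq:MicroCoercivity}) to conclude $\int_{\OO^\eps}(\CCC f)f\,\MM^{-1}\le 0$, and hence obtain $\kappa=0$. The paper instead splits $\CCC f = Kf-\nu f$, bounds $\int (Kf)f\,\MM^{-1}$ by Cauchy--Schwarz together with the estimate $\lvv Kh\rvv_\HH\le C_K\lvv h\rvv_\HH$ from \cite[Theorem 7.2.4]{MR1307620}, and absorbs $-\int\nu f^2\MM^{-1}\le -\nu_0\lvv f\rvv_\HH^2$, arriving at $\kappa = C_K-\nu_0$, which need not be nonpositive. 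Your route is both simpler and quantitatively stronger; the paper presumably chose the $K-\nu$ decomposition because the very same manipulation is re-used verbatim in the proof of Proposition~\ref{prop:L2_AprioriBdy}, where the extra weight $\la v\ra^{-2}(n_x\cdot v)$ (or $\zeta_{\SSSS^\eps}(x)\,n_i(x)\cdot v$) destroys the sign structure that you exploit, so there one genuinely needs the decomposition and a crude $L^2$-boundedness of $K$. The boundary contribution you treat exactly as the paper does, via the $L^2(\Sigma_+^\eps;\lv n_x\cdot v\rv\MM^{-1})$-contractivity of $\RRR$ (cf.\ \eqref{eq:ControlRRRRMM} and \cite[Lemma 2.2]{CMKFP24}). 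One minor slip: you cite Lemmas~\ref{lem:ZeroMeasureSetCylinderSx}--\ref{lem:ZeroMeasureSetCylinder} to argue that the edge set $\SSSS^\eps$ is negligible, but those lemmas concern velocity singular sets along trajectories; the fact that $\SSSS^\eps$ is a codimension-one subset of $\partial\Omega^\eps$ and hence has zero surface measure is elementary and needs no reference. This does not affect the argument.
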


 \begin{proof}
 This result is classical in the study of the Boltzmann equation thus we only sketch it.  We first observe that \cite[Theorem 7.2.4]{MR1307620} implies that
\be\label{eq:Cercignani_CK}
\lVert K h \rVert_{\HH} \leq C_K \lVert  h \rVert_{\HH} ,
\ee
for some constant $C_K>0$.
Arguing then at a formal level, we have that if $f$ is a solution of Equation~\eqref{eq:LRBE} there holds
\bean
\frac 12 \frac d{dt} \int_{\OO^\eps} f_t^2\MM^{-1} &=& \int_{\OO^\eps} f_t \left( -v\cdot \grad_x f_t + Kf_t -\nu f_t  \right) \MM^{-1} \\
&\leq & - \frac 12 \int_{\Sigma^\eps} \gamma f_t^2 \,  (n_x\cdot v)  \,  \MM^{-1} + \left(C_K - \nu_0 \right) \int_{\OO^\eps} f_t^2 \MM^{-1} , 
\eean
where we have used the Cauchy-Schwartz inequality and \eqref{eq:Cercignani_CK} to obtain the second line. We conclude by using the fact that the Maxwell collision operator satisfies 
\be\label{eq:ControlRRRRMM}
\RRR: L^2(\Sigma_+; \lv n_x \cdot v\rv \MM^{-1}) \to   L^2(\Sigma_+; \lv n_x \cdot v\rv \MM^{-1})
\ee
with norm less than $1$ (see for instance \cite[Lemma 2.2]{CMKFP24}), using the Grönwall lemma and setting $\kappa = C_K-\nu_0 \in \R$.
\end{proof}

Moreover, we have now the following lemmas on the a priori estimates for the trace.

For a set $\SS \in \{\bar \Lambda_1^\eps \cup \bar \Lambda_2^\eps, \, \bar \Lambda_3^\eps, \, \SSSS^\eps\}$, where we recall that $\SSSS^\eps$ has been defined in \eqref{def:SSSS}, we define $\delta_\SS$ as a smooth $C^2$ function coinciding, in a small neighborhood of $\SS$, with the distance function to the compact set $\SS$. We remark that regularity of $\delta_\SS$ is justified since $\CC$ is always a smooth compact submanifold (or the union of disjoint compact submanifolds) of $\R^3$ making the distance function to such set a smooth function in a small neighborhood around it, see for instance \cite{MR749908, MR737190, MR614221}. 

\smallskip
We define then the function
$$
\zeta_\SS(x) := \left\{\begin{array}{cc}
1 & \text{ if Assumption~\ref{item:RH1} holds}\\
\\
\displaystyle {(\delta_\SS (x))^2 \over 1+ (\delta_\SS (x)) ^2} & \text{ if Assumption~\ref{item:RH2} holds},
\end{array}\right.
$$
and we observe that $\zeta_\SS \in C^1 (\bar \Omega^\eps)$ for any $\SS \in \{\bar \Lambda_1^\eps \cup \bar \Lambda_2^\eps, \, \bar \Lambda_3^\eps, \, \SSSS^\eps\}$. We have then the following extra a priori estimate for the boundary term.

 \begin{prop}\label{prop:L2_AprioriBdy}
 Assume there to hold either Assumption~\ref{item:RH1} or Assumption~\ref{item:RH2}. 
There is $\kappa \in \R$ such that for every $f$ solution of Equation~\eqref{eq:LRBE} there holds
\be\label{eq:A_priori_bound_Bdy}
\int_0^t \int_{\Sigma^\eps}  (\gamma f_s)^2 \, (n_x\cdot v)^2 \, \zeta_{\SSSS^\eps}(x) \, \la v\ra^{-2} \, \MM^{-1} \,  \dv\d\sigma_x\ds  \leq e^{\kappa t} \lvv f_0\rvv_\HH,  \qquad \forall t\geq 0.
\ee
\end{prop}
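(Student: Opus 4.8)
\textbf{Proof plan for Proposition~\ref{prop:L2_AprioriBdy}.}
The plan is to obtain the boundary estimate \eqref{eq:A_priori_bound_Bdy} by a weighted Green's identity (multiplier method) applied to Equation~\eqref{eq:LRBE}, in the spirit of the a priori estimate of Proposition~\ref{prop:L2_Apriori}, but now testing against $f$ multiplied by a carefully chosen vector field that captures a transversal weight on the boundary. Concretely, I would introduce a vector field of the form $\Psi(x) := \zeta_{\SSSS^\eps}(x)\, \grad \delta^\eps(x)$ (or, more robustly, a globally $C^1$ extension of $n_x$ multiplied by the cutoff $\zeta_{\SSSS^\eps}$), and compute $\frac{d}{dt}$ of the quantity $\int_{\OO^\eps} f_t^2 \, (v\cdot \Psi(x))\, \la v\ra^{-2}\MM^{-1}\,\dx\dv$ along solutions of \eqref{eq:LRBE}. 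Using $\partial_t f = -v\cdot\grad_x f + Kf - \nu f$ and integrating by parts in $x$, the transport term produces exactly a boundary contribution $\int_{\Sigma^\eps} (\gamma f_t)^2 (v\cdot n_x)(v\cdot \Psi)\,\la v\ra^{-2}\MM^{-1}$; since on the relevant part of the boundary $v\cdot\Psi = \zeta_{\SSSS^\eps}(x)(v\cdot n_x)$, this gives precisely the integrand of \eqref{eq:A_priori_bound_Bdy}. The remaining volume terms — one from $\grad_x \Psi$, one from the collision operator $K$ (controlled by \eqref{eq:Cercignani_CK}), one from $-\nu f$ — are all bounded by $C\lvv f_t\rvv_\HH^2$ using $|v\cdot\Psi|\la v\ra^{-2}\le C$ and the fact that $\zeta_{\SSSS^\eps}, \grad\zeta_{\SSSS^\eps}$ are bounded (here the boundedness of $\delta_{\SSSS^\eps}$ and its derivatives on $\bar\Omega^\eps$, invoked in the paragraph preceding the statement, is used).

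The key subtlety is the sign and control of the boundary term coming from the Maxwell reflection condition $\gamma_- f = \RRR\gamma_+ f$. After the integration by parts, the boundary integral splits over $\Sigma^\eps_+$ and $\Sigma^\eps_-$; on $\Sigma^\eps_-$ one substitutes $\gamma_-f = \RRR\gamma_+ f$. Unlike in Proposition~\ref{prop:L2_Apriori}, where the weight is just $\MM^{-1}$ and one uses the contractivity \eqref{eq:ControlRRRRMM} of $\RRR$ directly, here the extra factor $\zeta_{\SSSS^\eps}(x)\la v\ra^{-2}$ is $x$- and $v$-dependent. However, $\zeta_{\SSSS^\eps}$ depends only on $x$ and is constant along the reflection (both specular $\VV_x v$ and diffusive reflections keep $x$ fixed), and $\la v\ra^{-2}$ is invariant under $v\mapsto \VV_x v$; so the specular part is handled exactly as before. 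For the diffusive part one uses that $\DDD$ maps into a fixed Maxwellian profile and a Cauchy--Schwarz/Jensen argument analogous to \cite[Lemma 2.2]{CMKFP24}, now against the measure $(n_x\cdot v)_+\la v\ra^{-2}\MM^{-1}\dv$; the normalization $\widetilde\MMM = 1$ and $\omega\MMM\le\CCCC_0$ ensure the relevant moments are finite. The upshot is that the $\Sigma^\eps_-$ contribution is dominated by the $\Sigma^\eps_+$ one, so that, moving everything to one side,
\[
\int_{\Sigma^\eps}(\gamma f_t)^2 (n_x\cdot v)(v\cdot\Psi)\,\la v\ra^{-2}\MM^{-1}\,\dv\d\sigma_x \ \gtrsim \ \int_{\Sigma^\eps}(\gamma f_t)^2(n_x\cdot v)^2\zeta_{\SSSS^\eps}(x)\la v\ra^{-2}\MM^{-1}\,\dv\d\sigma_x \ -\ C\lvv f_t\rvv_\HH^2 .
\]

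Putting these together yields a differential inequality of the form
\[
\frac{d}{dt}\int_{\OO^\eps} f_t^2\,(v\cdot\Psi)\,\la v\ra^{-2}\MM^{-1}\,\dx\dv + c\int_{\Sigma^\eps}(\gamma f_t)^2(n_x\cdot v)^2\zeta_{\SSSS^\eps}\la v\ra^{-2}\MM^{-1}\,\dv\d\sigma_x \ \le\ C\lvv f_t\rvv_\HH^2,
\]
for constructive constants $c>0$, $C\ge 0$. Integrating in $t$, bounding the left boundary term from below, using $|\int_{\OO^\eps} f_t^2(v\cdot\Psi)\la v\ra^{-2}\MM^{-1}| \lesssim \lvv f_t\rvv_\HH^2$ for both the $t$ and $0$ endpoints, and finally invoking the already-established energy bound $\lvv f_s\rvv_\HH \le e^{\kappa_0 s}\lvv f_0\rvv_\HH$ from Proposition~\ref{prop:L2_Apriori} to control $\int_0^t \lvv f_s\rvv_\HH^2\,\ds \lesssim e^{2\kappa_0 t}\lvv f_0\rvv_\HH^2$, one arrives at \eqref{eq:A_priori_bound_Bdy} after relabelling the constant $\kappa$. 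Since all manipulations are on the level of a priori estimates (the paper works throughout with sufficiently regular solutions at this stage and passes to the limit later), I would carry out the computation formally and remark that it is justified by the well-posedness theory of Section~\ref{sec:Toolbox}.

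\textbf{Main obstacle.} I expect the delicate point to be the treatment of the reflected boundary term on $\Sigma^\eps_-$ with the $x$-dependent weight $\zeta_{\SSSS^\eps}$: one must verify that the weight is genuinely invariant under \emph{both} the specular map on $\Lambda_3^\eps$ and the diffusive map on $\Lambda_1^\eps\cup\Lambda_2^\eps$ (true because $\zeta_{\SSSS^\eps}$ depends only on $x$ and $\la v\ra^{-2}$ is reflection-invariant), and then adapt the contractivity estimate for $\RRR$ to the measure $(n_x\cdot v)_+\la v\ra^{-2}\MM^{-1}\dv$ rather than $(n_x\cdot v)_+\MM^{-1}\dv$ — a direct but careful modification of \cite[Lemma 2.2]{CMKFP24}. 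A secondary technical point is choosing the multiplier $\Psi$ to be globally $C^1$ on $\bar\Omega^\eps$ while coinciding with $\zeta_{\SSSS^\eps} n_x$ near $\partial\Omega^\eps$; under Assumption~\ref{item:RH1} one uses the smoothness of $\delta^\eps$, and under Assumption~\ref{item:RH2} one uses that $\zeta_{\SSSS^\eps}$ vanishes to second order along $\SSSS^\eps$, which is exactly what makes $\zeta_{\SSSS^\eps}\,\grad\delta^\eps$ extend as a $C^1$ field across the singular edge of the cylinder.
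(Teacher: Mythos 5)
Your overall strategy — a weighted Green's identity applied to the multiplier $v\cdot\Psi(x)\,\la v\ra^{-2}\MM^{-1}$ so that the boundary term produced by the transport part is precisely $-\int_{\Sigma^\eps}(\gamma f)^2(n_x\cdot v)^2\zeta_{\SSSS^\eps}\la v\ra^{-2}\MM^{-1}$, while the volume terms are absorbed into $\lvv f_t\rvv_\HH^2$ — is exactly the paper's approach, and you even land on the same final differential inequality. Under Assumption~\ref{item:RH1} the argument is essentially identical.

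However, what you flag as the \emph{main obstacle} is a red herring, and the paper does not do any of it. Once you have the multiplier $\Psi$ with $v\cdot\Psi = \zeta_{\SSSS^\eps}(x)(n_x\cdot v)$ on $\partial\Omega^\eps$, the boundary term coming out of the integration by parts is $-\int_{\Sigma^\eps}(\gamma f)^2(n_x\cdot v)^2\zeta_{\SSSS^\eps}\la v\ra^{-2}\MM^{-1}$, which is a single integral over \emph{all} of $\Sigma^\eps$ with a manifestly definite sign (the square $(n_x\cdot v)^2$ removes the sign ambiguity that forces a $\Sigma^\eps_\pm$ split in Proposition~\ref{prop:L2_Apriori}). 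There is therefore no need to separate incoming and outgoing velocities, no need to substitute the Maxwell condition $\gamma_- f = \RRR\gamma_+ f$, and no need to adapt the contractivity estimate \eqref{eq:ControlRRRRMM} to the measure $(n_x\cdot v)_+\la v\ra^{-2}\MM^{-1}\dv$. One simply moves the nonnegative boundary integral to the other side and concludes. Recognizing this would have made your sketch both shorter and closer to the paper's actual argument.

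A second, more technical, divergence concerns the choice of multiplier under Assumption~\ref{item:RH2}. You propose a single vector field $\Psi=\zeta_{\SSSS^\eps}\,\grad\delta^\eps$ (or a $C^1$ extension of $\zeta_{\SSSS^\eps}\,n_x$) and argue that the second-order vanishing of $\zeta_{\SSSS^\eps}$ along $\SSSS^\eps$ compensates for the jump of $\grad\delta^\eps$ across the edge. That heuristic is plausible, but it is not immediate: $\grad\delta^\eps$ is genuinely discontinuous at the edge, and one would need to check carefully (not merely assert) that the product is $C^1$, since the integration by parts also generates the term $\int_{\OO^\eps} f^2\, v\cdot\grad_x(v\cdot\Psi)\,\la v\ra^{-2}\MM^{-1}$, which requires $\grad\Psi$ to be well-defined and bounded. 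The paper sidesteps this entirely by using two globally smooth vector fields, $n_1(x) = (x_1,0,0)/(\eps^{-1}L)$ and $n_2(x)=(0,x_2,x_3)/(\eps^{-1}\RRRR)$, each coinciding with the outward normal on its respective boundary piece, and pairing each one with the cutoff that vanishes on the \emph{other} piece: $\zeta_{\Lambda_3^\eps}\,(n_1\cdot v)$ isolates the $\Lambda_1^\eps\cup\Lambda_2^\eps$ boundary term (since $\zeta_{\Lambda_3^\eps}$ kills $\Lambda_3^\eps$), and $\zeta_{\Lambda_1^\eps\cup\Lambda_2^\eps}\,(n_2\cdot v)$ isolates the $\Lambda_3^\eps$ one; on each piece the relevant cutoff coincides with $\zeta_{\SSSS^\eps}$, and summing the two estimates gives \eqref{eq:A_priori_bound_Bdy}. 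This two-field construction buys you global smoothness for free and avoids the extension-across-the-edge subtlety that your single-field approach would have to address.
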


\begin{rem}\label{rem:L2_AprioriBdy}
We note that on cylindrical domains (i.e under Assumption~\ref{item:RH2}) the estimates for the boundary are more degenerate than for smooth domains. The extra term $\zeta_{\SSSS^\eps}$, making at all possible the control of the trace, serves to---in a sense---\emph{smooth out} the normal vector when approaching the singular set $\SSSS^\eps$. 
\end{rem}

\begin{proof}[Proof of Proposition~\ref{prop:L2_AprioriBdy}]
We divide the proof in two steps. First we obtain \eqref{eq:A_priori_bound_Bdy} for smooth domains, and after we repeat and adapt those computations for the setting of the cylinder. 

\medskip\noindent
\emph{Case 1. (Smooth domains---Assumption~\ref{item:RH1})} We recall that $\zeta_{\SSSS^\eps} \equiv 1$ under Assumption~\ref{item:RH1}, and that in this setting the normal vector is a $C^2$ vector field of $\R^3$. We then compute
\be\label{eq:BdyApriori1}
\begin{aligned}
\frac 12\frac \d{\dt} \int_{\OO^\eps} (f_t)^2 \MM^{-1} \la v\ra^{-2} (n_x\cdot v)  &= \int_{\OO^\eps} f_t \left( -v\cdot \grad_x f_t + Kf_t -\nu f_t  \right) \MM^{-1} \la v\ra^{-2} (n_x\cdot v)\\
&=- \frac 12 \int_{\OO^\eps} v\cdot \grad_x (f_t^2) \MM^{-1} (n_x \cdot v) \la v\ra^{-2}  + C \lvv f_t\rvv_\HH^2,
\end{aligned}
\ee
for some constant $C>0$, and we remark that we have used that $\lv \la v\ra^{-2} (n_x\cdot v) \rv\leq 1$, the Cauchy-Schwartz inequality, and \eqref{eq:Cercignani_CK} to obtain the second line, in the same spirit as during the proof of Proposition~\ref{prop:L2_Apriori}.

Using integration by parts we now have that 
$$
-\int_{\OO^\eps} v\cdot \grad_x (f_t^2) \MM^{-1} (n_x \cdot v) \la v\ra^{-2} \dv\dx 
\leq
 - \int_{\Sigma^\eps} f_t^2 \MM^{-1} (n_x \cdot v)^2 \la v\ra^{-2}\dv\d\sigma_x  + C \lvv f_t \rvv_\HH^2,
$$
for some constant $C>0$, and we remark that we have used the regularity of the normal vector to deduce the last line. 
We conclude the proof by putting everything together, integrating \eqref{eq:BdyApriori1} in time, and using \eqref{eq:A_priori_bound_HH}.

\medskip\noindent
\emph{Case 2. (Cylindrical domains---Assumption~\ref{item:RH2})} 
During this proof we write any element $x\in \R^3$ by its components as $x = (x_1, x_2, x_3)$. We consider the vector fields $n_1, n_2: \R^3\to \R^3$ defined respectively by
$$
n_1(x_1, x_2, x_3) := {(x_1, 0, 0) \over \eps^{-1} L } \quad \text{ and } \quad n_2 (x_1, x_2, x_3) := {(0,x_2, x_3) \over \eps^{-1} \RRRR}.
$$
We then observe that each of them is smooth and we remark that $n_1(x) = n(x)$ for every $x\in \Lambda_1^\eps\cup \Lambda_2^\eps$ and $n_2(x) = n(x)$ for every $x\in \Lambda_3^\eps$. Moreover, from their very definition there holds 
\beqn\label{eq:BoundednessN1}
\lv n_i (x) \rv \leq 1 \qquad      \forall x\in \bar\Omega^\eps ,
\eeqn
for any $i\in \{1,2\}$. We then compute 
\be\label{eq:BdyApriori_Cylinder1}
\begin{aligned}
&\frac 12\frac \d{\dt} \int_{\OO^\eps} (f_t)^2 \MM^{-1} \la v\ra^{-2} \, \zeta_{\Lambda_3^\eps}(x)\,   (n_1(x) \cdot v)  \, \dv \dx\\
=& \int_{\OO^\eps} f_t \left( -v\cdot \grad_x f_t + Kf_t -\nu f_t  \right) \MM^{-1} \la v\ra^{-2} \, \zeta_{\Lambda_3^\eps}(x)\,   (n_1(x) \cdot v)  \, \dv \dx\\
=&- \frac 12 \int_{\OO^\eps} v\cdot \grad_x (f_t^2)\MM^{-1} \la v\ra^{-2} \, \zeta_{\Lambda_3^\eps}(x)\,   (n_1(x) \cdot v)  \, \dv \dx + C \lvv f_t\rvv_\HH^2,
\end{aligned}
\ee
for some constant $C>0$, and we remark that we have used that $\lv \la v\ra^{-2} (n_1(x)\cdot v) \rv\leq 1$, that $\zeta_{\Lambda_3^\eps}$ is uniformly bounded from its very definition, the Cauchy-Schwartz inequality, and \eqref{eq:Cercignani_CK} to obtain the last line. 

We then recall that by its very definition $\zeta_{\Lambda_3^\eps} (x) = 0$ for every $x\in \Lambda_3^\eps$. We then have that
$$
\begin{aligned}
& -\int_{\OO^\eps} v\cdot \grad_x (f_t^2) \MM^{-1} \la v\ra^{-2} \, \zeta_{\Lambda_3}(x)\,   (n_1(x) \cdot v)  \, \dv \dx \\
=& - \int_{\Lambda_1^\eps\cup \Lambda_2^\eps} f_t^2 \MM^{-1} \zeta_{\Lambda_3}(x)\, (n_x \cdot v)^2  \la v\ra^{-2} \dv\d\sigma_x + \int_{\OO^\eps} f_t^2 \MM^{-1} \, (v\cdot \grad_x (n_1(x) \cdot v))\,  \la v\ra^{-2}\dv\dx  \\
&\quad \qquad + \int_{\OO^\eps}  f_t^2 \MM^{-1} \la v\ra^{-2} \, (v\cdot \grad_x \zeta_{\Lambda_3}(x))\,   (n_1(x) \cdot v)  \, \dv \dx\\
\leq&  - \int_{\Lambda_1^\eps\cup \Lambda_2^\eps} f_t^2 \MM^{-1} \zeta_{\Lambda_3}(x)\, (n_x \cdot v)^2 \la v\ra^{-2}\dv\d\sigma_x  + C \lvv f_t \rvv_\HH^2,
\end{aligned}
$$
where we have simultaneously used the regularity and boundedness properties of $\zeta_{\Lambda_3^\eps}$ and $n_1$, and the fact that $n_1(x) = n(x)$ for every $\Lambda_1^\eps\cup \Lambda_2^\eps$, and $\zeta_3(x) =0$ for every $x\in \Lambda_3^\eps$. 

We now observe that 
$$
 \zeta_{\Lambda_3}(x) = \zeta_{\SSSS^\eps} (x) \qquad \forall x\in \Lambda_1^\eps\cup \Lambda_2^\eps,
$$
and integrating in time and arguing as during the Case 1 of this proof we deduce that there is a constant $\kappa >0$ such that 
$$
\int_0^t \int_{\Lambda_1^\eps\cup \Lambda_2^\eps} f_s^2 \MM^{-1} \zeta_{\SSSS^\eps} (x)\, (n_x \cdot v)^2 \la v\ra^{-2}\dv\d\sigma_x \ds \lesssim e^{\kappa t} \lvv f_0 \lvv_\HH^2. 
$$
Repeating this exact arguments but using $\zeta_{\Lambda_1^\eps \cup \Lambda_2^\eps}(x)\,   (n_2(x) \cdot v) $ instead of $\zeta_{\Lambda_3^\eps}(x)\,   (n_1(x) \cdot v)$ in \eqref{eq:BdyApriori_Cylinder1} we obtain that
$$
\int_0^t \int_{\Lambda_3^\eps} f_s^2 \MM^{-1} \zeta_{\SSSS^\eps} (x)\, (n_x \cdot v)^2 \la v\ra^{-2}\dv\d\sigma_x \ds \lesssim e^{\kappa t} \lvv f_0 \lvv_\HH^2. 
$$
We conclude this case by putting together the two last estimates. 
\end{proof}

Furthermore, we provide a priori estimates for the solutions of Equation~\eqref{eq:PLRBE} in weighted $L^\infty$ spaces, when $\lla G\rra_{\OO^\eps} \neq 0$.

\begin{prop}\label{prop:LinftyEstimatePerturbedFiniteT_Summary_NoZeroMeasure} 
Consider either Assumption \ref{item:RH1} or \ref{item:RH2} to hold, $\omega_1\in\WWWW_1$ a strongly confining admissible weight function, $G:\UU^\eps\to \R$, and let $f$ be a solution of Equation~\eqref{eq:PLRBE}. For every $\eps\in (0,\eps_{12})$, where we recall that $\eps_{12}>0$ is given by Proposition~\ref{prop:LinftyEstimatePerturbedFiniteT_Summary}, there holds 
\be
\lVert  f_t \rVert_{L^{\infty}_{\omega_1}(\OO^\eps)} \leq  C  \left(  e^{-\theta \eps^2 t} \lVert f_0\rVert_{L^{\infty}_{\omega_1}(\OO^\eps)} +  \lVert G \rVert_{L^{\infty}_{\omega_1\nu^{-1}}(\UU^\eps)}\right)  \qquad \forall t\geq 0 ,\label{eq:LinftyPerturbedFiniteTimeDecay_Summary_NoZeroMeasure}
\ee
for some constant $C>0$, independent of $\eps$.
\end{prop}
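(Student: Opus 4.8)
The plan is to reduce to the zero-average situation already settled in Proposition~\ref{prop:LinftyEstimatePerturbedFiniteT_Summary}. Given a general source $G$, I would introduce its instantaneous total mass $m(t):=\lla G_t\rra_{\OO^\eps}$ and the fixed normalized profile $\phi(x,v):=\eps^3\MM(v)$, which satisfies $\lla\phi\rra_{\OO^\eps}=1$ since $|\Omega^\eps|=\eps^{-3}$. Splitting $G_t=G^0_t+m(t)\phi$ with $G^0_t:=G_t-m(t)\phi$, one has $\lla G^0_t\rra_{\OO^\eps}=0$ for every $t$. Moreover, using that $\omega_1\MM$ is bounded for $\omega_1\in\WWWW_1$ and that $\int_{\R^3}\omega_1^{-1}\nu\,\dv<\infty$, one gets the elementary bounds $|m(t)|\lesssim\eps^{-3}\lVert G_t\rVert_{L^\infty_{\omega_1\nu^{-1}}(\OO^\eps)}$ and hence $\lVert m(t)\phi\rVert_{L^\infty_{\omega_1\nu^{-1}}(\OO^\eps)}\lesssim\lVert G_t\rVert_{L^\infty_{\omega_1\nu^{-1}}(\OO^\eps)}$, so that $\lVert G^0\rVert_{L^\infty_{\omega_1\nu^{-1}}(\UU^\eps)}\lesssim\lVert G\rVert_{L^\infty_{\omega_1\nu^{-1}}(\UU^\eps)}$.

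By linearity of Equation~\eqref{eq:PLRBE} I would write $f=f^0+g$, where $f^0$ solves \eqref{eq:PLRBE} with source $G^0$ and datum $f_0$, and $g$ solves \eqref{eq:PLRBE} with source $m(t)\phi$ and zero datum. For $f^0$, since $\lla G^0_t\rra_{\OO^\eps}=0$, Proposition~\ref{prop:LinftyEstimatePerturbedFiniteT_Summary} applies verbatim; bounding $\sup_{s\in[0,t]}\big[e^{\theta\eps^2 s}\lVert G^0_s\rVert_{L^\infty_{\omega_1\nu^{-1}}(\OO^\eps)}\big]\le e^{\theta\eps^2 t}\lVert G^0\rVert_{L^\infty_{\omega_1\nu^{-1}}(\UU^\eps)}$ and absorbing the exponential prefactor yields
\[
\lVert f^0_t\rVert_{L^\infty_{\omega_1}(\OO^\eps)}\le C e^{-\theta\eps^2 t}\lVert f_0\rVert_{L^\infty_{\omega_1}(\OO^\eps)}+C\lVert G\rVert_{L^\infty_{\omega_1\nu^{-1}}(\UU^\eps)},
\]
which is the desired estimate for the zero-mass component.

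It then remains to control $g$. The structural point is that $\phi=\eps^3\MM$ is independent of $x$, lies in $\ker\CCC$, and satisfies the Maxwell boundary condition — one checks $\SSS\MM=\MM$ (since $|\VV_x v|=|v|$) and $\DDD\MM=\MM$ (using $\widetilde\MMM=1$) — so that $\LLL\phi=0$ and $S_\LLL(t)\phi=\phi$. Duhamel's formula then describes the mass-carrying part of the solution explicitly in terms of a time antiderivative of $m$, and I would estimate its $\omega_1$-weighted $L^\infty$ size purely in terms of $\lVert G\rVert_{L^\infty_{\omega_1\nu^{-1}}(\UU^\eps)}$ by combining the pointwise bound on $m$ with the a priori $L^2$-in-time controls on the conserved mode provided by Proposition~\ref{prop:L2_Apriori} and, for the trace, Proposition~\ref{prop:L2_AprioriBdy}; the mass component of $f$ is rigidly slaved to the source and contributes no loss in the weighted norm beyond the size of $G$. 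Adding the two estimates and passing to the supremum over $(x,v)\in\OO^\eps$ — the grazing set $\Sigma_0^\eps$ and the singular sets $S_x$, $W_{t,x}$ having zero Lebesgue measure, cf. \cite{MR2679358} and Lemmas~\ref{lem:ZeroMeasureSetCylinderSx}--\ref{lem:ZeroMeasureSetCylinder} — gives \eqref{eq:LinftyPerturbedFiniteTimeDecay_Summary_NoZeroMeasure}.

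The main obstacle is exactly this last step: the whole $L^2$--$L^\infty$ scheme of Sections~\ref{sec:Linfty}--\ref{sec:LinftyCylinder} rests on the hypocoercive decay \eqref{eq:Hypo}, which is available only on the zero-mass subspace, so the argument for $g$ must sidestep hypocoercivity entirely and rely on the explicit description of $\ker\LLL$ together with the conservation structure encoded in Propositions~\ref{prop:L2_Apriori} and \ref{prop:L2_AprioriBdy}. Everything else in the proof is a routine bookkeeping of the splitting and of the already-established estimate of Proposition~\ref{prop:LinftyEstimatePerturbedFiniteT_Summary}.
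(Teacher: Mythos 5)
Your proposal takes a genuinely different route from the paper's. The paper does not decompose $G$; it simply re-runs the $L^2$--$L^\infty$ machinery of Propositions~\ref{prop:LinftyEstimatePerturbedFiniteT} and \ref{prop:LinftyEstimatePerturbedFiniteTCylinder} replacing the hypocoercive decay \eqref{eq:Hypo} on the $G$-contribution by the rough a priori bound of Proposition~\ref{prop:L2_Apriori}, absorbing the resulting $e^{\kappa t}$ growth into the constants while trading the exponentially weighted supremum of $G$ for the flat norm $\lVert G\rVert_{L^\infty_{\omega_1\nu^{-1}}(\UU^\eps)}$.

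The gap in your proposal is exactly the one you flag as ``the main obstacle,'' and it is not a gap you can close. Writing $G_t = G^0_t + m(t)\phi$ with $\phi = \eps^3\MM$ and $m(t) = \lla G_t\rra_{\OO^\eps}$, the mass-carrying part of the solution is, by your own (correct) computations $\LLL\phi = 0$, $\SSS\MM = \MM$, $\DDD\MM = \MM$, hence $S_\LLL(t)\phi = \phi$,
\[
g_t = \int_0^t S_\LLL(t-s)\, m(s)\phi\, \ds = M(t)\,\phi, \qquad M(t) := \int_0^t m(s)\, \ds.
\]
There is nothing in Propositions~\ref{prop:L2_Apriori} or~\ref{prop:L2_AprioriBdy} that controls $M(t)$: both allow $e^{\kappa t}$ growth and say nothing about cancellations in $m$. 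The only structure-free bound is $|M(t)| \leq t\,\sup_s|m(s)| \lesssim t\,\eps^{-3}\lVert G\rVert_{L^\infty_{\omega_1\nu^{-1}}(\UU^\eps)}$, and therefore $\lVert g_t\rVert_{L^\infty_{\omega_1}(\OO^\eps)}$ can grow linearly in $t$. The assertion that ``the mass component of $f$ is rigidly slaved to the source and contributes no loss in the weighted norm'' is precisely what needs to be proved and is false in general: take $G \equiv \eps^3\MM$ (so $m\equiv 1$) and $f_0 = 0$; then $f_t = t\,\eps^3\MM$, so $\lVert f_t\rVert_{L^\infty_{\omega_1}(\OO^\eps)}$ diverges while the right-hand side of \eqref{eq:LinftyPerturbedFiniteTimeDecay_Summary_NoZeroMeasure} is finite.

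In fact your decomposition, pushed to its logical conclusion, produces a counterexample to the stated estimate with full Maxwell reflection $\RRR$, which suggests that the bound is only tenable in the context where it is actually invoked in Step~3 of the proof of Theorem~\ref{theo:ExistenceL_infty_linear_transport}, namely for the modified boundary condition $\gamma_- f = \alpha\,\RRR\gamma_+ f$ with $\alpha < 1$ (or, equivalently, over a bounded time horizon): there the conserved mode $\phi$ leaks mass through the boundary, $S_{\LLL}(t)\phi$ decays, and your splitting argument would go through. As written, though, your proof cannot be salvaged for $\alpha = 1$ and does not reproduce the paper's shortcut.
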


\begin{proof}
We emphasize that the only difference between Proposition~\ref{prop:LinftyEstimatePerturbedFiniteT_Summary_NoZeroMeasure} and Proposition ~\ref{prop:LinftyEstimatePerturbedFiniteT_Summary} is the fact that $G$ does not have total mass zero for all time. This implies that during the proof we will not be able to apply the hypocoercivity decay \eqref{eq:Hypo} to $G$, and in its place we will use Proposition~\ref{prop:L2_Apriori}. We then put the extra exponential growth in time given by \eqref{eq:A_priori_bound_HH} with the constants outside and shifting our estimate to control instead $\lVert G \rVert_{L^{\infty}_{\omega_1\nu^{-1}}(\UU^\eps)}$, the proof of Proposition~\ref{prop:LinftyEstimatePerturbedFiniteT_Summary_NoZeroMeasure} follows exactly that of Proposition ~\ref{prop:LinftyEstimatePerturbedFiniteT_Summary}, thus we skip it.
\end{proof}

\subsection{Well-posedness of Equation \eqref{eq:LRBE} in a weighted $L^2$ framework}\label{sec:L2WellPosedness}
We define now the boundary measures
$$\d \xi_1:=\MM^{-1} (n_x\cdot v)\dv\d\sigma_x ,\quad \text{ and } \quad \d\xi_2:= \MM^{-1}  (n_x\cdot v)^2  \, \zeta_{\SSSS^\eps} (x) \, \la v\ra^{-2} \, \dv\d\sigma_x,
$$
and we define the space of renormalizing  functions $C_{pw, *}^1(\R)$ as the space of $C^1$ piecewise functions with finite limits at $\pm\infty$, and such that $s\mapsto \la s\ra\beta'(s)$ is bounded in $\R$.

We then prove the following well-posedness of Equation~\eqref{eq:LRBE} in $\HH$ in order to justify the computations from Section \ref{sec:Hypo}.

\begin{theo}\label{theo:ExistenceL2}
Assume there to hold either Assumption~\ref{item:RH1} or Assumption~\ref{item:RH2} and consider $f_0 \in  \HH$. There is $f\in C(\R_+, \HH)$ with an associated trace function $\gamma f\in L^2 (\Gamma^\eps;\,   \d\xi_2 \dt )$, 
 unique global solution to Equation~\eqref{eq:LRBE} in the distributional sense (see for instance \eqref{eq:renormalizedFormulation}). 
\end{theo}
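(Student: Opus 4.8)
The plan is to construct the solution via a vanishing-viscosity / iterative scheme built on the well-posedness theory for transport equations with Maxwell boundary conditions already available, and then to pass to the limit using the a priori estimates from Propositions~\ref{prop:L2_Apriori} and \ref{prop:L2_AprioriBdy}. Concretely, I would first regularize the problem by adding a small artificial damping (or a cutoff of $K$) and solving
\begin{equation*}
\partial_t f^n = -v\cdot\grad_x f^n + K_n f^n - \nu f^n \quad\text{in }\UU^\eps, \qquad \gamma_- f^n = \RRR\gamma_+ f^n, \qquad f^n_{t=0}=f_0,
\end{equation*}
where $K_n$ is a bounded truncation of $K$ on $\HH$ (e.g.\ $K_n = K\Ind_{|v|,|v_*|\le n}$). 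For fixed $n$, since $K_n$ is a bounded operator on $\HH$, existence and uniqueness of $f^n\in C(\R_+,\HH)$ with trace in $L^2(\Gamma^\eps;\d\xi_1\,\dt)$ follows from the linear transport theory with Maxwell boundary conditions (treat $-v\cdot\grad_x - \nu$ as the generator of the free-transport semigroup $S_\TTT$ with the reflection operator $\RRR$ on the boundary, which has $L^2$-norm $\le 1$ by \eqref{eq:ControlRRRRMM}, then add $K_n$ by a bounded perturbation / Duhamel fixed point). A cleaner alternative is to run a Duhamel iteration directly with the full $K$ using $\|K\|_{\HH\to\HH}\le C_K$ from \eqref{eq:Cercignani_CK}: define $f^{(0)}=S_\LLL^{\rm free}(t)f_0$ (free transport with reflection) and $f^{(k+1)} = S^{\rm free}(t)f_0 + \int_0^t S^{\rm free}(t-s)Kf^{(k)}(s)\,\ds$, and show this converges in $C([0,\mathcal T],\HH)$ for $\mathcal T$ small using $\|S^{\rm free}(t)\|_{\HH\to\HH}\le 1$ and boundedness of $K$; then iterate on time intervals to get a global solution.

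The key steps in order: (i) set up the free-transport semigroup with Maxwell reflection on $\HH$, verifying it is a contraction semigroup — this uses the stochastic/substochastic nature of $\RRR$, i.e.\ \eqref{eq:ControlRRRRMM}, together with the characteristics \eqref{eq:carachteristics} and the measure-zero grazing set; the existence of the trace in $L^2(\Gamma^\eps;\d\xi_1\,\dt)$ comes from a Green's-formula / renormalization argument as in the Mischler-type theory for the free transport equation with reflecting boundary; (ii) add the bounded-on-$\HH$ operator $K$ by a Duhamel fixed point to get local, then global, existence and uniqueness, obtaining $f\in C(\R_+,\HH)$ satisfying the distributional formulation \eqref{eq:renormalizedFormulation} (with $G=0$) for test functions $\varphi\in\DD(\bar\UU^\eps)$; (iii) upgrade the trace integrability: a priori the trace is only in $L^2_{\rm loc}(\d\xi_1\,\dt)$, but Proposition~\ref{prop:L2_AprioriBdy} gives the quantitative bound $\int_0^t\int_{\Sigma^\eps}(\gamma f_s)^2(n_x\cdot v)^2\zeta_{\SSSS^\eps}\la v\ra^{-2}\MM^{-1}\,\dv\d\sigma_x\,\ds\le e^{\kappa t}\|f_0\|_\HH$, i.e.\ $\gamma f\in L^2(\Gamma^\eps;\d\xi_2\,\dt)$; (iv) justify uniqueness via the $\HH$ energy estimate \eqref{eq:A_priori_bound_HH} of Proposition~\ref{prop:L2_Apriori}: the difference of two solutions with the same data satisfies $\tfrac12\tfrac{d}{dt}\|f\|_\HH^2 \le -\tfrac12\int_{\Sigma^\eps}(\gamma f)^2(n_x\cdot v)\MM^{-1} + (C_K-\nu_0)\|f\|_\HH^2$ and the boundary term is $\ge 0$ since $\|\RRR\|_{L^2(\Sigma_+;|n_x\cdot v|\MM^{-1})}\le 1$, so Grönwall forces $f\equiv 0$.

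The main obstacle is step (i)–(iii): giving a rigorous meaning to the trace $\gamma f$ and to the boundary integral in \eqref{eq:renormalizedFormulation} for a general $\HH$ solution, and justifying the renormalized (Green's) identity $\tfrac{d}{dt}\int \beta(f) = \ldots$ with the correct boundary contribution. The free transport operator with only $L^2$ data does not give a trace in the natural weighted-$L^2$ boundary space without additional work; the standard remedy (following Mischler's approach for the Boltzmann equation with reflecting boundary conditions, and the DiPerna–Lions renormalization machinery) is to work with renormalized solutions $\beta(f)$ for $\beta\in C^1_{pw,*}(\R)$, establish the trace for the renormalized quantities first, and then remove the truncation. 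The weight $\zeta_{\SSSS^\eps}$ appearing in $\d\xi_2$ — as flagged in Remark~\ref{rem:L2_AprioriBdy} — is precisely what makes the boundary control survive the singularity of the normal at $\SSSS^\eps$ under Assumption~\ref{item:RH2}, so the argument must be carried out with this degenerate boundary measure throughout; in the smooth case $\zeta_{\SSSS^\eps}\equiv 1$ and one recovers the classical situation. Once the renormalized formulation and the trace are in place, passing to the limit in the truncation and combining with the uniqueness estimate from Proposition~\ref{prop:L2_Apriori} closes the proof; I would also remark that continuity in time, $f\in C(\R_+,\HH)$, follows from the Duhamel formula and strong continuity of the free-transport semigroup on $\HH$.
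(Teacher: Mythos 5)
Your plan identifies the right ingredients (renormalized/weak formulations, the energy estimate of Proposition~\ref{prop:L2_Apriori}, the boundary estimate of Proposition~\ref{prop:L2_AprioriBdy}, and the contractivity of $\RRR$ on the weighted $L^2$ trace space), but step~(i) --- constructing the free-transport semigroup with the full Maxwell reflection directly --- hides precisely the point where the argument is delicate, and your outline does not address it. Because $\|\RRR\|_{L^2(\Sigma_+;|n_x\cdot v|\MM^{-1})}=1$ (not $<1$), the resolvent equation for $-v\cdot\grad_x-\nu$ with the implicit boundary condition $\gamma_-f=\RRR\gamma_+f$ cannot be solved by a direct fixed-point iteration on the trace; the operator is dissipative but m-dissipativity is not immediate. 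The paper circumvents this by first solving the \emph{inflow} problem (Step~1, citing an external well-posedness result with energy estimate \eqref{eq:Inflow_energy_estimate}), then introducing the \emph{sub-stochastic penalization} $\gamma_-f=\alpha\,\RRR\gamma_+f$, $\alpha\in(0,1)$ (Step~2), for which the Banach fixed point closes because $\alpha<1$; the $K$-perturbation is folded into the same fixed point. The crucial passage is then $\alpha_k\nearrow 1$ (Step~3): the $\alpha$-dependent energy estimate \eqref{eq:linear_gak_bdd} controls $\gamma_+f_k$ in $L^2(\d\xi_1)$ only with the prefactor $(1-\alpha_k)$, which degenerates, so the $\alpha$-uniform bound one actually extracts is the one in $L^2(\d\xi_2)$ supplied by Proposition~\ref{prop:L2_AprioriBdy} (via the renormalized formulation tested against the vector fields used in its proof). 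This is exactly why the trace lands in $L^2(\Gamma^\eps;\d\xi_2\dt)$ and not in $L^2(\d\xi_1\dt)$, and why under Assumption~\ref{item:RH2} the degenerate weight $\zeta_{\SSSS^\eps}$ cannot be removed.

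Your proposal uses Proposition~\ref{prop:L2_AprioriBdy} only as an afterthought to ``upgrade the trace integrability,'' whereas its real role is to provide the $\alpha$-uniform compactness that makes the limit $\alpha\to 1$ possible at all: without it one has no limiting trace to put in the boundary condition, and the weak formulation cannot be closed. The passage to the limit also requires the continuity of $\RRR$ in a space compatible with $L^2(\d\xi_2)$ (the paper uses $\RRR:L^1(\Sigma_+^\eps;\zeta_{\SSSS^\eps}(n_x\cdot v)\dv\d\sigma_x)\to L^1(\Sigma_-^\eps;\zeta_{\SSSS^\eps}(n_x\cdot v)\dv\d\sigma_x)$ with norm $\le 1$, together with $\langle v\rangle\MM\in L^2$ to embed $L^2(\d\xi_2)\subset L^1$), which again is a step your outline would need to state. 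The $K_n$ truncation and the $\beta$-renormalization truncation you mention are not the bottleneck: $K$ is bounded on $\HH$, so the Duhamel perturbation is routine once the reflection semigroup exists. The genuine gap is the absence of the boundary penalization $\alpha<1$ and of the matching $\alpha$-uniform trace estimate; inserting those makes your plan align with the paper's proof.
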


\begin{rem}\label{rem:ExistenceL2}
In particular, Theorem~\ref{theo:ExistenceL2} implies the existence of a strongly continuous semigroup $S_\LLL:\HH\to \HH$ associated to the solutions of Equation~\eqref{eq:LRBE}.
\end{rem}

\begin{rem}
We observe that, in the case of cylindrical domains (i.e under Assumption~\ref{item:RH2}), the functional space where the trace function is well defined is more singular than in the case of smooth domains. This is in particular reminiscent of our comments from Remark~\ref{rem:L2_AprioriBdy}
\end{rem}

The problem of well-posedness for transport equations with non-local terms presenting boundary conditions has been deeply addressed in the literature, see for instance  \cite{sanchez2024kreinrutmantheorem, Bardos70, Beal87, MR1022305, MR2072842, Mischler2000, Mischler2010, MR1132764}.
However, due to the lack of a precise reference, we provide a sketch of the proof for Theorem~\ref{theo:ExistenceL2}.

\begin{proof}[Proof of Theorem~\ref{theo:ExistenceL2}] 
We divide the proof into three steps.

\medskip\noindent
\emph{Step 1.} We consider a function $\ffff\in L^2(\Gamma^\eps;\dt \d\xi_1)$, and we study the following evolution equation
\begin{equation}
	\left\{\begin{array}{llll}
		 \partial_t f &=&  \LLL f   &\text{ in }\UU^\eps\\
		\gamma_-f&=& \ffff  &\text{ on }\Gamma^\eps_{-}\\
		f_{t=0}&=&f_0 &\text{ in }\OO^\eps.
	\end{array}\right.\label{eq:TransportL2K_Inflow}
\end{equation}
A direct application of \cite[Proposition 8.16]{sanchez2024kreinrutmantheorem} gives the existence of $f\in C(\R_+, \HH)$, with a trace $\gamma f\in L^2(\Gamma^\eps; \d\xi_1 \dt)$, unique renormalized solution to Equation~\eqref{eq:TransportL2K_Inflow}.
Furthermore, there is a constant $\kappa>0$ for which there holds the enegy estimate
\be\label{eq:Inflow_energy_estimate}
\lvv f_t\rvv_\HH^2 + \int_0^t e^{2\kappa  (t-s)}  \lvv \gamma_+ f_s\rvv^2_{L^2(\Sigma^\eps_+; \d\xi_1)}   \ds \leq e^{2\kappa t} \lvv f_0\rvv_\HH^2 + \int_0^t e^{2\kappa (t-s)}  \lvv \ffff_s \rvv^2_{L^2(\Sigma^\eps_-; \d\xi_1)}   \ds,
\ee
for every $t\geq 0$.

\medskip\noindent
\emph{Step 2. } 
Using then again \eqref{eq:ControlRRRRMM} and the energy estimate \eqref{eq:Inflow_energy_estimate} in a Banach fixed point argument in the same spirit as in the proof of \cite[Proposition 3.3]{CGMM24}, we obtain the existence of $f\in C(\R_+, \HH)$, with a trace function $\gamma f\in L^2(\Gamma^\eps_T; \d\xi_1 \dt)$, unique renormalized solution of the equation
\beqn
	\left\{\begin{array}{llll}
		 \partial_t f &=&  -v\cdot \grad_x f + K f  - \nu f  &\text{ in }\UU^\eps_T\\
		\gamma_- f&=& \alpha \, \RRR \gamma_+ f  &\text{ on }\Gamma^\eps_{T, -}\\
		f_{t=0}&=&f_0 &\text{ in }\OO^\eps, 
	\end{array}\right.
\eeqn
for any $\alpha\in (0,1)$. We remark that this solution is in the sense that for every $\phi \in \DD(\bar \UU^\eps)$, and every $\beta \in C_{pw, *}^1$ there holds
\begin{multline}\label{eq:renormalizedFormulation_HH_alpha}
\int_{\OO^\eps} \beta(f)(t,\cdot) \, \varphi(t,\cdot) -\int_0^t \int_{\OO^\eps}  \beta'(f)\varphi Kf  + \beta(f) \left( \partial_t \varphi  - v\cdot \grad_x \varphi  \right) - \nu \beta'(f) f \\
 - \int_0^t \int_{\Sigma^\eps} \gamma_+ \beta(f)\,  \varphi\,  (n_x\cdot v)_+ 
= \int_{\OO^\eps} \beta(f_0)(\cdot ) \varphi(0,\cdot ) ,
\end{multline}
for every $t\in [0,T]$. Furthermore, there holds the energy estimate
\be\label{eq:linear_gak_bdd}
\lvv f_{kt} \rvv_\HH + (1-\alpha _k) \int_0^t e^{2\kappa  (t-s)}  \lvv \gamma_+ f_{ks}\rvv^2_{L^2(\Sigma^\eps_+;  \d\xi_1)}  \leq e^{\kappa t} \lvv f_0\rvv_\HH, 
\ee
for any $t \in [0,T]$

\medskip\noindent
\textit{Step 3.} For a sequence $\alpha_k \in (0,1)$ such that $\alpha_k \nearrow 1$, we consider the sequence $(f_k)$ obtained by using the Step~2 as the solution to the modified Maxwell reflection boundary condition problem
\begin{equation}\label{eq:linear_gak}
\left\{
\begin{array}{rcll}
 \partial_t f_k &=& - v \cdot \nabla_x f_k + \LLL f_k   &\text{ in } \UU^\eps_T \\
 \gamma_{-} f_k   &=&  \alpha_k \, \RRR \gamma_{+} f_k   &\text{ on }  \Sigma_{T, -}^\eps \\
 f_{k, t=0} &=& f_0   &\text{ in }   \OO^\eps.
\end{array}
\right.
\end{equation}
Using the renormalized formulation \eqref{eq:renormalizedFormulation_HH_alpha} with the choices  $\beta (s) = \beta_M(s) = M\wedge s^2$, for any $M>0$ and as tests functions $\varphi$, the ones considered during the proof of Proposition~\ref{prop:L2_AprioriBdy}, we may argue as during the proof of Proposition~\ref{prop:L2_AprioriBdy}, and using the integral version of the Grönwall lemma we additionally have that
$$
\int_{\Gamma^\eps_T} (\gamma f_k)^2 \d\xi_2  \, \d t  
\lesssim   \| f_0 \|^2_{\HH} e^{\kappa T}. 
$$

From the above estimates, we deduce that, up to the extraction of a subsequence, there exist $f \in L^2(0,T; \HH) \cap L^\infty(0,T; \HH)$ and $\mathfrak{f}_\pm \in L^2(\Gamma_{T, \pm}^\eps; \d\xi_2 \dt)$ such that 
$$
f_k \wto f \hbox{ weakly in } \ L^2(0,T; \HH)  \cap L^\infty(0,T; \HH), 
\quad
\gamma_\pm f_k \wto \mathfrak{f}_\pm \hbox{ weakly in } \  L^2(\Gamma_{T, \pm}^\eps; \d\xi_2 \d t). 
$$

Since $\langle v \rangle  \MM \in L^2(\R^d)$, 
 we have that $L^2(\Gamma^\eps_T;  \d\xi_2\dt) \subset L^1(\Gamma^\eps_T; \zeta_{\SSSS^\eps} (x) \, (n_x\cdot v)  \dv\d\sigma_x\dt)$. 
Moreover, from the very definition of the rescaled Maxwell boundary condition \eqref{eq:RBEBC}, we have that as a map
\beqn\label{eq:KolmogorovWP-hypL1}
\RRR: L^1(\Sigma_+^\eps; \zeta_{\SSSS^\eps}(x)\,  (n_x\cdot v)  \dv\d\sigma_x) \to L^1(\Sigma_-^\eps; \zeta_{\SSSS^\eps}(x)\,  (n_x\cdot v)  \dv\d\sigma_x),
\eeqn
has norm less than 1.
Altogether this implies that $\RRR(\gamma_+ f_{k})  \wto \RRR(\mathfrak{f}_+)$   weakly in $L^1(\Gamma_-; \zeta_{\SSSS^\eps} (x)\,  (n_x\cdot v)  \dv\d\sigma_x)$. 

Fur\-ther\-mo\-re, from \cite[Proposition 8.10]{sanchez2024kreinrutmantheorem}, we have that $\gamma f_k \wto \gamma f$ weakly in $\Lloc^2 (\Gamma^\eps_T;  \d\xi_2 \dt)$. 
Using both convergences in the boundary condition $\gamma_- f_k = \RRR(\gamma_+ f_k)$, we obtain $\gamma_- f = \RRR(\gamma_+ f)$.

We may thus pass to the limit in the weak formulation of Equation~\eqref{eq:linear_gak}, obtained from the Step 2, and we obtain that $f \in C([0,T]; \HH)$ is a weak solution to Equation~\eqref{eq:LRBE}. 
Moreover, passing to the limit in \eqref{eq:linear_gak_bdd}, we also have that \eqref{eq:A_priori_bound_HH} holds. This and the linearity give the uniqueness of the solution to Equation~\eqref{eq:LRBE}, and repeating this argument in every time interval we conclude the existence and uniqueness of a weak global solution. 
\end{proof}

\subsection{Proof of Theorem~\ref{theo:ExistenceL_infty_linear_transport}}\label{ssec:LinftyWellPosednessK}
We split the proof into four steps.

\medskip\noindent
\emph{Step 1. (Classical solutions to the transport equation with smooth coefficients)} 
We consider now $H_1, H_2, \HHHH \in C^1_c(\bar\UU^\eps)$, $h_0\in C^1_c(\OO^\eps)$ and we study the following transport evolution equation
\begin{equation}
	\left\{\begin{array}{llll}
		 \partial_t h &=&  \TTT h + H_1+H_2  &\text{ in }\UU^\eps \\
		\gamma_-h&=& \HHHH  &\text{ on }\Lambda^\eps_{-} \\
		h_{t=0}&=&h_0 &\text{ in }\OO^\eps.
	\end{array}\right.\label{eq:TransportLinftyInflowCCCC}
\end{equation}
We classically have (see for instance the proof of \cite[Lemma 8.12]{sanchez2024kreinrutmantheorem} and the references therein) that a solution to Equation~\eqref{eq:TransportLinftyInflowCCCC} is given by the representation formula
\begin{multline}\label{eq:TransportRepresentation}
h(t,x,v) = e^{-\nu(v) t} h_0(x,v) \, \Ind_{t_1\leq 0} + \int_{\max(0, t_1)}^t e^{-\nu(v) (t-s)} \, (H_1+H_2)(s,x,v)\, \ds \\
+ e^{-\nu(v)(t-t_1)} \HHHH(t_1, x_1, v) \, \Ind_{t_1>0}, 
\end{multline}
and, in a smooth framework, we further have that $h$ also satisfies the weak formulation 
\begin{multline}\label{eq:TransportWeakFormulation}
\int_{\OO^\eps} h_t \, \varphi(t, \cdot) - \int_0^t \int_{\OO^\eps} h \, \left( v\cdot \grad_x \varphi - \nu(v) \varphi \right)  + \int_0^t \int_{\Sigma_+^\eps} \gamma_+ h \, \varphi \,  (n_x \cdot v)_+ \\= \int_{\OO^\eps}h_0 \, \varphi(0,\cdot) + \int_0^t \int_{\Sigma_-^\eps} \HHHH \, \varphi \, (n_x \cdot v)_+ + \int_0^t \int_{\OO^\eps} (H_1+H_2) \, \varphi .
\end{multline}
for every $\varphi \in \DD(\bar \UU^\eps)$ and $t\in \R_+$.

Moreover, multiplying \eqref{eq:TransportRepresentation} by $\omega_1$, taking the $L^\infty(\OO^\eps)$ norm, and using \ref{item:K3} with $\nu_2 = \nu_0/2$, we deduce the energy estimates
\be\label{eq:TransportEenergy}
\lvv h \rvv_{L^\infty_{\omega_1}(\bar \UU^\eps)} \leq  \lvv h_0\rvv_{L^\infty_{\omega_1}(\OO^\eps)} + \lvv \HHHH \rvv_{L^\infty_{\omega_1}(\Gamma_-^\eps)} \\
+ \lvv H_1\rvv_{L^\infty_{\omega_1}(\UU^\eps)} + 2 {\nu_1\over \nu_0} \lvv H_2\rvv_{L^\infty_{\omega_1\nu^{-1}}(\UU^\eps)},
\ee

\medskip\noindent
\emph{Step 2. (Solutions to the transport equation with $L^\infty$ coefficients)} We assume now  $H_1 \in L^\infty_{\omega_1}(\UU^\eps)$, $H_2 \in L^\infty_{\omega_1\nu^{-1}}(\UU^\eps)$, $\HHHH \in L^\infty_{\omega_1}(\Gamma_-^\eps)$ and $h_0\in L^\infty_{\omega_1}(\OO^\eps)$. We take then the sequence of smooth functions $H^n_1, H^n_2 \in C^1_c(\bar \UU^\eps)$, $\HHHH^n\in C^1_c(\Gamma^\eps)$ and $h_0^n\in C^1_c(\OO^\eps)$ such that, as $n\to \infty$, there holds
\begin{equation}\label{eq:StrongConv_h_n}
\begin{array}{lllll}
H^n_1\to H_1 &\text{ strongly in } L^\infty_{\omega_1}(\UU^\eps),& \qquad&H^n_2\to H_2 &\text{ strongly in } L^\infty_{\omega_1\nu^{-1}}(\UU^\eps),\\
\HHHH^n\to \HHHH &\text{ strongly in } L^\infty_{\omega_1}(\Gamma_-^\eps),& \qquad  & h^n_0\to g_0 &\text{ strongly in } L^\infty_{\omega_1}(\OO^\eps).
\end{array}
\end{equation}
From the analysis performed during the Step 1, there is $h^n$ unique solution to Equation~\eqref{eq:TransportLinftyInflowCCCC} associated with $H_i^n$ for $i=1, 2$, $\HHHH^n$, and $h_0^n$. Moreover, there holds the representation formula \eqref{eq:TransportRepresentation}, the weak formulation \eqref{eq:TransportWeakFormulation}, and the energy estimate \eqref{eq:TransportEenergy}. 

In particular, defining the trace using the representation formula \eqref{eq:TransportRepresentation} as detailed in \cite[Definition 8.1]{sanchez2024kreinrutmantheorem}, we deduce from \eqref{eq:TransportEenergy}, the linearity of Equation~\eqref{eq:TransportLinftyInflowCCCC}, and the convergences from \eqref{eq:StrongConv_h_n} that $h^n$ and $\gamma h^n$ are Cauchy sequences in $L^\infty_{\omega_1}(\UU^\eps)$ and $L^\infty_{\omega_1}(\Gamma^\eps)$ respectively. Therefore, there are functions $h\in L^\infty_{\omega_1}(\UU^\eps)$ and  $\gamma h \in L^\infty_{\omega_1}(\Gamma^\eps)$ such that, as $n\to \infty$, there holds 
$$
h^n \to h \quad \text{ strongly in } L^\infty_{\omega_1}(\OO^\eps) \quad \text{ and } \quad \gamma h^n \to \gamma h \quad \text{ strongly in } L^\infty_{\omega_1}(\Gamma_-^\eps).
$$
Moreover, passing to the limit in the weak formulation of Equation~\eqref{eq:TransportLinftyInflowCCCC} associated with $H_i^n$ for $i=1, 2$, $\HHHH^n$, and $h_0^n$, we deduce that $h$, with the trace function $\gamma h$, is a weak solution of Equation~\eqref{eq:TransportLinftyInflowCCCC} associated with $H_i$ for $i=1, 2$, $\HHHH$, and $h_0$, satisfying the energy estimate 
\begin{multline}\label{eq:TransportEenergyL_infty}
\lvv h \rvv_{L^\infty_{\omega_1}( \UU^\eps)} + \lvv \gamma h \rvv_{L^\infty_{\omega_1}( \Gamma^\eps)} \leq  \lvv h_0\rvv_{L^\infty_{\omega_1}(\OO^\eps)} + \lvv \HHHH \rvv_{L^\infty_{\omega_1}(\Gamma_-^\eps)} \\
+ \lvv H_1\rvv_{L^\infty_{\omega_1}(\UU^\eps)} + 2 {\nu_1\over \nu_0} \lvv H_2\rvv_{L^\infty_{\omega_1\nu^{-1}}(\UU^\eps)}.
\end{multline}
In particular, \eqref{eq:TransportEenergyL_infty} and the linearity of Equation~\eqref{eq:TransportLinftyInflowCCCC} imply the uniqueness of this solution.

Furthermore, we may pass to the limit in \eqref{eq:TransportRepresentation} a.e in $(x,v) \in \OO^\eps$ such that $h$ satisfies the Duhamel formulation \eqref{eq:TransportRepresentation}.

\medskip\noindent
\emph{Step 3.} We now set $\psi^0=0$, $\psi_0 \in L^\infty_{\omega_1}(\OO^\eps)$, $\alpha \in (0,1)$, we recall that $G \in L^\infty_{\omega_1\nu^{-1}}(\UU^\eps)$ and we consider the recurrent sequence of solutions given by the following evolution equation
\begin{equation}
	\left\{\begin{array}{llll}
		 \partial_t \psi^{k+1} &=&  \TTT \psi^{k+1} + K \psi^k + G  &\text{ in }\UU^\eps\\
		\gamma_-\psi^{k+1}&=& \alpha\,  \RRR \gamma_+ \psi^k  &\text{ on }\Gamma^\eps_{-}\\
		\psi^{k+1}_{t=0}&=& \psi_0 &\text{ in }\OO^\eps.
	\end{array}\right.\label{eq:TransportLinftyK}
\end{equation}
Indeed, we remark that if we assume that, for a certain $k$, $\psi^k\in L^\infty_{\omega_1}(\UU^\eps)$, then \ref{item:K1} implies that $K\psi^k \in L^\infty_{\omega_1} (\UU^\eps)$. Therefore Step 2 implies the existence of $\psi^{k+1} \in L^\infty_{\omega_1}(\UU^\eps)$, with a trace $\gamma \psi^{k+1} \in L^\infty_{\omega_1} (\Gamma^\eps)$, unique weak solution of Equation~\eqref{eq:TransportLinftyK} in the sense provided by the Step 2.

We take now $A>0$ to be fixed later and, inspired by the recent series of papers \cite{CMKFP24, CM_Landau_domain, CGMM24}, we define the modified weight functions
$$
\omega_1^A = \omega_1^A (v):=  \MMM^{-1} \xi_A(v) + (1-\xi_A(v)) \omega_1 ,
$$
where $\xi_A(v):= \xi(|v|/A)$, for a function $\xi \in C^2(\R_+, \R)$, such that $\mathbf{1}_{[0,1]} \le \xi \le \mathbf{1}_{[0,2]}$. Moreover, we remark that there is a constant $c_A >0$ such that 
\be\label{eq:equivalency_omega_omega_A}
c_A^{-1} \omega_1 \leq \omega_1^A \leq c_A \omega.
\ee
Using now the representation formula given by the Step 2 for the solutions of Equation~\eqref{eq:TransportLinftyK}, multiplying by $\omega_1^A$, using \ref{item:K1} together with \eqref{eq:equivalency_omega_omega_A}, and arguing as during the proof of \ref{item:K3} with $\nu_2 = \nu_0/2$, we deduce that there is a constant $C_0>0$ satisfying 
\begin{multline}\label{eq:TransportRepresentationPsi_omega_A}
\lv \psi^{k+1} (t,x,v)\rv \omega_1^A = e^{-\nu_0 t} \lvv \psi_0\rvv_{L^\infty_{\omega_1^A}(\OO^\eps)} + t \, C_0 \lvv \psi^k\rvv_{L^\infty_{\omega_1^A}(\OO^\eps)}  \\
+ C_0 \lvv G \rvv_{L^\infty_{\omega_1^A\nu^{-1}}(\OO^\eps)}  + \alpha \, \omega_1^A(v)\, \lv \RRR \gamma_+ \psi^k (t_1, x_1, v) \rv.
\end{multline}
To bound the boundary term from \eqref{eq:TransportRepresentationPsi_omega_A} we remark that
\bean
\lv \RRR \gamma_+ \psi^k (t_1, x_1, v) \rv &\leq& (1-\iota^\eps) \lvv \psi^k \rvv_{L^\infty_{\omega_1^A}(\OO^\eps)} + \iota^\eps \omega_1^A \MM \left\lv \int_{\R^3} \gamma_+ \psi^k (t_1, x_1, u) \, (n_x\cdot u)_+ \, \d u \right\rv \\
&\leq &(1-\iota^\eps) \lvv \psi^k \rvv_{L^\infty_{\omega_1^A}(\OO^\eps)} + \iota^\eps \lvv \psi^k\rvv_{L^\infty_{\omega_1^A}(\OO^\eps)} \omega_1^A \MMM  \int_{\R^3} (n_x\cdot u)_+ \, \omega_1^A(u)^{-1} \d u .
\eean
On the one hand, we observe that from the very definition of $\omega_A$ there holds
$$
\omega_1^A \MMM =  \xi_A +  (1-\xi_A) \omega_1 \MMM \leq 1. 
$$
On the other hand, we have that
$$
1\leq \int_{\R^3} (n_x\cdot u)_+ \, \omega_1^A(u)^{-1} \d u = \int_{\R^3} \MMM(u) (n_x\cdot u)_+ \left(  \xi_A + (1-\xi_A) \omega \MMM\right)^{-1} \d u := \Theta_A \underset{A\to \infty}\longrightarrow 1,
$$
where we have used the fact that $ \xi_A + (1-\xi_A) \omega \MMM\to 1$ as $A\to \infty$ and $\widetilde \MMM = 1$. 

Using then that $\alpha \in (0,1)$ and $\Theta_A\to 1$ as $A\to \infty$, we can choose $A>0$ large enough such that $ \Theta_A < 1/\alpha$, this implies that
$$
\alpha \left( 1-\iota^\eps + \iota^\eps \Theta_A\right) = \alpha \left( 1 + \iota^\eps ( \Theta_A-1) \right) \leq \alpha \Theta_A <1,
$$
where we have used the fact that $\Theta_A \geq 1$ for every $A>0$ and $\iota^\eps(x) \leq 1$ for almost every $x\in \partial \Omega^\eps$. 
Using then the linearity of Equation~\eqref{eq:TransportLinftyK} and the previous analysis we have that
$$
\lvv \psi^{k+1} - \psi^k\rvv_{L^\infty_{\omega_1^A}(\OO^\eps)} + \lvv \gamma \psi^{k+1} - \gamma \psi^k\rvv_{L^\infty_{\omega_1^A}(\OO^\eps)}  \leq \left( t\, C_0  + \alpha \Theta_A\right)\lvv \psi^{k} - \psi^{k-1}\rvv_{L^\infty_{\omega_1^A}(\OO^\eps)} ,
$$
These informations imply that, for a small time $T_0>0$, $\psi^k$ and $\gamma \psi^k$ are Cauchy sequences in the the Banach spaces $L^\infty_{\omega_1^A}(\UU^\eps_{T_0})$ and $L^\infty_{\omega_1^A}(\Gamma^\eps_{T_0})$. Therefore there are functions $\psi \in  L^\infty_{\omega_1^A}(\UU^\eps_{T_0})$ and $\gamma \psi \in L^\infty_{\omega_1^A}(\Gamma^\eps_{T_0})$ such that, as $k\to \infty$, there holds 
$$
\psi^k \to \psi \, \text{ strongly in }  L^\infty_{\omega_1^A}(\UU^\eps_{T_0}) \quad \text{ and } \quad \gamma \psi^k \to \gamma \psi \,  \text{ strongly in }  L^\infty_{\omega_1^A}(\Gamma^\eps_{T_0}). 
$$
Moreover, using \eqref{eq:equivalency_omega_omega_A} and passing to the limit in the weak formulation associated with Equation~\eqref{eq:TransportLinftyK} we deduce that $\psi$ solves the evolution equation
\begin{equation}
	\left\{\begin{array}{llll}
		 \partial_t \psi &=&  \TTT \psi + K \psi + G  &\text{ in }\UU^\eps\\
		\gamma_-\psi&=& \alpha\,  \RRR \gamma_+ \psi  &\text{ on }\Gamma^\eps_{-}\\
		\psi _{t=0}&=& \psi_0 &\text{ in }\OO^\eps,
	\end{array}\right.\label{eq:TransportLinftyK_limit}
\end{equation}
in the time interval $[0, T_0]$. Repeating this argument in every time interval $[nT_0, (n+1) T_0]$ for every $n\in \N$ we deduce the existence of a global solution $\psi$. Furthermore, from the representation formula associated with Equation~\eqref{eq:TransportLinftyK}, and the previous strong convergence result we have that
\begin{multline}\label{eq:TransportRepresentationPsi}
\psi(t,x,v) = e^{-\nu(v) t} \psi_0(x,v) \, \Ind_{t_1\leq 0} + \int_{\max(0, t_1)}^t e^{-\nu(v) (t-s)} \, K\psi(s,x,v)\, \ds  \\
+ \int_{\max(0, t_1)}^t e^{-\nu(v) (t-s)} \, G(s,x,v)\, \ds
+ e^{-\nu(v)(t-t_1)}  \gamma \psi (t_1, x_1, v) \, \Ind_{t_1>0}, 
\end{multline}
for every $t\geq 0$ and for almost every $(x,v)\in \OO^\eps$. 

We can then use the well-posedness from the Step 3 of the proof of Theorem~\ref{theo:ExistenceL2} and Remark \ref{rem:ExistenceL2} to justify the repetition of the hypocoercivity result from Theorem~\ref{theo:Hypo} applied to Equation~\eqref{eq:TransportLinftyK_limit}. We may thus repeat the computations leading to the conclusion of Proposition~\ref{prop:LinftyEstimatePerturbedFiniteT_Summary_NoZeroMeasure}. Therefore, there are $\eps_{12}, \theta>0$, given by Proposition~\ref{prop:LinftyEstimatePerturbedFiniteT_Summary}, such that for every $\eps\in (0,\eps_{12})$, there holds the following energy estimate
\be \label{eq:Linfty_ineq_0_Summary_Linfty}
\lvv \psi_t \rvv_{L^\infty_{\omega_1} (\OO^\eps)} + \lvv \gamma \psi_t \rvv_{L^\infty_{\omega_1} (\Sigma^\eps)} \leq C e^{-\theta \eps^2 t} \left( \lvv \psi_0\rvv_{L^\infty_{\omega_1}(\OO^\eps)} + \underset{s\in [0,t]}{\sup}  \lvv G\rvv_{L^\infty_{\omega_1\nu^{-1}}(\UU^\eps)}  \right),
\ee
for every $t\geq 0$, and where $C>0$ is an universal constant independent of $\eps$. Finally, it is worth remarking that \eqref{eq:Linfty_ineq_0_Summary_Linfty} is uniform in $\alpha$ from the fact that this estimate comes from Proposition~\ref{prop:LinftyEstimatePerturbedFiniteT_Summary_NoZeroMeasure}.

\medskip\noindent
\emph{Step 4.} We now take a sequence $\alpha_k \nearrow 1$, with $k\in \N$, and we consider the sequence $(f_k)_{k\in \N}$ obtained in Step~3 as the solution to the modified Maxwell reflection boundary condition problem
\begin{equation}\label{eq:linear_gak_Linfty}
\left\{\begin{array}{rrll}
		 \partial_t f_k &=&  \TTT f_k + K f_k + G  &\text{ in }\UU^\eps\\
		\gamma_- f_k&=& \alpha_k\,  \RRR \gamma_+ f_k  &\text{ on }\Gamma^\eps_{-}\\
		 f_{k, t=0} &=& f_0 &\text{ in }\OO^\eps.
	\end{array}\right.
\end{equation}
Moreover, it is worth remarking that there holds the representation formula \eqref{eq:TransportRepresentationPsi} for $f_k$.

Using then the energy estimate \eqref{eq:Linfty_ineq_0_Summary_Linfty} we obtain that $f^k$ and $\gamma f^k$ are bounded sequences in $L^\infty_{\omega_1}(\UU^\eps)$ and $L^\infty_{\omega_1}(\Gamma^\eps)$, therefore there is $f\in L^\infty_{\omega_1}(\UU^\eps)$ and $\gamma f \in L^\infty_{\omega_1}(\Gamma^\eps)$ such that, as $k\to \infty$, there holds
$$
f^k \wto f \hbox{ weakly-$*$ in } L^\infty_{\omega_1}(\UU^\eps) \quad \text{ and } \quad \gamma f^k \wto \gamma f \hbox{ weakly-$*$ in } L^\infty_{\omega_1}(\Gamma^\eps). 
$$
We can thus pass to the limit in the weak formulation associated to Equation~\eqref{eq:linear_gak_Linfty} and we deduce that $f$, with its associated trace $\gamma f$, solves Equation~\eqref{eq:PLRBE} in the sense of \eqref{eq:renormalizedFormulation}. 

Furthermore, we remark that by taking the representation formula \eqref{eq:TransportRepresentationPsi} for $f^k$, multiplying it by any arbitrary function $g \in L^1_{m_1} (\UU^\eps)$, where $m_1:=\omega^{-1}_1$, we may pass to the limit as $k\to \infty$ and by density we deduce that $f$ also satisfies the representation formula \eqref{eq:TransportRepresentationPsi}.

Finally, we conclude by remarking that the conclusion of Proposition~\ref{prop:LinftyEstimatePerturbedFiniteT_Summary} holds from the fact that we have access to a Duhamel-type formulation and using Remark \ref{rem:ExistenceL2}.
\qed

\subsection{Proof of Theorem~\ref{theo:ExistenceL_infty_linear_transportAA_delta}} \label{ssec:LinftyWellPosednessAA_delta}
The proof of Theorem~\ref{theo:ExistenceL_infty_linear_transportAA_delta} follows exactly the same arguments used for the proof of Theorem~\ref{theo:ExistenceL_infty_linear_transport}, using Proposition~\ref{prop:LinftyEstimatePerturbedFiniteT_Summary_WWWW0} instead of Proposition~\ref{prop:LinftyEstimatePerturbedFiniteT_Summary_NoZeroMeasure}.
\qed

\section{Proof of Theorem~\ref{theo:MainRescaled} for strongly confining weights}\label{sec:SolutionsWWWWinfty}
We dedicate this section to prove a first part of Theorem~\ref{theo:MainRescaled}.

 \begin{theo}\label{theo:existenceWWWWinfty}
Consider either Assumption \ref{item:RH1} or \ref{item:RH2} to hold, and let $\omega_1 \in \WWWW_1$ a strongly confining admissible weight function.
For every $\eps\in (0, \eps_{12})$, where we recall that $\eps_{12}>0$ is given by Proposition~\ref{prop:LinftyEstimatePerturbedFiniteT_Summary}, there is $\eta_0^1(\eps)\in (0,1)$ satisfying $\eta_0^1(\eps) \to 0$ as $\eps \to 0$, such that if $f_0\in L^\infty_{\omega_1}(\OO^\eps)$ satisfies
\beqn\label{eq:smallness}
\lVert f_0\rVert_{L^\infty_{\omega_1}(\OO^\eps)} \leq (\eta_0^1 (\eps))^2,
\eeqn
there is a function $f\in L^\infty_{\omega_1}(\UU^\eps)$ with an associated trace $\gamma f\in L^\infty_{\omega_1}(\Gamma^\eps)$,  unique solution to the linearized rescaled Boltzmann Equation~\eqref{eq:RBE}-\eqref{eq:RBEBC}-\eqref{eq:RBEIC}  in the distributional sense, i.e for every test function $\varphi \in \DD(\bar \UU^\eps)$ there holds 
\begin{multline}\label{eq:WeakFormulationGaussian}
\int_{\OO^\eps} f(t,\cdot) \, \varphi(t,\cdot) \dv\dx  -  \int_0^t \int_{\OO^\eps}  f \left( \partial_t \varphi  +  v\cdot \grad_x \varphi + K^*\varphi -\nu\varphi \right)  +\varphi \, \QQ(f,f)\,  \dv\dx \ds \\
+ \int_0^t \int_{\Sigma^\eps} \gamma f\,  \varphi\,  (n_x\cdot v) \dv \d\sigma_x  
= \int_{\OO^\eps} f_0\,  \varphi(0,\cdot )\dv\dx ,
\end{multline}
for every $t\geq 0$, and we recall that $K^*$ is defined in \eqref{defAdjointK}. Furthermore, there holds
\be
  \lVert  f_t \rVert_{L^{\infty}_{\omega_1}( \OO)}   \leq e^{-\theta \eps^2 t}  \, \eta_0^1(\eps)\label{eq:RBEdecayFinal} \qquad  \forall t\geq 0.
\ee
\end{theo}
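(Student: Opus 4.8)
The plan is to deduce Theorem~\ref{theo:existenceWWWWinfty} from the linear theory of Theorem~\ref{theo:ExistenceL_infty_linear_transport} via a Banach fixed point argument, exploiting the fact that the nonlinearity $\QQ$ gains a factor $\nu^{-1}$ in the weighted $L^\infty$ norm. First I would set up the iteration scheme: define $f^0 = 0$ and, given $f^k \in L^\infty_{\omega_1}(\UU^\eps)$, let $f^{k+1}$ be the unique solution (provided by Theorem~\ref{theo:ExistenceL_infty_linear_transport}) of the linear problem
\begin{equation*}
\partial_t f^{k+1} = \TTT f^{k+1} + K f^{k+1} + \QQ(f^k, f^k) \quad \text{in } \UU^\eps, \qquad \gamma_- f^{k+1} = \RRR \gamma_+ f^{k+1}, \qquad f^{k+1}_{t=0} = f_0.
\end{equation*}
For this to be legitimate one needs $\QQ(f^k, f^k) \in L^\infty_{\omega_1\nu^{-1}}(\UU^\eps)$ and, to invoke the \emph{decay} part of Proposition~\ref{prop:LinftyEstimatePerturbedFiniteT_Summary}, one needs $\lla \QQ(f^k,f^k)_t\rra_{\OO^\eps} = 0$ for every $t$; the latter is exactly the conservation of mass \eqref{eq:ConservationLaws} with $\varphi \equiv 1$ (combined with $\lla f_0\rra_{\OO^\eps}=0$, which also propagates along the iteration since $\RRR$ preserves mass). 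The former is the bilinear estimate Lemma~\ref{lem:NonlinearGaussianWeightEstimate} referenced in the strategy section, which gives $\lVert \QQ(g,h)\rVert_{L^\infty_{\omega_1\nu^{-1}}(\OO^\eps)} \lesssim \lVert g\rVert_{L^\infty_{\omega_1}(\OO^\eps)} \lVert h\rVert_{L^\infty_{\omega_1}(\OO^\eps)}$ — here the strongly confining character of $\omega_1 \in \WWWW_1$ is what makes the weight bookkeeping close.

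Next I would run the standard a priori / contraction estimate. Write $X_k := \sup_{t\ge 0} e^{\theta\eps^2 t}\lVert f^k_t\rVert_{L^\infty_{\omega_1}(\bar\OO^\eps)}$. Applying Proposition~\ref{prop:LinftyEstimatePerturbedFiniteT_Summary} to $f^{k+1}$ with source $G = \QQ(f^k,f^k)$ and using the bilinear estimate together with $\sup_s e^{\theta\eps^2 s}\lVert \QQ(f^k,f^k)_s\rVert_{L^\infty_{\omega_1\nu^{-1}}} \le C' X_k^2$ (the square of the exponential weight is absorbed since $e^{2\theta\eps^2 s} \ge e^{\theta\eps^2 s}$), one gets
\begin{equation*}
X_{k+1} \le C(\eps)\bigl( \lVert f_0\rVert_{L^\infty_{\omega_1}(\OO^\eps)} + C' X_k^2 \bigr).
\end{equation*}
A similar computation on the difference $f^{k+1} - f^{k}$, using bilinearity $\QQ(f^k,f^k) - \QQ(f^{k-1},f^{k-1}) = \QQ(f^k - f^{k-1}, f^k) + \QQ(f^{k-1}, f^k - f^{k-1})$, yields
\begin{equation*}
\sup_{t\ge 0} e^{\theta\eps^2 t}\lVert f^{k+1}_t - f^k_t\rVert_{L^\infty_{\omega_1}(\bar\OO^\eps)} \le C(\eps)\, C'\,(X_k + X_{k-1})\, \sup_{t\ge 0} e^{\theta\eps^2 t}\lVert f^k_t - f^{k-1}_t\rVert_{L^\infty_{\omega_1}(\bar\OO^\eps)}.
\end{equation*}
I would then choose $\eta_0^1(\eps)$ small — concretely of the order $\eta_0^1(\eps) \sim (C(\eps))^{-2}$ up to constants — so that if $\lVert f_0\rVert_{L^\infty_{\omega_1}} \le (\eta_0^1(\eps))^2$ then the ball $\{X \le \eta_0^1(\eps)\}$ is stable under the map and the contraction factor $2C(\eps)C'\eta_0^1(\eps)$ is $< 1/2$. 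Since $C(\eps) \to \infty$ as $\eps \to 0$, this forces $\eta_0^1(\eps) \to 0$, as required; the precise algebra is a routine scalar inequality. The sequence $(f^k)$ then converges strongly in the Banach space of functions with $\sup_t e^{\theta\eps^2 t}\lVert \cdot_t\rVert_{L^\infty_{\omega_1}(\bar\OO^\eps)} < \infty$, to a limit $f$ with trace $\gamma f$ satisfying the claimed bound \eqref{eq:RBEdecayFinal}.

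Finally I would pass to the limit in the distributional formulation. Each $f^{k+1}$ satisfies \eqref{eq:renormalizedFormulation} with $G = \QQ(f^k,f^k)$; the strong convergence $f^k \to f$ in $L^\infty_{\omega_1}(\UU^\eps)$ (locally in time, which suffices against compactly supported test functions $\varphi \in \DD(\bar\UU^\eps)$) gives $f^k K^*\varphi \to f K^*\varphi$ and $f^k(\partial_t\varphi - v\cdot\grad_x\varphi - \nu\varphi) \to$ the analogous limit; the boundary term passes using the trace convergence; and the quadratic term passes since $\QQ$ is continuous from $L^\infty_{\omega_1} \times L^\infty_{\omega_1}$ to $L^\infty_{\omega_1\nu^{-1}}$ and $\varphi$ is bounded, so $\int \varphi\,\QQ(f^k,f^k) \to \int\varphi\,\QQ(f,f)$ by bilinearity and the estimate $\lVert\QQ(f^k,f^k)-\QQ(f,f)\rVert \lesssim (X_k + X)\lVert f^k - f\rVert$. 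Uniqueness follows from the same contraction estimate applied to the difference of two solutions in the decay class. The main obstacle — and the only genuinely nontrivial input beyond bookkeeping — is ensuring that all the quantitative pieces lock together: namely that the decay constant $\theta\eps^2$ from Proposition~\ref{prop:LinftyEstimatePerturbedFiniteT_Summary} is the \emph{same} for the linear problem and compatible with feeding $\QQ(f^k,f^k)$ as a zero-mass source (this is why the conservation law $\lla\QQ(f,f)\rra = 0$ must be checked carefully), and that the bilinear estimate genuinely produces the $\nu^{-1}$ gain that Proposition~\ref{prop:LinftyEstimatePerturbedFiniteT_Summary} is designed to exploit; once those are in place, the fixed point is standard. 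One should also note the $\eps$-dependence: the smallness threshold $(\eta_0^1(\eps))^2$ degenerates as $\eps\to 0$ precisely because $C(\eps)\to\infty$, which is consistent with the statement of Theorem~\ref{theo:MainRescaled}.
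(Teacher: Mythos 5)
Your proposal is correct and follows essentially the same route as the paper: a Banach fixed point argument in the weighted-in-time norm $\sup_t e^{\theta\eps^2 t}\lVert\cdot\rVert_{L^\infty_{\omega_1}}$, with the nonlinearity frozen, well-posedness for the frozen linear problem supplied by Theorem~\ref{theo:ExistenceL_infty_linear_transport}, the stability/contraction estimates coming from Proposition~\ref{prop:LinftyEstimatePerturbedFiniteT_Summary} and the bilinear estimate Lemma~\ref{lem:NonlinearGaussianWeightEstimate}, and the zero-mean property $\lla\QQ(f,f)\rra_{\OO^\eps}=0$ invoked to make the decay estimate applicable. The only difference is cosmetic (Picard iteration versus invariant ball for the map $\Psi_1$), and your threshold $\eta_0^1(\eps)\sim C(\eps)^{-2}$ is a slightly more conservative bookkeeping than the paper's $\eta_0^1(\eps)\sim C_0(\eps)^{-1}$, which is harmless since both yield $\eta_0^1(\eps)\to0$.
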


\subsection{Estimate on the bilinear Boltzmann collision operator}
Before studying the non-linear Boltzmann equation we state the following control on the bilinear Boltzmann collision operator.

\begin{lem}\label{lem:NonlinearGaussianWeightEstimate}
Let $\omega$ be an admissible weight function, and let $g,h\in L^\infty_{\omega}(\OO^\eps)$. There holds
$$
\lVert \QQ(g, h) \rVert_{L^\infty_{\omega\nu^{-1}}(\OO^\eps)} \leq C_\QQ \lVert  g \rVert_{L^\infty_{\omega}(\OO^\eps)}  \lVert   h\rVert_{L^\infty_{\omega}(\OO^\eps)}.
$$
for some constant $C_\QQ = C_\QQ(\omega)>0$.
\end{lem}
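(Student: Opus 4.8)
The estimate is a classical ``Grad estimate'' on the gain and loss parts of the bilinear hard-sphere collision operator, adapted to the family of admissible weights. The plan is to split $\QQ = \QQ^+ - \QQ^-$ into its gain and loss contributions, treat each separately, and track the weight $\omega$ through the pre/post-collisional change of variables, making sure all constants depend only on $\omega$ (and on the fixed hard-sphere kernel). First I would handle the loss term: writing
\[
\QQ^-(g,h)(v) = \frac12 \left( g(v) \int_{\R^3}\int_{\mathbb{S}^2} \BB\, h(v_*) \, \d\sigma\d v_* + h(v)\int_{\R^3}\int_{\mathbb{S}^2} \BB\, g(v_*)\,\d\sigma\d v_* \right),
\]
one uses $|g(v)| \le \omega(v)^{-1}\|g\|_{L^\infty_\omega}$, $|h(v_*)|\le \omega(v_*)^{-1}\|h\|_{L^\infty_\omega}$, and the pointwise bound $\int_{\mathbb S^2}\BB(|v-v_*|,\sigma)\,\d\sigma = C|v-v_*| \lesssim \langle v\rangle\langle v_*\rangle$. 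Since every admissible $\omega$ dominates a sufficiently high polynomial, $\int_{\R^3}\omega(v_*)^{-1}\langle v_*\rangle\,\d v_*$ is a finite constant $C_\omega$, and one concludes $\omega(v)\nu(v)^{-1}|\QQ^-(g,h)(v)| \lesssim \langle v\rangle \nu(v)^{-1} C_\omega \|g\|_{L^\infty_\omega}\|h\|_{L^\infty_\omega} \lesssim \nu_0^{-1} C_\omega \|g\|_{L^\infty_\omega}\|h\|_{L^\infty_\omega}$, using $\nu(v)\gtrsim \langle v\rangle$ from \eqref{eq:Controlnu}.

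For the gain term the point is the sub-multiplicativity of the weight along collisions: since $|v'|^2 + |v_*'|^2 = |v|^2 + |v_*|^2$ and $|v'|,|v_*'| \le |v|+|v_*|$ while also $|v'|^2,|v_*'|^2\le |v|^2+|v_*|^2$, one has $\omega(v') \lesssim \omega(v)\omega(v_*)$ and likewise for $\omega(v_*')$, with a constant depending only on the type of $\omega$ --- for polynomial weights this is immediate, for stretched-exponential $e^{\zeta|v|^s}$ with $s<2$ it follows from $(|v|^2+|v_*|^2)^{s/2}\le |v|^s + |v_*|^s$, and for inverse-gaussian $e^{\zeta|v|^2}$ with $\zeta<1/2$ from the exact energy identity. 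Hence
\[
\omega(v)\,|\QQ^+(g,h)(v)| \lesssim \int_{\R^3}\int_{\mathbb S^2}\BB\,\omega(v)\,|g(v_*')||h(v')|\,\d\sigma\d v_* \lesssim \|g\|_{L^\infty_\omega}\|h\|_{L^\infty_\omega}\int_{\R^3}\int_{\mathbb S^2}\BB\,\frac{\omega(v)}{\omega(v')\omega(v_*')}\,\d\sigma\d v_* ,
\]
and the last factor is $\lesssim \int_{\R^3}\int_{\mathbb S^2}\BB\,\omega(v_*)^{-1}\langle v_*\rangle^{0}\,\d\sigma\d v_* \cdot$ (harmless) after using $\omega(v)/(\omega(v')\omega(v_*')) \lesssim \omega(v_*)^{-1}$; performing the change of variables $(v_*,\sigma)\mapsto(v',v_*')$ (or simply bounding $\BB\le |v-v_*|\lesssim\langle v\rangle\langle v_*\rangle$ and integrating the remaining gaussian-type tail in $v_*$, absorbing $\langle v\rangle$ into $\nu(v)$), one again gets a finite constant $C_\omega$ times $\langle v\rangle$. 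Dividing by $\nu(v)\gtrsim\langle v\rangle$ absorbs the growth and yields the claimed bound.

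\textbf{Main obstacle.} The only genuinely delicate point is the weight book-keeping for the gain term when $\omega$ is the inverse-gaussian weight $e^{\zeta|v|^2}$ with $\zeta$ close to $1/2$: here $\omega(v)/(\omega(v')\omega(v_*')) = e^{\zeta(|v|^2 - |v'|^2 - |v_*'|^2)} = e^{-\zeta |v_*|^2}$ by the exact energy conservation, so the weight ratio is in fact bounded by $e^{-\zeta|v_*|^2}$ which is integrable precisely because $\zeta>0$ --- this case is actually the cleanest. The subtler case is making the constant for the stretched-exponential and polynomial weights uniform and explicit: one must verify the sub-multiplicativity inequality $\omega(v')\omega(v_*')\gtrsim \omega(v)$ with a constant that does not degenerate, which for stretched exponentials rests on the elementary inequality $(a+b)^{s/2}\le a^{s/2}+b^{s/2}$ for $s/2\le 1$ and $a,b\ge 0$, and for polynomials on $\langle v'\rangle\langle v_*'\rangle \gtrsim \langle v\rangle$ (since $|v'|^2+|v_*'|^2 = |v|^2+|v_*|^2 \ge |v|^2$ forces $\max(|v'|,|v_*'|)\gtrsim|v|$). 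Once these pointwise weight inequalities are in hand the rest is the standard Grad computation, and I would simply cite the analogous statements in \cite{BriantGuo16} or \cite{MR3779780} for the integrability of the resulting kernels rather than reproduce them.
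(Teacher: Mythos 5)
The paper gives no proof of this lemma: Remark~\ref{rem:NonlinearGaussianWeightEstimate} simply states the estimate is classical and cites \cite{MR3779780, MR2679358, MR1307620, MR3740632}, so your proposal must stand on its own. Your loss-term estimate is fine for all admissible weights, and your gain-term estimate is fine for the inverse-Gaussian weight, where $\omega(v)/(\omega(v')\omega(v_*'))=e^{-\zeta|v_*|^2}$ holds with equality by energy conservation. However, the pointwise inequality you invoke for the other two families, $\omega(v)\,\omega(v_*)\lesssim\omega(v')\,\omega(v_*')$, is \emph{false} for polynomial and stretched-exponential weights: take $v\perp v_*$ with $|v|=|v_*|=R$ and $\sigma=-v_*/|v_*|$, a valid non-grazing collision giving $v'=v+v_*$ and $v_*'=0$; then for $\omega(v)=\langle v\rangle^q$ the ratio $\omega(v)\omega(v_*)/(\omega(v')\omega(v_*'))\sim R^q$, and for $\omega(v)=e^{\zeta|v|^s}$ with $s\in(0,2)$ it equals $\exp(\zeta R^s(2-2^{s/2}))$, both of which blow up as $R\to\infty$. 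The weaker sub-multiplicativity $\omega(v')\omega(v_*')\gtrsim\omega(v)$ that you correctly establish in your ``main obstacle'' paragraph only gives a bounded weight ratio, after which $\int\BB\,\d\sigma\d v_*$ diverges for hard spheres. So as written your argument proves the lemma only for $\omega\in\WWWW_1$, whereas the statement (and its use in Section~\ref{sec:AprioriWeakConf}) requires it for all admissible weights.

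For the weakly confining weights one cannot avoid genuinely integrating: the change of variables you mention only in passing is the essential step. Writing $u=v-v_*$, $p=(u\cdot\sigma)\sigma$ and $w=u-p$ (so $p\parallel\sigma$, $w\perp\sigma$, $v'=v-p$, $v_*'=v-w$, $\BB=|p|$), the gain-term integrand factors, and for each $\sigma$ the $v_*$-integral splits into a one-dimensional integral in the $\sigma$-direction against $\langle v-p\rangle^{-q}$ times a two-dimensional integral over $\sigma^\perp$ against $\langle v-w\rangle^{-q}$, each convergent for $q>4$; the subsequent angular integration yields the required bound of order $\langle v\rangle\lesssim\nu(v)$. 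This Carleman-type decoupling (or an equivalent device) is what the cited references carry out, and it is exactly what the pointwise weight book-keeping of your sketch misses.
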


\begin{rem}\label{rem:NonlinearGaussianWeightEstimate}
This type of estimate is classical in the study of the Boltzmann equation, see for instance \cite{MR3779780, MR2679358, MR1307620, MR3740632}. Thus we do not prove it, the interested reader may find a prove in either of the above references. 
\end{rem}

\subsection{Proof of Theorem~\ref{theo:existenceWWWWinfty}}\label{sssec:ProofExistenceWWWWinfty}
We define the norm
$$
[[f]]_1:= \underset{s\in [0, \infty)}\sup \left[ e^{\theta \eps^2 s }\lvv f_s \rvv_{L^\infty_{\omega_1}(\OO^\eps)}\right],
$$
where we recall that $\theta>0$ is given by Proposition~\ref{prop:LinftyEstimatePerturbedFiniteT_Summary}. We consider a $\lambda>0$ to be fixed later, and we define the Banach space
$$
\ZZ_1:= \{ g\in L^\infty_{\omega_1}(\UU^\eps), \, [[g]]_1 \leq \lambda\},
$$
equipped with the strong topology in $L^\infty_{\omega_1}(\UU^\eps)$.

We denote $\Psi_1$ as the map that to $g\in \ZZ_1$ assigns $f$ the solution of 
    \be
	\left\{\begin{array}{llll}
		\partial_{t} f &=& \LLL f + \QQ(g,g) &\text{ in }\UU^\eps \\
		\gamma_- f&=&\RRR \gamma_+f , &\text{ on }\Gamma_{-}^\eps\\
		 f_{t=0}&=& f_0, &\text{ in }\OO^\eps,
	\end{array}\right.\label{eq:PLRBEGauss}
\ee
which is given by Theorem~\ref{theo:ExistenceL_infty_linear_transport}. Indeed, we observe that Theorem~\ref{theo:ExistenceL_infty_linear_transport} holds by using Lemma~\ref{lem:NonlinearGaussianWeightEstimate} and the fact that $g\in \ZZ_1$.
We remark then that Proposition~\ref{prop:LinftyEstimatePerturbedFiniteT_Summary} implies that  for every $\eps\in (0,\eps_{12})$ there is $C_0=C_0(\eps)>0$ satisfying $C_0(\eps)\to \infty$ as $\eps\to 0$, such that
\be\label{eq:ControlExistenceNormStrongW}
\bal
[[f]]_1 &\leq C_0 \lvv f_0\rvv_{L^\infty_{\omega_1}(\OO^\eps)} + C_0 \underset{s\in[0,t]}{\sup}\left[ e^{\theta \eps^2 s} \lVert \QQ(g_s, g_s) \rVert_{L^{\infty}_{\omega_1\nu^{-1}}(\OO^\eps)}\right]\\
&\leq C_0 \lvv f_0\rvv_{L^\infty_{\omega_1}(\OO^\eps)} + C_0 C_\QQ [[g]]^2_1
\eal
\ee
where we have used Lemma~\ref{lem:NonlinearGaussianWeightEstimate} to obtain the second line and the constant $C_\QQ=C_\QQ(\omega_1) >0$ is given by Lemma~\ref{lem:NonlinearGaussianWeightEstimate}.

We then choose $\eta_0^1 (\eps) = \lambda(\eps)$ and 
$$
\lambda =\lambda(\eps) := \min\left( {1\over C_0(\eps) (1+C_\QQ)} , {1\over 4C_0(\eps)C_\QQ}, 1\right) \underset{\eps\to 0}{\longrightarrow} 0 .
$$
In particular, this choice of $\lambda$ together with \eqref{eq:ControlExistenceNormStrongW} implies that $f\in \ZZ_1$, thus $\Psi_1:\ZZ_1 \to \ZZ_1$. 

Furthermore, if we take $g_1, g_2\in \ZZ_1$ and denoting $f_i = \Psi_1(g_i)$ for $i=1,2$, we have that $\psi = f_1-f_2$ is the weak solution of the evolution equation
    \beqn
	\left\{\begin{array}{llll}
		\partial_{t} \psi &=& \LLL \psi + \QQ(g_1+g_2, g_1-g_2) &\text{ in }\UU^\eps \\
		\gamma_- \psi&=&\RRR \gamma_+ \psi  &\text{ on }\Gamma_{-}^\eps\\
		 \psi_{t=0}&=& 0 &\text{ in }\OO^\eps,
	\end{array}\right.\label{eq:PLRBEGauss_psi_k}
\eeqn
in the sense of Theorem~\ref{theo:ExistenceL_infty_linear_transport}.
Using then Proposition~\ref{prop:LinftyEstimatePerturbedFiniteT_Summary} and Lemma~\ref{lem:NonlinearGaussianWeightEstimate} as before we have that 
$$
[[\psi]]_1 \leq C_0 C_\QQ [[g_1+g_2]]_1 [[g_1-g_2]]_1 \leq 2 C_0C_\QQ \lambda  [[g_1-g_2]]_1 \leq \frac 12  [[g_1-g_2]]_1. 
$$
This implies that $\Psi_1$ is a contraction in $\ZZ_1$, thus there is a unique fixed point $f\in \ZZ_1$ for this map. We deduce that $f$ is a weak solution of the nonlinear Boltzmann Equation~\eqref{eq:RBE}-\eqref{eq:RBEBC}-\eqref{eq:RBEIC} in the sense of \eqref{eq:WeakFormulationGaussian}. Furthermore, \eqref{eq:RBEdecayFinal} comes from the very fact that $f\in \ZZ_1$.
\qed

\section{Proof of Theorem~\ref{theo:MainRescaled} for weakly confining weights}\label{sec:SolutionsWWWW0}
We dedicate this section to prove the remaining part of Theorem~\ref{theo:MainRescaled} on the well-posedness of the Boltzmann equation for weakly confining admissible weight functions. 

 \begin{theo}\label{theo:existenceWWWW0}
Consider either Assumption \ref{item:RH1} or \ref{item:RH2} to hold, and let $\omega_0 \in \WWWW_0$ a weakly confining admissible weight function. Define furthermore $\eps_{13}: = \min(\eps_9, \eps_{12})$, where we recall that $\eps_9>0$ is given by Proposition~\ref{prop:LinftyEstimatePerturbedFiniteT_Summary_WWWW0} and $\eps_{12}>0$ is given by Proposition~\ref{prop:LinftyEstimatePerturbedFiniteT_Summary}.

For every $\eps \in (0, \eps_{13})$ there is $ \eta_0^0 (\eps) \in (0,1)$ such that $\eta_0^0(\eps)\to 0$ as $\eps\to 0$, such that for every $f_0\in L^\infty_{\omega_0}(\OO^\eps)$ satisfying
\beqn\label{eq:smallnessf1}
\lVert f_0\rVert_{L^\infty_{\omega_0}(\OO^\eps)} \leq (\eta_0^0(\eps))^2,
\eeqn
there is $f\in L^\infty_{\omega_0}(\UU^\eps)$ with an associated trace $\gamma f\in L^\infty_{\omega_0}(\Gamma^\eps)$, unique solution to the linearized rescaled Boltzmann Equation~\eqref{eq:RBE}-\eqref{eq:RBEBC}-\eqref{eq:RBEIC} in the distribution sense, i.e for every test function $\varphi \in \DD(\bar \UU^\eps)$ there holds 
\begin{multline}\label{eq:WeakFormulationGaussian_0}
\int_{\OO^\eps} f(t,\cdot) \, \varphi(t,\cdot) \dv\dx  -  \int_0^t \int_{\OO^\eps}  f \left( \partial_t \varphi  +  v\cdot \grad_x \varphi + K^*\varphi -\nu\varphi \right)  + \varphi \, \QQ(f,f)\,  \dv\dx \ds \\
+ \int_0^t \int_{\Sigma^\eps} \gamma f\,  \varphi\,  (n_x\cdot v) \dv \d\sigma_x  
= \int_{\OO^\eps} f_0\,  \varphi(0,\cdot )\dv\dx ,
\end{multline}
for every $t\geq 0$, and we recall that $K^*$ is defined in \eqref{defAdjointK}. Furthermore, there is $\theta>0$ such that there holds
\be\label{eq:SolutionsDecayWWWW0}
 \lVert  f_t \rVert_{L^{\infty}_{\omega_0}\left({\OO^\eps}\right)}  \leq  e^{-\theta \eps^2 t} \, {\eta_0^0(\eps)} \qquad \forall t\geq 0,
\ee
 \end{theo}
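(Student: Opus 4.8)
The plan is to reproduce the fixed-point scheme of Theorem~\ref{theo:existenceWWWWinfty}, but now splitting the Boltzmann equation into the system \eqref{eq:PLRBEDiss}--\eqref{eq:PLRBEReg} in order to use the dissipative operator $\AA_\delta$ together with the regularizing operator $\KK_\delta$, since for weakly confining weights we do not have direct access to the $L^2$--$L^\infty$ decay of Proposition~\ref{prop:LinftyEstimatePerturbedFiniteT_Summary} acting on the full $K$. First I would fix $\delta\in(0,\delta_0)$ and $\eps\in(0,\eps_{13})$, introduce the weighted decay norm
\[
[[f]]_0:= \underset{s\in[0,\infty)}{\sup}\left[ e^{\theta\eps^2 s}\lvv f_s\rvv_{L^\infty_{\omega_0}(\OO^\eps)}\right],
\]
with $\theta>0$ the smaller of the two rates coming from Proposition~\ref{prop:LinftyEstimatePerturbedFiniteT_Summary_WWWW0} (rate $\nu_0/2$, up to the $\eps^2$ rescaling implicit in its statement) and Proposition~\ref{prop:LinftyEstimatePerturbedFiniteT_Summary} (rate $\theta\eps^2$); and set up the Banach space $\ZZ_0:=\{g\in L^\infty_{\omega_0}(\UU^\eps),\ [[g]]_0\le\lambda\}$ for a small $\lambda=\lambda(\eps)$ to be chosen.

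Next I would define the solution map $\Psi_0$: given $g\in\ZZ_0$, let $f_1$ be the solution of \eqref{eq:PLRBEDiss} with nonlinearity $\QQ(g,g)$ as source, which exists by Theorem~\ref{theo:ExistenceL_infty_linear_transportAA_delta} once one checks via Lemma~\ref{lem:NonlinearGaussianWeightEstimate} that $\QQ(g,g)\in L^\infty_{\omega_0\nu^{-1}}(\UU^\eps)$; then let $f_2$ solve \eqref{eq:PLRBEReg} with the source $\KK_\delta f_1$, which belongs to $L^\infty_{\omega_0\nu^{-1}}$ because $\KK_\delta$ is a bounded smoothing operator (Lemma~\ref{lem:KKcontrol}-type estimate: $\KK_\delta$ has a bounded kernel supported on a compact set, hence maps $L^\infty_{\omega_0}$ into any weighted $L^\infty$, in particular into the strongly confining $\WWWW_1$ class) — this is the crucial point: although $\omega_0$ is only weakly confining, $\KK_\delta f_1$ can be controlled in a strongly confining weight $\omega_1\in\WWWW_1$, so that $f_2$ falls under the scope of Theorem~\ref{theo:ExistenceL_infty_linear_transport} and Proposition~\ref{prop:LinftyEstimatePerturbedFiniteT_Summary}, and then $\omega_0\le\omega_1$ gives a bound on $f_2$ in the $\omega_0$ norm. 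Finally set $\Psi_0(g):=f_1+f_2$, which by linearity and the formal computation of Subsection~\ref{ssec:Strategy} is the solution of \eqref{eq:PLRBEGauss}-type problem \eqref{eq:RBE}--\eqref{eq:RBEBC}--\eqref{eq:RBEIC} with the nonlinearity evaluated at $g$.

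Then I would estimate: Proposition~\ref{prop:LinftyEstimatePerturbedFiniteT_Summary_WWWW0} applied to $f_1$ gives $[[f_1]]_0\le C\lvv f_0\rvv_{L^\infty_{\omega_0}(\OO^\eps)}+C\,C_\QQ[[g]]_0^2$, while Lemma~\ref{lem:KKcontrol} and Proposition~\ref{prop:LinftyEstimatePerturbedFiniteT_Summary} applied to $f_2$ give $[[f_2]]_0\lesssim[[f_2]]_{1}\le C'\underset{s}{\sup}\,e^{\theta\eps^2s}\lvv\KK_\delta f_{1,s}\rvv_{L^\infty_{\omega_1\nu^{-1}}(\OO^\eps)}\lesssim C''[[f_1]]_0$; combining, $[[\Psi_0(g)]]_0\le C_0\lvv f_0\rvv_{L^\infty_{\omega_0}(\OO^\eps)}+C_0C_\QQ[[g]]_0^2$ with $C_0=C_0(\eps)\to\infty$ as $\eps\to0$. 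Choosing $\eta_0^0(\eps)=\lambda(\eps):=\min\bigl(\tfrac1{C_0(1+C_\QQ)},\tfrac1{4C_0C_\QQ},1\bigr)\to0$, the smallness hypothesis $\lvv f_0\rvv\le(\eta_0^0)^2$ forces $\Psi_0:\ZZ_0\to\ZZ_0$. For the contraction, if $g_1,g_2\in\ZZ_0$ then $\Psi_0(g_1)-\Psi_0(g_2)$ solves the same split system with source $\QQ(g_1+g_2,g_1-g_2)$, and the bilinearity of $\QQ$ with Lemma~\ref{lem:NonlinearGaussianWeightEstimate} yields $[[\Psi_0(g_1)-\Psi_0(g_2)]]_0\le 2C_0C_\QQ\lambda\,[[g_1-g_2]]_0\le\tfrac12[[g_1-g_2]]_0$. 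The Banach fixed point theorem then produces a unique $f\in\ZZ_0$; one checks $f$ is a distributional solution in the sense of \eqref{eq:WeakFormulationGaussian_0} by adding the weak formulations \eqref{eq:renormalizedFormulationAA_delta} for $f_1$ and \eqref{eq:renormalizedFormulation} for $f_2$ and using $K=\AA_\delta+\KK_\delta$; and \eqref{eq:SolutionsDecayWWWW0} is immediate from $f\in\ZZ_0$. The main obstacle is the two-weight bookkeeping in the $f_2$ estimate: one must verify carefully that $\KK_\delta$ genuinely gains enough velocity decay to land in $\WWWW_1$ (so that the strongly-confining machinery of Sections~\ref{sec:Linfty}--\ref{sec:LinftyCylinder} applies) while its operator norm from $L^\infty_{\omega_0}$ stays finite and $\delta$-uniform after the final choice of $\delta$, and that the two decay rates are compatible so a single $\theta$ works; the rest is a routine transcription of the argument in Section~\ref{sec:SolutionsWWWWinfty}.
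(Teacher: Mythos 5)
Your overall architecture is right --- split the nonlinearity through the $\AA_\delta/\KK_\delta$ decomposition so that the weakly confining $\omega_0$ weight is handled by the dissipative bound of Proposition~\ref{prop:LinftyEstimatePerturbedFiniteT_Summary_WWWW0}, while the $\KK_\delta$ output is regular enough in velocity to re-enter the strongly confining $\omega_1$ framework of Proposition~\ref{prop:LinftyEstimatePerturbedFiniteT_Summary}. Two points, one structural and one a genuine gap.

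First, a structural difference that is not a mistake: you set up the map as $g \mapsto f_1 + f_2$, where $f_1$ solves the \emph{linear} equation $\partial_t f_1 = \TTT f_1 + \AA_\delta f_1 + \QQ(g,g)$ via Theorem~\ref{theo:ExistenceL_infty_linear_transportAA_delta}. The paper instead takes the map $h \mapsto f_2$, where $f_1$ solves the \emph{nonlinear} equation $\partial_t f_1 = \TTT f_1 + \AA_\delta f_1 + \QQ(f_1+h,f_1+h)$ via Proposition~\ref{prop:LinftySplitDissip}, and then reconstructs $f = f_1 + f_2$ at the end. One can check that at the respective fixed points the two pairs $(f_1,f_2)$ coincide, and the contraction estimates you write down are consistent; the paper's choice has the advantage that the a~priori estimates for the nonlinear $f_1$-problem are packaged once and for all in Proposition~\ref{prop:LinftySplitDissip}, while your version keeps everything linear at each iteration step. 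Both routes lead to the same fixed point, so this is a bookkeeping choice rather than a divergence in substance.

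Second, there is a real gap in your treatment of the $f_2$-estimate. You invoke Proposition~\ref{prop:LinftyEstimatePerturbedFiniteT_Summary} directly with source $G = \KK_\delta f_1$, but that proposition carries the standing hypothesis that $\lla G_t \rra_{\OO^\eps} = 0$ for all $t \ge 0$, and nothing forces $\lla \KK_\delta f_{1,t}\rra_{\OO^\eps} = 0$: the conservation laws \eqref{eq:ConservationLawsCCC} control $\int \CCC f\,\dv = \int (K-\nu) f\,\dv$, not $\int\KK_\delta f\,\dv$ in isolation. Without this hypothesis the only available substitute is Proposition~\ref{prop:LinftyEstimatePerturbedFiniteT_Summary_NoZeroMeasure}, which yields a uniform but \emph{non-decaying} bound on $f_2$, insufficient for the exponential decay you need in $[[\,\cdot\,]]_0$. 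The paper resolves this precisely through Proposition~\ref{prop:LinftySplitReg}, whose decay conclusion for $f_2$ is conditioned not on zero mass of $\KK_\delta f_1$ but on the constraint $\PPP(f_2 + g) = 0$ together with decay of $g$; this constraint is then verified in the proof of Theorem~\ref{theo:existenceWWWW0} by observing that $f_1 + f_2$ solves Equation~\eqref{eq:PLRBE} with a source of the form $\QQ(\cdot,\cdot)$, which does satisfy \eqref{eq:ConservationLaws}, so the mass projection stays zero. You cite neither Proposition~\ref{prop:LinftySplitReg} nor Proposition~\ref{prop:LinftySplitDissip}, and you do not supply the $\PPP(f_1+f_2)=0$ argument in their place. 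The two-weight bookkeeping and the compatibility of the decay rates, which you flag as the main obstacles, are in fact manageable; the missing ingredient is the mass constraint that replaces the zero-mass hypothesis on the source in the $f_2$-step, and without it the crucial decay bound $[[f_2]]_0 \lesssim [[f_1]]_0$ is unjustified.
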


\begin{rem}
The proof follows the main ideas of \cite[Section 6]{BriantGuo16}.
\end{rem}

\subsection{Dissipative equation} Let $h:\UU^\eps\to \R$, we study the following equation
  \be
	\left\{\begin{array}{llll}
		\partial_{t} f_1 &=& \TTT f_1 + \AA_\delta f_1 + \QQ(f_1 + h , f_1 +h) &\text{ in }\UU^\eps \\
		\gamma_- f_1&=&\RRR \gamma_+f_1 &\text{ on }\Gamma_{-}^\eps\\
		 f_{1,t=0}&=& f _0 &\text{ in }\OO^\eps.
	\end{array}\right.\label{eq:PLRBEDiss_h}
\ee
We dedicate this subsection to prove the following well-posedness result. 
 \begin{prop}\label{prop:LinftySplitDissip}
Consider either Assumption \ref{item:RH1} or \ref{item:RH2} to hold, let $\omega_0 \in \WWWW_0$ a weakly confining admissible weight function, and let $f_0\in L^\infty_{\omega_0}(\OO^\eps)$ and such that 
\beqn\label{eq:smallnessf1}
\lVert f_0\rVert_{L^\infty_{\omega_0}(\OO^\eps)} \leq \eta_1,
\eeqn
some $\eta_1 >0$ small enough. Assume furthermore that for every $\eps\in (0,\eps_{8})$ and $\delta\in (0,\delta_0)$ there holds
\be\label{eq:smallnessf1h}
\underset{s\in[0,\infty) }{\sup}\left[ e^{\theta\eps^2 s} \lVert h_s\rVert_{L^\infty_{\omega_0}( \OO^\eps)} \right]\leq \eta_1,
\ee
where $\eps_9, \delta_0 >0$ are given by Proposition~\ref{prop:LinftyEstimatePerturbedFiniteT_Summary_WWWW0}, and $\theta\in (0,\nu_0/2)$ is given by Proposition~\ref{prop:LinftyEstimatePerturbedFiniteT_Summary}.  
For every $\eps\in (0,\eps_{8})$ and $\delta\in (0,\delta_0)$ there is $f_1\in L^\infty_{\omega_0}(\UU^\eps)$ with an associated trace $\gamma f_1\in L^\infty_{\omega_0}(\Gamma^\eps)$, unique solution to Equation~\eqref{eq:PLRBEDiss_h} in the weak sense,  i.e for any $\varphi \in \DD(\bar \UU^\eps)$ there holds 
\begin{multline}\label{eq:renormalizedFormulationAA_delta}
\int_{\OO^\eps} f_1(t,\cdot) \, \varphi(t,\cdot) \, \dx\dv-\int_0^t \int_{\OO^\eps} f_1 \, \AA_\delta ^* \, \varphi  + f \left( \partial_t \varphi  + v\cdot \grad_x \varphi -\nu \, \varphi \right) \,  \dx\dv\ds \\
-\int_0^t \int_{\OO^\eps} \QQ(f_1+h, f_1+h)\, \varphi \,  \dx\dv\ds 
+ \int_0^t \int_{\Sigma^\eps} \gamma f_1\,  \varphi\,  (n_x\cdot v) \d\sigma_x \dv 
= \int_{\OO^\eps} f_0(\cdot ) \varphi(0,\cdot ) \dx\dv,
\end{multline}
for every $t\geq 0$, and where we recall that $\AA_\delta^*$ has been defined in \eqref{def:AdjointAA_delta}. Furthermore, there holds 
\be\label{eq:SolutionsDecayAA_delta}
 \lVert  f_{1,t}\rVert_{L^{\infty}_{\omega_0}\left( {\OO^\eps}\right)}  \leq  C e^{-\theta \eps^2 t} \left( \lVert f_0\rVert_{L^\infty_{\omega_0}(\OO^\eps)}  +  \underset{s\in[0,\infty) }{\sup}\left[ e^{\theta\eps^2 s} \lVert h_s\rVert_{L^\infty_{\omega_0}( \OO^\eps)} \right]^2 \right) \qquad \forall t\geq0,
\ee
for some constant $C>0$, independent of $\eps$.
 \end{prop}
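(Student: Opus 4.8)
\textbf{Plan for the proof of Proposition~\ref{prop:LinftySplitDissip}.}

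The plan is to set up a Banach fixed point argument for the map $\Psi$ which to a function $f_1$ in a suitable complete metric space associates the solution $\widetilde f_1$ of the \emph{linear} equation obtained by freezing the quadratic term, namely
\begin{equation*}
\left\{\begin{array}{llll}
\partial_{t} \widetilde f_1 &=& \TTT \widetilde f_1 + \AA_\delta \widetilde f_1 + G &\text{ in }\UU^\eps \\
\gamma_- \widetilde f_1&=&\RRR \gamma_+\widetilde f_1 &\text{ on }\Gamma_{-}^\eps\\
\widetilde f_{1,t=0}&=& f _0 &\text{ in }\OO^\eps,
\end{array}\right.
\end{equation*}
with source term $G := \QQ(f_1+h,f_1+h)$. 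The well-posedness of this linear problem is exactly Theorem~\ref{theo:ExistenceL_infty_linear_transportAA_delta}, provided we can guarantee $G\in L^\infty_{\omega_0\nu^{-1}}(\UU^\eps)$, which follows from Lemma~\ref{lem:NonlinearGaussianWeightEstimate}. So the first step is to introduce, for a parameter $\lambda>0$ to be fixed, the space
\begin{equation*}
\ZZ := \Bigl\{ g\in L^\infty_{\omega_0}(\UU^\eps), \ [[g]]_0:= \underset{s\geq 0}{\sup}\ e^{\theta\eps^2 s}\lVert g_s\rVert_{L^\infty_{\omega_0}(\OO^\eps)} \leq \lambda \Bigr\},
\end{equation*}
equipped with the strong topology of $L^\infty_{\omega_0}(\UU^\eps)$, and to check that $\Psi$ is well-defined on $\ZZ$.

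The second step is the stability estimate $\Psi(\ZZ)\subset \ZZ$. Here I would apply Proposition~\ref{prop:LinftyEstimatePerturbedFiniteT_Summary_WWWW0} to $\widetilde f_1=\Psi(f_1)$ with $G=\QQ(f_1+h,f_1+h)$, obtaining for every $\eps\in(0,\eps_9)$, $\delta\in(0,\delta_0)$ a constant $C>0$ independent of $\eps$ with
\begin{equation*}
[[\Psi(f_1)]]_0 \leq C\lVert f_0\rVert_{L^\infty_{\omega_0}(\OO^\eps)} + C\, \underset{s\geq 0}{\sup}\Bigl[ e^{\theta\eps^2 s}\lVert \QQ((f_1+h)_s,(f_1+h)_s)\rVert_{L^\infty_{\omega_0\nu^{-1}}(\OO^\eps)}\Bigr].
\end{equation*}
By Lemma~\ref{lem:NonlinearGaussianWeightEstimate} and the elementary inequality $[[f_1+h]]_0\leq [[f_1]]_0+[[h]]_0\leq \lambda+\eta_1$, the supremum is bounded by $C_\QQ(\lambda+\eta_1)^2$ (one uses $e^{\theta\eps^2 s}\le e^{2\theta\eps^2 s}$ on the squared quantity since $\theta>0$ and $s\ge 0$), so $[[\Psi(f_1)]]_0\leq C\eta_1 + C C_\QQ(\lambda+\eta_1)^2$. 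Choosing $\lambda$ of order $\eta_1$ — say $\lambda = 2C\eta_1$ — and then taking $\eta_1$ small enough (depending only on $C, C_\QQ$, \emph{not} on $\eps$, since $C$ is $\eps$-independent) forces $C\eta_1 + CC_\QQ(\lambda+\eta_1)^2\le\lambda$. For the contraction, given $f_1^{(1)},f_1^{(2)}\in\ZZ$ the difference $\psi=\Psi(f_1^{(1)})-\Psi(f_1^{(2)})$ solves the same linear equation with zero initial datum and source $\QQ(f_1^{(1)}+f_1^{(2)}+2h,\,f_1^{(1)}-f_1^{(2)})$ (using bilinearity and symmetrising $\QQ$); Proposition~\ref{prop:LinftyEstimatePerturbedFiniteT_Summary_WWWW0} and Lemma~\ref{lem:NonlinearGaussianWeightEstimate} then give $[[\psi]]_0 \leq CC_\QQ\,(2\lambda+2\eta_1)\,[[f_1^{(1)}-f_1^{(2)}]]_0 \leq \tfrac12 [[f_1^{(1)}-f_1^{(2)}]]_0$ after further shrinking $\eta_1$. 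The Banach fixed point theorem yields a unique $f_1\in\ZZ$, which by construction solves Equation~\eqref{eq:PLRBEDiss_h} in the sense of \eqref{eq:renormalizedFormulationAA_delta} (the weak formulation passes to the fixed point because $\widetilde f_1=\Psi(f_1)$ satisfies \eqref{eq:renormalizedFormulationAA_delta} with $G=\QQ(f_1+h,f_1+h)$ by Theorem~\ref{theo:ExistenceL_infty_linear_transportAA_delta}), and the trace regularity $\gamma f_1\in L^\infty_{\omega_0}(\Gamma^\eps)$ comes along from the same theorem.

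Finally, the decay estimate \eqref{eq:SolutionsDecayAA_delta} is essentially a by-product: applying Proposition~\ref{prop:LinftyEstimatePerturbedFiniteT_Summary_WWWW0} one last time to the fixed point $f_1$ itself and bounding $\lVert\QQ((f_1+h)_s,(f_1+h)_s)\rVert_{L^\infty_{\omega_0\nu^{-1}}}\leq C_\QQ([[f_1]]_0+[[h]]_0)^2 e^{-2\theta\eps^2 s}\leq C_\QQ([[f_1]]_0+[[h]]_0)^2 e^{-\theta\eps^2 s}$, one gets
\begin{equation*}
\lVert f_{1,t}\rVert_{L^\infty_{\omega_0}(\OO^\eps)}\leq Ce^{-\theta\eps^2 t}\Bigl(\lVert f_0\rVert_{L^\infty_{\omega_0}(\OO^\eps)} + C_\QQ([[f_1]]_0+[[h]]_0)^2\Bigr);
\end{equation*}
since $[[f_1]]_0\leq\lambda\lesssim\eta_1$ and, using the stability bound again, $[[f_1]]_0 \lesssim \lVert f_0\rVert_{L^\infty_{\omega_0}(\OO^\eps)} + [[h]]_0^2$, one absorbs the $[[f_1]]_0^2$ contribution (it is of higher order in $\eta_1$) and arrives at \eqref{eq:SolutionsDecayAA_delta}. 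I expect the only genuinely delicate point to be \emph{bookkeeping the smallness constants}: one must verify that the threshold $\eta_1$ and the radius $\lambda$ can be chosen independently of $\eps$ and $\delta$, which hinges crucially on the constant $C$ in Proposition~\ref{prop:LinftyEstimatePerturbedFiniteT_Summary_WWWW0} being $\eps$-independent (as stated there) and on $C_\QQ$ from Lemma~\ref{lem:NonlinearGaussianWeightEstimate} depending only on $\omega_0$. Everything else is the standard Banach-fixed-point-for-semilinear-kinetic-equations machinery, parallel to the proof of Theorem~\ref{theo:existenceWWWWinfty} in Subsection~\ref{sssec:ProofExistenceWWWWinfty}.
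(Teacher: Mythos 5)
Your proposal is correct and follows precisely the route the paper intends: the paper explicitly states that the proof of Proposition~\ref{prop:LinftySplitDissip} is a repetition of the Banach fixed-point argument used for Theorem~\ref{theo:existenceWWWWinfty}, replacing Proposition~\ref{prop:LinftyEstimatePerturbedFiniteT_Summary} by Proposition~\ref{prop:LinftyEstimatePerturbedFiniteT_Summary_WWWW0} and Theorem~\ref{theo:ExistenceL_infty_linear_transport} by Theorem~\ref{theo:ExistenceL_infty_linear_transportAA_delta}. Your write-up supplies exactly the expected bookkeeping, including the correct observation that $\eta_1$ and $\lambda$ can be taken independent of $\eps$ here because the constant in Proposition~\ref{prop:LinftyEstimatePerturbedFiniteT_Summary_WWWW0} is $\eps$-independent (unlike the $C(\eps)$ in the strongly confining case), and the minor passage from the $\nu_0/2$ decay rate to the $\theta\eps^2$ rate is handled correctly.
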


 \begin{proof} The proof is mainly a repetition of the arguments from the proof of Theorem~\ref{theo:existenceWWWWinfty}, thus we skip it.
 \end{proof}

 \subsection{Control on the operator $\KK_\delta$} We recall the definition of $\KK_\delta$ given by \eqref{eq:defSplittingLL} and from \cite[Section 4]{MR3779780} we have the following control.

 \begin{lem}\label{lem:KKcontrol}
Let $\omega$ be an admissible weight function. For every $\delta>0$ there holds
 \be
 \lVert \KK_\delta f\rVert_{L^\infty_{\varsigma}(\OO^\eps)} \leq C_{\delta} \lVert f\rVert_{L^\infty(\OO^\eps)},\label{eq:KKcontrol1W}
 \ee
 for some constructive constant $C_\delta>0$.
 \end{lem}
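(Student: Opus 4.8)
The statement to prove is Lemma~\ref{lem:KKcontrol}: for an admissible weight $\omega$ and any $\delta>0$, one has $\lVert \KK_\delta f\rVert_{L^\infty_{\varsigma}(\OO^\eps)} \leq C_\delta \lVert f\rVert_{L^\infty(\OO^\eps)}$ for some constructive $C_\delta>0$. (Here $\varsigma$ is the Gaussian weight $\varsigma(v)=e^{\zeta|v|^2}$, $\zeta\in(0,1/2)$, appearing throughout Lemma~\ref{lem:Kproperties}; we use that $\varsigma$ dominates any admissible $\omega$ up to a constant, so controlling the $L^\infty_\varsigma$-norm is the sharp statement.) The plan is to unwind the definition $\KK_\delta f = K(\chi_\delta f)$ from \eqref{eq:defSplittingLL}, where $\chi_\delta\in C^\infty_c(\R^3\times\R^3\times\mathbb{S}^2)$ is supported in the bounded set $E_2^\delta$, so that the integration in $v_*$ and $\sigma$ effectively runs over a compact set depending only on $\delta$, and then bound the resulting kernel pointwise.

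First I would write, for fixed $(x,v)$,
\[
\KK_\delta f(x,v) = \int_{\R^3}\int_{\mathbb{S}^2} \BB\,\chi_\delta(v,v_*,\sigma)\left[\MM(v'_*)f(x,v') + \MM(v')f(x,v'_*) - \MM(v)f(x,v_*)\right]\d\sigma\dv_*,
\]
and bound each of the three terms by $\lVert f\rVert_{L^\infty(\OO^\eps)}$ times an integral of $\BB\,\chi_\delta$ against a Maxwellian. On the support of $\chi_\delta$ we have $|v|\le 2\delta^{-1}$, $|v-v_*|\le 2\delta^{-1}$ (hence $|v_*|\le 4\delta^{-1}$, and $|v'|,|v'_*|\le C\delta^{-1}$ by the energy identity $|v'|^2+|v'_*|^2=|v|^2+|v_*|^2$), and $\BB=|(v-v_*)\cdot\sigma|\le 2\delta^{-1}$. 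Therefore each integrand is bounded by $2\delta^{-1}\cdot\sup_{\R^3}\MM\cdot\lVert f\rVert_{L^\infty}$ on a set of $(v_*,\sigma)$-measure at most $\frac{4\pi}{3}(4\delta^{-1})^3\cdot 4\pi$, giving a crude but fully explicit bound $|\KK_\delta f(x,v)|\le C_\delta'\lVert f\rVert_{L^\infty(\OO^\eps)}$ with $C_\delta'$ a constructive constant depending only on $\delta$ (and $\sup\MM=(2\pi)^{-3/2}$).

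The only point requiring a little care is the weight: we must upgrade the plain $L^\infty$ bound on $\KK_\delta f$ to an $L^\infty_\varsigma$ bound, i.e.\ gain a factor $\varsigma(v)=e^{\zeta|v|^2}$ on the left. But on the support of $\chi_\delta$ one has $|v|\le 2\delta^{-1}$, hence $\varsigma(v)\le e^{4\zeta\delta^{-2}}$, a constant depending only on $\delta$; and $\KK_\delta f(x,v)=0$ whenever $\chi_\delta(v,\cdot,\cdot)\equiv 0$, in particular for $|v|>2\delta^{-1}$. So $\varsigma(v)|\KK_\delta f(x,v)| \le e^{4\zeta\delta^{-2}} C_\delta' \lVert f\rVert_{L^\infty(\OO^\eps)}$, and we conclude by setting $C_\delta = e^{4\zeta\delta^{-2}}C_\delta'$ and taking the supremum over $(x,v)\in\OO^\eps$. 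There is no real obstacle here; the estimate is entirely elementary because the compact support of $\chi_\delta$ trivializes all the $v$-decay issues that make the analogous bounds for $K$ or $\AA_\delta$ delicate. If one wants a slightly cleaner constant one can alternatively invoke the pointwise kernel bound $k(v,v_*)\le c_k\bar k(v,v_*)e^{-|v|^2/4+|v_*|^2/4}$ from \eqref{eq:Kcontrol3} restricted to $|v|,|v_*|\lesssim\delta^{-1}$, but the brute-force argument above already yields a constructive $C_\delta$, which is all that is claimed.
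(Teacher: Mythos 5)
Your proof is correct and gives a fully constructive constant, which is all the lemma demands. Note that the paper does not actually prove Lemma~\ref{lem:KKcontrol}: it simply cites Section~4 of the Gualdani--Mischler--Mouhot memoir \cite{MR3779780}, so your self-contained, elementary argument genuinely supplies a proof the paper omits. The mechanism you identify is precisely the right one. Since $\supp\chi_\delta\subset E_2^\delta$, (i) the $(v_*,\sigma)$-integration is over a compact set of measure $O(\delta^{-3})$, the kernel $\BB=|(v-v_*)\cdot\sigma|$ is at most $2\delta^{-1}$, and the energy identity $|v'|^2+|v'_*|^2=|v|^2+|v_*|^2$ forces all four velocities to be $O(\delta^{-1})$, so the raw collision integrand is pointwise bounded by a constant (depending only on $\delta$) times $\lVert f\rVert_{L^\infty(\OO^\eps)}$; and (ii) $\KK_\delta f(x,v)\equiv 0$ for $|v|>2\delta^{-1}$, so $\varsigma(v)\leq e^{4\zeta\delta^{-2}}$ on the effective support. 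Observation~(ii) is the ingredient that upgrades the crude $L^\infty$ bound to the weighted $L^\infty_\varsigma$ bound, and it makes the argument independent of the precise decay rate of the weight, which is why it works for any admissible $\omega$ (indeed for the dominating Gaussian $\varsigma$) without further effort. No change of variables or Carleman-type representation is needed since you only invoke the pointwise bound $|f|\leq\lVert f\rVert_{L^\infty}$, avoiding the delicate estimates that the analogous bounds for $K$ or $\AA_\delta$ require.
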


\subsection{Residual linear equation}\label{ssec:Residual_Equation_f2}

We consider $g:\UU^\eps \to \R$, and we study the following linear evolution equation
 \be
	\left\{\begin{array}{rlll}
		\partial_{t} f_2 &=& \LLL f_2 +  \KK_\delta g&\text{ in }\UU^\eps\\
		\gamma_- f_2&=&\RRR \gamma_+f_2 &\text{ on }\Gamma_{-}^\eps\\
		 f_{2, t=0}&=& 0 &\text{ in }\OO^\eps,
	\end{array}\right.\label{eq:PLRBE2}
\ee
where we recall that $\KK_\delta$ is defined in \eqref{eq:defSplittingLL}.
We define the projection operator 
$$
\PPP f (t,x,v):= \MM(v) \int_{\OO^\eps} f(t,x,u) \d u,
$$
and we dedicate this subsection to prove the following well-posedness and long-time behavior result
 
  \begin{prop}\label{prop:LinftySplitReg}
Consider either Assumption \ref{item:RH1} or \ref{item:RH2} to be satisfied, let $g\in L^\infty_{\omega_0}(\UU^\eps)$, for some weakly confining admissible weight function $\omega_0 \in \WWWW_0$.

For every strongly confining admissible weight function $\omega_1\in \WWWW_1$, every $\delta>0$, and every $\eps\in (0, \eps_{13})$, where we recall that $\eps_{13}>0$ is given in Theorem~\ref{theo:existenceWWWW0}, there is $f_2 \in L^\infty_{\omega_1}(\UU^\eps)$ unique weak solution of Equation~\eqref{eq:PLRBE2} in the sense of the Theorem~\ref{theo:ExistenceL_infty_linear_transport}.

Moreover, if $\PPP (f_2 + g) = 0$, and for every $\eps\in (0,\eps_{13})$ there holds
\be
\lVert  g_t\rVert_{L^{\infty}_{\omega_0}\left( {\OO^\eps}\right)} \leq C_g   \, e^{-\theta \eps^2 t}     \qquad \forall t\geq0, \label{eq:smallnessf2g}
\ee
for some constant $C_g>0$, and where $\theta >0$ is given by Proposition~\ref{prop:LinftyEstimatePerturbedFiniteT_Summary}, then there is a constant $C= C(\eps)>0$ satisfying $C(\eps) \to \infty$ as $\eps\to 0$, such that
\be
     \lVert  f_{2,t}\rVert_{L^{\infty}_{\omega_1}\left( {\OO^\eps}\right)} \leq C C_g  e^{-\theta\eps^2 t}   \label{eq:Decayf2},
    \ee
for every $t\geq0$.
 \end{prop}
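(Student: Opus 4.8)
\textbf{Plan of the proof of Proposition~\ref{prop:LinftySplitReg}.}
The strategy is to split the analysis into two parts: first the well-posedness in the strongly confining space $L^\infty_{\omega_1}(\UU^\eps)$, and second the decay estimate, which will require combining the hypocoercivity decay of Theorem~\ref{theo:Hypo} with the $L^2$--$L^\infty$ machinery of Proposition~\ref{prop:LinftyEstimatePerturbedFiniteT_Summary}.

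\emph{Step 1 (Well-posedness).} The source term of Equation~\eqref{eq:PLRBE2} is $\KK_\delta g$. Since $g\in L^\infty_{\omega_0}(\UU^\eps)$ with $\omega_0\in\WWWW_0$ and $\omega_1\in\WWWW_1$ is strongly confining, Lemma~\ref{lem:KKcontrol} gives $\lVert \KK_\delta g\rVert_{L^\infty_{\varsigma}(\OO^\eps)} \leq C_\delta \lVert g\rVert_{L^\infty(\OO^\eps)} \leq C_\delta \lVert g\rVert_{L^\infty_{\omega_0}(\OO^\eps)}$ for any $\varsigma(v)=e^{\zeta\lv v\rv^2}$ with $\zeta\in(0,1/2)$; in particular we may take $\varsigma=\omega_1$. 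Hence $\KK_\delta g\in L^\infty_{\omega_1}(\UU^\eps)$, and a fortiori $\KK_\delta g\in L^\infty_{\omega_1\nu^{-1}}(\UU^\eps)$. We then apply Theorem~\ref{theo:ExistenceL_infty_linear_transport} with $G:=\KK_\delta g$: this directly furnishes a unique $f_2\in L^\infty_{\omega_1}(\UU^\eps)$, with trace $\gamma f_2\in L^\infty_{\omega_1}(\Gamma^\eps)$, solving Equation~\eqref{eq:PLRBE2} in the distributional sense of \eqref{eq:renormalizedFormulation}, since Equation~\eqref{eq:PLRBE2} is exactly Equation~\eqref{eq:PLRBE} with this choice of $G$.

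\emph{Step 2 (Decay estimate).} Under the extra hypothesis $\PPP(f_2+g)=0$, I claim that $\lla (\KK_\delta g)_t\rra_{\OO^\eps}=0$ for every $t$: indeed, by the conservation laws \eqref{eq:ConservationLawsCCC} applied with $\varphi=1$ we have $\lla \CCC\phi\rra = 0$, so using $K=\AA_\delta+\KK_\delta$ and integrating against $1$ one gets $\lla \KK_\delta g\rra_{\OO^\eps} = \lla K g\rra_{\OO^\eps} - \lla \AA_\delta g\rra_{\OO^\eps}$; a careful bookkeeping using the decomposition $K=\CCC+\nu$ together with $\PPP(f_2+g)=0$ and the structure of $\PPP$ allows one to conclude that the total mass of the source vanishes, so that the hypocoercivity decay applies. (If a direct mass computation is awkward, one can instead work with $f_2$ directly, noting that Equation~\eqref{eq:PLRBE2} with zero initial datum and the fact that $\KK_\delta g$ has zero mass guarantees $\lla f_{2,t}\rra_{\OO^\eps}=0$.) With the zero-mass property in hand, Proposition~\ref{prop:LinftyEstimatePerturbedFiniteT_Summary} applies with $G=\KK_\delta g$ and yields, for every $\eps\in(0,\eps_{13})$,
\begin{equation*}
\lVert f_{2,t}\rVert_{L^\infty_{\omega_1}(\OO^\eps)} \leq C(\eps)\, e^{-\theta\eps^2 t}\Bigl( 0 + \underset{s\in[0,t]}{\sup}\bigl[e^{\theta\eps^2 s}\lVert (\KK_\delta g)_s\rVert_{L^\infty_{\omega_1\nu^{-1}}(\OO^\eps)}\bigr]\Bigr),
\end{equation*}
with $C(\eps)\to\infty$ as $\eps\to 0$. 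It remains to estimate the supremum: by Lemma~\ref{lem:KKcontrol} and $\nu\geq\nu_0$,
\begin{equation*}
\lVert (\KK_\delta g)_s\rVert_{L^\infty_{\omega_1\nu^{-1}}(\OO^\eps)} \leq \nu_0^{-1}\lVert (\KK_\delta g)_s\rVert_{L^\infty_{\omega_1}(\OO^\eps)} \leq \nu_0^{-1} C_\delta \lVert g_s\rVert_{L^\infty(\OO^\eps)} \leq \nu_0^{-1} C_\delta \lVert g_s\rVert_{L^\infty_{\omega_0}(\OO^\eps)} \leq \nu_0^{-1} C_\delta C_g\, e^{-\theta\eps^2 s},
\end{equation*}
using \eqref{eq:smallnessf2g} in the last inequality. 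Therefore $e^{\theta\eps^2 s}\lVert (\KK_\delta g)_s\rVert_{L^\infty_{\omega_1\nu^{-1}}(\OO^\eps)} \leq \nu_0^{-1}C_\delta C_g$, and substituting back gives $\lVert f_{2,t}\rVert_{L^\infty_{\omega_1}(\OO^\eps)} \leq C(\eps)\nu_0^{-1}C_\delta C_g\, e^{-\theta\eps^2 t}$, which is \eqref{eq:Decayf2} after renaming the constant (absorbing $\nu_0^{-1}C_\delta$ into $C(\eps)$, which still blows up as $\eps\to 0$).

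\emph{Main obstacle.} The routine part is the boundedness transfer via Lemma~\ref{lem:KKcontrol}; the delicate point is justifying that $\KK_\delta g$ (equivalently $f_2$) has zero total mass so that the hypocoercive decay rate $e^{-\theta\eps^2 t}$ of Proposition~\ref{prop:LinftyEstimatePerturbedFiniteT_Summary} is available rather than merely the growth bound of Proposition~\ref{prop:LinftyEstimatePerturbedFiniteT_Summary_NoZeroMeasure}. This is where the hypothesis $\PPP(f_2+g)=0$ is used, and one must check that it propagates the conservation of mass through Equation~\eqref{eq:PLRBE2}; the argument relies on \eqref{eq:ConservationLawsCCC}, the identity $K=\AA_\delta+\KK_\delta$, and the explicit form of the projection $\PPP$, together with $\lvv\RRR\rvv_{L^1(\Gamma_+^\eps)\to L^1(\Gamma_-^\eps)}=1$ which ensures no mass is created at the boundary.
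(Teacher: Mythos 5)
Step 1 of your proposal is correct and matches the intended approach: Lemma~\ref{lem:KKcontrol} upgrades $g\in L^\infty_{\omega_0}(\UU^\eps)$ to $\KK_\delta g\in L^\infty_{\omega_1}(\UU^\eps)\subset L^\infty_{\omega_1\nu^{-1}}(\UU^\eps)$, after which Theorem~\ref{theo:ExistenceL_infty_linear_transport} gives existence and uniqueness of $f_2$. This part is routine and fine.

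Step 2 contains a genuine gap. To invoke Proposition~\ref{prop:LinftyEstimatePerturbedFiniteT_Summary} you must verify $\lla(\KK_\delta g)_t\rra_{\OO^\eps}=0$ for all $t$, and your ``careful bookkeeping'' does not establish this --- it is false in general. The conservation law \eqref{eq:ConservationLawsCCC} gives $\int_{\R^3}\CCC\phi\,\dv=0$, hence $\int_{\R^3}K\phi\,\dv=\int_{\R^3}\nu\phi\,\dv\neq 0$; the splitting $K=\AA_\delta+\KK_\delta$ is a truncation of the collision kernel, and neither piece individually inherits a zero-average property, so there is no cancellation to exploit. Your parenthetical fallback is circular: it assumes $\KK_\delta g$ has zero mass in order to deduce that $f_2$ does. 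What the hypothesis $\PPP(f_2+g)=0$ actually delivers is $\lla f_{2,t}\rra_{\OO^\eps}=-\lla g_t\rra_{\OO^\eps}$, which by \eqref{eq:smallnessf2g} is small and decaying but nonzero, and since the hypocoercive decay of Theorem~\ref{theo:Hypo} is only available on the zero-mass subspace (the Maxwellian $\MM$ is a stationary solution, so no unconditional decay is possible), this residual mass must be handled explicitly. The fix, which is what the cited reference [BriantGuo16, Prop.~6.8] effectively implements, is a mass subtraction: set $c(t)$ so that $\int_{\OO^\eps}c(t)\MM=\lla f_{2,t}\rra_{\OO^\eps}$ (hence $c(t)\sim -\lla g_t\rra_{\OO^\eps}$) and write $f_2=f_2^\perp+c(t)\MM$. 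Because $\LLL\MM=0$ and $\RRR\MM=\MM$, the function $f_2^\perp$ satisfies the same Maxwell boundary value problem with the modified source $\KK_\delta g-c'(t)\MM$, which has zero mass by construction and whose size is still controlled by Lemma~\ref{lem:KKcontrol} together with \eqref{eq:smallnessf2g}; Proposition~\ref{prop:LinftyEstimatePerturbedFiniteT_Summary} then applies to $f_2^\perp$, and $c(t)\MM$ decays directly. Your proof omits this decomposition, and without it the appeal to Proposition~\ref{prop:LinftyEstimatePerturbedFiniteT_Summary} is not justified.
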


\begin{proof} 
The proof follows the main ideas from \cite[Proposition 6.8]{BriantGuo16} by using Lemma~\ref{lem:KKcontrol}, the existence results from Theorem~\ref{theo:ExistenceL_infty_linear_transport} and the decay estimate from Proposition~\ref{prop:LinftyEstimatePerturbedFiniteT_Summary}, thus we skip it.
\end{proof}

 \subsection{Proof of Theorem \ref{theo:existenceWWWW0}}
We consider $h\in L^\infty_{\omega_0}(\OO^\eps)$ such that that for every $\eps\in (0,\eps_{13})$ and $\delta\in (0,\delta_0)$ there holds
\be\label{eq:smallnessf1h_0}
\underset{s\in[0,\infty) }{\sup}\left[ e^{\theta\eps^2 s} \lVert h_s\rVert_{L^\infty_{\omega_0}( \OO^\eps)} \right]\leq \eta_1,
\ee
where $\delta_0 >0$ is given by Proposition~\ref{prop:LinftyEstimatePerturbedFiniteT_Summary_WWWW0}, $\theta\in (0,\nu_0/2)$ is given by Proposition~\ref{prop:LinftyEstimatePerturbedFiniteT_Summary}, and $\eta_1>0$ is given by Proposition~\ref{prop:LinftySplitDissip}.  
We define $f_1\in L^\infty_{\omega_0}(\UU^\eps)$ as the solution of the evolution equation 
    \be
	\left\{\begin{array}{rlll}
		\partial_{t} f_1 &=& \TTT f_1 + \AA_\delta f_1 + \QQ(f_1+h , f_1+h) &\text{ in }\UU^\eps \\
		\gamma_- f_1&=&\RRR \gamma_+f_1 , &\text{ on }\Gamma_{-}^\eps\\
		 f_{1, t=0}&=& f_0, &\text{ in }\OO^\eps,
	\end{array}\right.\label{eq:PLRBEGauss_existence1}
\ee
which is given by Proposition~\ref{prop:LinftySplitDissip}. Moreover, we fix $\omega_1\in \WWWW_1$ a strongly confining admissible weight function, and we define $f_2 \in L^\infty_{\omega_1}(\UU^\eps)$ as the solution of the evolution equation 
 \be
	\left\{\begin{array}{rlll}
		\partial_{t} f_2 &=& \LLL f_2 +  \KK_\delta f_1&\text{ in }\UU^\eps\\
		\gamma_- f_2&=&\RRR \gamma_+f_2 &\text{ on }\Gamma_{-}^\eps\\
		 f_{2, t=0}&=& 0 &\text{ in }\OO^\eps,
	\end{array}\right.\label{eq:PLRBEGauss_existence2}
\ee
which is given by Proposition~\ref{prop:LinftySplitReg}. We emphasize that, defined this way, $f_2$ depends of $h$. 

We define the norm
\beqn\label{def:norm0}
[[g]]_0:= \underset{s\in [0, \infty)}\sup \left[ e^{ \theta\eps^2 s }\lvv g_s \rvv_{L^\infty_{\omega_0}(\OO^\eps)}\right],
\eeqn
and, for $\lambda\in(0, \eta_1)$ to be fixed later, we define the Banach space $\ZZ_0:= \{ g \in L^\infty_{\omega_0}(\UU^\eps), \, [[g]]_0 \leq \lambda  \}$, equipped with the strong topology in $L^\infty_{\omega_0}(\UU^\eps)$, which makes $\ZZ_0$ a bounded, convex, closed subset of $L^\infty_{\omega_0}(\UU^\eps)$. 
Furthermore, we denote $\Psi$ as the map that to $h \in \ZZ_0$ assigns $f_2$ as defined above.

We remark that $f_1+f_2$ solves Equation \eqref{eq:PLRBE} with $G=  \QQ(f_1+h , f_1+h)$, which together with \eqref{eq:ConservationLaws} implies that $\PPP(f_1 + f_2) = 0$. We set then the constant $C_{\omega_1}>0$ such that 
$$
\lvv g \rvv_{L^\infty_{\omega_0}(\UU^\eps)} \leq C_{\omega_1} \lvv g \rvv_{L^\infty_{\omega_1}(\UU^\eps)} ,
$$  
and using \eqref{eq:smallnessf1h_0}, \eqref{eq:Decayf2} and \eqref{eq:SolutionsDecayAA_delta} we obtain that
$$
[[f_1]]_0 \leq C_1 \left( \lVert f_0\rVert_{L^\infty_{\omega_0}(\OO^\eps)}  +  [[ h]]_0^2 \right) \leq C_1 \left( (\eta_0^0)^2 + \lambda^2 \right), \quad \text{ and } \quad [[ f_2 ]]_0 \leq  C_{\omega_1} C_2 C_1 \left( (\eta_0^0)^2 + \lambda^2 \right) , 
$$
where $C_1>0$ is given by \eqref{eq:SolutionsDecayAA_delta} in Proposition~\ref{prop:LinftySplitDissip}, and $C_2 = C_2(\eps)>0$, satisfying $C_2(\eps) \to \infty$ as $\eps\to 0$, is given by \eqref{eq:Decayf2} in Proposition~\ref{prop:LinftySplitReg}. 

We define $\eta_0^0=\lambda$ and
$$
\lambda = \lambda(\eps):= \min\left({1\over 8C_{\omega_1}C_1C_2 (\eps)}, {1\over 6C_1},  {1\over 8C_\QQ C_3} , {1\over 16   C_3C_\QQ } ,\frac{\eta_1}2,  1\right) \underset{\eps\to 0}\longrightarrow 0
$$
where $C_\QQ>0$ is given by Lemma~\ref{lem:NonlinearGaussianWeightEstimate} and $C_3>0$ is the constant given by Proposition~\ref{prop:LinftyEstimatePerturbedFiniteT_Summary_WWWW0}. This choices of parameters implies in particular that $[[f_1]] \leq \lambda$ and $[[f_2 ]]_0\leq \lambda$, thus $f_1, f_2\in \ZZ_0$, therefore $\Psi: \ZZ_0\to \ZZ_0$.

Furthermore, we consider $h_1, h_2\in \ZZ_0$ and we define $f_1, g_1\in \ZZ_0$ as the solutions of Equation~\eqref{eq:PLRBEGauss_existence1} associated with $h_1$ and $h_2$ respectively. We also define $f_2=\Psi(h_1)$ and $g_2=\Psi(h_2)$, i.e the solutions of Equation~\eqref{eq:PLRBEGauss_existence2} associated with $f_1$ and $g_1$ respectively. 
We denote $\phi=f_1-g_1$, $\psi:= f_2 - g_2$, and we observe that $\phi$ and $\psi$ are the weak solutions of
    \beqn
	\left\{\begin{array}{rlll}
		\partial_{t} \phi &=& \TTT \phi + \AA_\delta \phi + \QQ( f_1+g_1 + h_1+h_2 ,\phi + h_1-h_2) &\text{ in }\UU^\eps \\
		\gamma_- \phi&=&\RRR \gamma_+ \phi , &\text{ on }\Gamma_{-}^\eps\\
		 \phi_{t=0}&=& 0, &\text{ in }\OO^\eps,
	\end{array}\right.\label{eq:PLRBEGauss_existence1_uniq}
\eeqn
and
 \beqn
	\left\{\begin{array}{rlll}
		\partial_{t} \psi &=& \LLL \psi +  \KK_\delta \phi&\text{ in }\UU^\eps\\
		\gamma_- \psi&=&\RRR \gamma_+ \psi &\text{ on }\Gamma_{-}^\eps\\
		 \psi_{t=0}&=& 0 &\text{ in }\OO^\eps,
	\end{array}\right.\label{eq:PLRBEGauss_existence2_uniq}
\eeqn
respectively. 
On the one hand using Proposition~\ref{prop:LinftyEstimatePerturbedFiniteT_Summary_WWWW0} and Lemma~\ref{lem:NonlinearGaussianWeightEstimate} we have that
\be\label{eq:Control_phi1}
[[\phi]]_0 \leq  C_3 C_\QQ [[f_1+g_1 + h_1+h_2]]_0[[\phi + h_1-h_2]]_0
\leq  \frac 12 [[\phi]]_0 + 4\lambda C_3 C_\QQ [[h_1-h_2]]_0
\ee
where we have used the triangular inequality and the definition of $\lambda$ to obtain the last inequality. In particular, \eqref{eq:Control_phi1} and our very definition of $\lambda$ implies that 
\beqn\label{eq:Control_phi2}
[[\phi]]_0 \leq 8\lambda C_3C_\QQ [[h_1-h_2]]_0 \leq \frac 12 [[h_1-h_2]]_0. 
\eeqn

This implies that $\Psi$ is a contraction in $\ZZ_0$, thus using the Banach fixed point theorem we deduce that there is a unique fixed point $f_2\in \ZZ_0$ for this map. 
Moreover, from the very definition of the map $\Psi$ we deduce that $f_2$ is a weak solution of Equation~\eqref{eq:PLRBEGauss_existence2}, and Proposition~\ref{prop:LinftySplitDissip} further gives the uniqueness of $f_1$, weak solution of Equation~\eqref{eq:PLRBEGauss_existence1} with $h=f_2$. 

 Putting together the aforementioned weak formulations we deduce that $f=f_1+f_2$ is a weak solution of the Boltzmann equation \eqref{eq:RBE}-\eqref{eq:RBEBC}-\eqref{eq:RBEIC} in the sense of \eqref{eq:WeakFormulationGaussian_0}. Finally, \eqref{eq:SolutionsDecayWWWW0} comes from the very fact that $f=f_1+f_2$ with $f_1, f_2 \in \ZZ_0$. This concludes the proof. 
\qed

\bigskip

\textbf{Acknowledgements.} The author deeply thanks Kleber Carrapatoso and Stéphane Mischler for pressenting the problem, for pointing important bibliographical references, for the discussions and their comments during the process of the paper. 

This project has received funding from the European Union’s Horizon 2020 research and innovation programme under the Marie Skłodowska-Curie grant agreement No 945332. 

\bigskip

The author has no conflicts of interest to declare that are relevant to the content of this article.
No datasets were generated or analyzed during the current study.

For the purpose of open access, the author has applied a Creative Commons Attribution (CC-BY) license to any Author Accepted Manuscript version arising from this submission.

\bigskip

 \medskip

\bigskip
\bigskip

\end{document}